\newtheorem{thm}{Theorem}[section]
\newtheorem{prop}[thm]{Proposition}
\newtheorem{cor}[thm]{Corollary}
\newtheorem{lem}[thm]{Lemma}
\theoremstyle{definition}
\newtheorem{rem}[thm]{Remark}
\newtheorem{def1}[thm]{Definition}
\newcommand{\ra}{\rightarrow}
\newcommand{\bk}{\backslash}
\newcommand{\mc}{\mathcal}
\newcommand{\mb}{\mathbb}
\newcommand{\sg}{\sigma}
\newcommand{\R}{\mb{R}}
\newcommand{\N}{\mb{N}}
\newcommand{\llf}{\left\lfloor}
\newcommand{\e}{\epsilon}
\newcommand{\rrf}{\right\rfloor}
\newcommand{\mbf}{\boldsymbol}
\begin{document}
\title[Hal\'{a}sz Refinement]{A Strengthening of Theorems of Hal\'{a}sz and Wirsing}
\normalsize
\author[A. P. Mangerel]{Alexander P. Mangerel}
\address{Department of Mathematics\\ University of Toronto\\
Toronto, Ontario, Canada}
\email{sacha.mangerel@mail.utoronto.ca}
\maketitle
\begin{abstract}
Given an arithmetic function $g(n)$ write $M_g(x) := \sum_{n \leq x} g(n)$. We extend and strengthen the results of a fundamental paper of Hal\'{a}sz in several ways by proving upper bounds for the ratio of $\frac{|M_g(x)|}{M_{|g|}(x)}$, for any strongly multiplicative, complex-valued function $g(n)$ under certain assumptions on the sequence $\{g(p)\}_p$. We further prove an asymptotic formula for this ratio in the case that $|\text{arg}(g(p))|$ is sufficiently small uniformly in $p$. In so doing, we recover a new proof of an explicit lower mean value estimate for $M_{f}(x)$ for any non-negative, multiplicative function satisfying $c_1 \leq |f(p)| \leq c_2$ for $c_2 \geq c_1 > 0$, by relating it to $\frac{x}{\log x}\prod_{p \leq x} \left(1+\frac{f(p)}{p}\right)$. 
%As a first application of our main theorems, we extend the results of a previous paper of the author. Given a partition $\{E_0,\ldots,E_n\}$ of the set of primes and a vector $\mbf{k} \in \mb{N}_0^{n+1}$, we compute an asymptotic formula for the quantity $|\{m \leq x: \omega_{E_j}(m) = k_j \ \forall \ 0 \leq j \leq n\}|$ uniformly in a wide range of the parameters $k_j$. 
As an application, we generalize our main theorem in such a way as to give explicit estimates for the ratio $\frac{|M_g(x)|}{M_{f}(x)}$, whenever $f: \mb{N} \ra (0,\infty)$ and $g: \mb{N} \ra \mb{C}$ are strongly multiplicative functions that are uniformly bounded on primes and satisfy $|g(n)| \leq f(n)$ for every $n \in \mb{N}$. This generalizes a theorem of Wirsing and extends recent work due to Elliott.
%This allows us to conclude the validity of a probabilistic heuristic set forth in the aforementioned paper of the author regarding $\pi(x;\mbf{E},\mbf{k})$ in the case that $k_j = (1+o(1))E_j(x)$, for each $0 \leq j \leq n$. 
%Our final application is to exponential sums with multiplicative functions supported on squarefree integers, addressing a problem due to G. Bachman.
\end{abstract}
\section{Introduction}
Let $g : \mb{N} \ra \mb{C}$ be a multiplicative function, i.e., such that for any coprime $m,n \in \mb{N}$, $g(mn) = g(m)g(n)$. Put 
\begin{equation*}
M_g(x) := \sum_{n \leq x} g(n). 
\end{equation*}
Many difficult problems in number theory can be recast into a problem about the rate of growth of $M_g(x)$, for $g$ multiplicative. For example, when $g = \mu$, the M\"{o}bius function, Landau proved that the Prime Number Theorem is equivalent to $M_{\mu}(x) = o(x)$ (see Theorem 8 in I.3 of \cite{Ten2}), and Littlewood showed that the Riemann hypothesis is equivalent to $M_{\mu}(x) = O_{\e}\left(x^{\frac{1}{2}+\e}\right)$ (see paragraph 14.25 in \cite{Tit}). In another direction, let $q \in \mb{N}$, $q \geq 2$ and suppose $\chi$ is a Dirichlet character modulo $q$, i.e., a periodic extension of a group homomorphism $\chi: \left(\mb{Z}/q\mb{Z}\right)^{\ast} \ra \mb{T}$, with $\chi(n) = 0$ for $(n,q) > 1$.  Finding sharp upper and lower bounds for $M_{\chi}(x)$ that are uniform in the conductor $q$ is a notoriously difficult problem whose study goes back at least to P\'{o}lya.  There are various important consequences of bounds for $M_{\chi}(x)$. For instance, if $\chi$ is a quadratic character then sharp upper bounds have implications in the study of the Class Number problem for real quadratic fields, whose study was initiated by Gau$\ss$ (see, for instance, \cite{Lam} and \cite{LamD}). \\
In this vein, it is of interest to relate $M_g(x)$ to other arithmetic data that are known, and the most basic such approach is to consider its growth relative to $x$. We say that $g$ possesses a \emph{mean value} if there exists a constant $\alpha \in \mb{C}$ such that $x^{-1}M_g(x) \ra \alpha$ as $x \ra \infty$. It is a classical topic in number theory to consider when a multiplicative function possesses a mean value. In the case where $g$ is non-negative this is simplest, and it is known that any such function taking values in $[0,1]$ possesses a mean value. For real-valued functions, Wintner \cite{Win} proved that if $g$ is multiplicative with $|g(n)| \leq 1$ for all $n \in \mb{N}$ then it is sufficient that $\sum_p \frac{1-g(p)}{p}$ converges in order for $g$ to have a non-zero mean value, and in particular, 
\begin{equation*}
\lim_{x \ra \infty} x^{-1}M_g(x) = \prod_p \left(1-\frac{1}{p}\right)\sum_{k \geq 0} g(p^k)p^{-k}.
\end{equation*}
Wirsing, improving on this result and others due to Delange, proved a famous conjecture of Erd\H{o}s that as long as $|g(n)| \leq 1$, $g$ possesses a mean value, and that this mean value is zero if, and only if, $\sum_p \frac{1-g(p)}{p}$ diverges \cite{Ell}. \\
It is known that this latter criterion is insufficient to describe the situation in the case that $g$ is complex-valued, e.g., when $g(n) = n^{i\alpha}$ (a so-called \emph{archimedean character}). This example also demonstrates that the limit of $x^{-1}M_g(x)$ may not even exist, as a simple contour integration argument shows that $M_{n^{i\alpha}}(x) \sim x^{1+i\alpha}/(1+i\alpha)$.  It turns out that the sole counterexamples to this criterion for complex-valued multiplicative functions are those $g$ that behave like $p^{i\alpha}$ times a real-valued multiplicative function at primes for some $\alpha \in \mb{R}$, and Hal\'{a}sz \cite{Hal1} generalized Wirsing's theorem, to complex-valued functions with $|g(n)| \leq 1$, accounting for this behaviour. In particular, he showed that $g$ has non-zero mean value if, and only if, $\sum_p \frac{1-\text{Re}(g(p)p^{-i\alpha})}{p}$ converges for some $\alpha$. It is conventional in the literature to say that if $g$ has a non-zero mean value and an $\alpha$ exists for which the latter series converges then $g$ ''pretends'' to be the product of a real function and an archimedean character.  \\
%, or, if there exists some special $\tau = \tau_0$ for which it converges then $g(2^k) = -2^{ik\tau_0}$ for each $k \in \mb{N}$. \\
While the above results classify those situations in which a function $g$ has mean-value zero, it does not provide a rate of convergence of $x^{-1}M_g(x)$ to its limiting value, an important consideration in many number-theoretic problems.  Shortly after proving his mean value theorem, Hal\'{a}sz (\cite{Hal2}, \cite{Hal3}) proved in two separate papers \emph{explicit mean-value estimates} for multiplicative functions.  
%In particular, he showed in \cite{Hal3} that if we fix $T \geq 2$ and let $m(x,T) := \min_{|\tau| \leq T} \sum_{p \leq x} \frac{1-\text{Re}(g(p)p^{-i\tau})}{p}$ then
%\begin{equation*}
%x^{-1}M_g(x) \ll e^{-\frac{1}{2}m(x,T)} + T^{-\frac{1}{2}}
%\end{equation*}
%(see III.4 in \cite{Ten2} for a discussion of an improvement due to Tenenbaum). 
%
% best possible constants are provided in estimates of the form $x^{-1}M_g(x) \ll \exp\left(-K\sum_{p \leq x} \frac{1-\text{Re}(g(p))}{p}\right)$, assuming that $\{g(p)\}_p$ belongs to a convex subset of the unit disc with a particular geometry. 
Of particular interest to us is \cite{Hal2}, in which an estimate, uniform over a collection of complex-valued, multiplicative functions $g$, is given in the case that the values of $g$ at primes are not uniformly distributed in argument about 0. \\
(Due to the multiplicativity of $g$, its values at primes are significant in determining the nature of mean-value estimates; these can therefore be improved if certain assumptions are made about the uniformity of distribution of values of the sequence $\{g(p)\}_p$. For further examples, see, for instance, \cite{Hall}). \\
To be precise, suppose $g$ is a \emph{completely multiplicative} function, i.e., $g(mn) = g(m)g(n)$ for all $m,n \in \mb{N}$, with the following properties. First, assume there exists $\delta > 0$ with $\delta \leq |g(p)| \leq 2-\delta$ for all primes $p$. Second, suppose that there exists an angle $\theta \in [-\pi,\pi)$ and $\beta > 0$ for which $|\text{arg}(g(p))-\theta| \geq \beta$, where $\text{arg}(z)$ is taken to be the branch of argument with cut line along the negative real axis.  Then Hal\'{a}sz proved the existence of a constant $c = c(\delta,\beta) > 0$ such that
\begin{equation}
x^{-1}M_g(x) \ll_{\delta,\theta,\beta} \exp\left(\sum_{p \leq x} \frac{|g(p)|-1}{p} - c\sum_{p \leq x} \frac{|g(p)|-\text{Re}(g(p))}{p}\right) \label{HALLLL}.
\end{equation}
In the specific situation in which $|g(p)-1|$ is uniformly small over all primes $p$ in a precise sense then
% \leq \eta$ for some $\eta > 0$ for each prime $p$ (which is a special case of the above when $\delta \leq 1-\eta$ and $\theta$ is such that $|\theta| \geq \eta$, and $\beta = \eta$), he proved additionally the asymptotic formula
\begin{equation}
M_g(x) \sim x\exp\left(\sum_{p \leq x} \frac{g(p)-1}{p}\right) =: Ax \label{HALLL}
%+ O_{\delta,\eta}\left(|A|\eta x+xe^{\sum_{p \leq x} \frac{|g(p)|-1}{p}}\left(e^{-\frac{c_1}{\eta}} + \log^{-c_2}x\right)\right), 
\end{equation}
%where $A := $, and $c_1,c_2 > 0$ depend on $\delta$.  
This last result
%, which Hal\'{a}sz used to compute $x^{-1}\pi(x;E,k)$ for $k = (1+o(1))E(x)$ (as mentioned above, and elaborated upon below), 
essentially implies that completely multiplicative functions $g$ whose Dirichlet series are sufficiently close to the $\zeta$ function for $s = \sg + i\tau$ with $\sg > 1$ and $|\tau|$ small will possess a mean value. It should be emphasized that the first result just mentioned is presented in \cite{Ell2} with $c$ a constant multiple of $\delta \beta^3$. In particular, the upper bound is less advantageous in cases where $\delta$ is small.\\
Thus far, all of the results that we have mentioned from the literature compare the summatory function $M_g(x)$ with $x$ (or rather, essentially  $M_1(x)$), and in \eqref{HALLL}, the Dirichlet series $G(s) := \sum_{n \geq 1} g(n)n^{-s}$ is compared with $\zeta(s)$.  A better standard of comparison for $G(s)$ than $\zeta(s)$, demonstrating similar nuances in the distribution of its coefficients and highlighting, instead, the effect of cancellation due to the arguments $\text{arg}(g(p))$ on the sum $M_g(x)$, is $\mc{G}(s) := \sum_{n \geq 1} |g(n)|n^{-s}$. %This idea is elaborated upon in the next section.   
%As far as is known to the author, there are no results comparing $M_g(x)$ and $M_{|g|}(x)$ (an improvement to the triangle inequality, if you will) found in the literature in the context of effective mean value estimates.  \\
%As far as is known to the author, there are no results comparing $M_g(x)$ and $M_{|g|}(x)$ (an improvement to the triangle inequality, if you will) found in the literature in the context of effective mean value estimates.  \\
%It turns out that a comparison of $M_g(x)$ with $M_{|g|}(x)$ suffices to extend Hal\'{a}sz' theorem regarding $x^{-1}\pi(x;E,k)$ even to values of $k$ not satisfying $k= (1+o(1))E(x)$ (in the paradigm considered here, namely with a partition of sets). This comparison is addressed in our main theorem (see Theorem \ref{HalGen} below). \\
With the broadened perspective of comparing the summatory functions of two multiplicative functions, neither of which the constant function 1, the more general problem of comparing $M_g(x)$ to $M_{f}(x)$ where $f$ is \emph{any} non-negative multiplicative function satisfying $|g(n)| \leq f(n)$ for all $n \in \mb{N}$, is also natural. This general viewpoint is elaborated upon in the next section as well (see the discussion surrounding Theorem \ref{WIRSINGEXT}). Besides a result of this nature extending a theorem of Wirsing, as discussed below, ratios of the form $M_{g}(x)/M_{f}(x)$ are useful in arithmetic applications. Indeed, non-negative multiplicative functions and their summatory functions are typically simpler to estimate, as properties such as positivity and monotonicity can be used to extract upper and lower bounds. One would therefore expect that evaluating $M_g$ indirectly through $M_{f}$, where $M_f$ is well-understood, may be more efficient than a direct attempt at evaluating $M_g$. \\
In our main theorems below, we consider estimates of the above kind in case $g$ is either completely multiplicative or \emph{strongly} multiplicative, i.e., $g(p^k) = g(p)$ for all $k \in \mb{N}$ and primes $p$. A consequence of Theorem \ref{WIRSINGEXT} is the following. Suppose $f,g$ are strongly or completely multiplicative functions such that: i) $f$ satisfies $\delta \leq |f(p)| \leq B$ for all primes $p$ with $B,\delta > 0$, and does not ''pretend'' to be a product of a real function and an archimedean character; ii) $g$ is real-valued such that $|\text{arg}(f(p)) + \beta g(p) -\tau \log p|$ is not too small (in a precise sense) for $p \leq x$ and any $|\tau| \leq \log^{O(1)} x$. Then for any $\alpha,\beta \in \mb{R}$ fixed,
\begin{equation*}
\sum_{n \leq x} f(n)^{\alpha}g(n)^{i\beta} \ll_B \left(\frac{B}{\delta}\right)^3\left(\sum_{n \leq x} |f(n)|^{\alpha}\right) \exp\left(-c\sum_{p \leq x} \frac{|f(p)|^{\alpha}-\text{Re}\left(f(p)^{\alpha}e^{i\beta\log g(p)}\right)}{p}\right),
\end{equation*}
where $c$ depends at most on the ratio $B/\delta$ (see Theorem \ref{HalGen} below). In particular, if $f = g$ then our theorem gives an estimate for the summatory functions of complex moments of real-valued multiplicative functions. In certain cases, depending on the distribution of the values of $f(p)$, we can even find asymptotic formulae for such quantities. Estimates for moments of this type are relevant in the context of the moment method in the analysis of number-theoretic, probabilistic models (for a related example, see \cite{LamD}). For another example, see \cite{Res2}.
\section{Results}
\subsection{Definitions and Conventions}
\begin{def1}\label{REAS}
Given $g$ a multiplicative function and $\mbf{E} := (E_1,\ldots,E_m)$ a partition of the primes, the pair $(g,\mbf{E})$ is said to be \emph{reasonable} if for any $t \geq 2$ sufficiently large and $\kappa > 0$ fixed there exists an index $1 \leq j_0 \leq m$ such that 
\begin{align}
\sum_{t^{\kappa} < p \leq t \atop p \in E_{j_0}} \frac{1}{p} &\geq \frac{1}{m} \sum_{t^{\kappa} < p \leq t} \frac{1}{p} \label{CONDIT1}\\
\sum_{p \leq t \atop p \in E_{j_0}} \frac{|g(p)|}{p} &\geq \frac{1}{m}\sum_{p \leq t} \frac{|g(p)|}{p}.\label{CONDIT2}
\end{align}
We also say that $(g,\mbf{E})$ is \emph{non-decreasing} if for each $1 \leq j \leq m$, $\{|g(p)|\}_{p \in E_j}$ is a non-decreasing sequence. 
\end{def1}
It is an obvious consequence of the pigeonhole principle that for any fixed $t$ and $\kappa$ one can find two (possibly distinct) indices $1 \leq j_1,j_2 \leq m$ such that \eqref{CONDIT1} holds with $j_1$ in place of $j_0$ and \eqref{CONDIT2} holds with $j_2$ in place of $j_0$. However, it need not be the case that $j_1 = j_2$. Thus, the ''reasonable'' property permits us to choose a common index satisfying both conditions simultaneously. Any partition containing even \emph{one} set with Dirichlet density, i.e., such that $\log_2^{-1} x\left(\sum_{p \leq x} \frac{1}{p}\right) \ra \lambda$ with $\lambda > 0$, and such that $|g(p)|$ is not too small on this set, will provide a reasonable pair $(g,\mbf{E})$. There is therefore a wealth of natural cases in which $(g,\mbf{E})$ is reasonable (even when $\frac{\delta}{B}$ is small). This definition is used in Section 6 alone. \\
\begin{def1}\label{GOOD}
We say that a subset of the primes $E$ is \emph{good} if there exists $\lambda_j > 0$ and a neighbourhood of $s = 1$ such that $\sum_{p \in E_j} \frac{1}{p^s} - \lambda_j \log\left(\frac{1}{s-1}\right)$ is holomorphic.
$\mbf{E}$ is a \emph{good partition} if, for each $1 \leq j \leq m$, $E_j$ is good.
\end{def1}
As a consequence of the Prime Number Theorem and accompanying zero-free regions associated to $\zeta$, it is evident that the set of all primes give a trivial, good partition, as is the case for a partition of (unions of) arithmetic progressions modulo any $q \in \mb{N}$. For a different example, it can be checked that, for a fixed $\tau \in \mb{R}$ and $0\leq \alpha < \beta \leq 1$, the set of primes 
\begin{equation*}
E := \left\{p : \left\{\frac{\tau \log p}{2\pi}\right\} \in (\alpha,\beta]\right\} 
\end{equation*}
satisfies $\sum_{p \in E \atop p \leq x} \frac{1}{p} \sim (\beta-\alpha) \log_2 x + O(1)$ (see, for instance, Lemma 3.6 of \cite{prev}). Hence, taking $\sg := 1+\frac{1}{\log x}$ and letting $x \ra \infty$, it is not hard to show (using \eqref{HEAD} and \eqref{TAIL} below) that $\sum_{p \in E_j} \frac{1}{p^{s}} - (\beta-\alpha) \log\left(\frac{1}{s-1}\right)$ is holomorphic near $s = 1$.  Thus, partitioning $[0,2\pi]$ into intervals $(\alpha_j,\beta_j]$ and choosing sets $E_j$ in analogy to the choice of $E$ above, we can generate a good partition. \\
By a \emph{squarefree-supported} multiplicative function, we mean a multiplicative function of the form $f(n) = \mu^2(n)g(n)$, where $g$ is multiplicative and $\mu$ is the M\"{o}bius function.  \\
For $x \geq 2$, $T > 0$, a set $E$ of primes and arithmetic functions $f,g: \mb{N} \ra \mb{C}$ taking values in the unit disc we put
\begin{align*}
D_E(g,n^{i\tau};x) &:= \sum_{p \in E \atop p \leq x} \frac{1-\text{Re}(g(p)\overline{f(p)})}{p} \\
\rho_E(x;g,T) &:= \min_{|\tau| \leq T} D_E(g,n^{i\tau};x). 
\end{align*}
Unless stated otherwise, given a set of primes $E$, $E(t) := \sum_{p \in E \atop p \leq t} \frac{1}{p}$. \\
We will frequently refer to a finite collection of parameters by a corresponding vector whose entries are those parameters. Thus, for instance, if $\{r_1,\ldots,r_k\}$ are positive real numbers then we refer to this collection with the shorthand $\mbf{r}$, defined as the vector $(r_1,\ldots,r_k)$. \\ \\
Let us define the collection of strongly and completely multiplicative functions $\mc{C}$ as follows: if $g \in \mc{C}$ then there is a partition $\{E_1,\ldots,E_m\}$ of the set of primes, and a set of primes $S$ such that there exist non-decreasing and non-increasing positive functions $B_j= B_j(x)$ and $\delta_j = \delta_j(x)$, respectively, such that: \\
i) for $p \in E_j \bk S$, $\delta_j \leq |g(p)|\leq B_j$, and given $\delta := \min_{1 \leq j \leq m} \delta_j$,  $|g(p)| < \delta$ for $p \in S$ (in case $g$ is completely multiplicative, we require that $B_j(x) < 2$ uniformly in $x$);\\
ii) for $P_x := \prod_{p \in S \atop p \leq x} p$, $P_x \ll_{\alpha} x^{\alpha}$ for any $\alpha > 0$. \\
We also define subcollections $\mc{C}_a$ and $\mc{C}_b$ of $\mc{C}$ as follows: \\
$g \in \mc{C}_a$ if: iii) there are positive functions $\eta_j = \eta_j(x)$ such that $|\text{arg}(g(p))| \leq \eta_j$ for each $p \in E_j$; \\
$g \in \mc{C}_b$ if: iv) $\mbf{E}$ is a good partition (in the sense of Definition \ref{GOOD}); \\
v) there are angles $\{\phi_1,\ldots,\phi_m\}$ and $\{\beta_1,\ldots,\beta_m\} \subset (0,\pi)$ such that $|\text{arg}(g(p))-\phi_j| \geq \beta_j$ for each $p \in E_j$; \\
vi) $D_{E_j}(g,n^{i\tau};x) \geq C\log_3 x$ for all $|\tau| \ll \log^A x$, where the implicit constant here is sufficiently large with respect to $r,A$ and $\mbf{\beta}$. \\
\subsection{Main Result and Consequences}
Our main result is the following.
%; we remark that our choice $\delta$ may vary depending on $x$.
\begin{thm} \label{HalGen}
i) Let $x \geq 3$. Suppose $g\in \mc{C}$, and let $B := \max_{1 \leq j \leq m} B_j$, where $\mbf{B}$ implicitly chosen for $g$. Set $\tilde{g}(p) := g(p)/B_j$ for $p \in E_j$, and extend $\tilde{g}$ as a strongly or completely multiplicative function. Then for any $\alpha,D > 2$, $T := \log^D x$,
\begin{equation*} \label{UPPERGen}
M_g(x) \ll_{\alpha,B,m} \frac{B^2}{\delta}\frac{P}{\phi(P)}M_{|g|}(x) \exp\left(-\sum_{1 \leq j \leq m} B_j\left(\rho_{E_j}(x;\tilde{g},T) - \sum_{p \in E_j \atop p \leq x} \frac{1-|\tilde{g}(p)|}{p}\right)\right).
\end{equation*}
Suppose $g \in \mc{C}_b$, and set $\gamma_{0,j} := \frac{27\delta_j}{1024 \pi B_j}\beta_j^3$. Then we have, more precisely,
%$S$ is a (possibly empty) subset of the primes, and let $\{E_1,\ldots,E_m\}$ be a partition of the set of primes. Assume that for each $1 \leq j \leq m$ there are non-increasing functions $\delta_j = \delta_j(x) > 0$ and non-decreasing functions $B_j = B_j(x) > 0$ such that $\delta_j \leq |g(p)| \leq B_j \leq B$ uniformly over $p \in E_j \bk S$. Assume moreover that there are fixed $\phi_j \in [-\pi,\pi]$ and $\beta_j \in (0,\pi]$ such that for each $p \in E_j$, $|\arg(g(p))-\phi_j| \geq \beta_j$. Set $\gamma_{0,j} := \frac{27\pi}{1024}\beta_j^3$ and $\delta := \min_{1 \leq j \leq m} \delta_j$. Finally, set $P= P_x := \prod_{p \leq x \atop p \in S} p$, and assume that $P_x \ll_r x^r$ for any $r > 0$. Then 
\begin{equation} \label{UPPER}
M_g(x) \ll_{B,m,\mbf{\phi},\mbf{\beta},r,\alpha} \frac{B^2}{\delta}\frac{P}{\phi(P)}M_{|g|}(x)\exp\left(-\sum_{1 \leq j \leq m}c_j\sum_{p \leq x \atop p \in E_j} \frac{|g(p)|-\text{Re}(g(p))}{p}\right),
\end{equation}
where 
\begin{equation*}
c_j := \begin{cases} \frac{1}{2}\min\left\{\frac{1}{4m^2},\frac{\gamma_{0,j}}{1+\gamma_{0,j}}\right\} &\text{ if $(g,\mbf{E})$ is non-decreasing} \\
\frac{1}{2}\min_j\left\{\frac{\delta_j}{B_j}\right\}\min\left\{\frac{1}{4m^2},\frac{\gamma_{0,j}}{1+\gamma_{0,j}}\right\} &\text{ if $(g,\mbf{E})$ is not non-decreasing}.
%\frac{1}{2}\frac{\delta}{B}\frac{\gamma_{0,j}}{1+\gamma_{0,j}} &\text{ if $(g,E)$ is not reasonable but non-decreasing} \\
%\frac{1}{2}\delta\frac{\gamma_{0,j}}{1+\gamma_{0,j}} &\text{ otherwise}.
\end{cases}
\end{equation*}
ii) Suppose now that $g \in \mc{C}_a$. Set $A := \exp\left(\sum_{p \leq x} \frac{g(p)-|g(p)|}{p}\right)$, and let $\eta := \max_{1 \leq j \leq m} \eta_j$. Then
\begin{equation} \label{ASYMP}
%\sum_{n \leq x} g(n) = \sum_{n \leq x} |g(n)|\left(A+O_{B,m,\mbf{\phi},\mbf{\beta}} \left(\frac{B^2}{\delta}\left(\frac{1}{\kappa}\left(|A|\eta^{\frac{1}{2}} + |A|^{\frac{\gamma_0}{4(1+\gamma_0)}}\left(\log^{-\frac{\gamma}{2}} x + \gamma^{-1}e^{-\frac{1}{\sqrt{\eta}}}\right)\right)\right)\right)\right).
M_g(x) = M_{|g|}(x)\left(A+O_{B,m,\alpha}\left(\mc{R}\right)\right),
\end{equation}
where, for $d_1 := \delta^{-1}\sqrt{\eta}$ if $B\eta \leq \delta \leq \eta^{\frac{1}{2}}$ and $d_1 := 1$ otherwise, and $\gamma_0 := \min_{1 \leq j \leq m} \gamma_{0,j}$,
\begin{equation*}
\mc{R} := \frac{B^2}{\delta}\frac{P}{\phi(P)}\left(\eta^{\frac{1}{2}}|A|\left(d_1 + \log^{-\frac{\delta}{3}} x + \delta^{-\frac{1}{2}}e^{-\frac{d_1}{\sqrt{\eta}}}\right) + |A|^{\frac{\gamma_0}{4(1+\gamma_0)}}\left(\log^{-\frac{\delta\beta^3}{2}} x + \delta^{-1}e^{-\frac{d_1}{\sqrt{\eta}}}\right)\right).
\end{equation*}
%where $\kappa > 0$ is sufficiently small so that $\sum_{t^{\kappa} < p \leq t} \frac{|g(p)|}{p}$ is smaller than a fixed constant (depending at most on $B$).
%Finally, the conditions above both also hold in case $f$ is completely multiplicative, provided that $|f(p)| < 2$ as well.
\end{thm}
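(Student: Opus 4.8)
The plan is to run a weighted Hal\'asz method: compare the Dirichlet series $G(s) := \sum_n g(n) n^{-s}$ with $\mc{G}(s) := \sum_n |g(n)| n^{-s}$ near the point $1 + 1/\log x$, and pay for the denominator $M_{|g|}(x)$ through a Hall--Tenenbaum-type lower bound
\begin{equation*}
M_{|g|}(x) \gg_{B,m} \frac{\phi(P)}{P}\cdot\frac{x}{\log x}\prod_{p\leq x}\left(1 + \frac{|g(p)|}{p}\right),
\end{equation*}
which is precisely the mean-value estimate announced in the abstract; the factor $\phi(P)/P$ appears because on the exceptional primes of $S$ we discard all information, and property (ii) ($P_x \ll_\alpha x^\alpha$) is what keeps this factor, and the attendant Perron error terms, under control once $\alpha, D > 2$. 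First I would normalise: setting $b(p) := B_j$ for $p \in E_j$ and extending $b$ multiplicatively of the same type as $g$, one has $g = b\,\tilde g$ and $|g| = b\,|\tilde g|$ with $0 \leq |\tilde g(p)|\leq 1$, so that it suffices to estimate $\sum_{n\leq x} b(n)\tilde g(n)$ against $\sum_{n\leq x} b(n)|\tilde g(n)|$; the weight satisfies $b(n)\leq B^{\Omega(n)}$, harmless on average, and in the completely multiplicative case the requirement $B_j < 2$ is exactly what makes $\sum_n b(n) n^{-\sigma}$ of the expected size as $\sigma\to 1^+$. A truncated Perron formula at height $T = \log^D x$ then represents $M_g(x)$ and $M_{|g|}(x)$ as contour integrals of $G$ and $\mc{G}$ over the segment $\{1 + 1/\log x + it : |t|\leq T\}$, up to acceptable error.

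The pointwise comparison comes next. Working factor by factor in the Euler products, using $\log|G(s)| = \text{Re}\log G(s)$ and discarding prime powers and small primes into a convergent $\sum_p O_B(p^{-2})$, one gets for $s = 1 + 1/\log x + it$ the estimate
\begin{equation*}
|G(s)| \ll_B \mc{G}(1 + 1/\log x)\,\exp\left(-\sum_{1\leq j\leq m}\ \sum_{p\in E_j,\, p\leq x}\frac{|g(p)| - \text{Re}(g(p)p^{-it})}{p}\right).
\end{equation*}
Each inner sum is $\geq 0$ termwise, so one may bound it below by its own minimum over $|\tau|\leq T$, which equals $B_j(\rho_{E_j}(x;\tilde g,T) - \sum_{p\in E_j,\,p\leq x}(1-|\tilde g(p)|)/p)$. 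Inserting this into a Hal\'asz--Montgomery large-values inequality (or the Montgomery--Vaughan mean-value theorem) produces the first bound of part (i): the constant $\tfrac{B^2}{\delta}\tfrac{P}{\phi(P)}$ collects the constant from the lower bound for $M_{|g|}(x)$, the contribution of $S$, and the polynomial-in-distance loss $(1+V)^{O(1)}$ built into Hal\'asz's inequality, the last being absorbed into the exponent at the cost of the factor $\tfrac12$ visible in every $c_j$.

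For the sharper bound \eqref{UPPER} when $g\in\mc{C}_b$, the point is to trade $\rho_{E_j}(x;\tilde g,T) - \sum_{p\in E_j}(1-|\tilde g(p)|)/p$ for a fixed multiple $c_j\sum_{p\in E_j}(|g(p)| - \text{Re}(g(p)))/p$ of the untwisted distance, i.e.\ to show the minimising $\tau$ may be taken to be $0$. This is a ``no archimedean conspiracy'' step: property (vi), $D_{E_j}(g,n^{i\tau};x)\geq C\log_3 x$ for $|\tau|\ll\log^A x$, kills pretentiousness at nonzero $\tau$ directly, while the good-partition hypothesis (iv) and the separation $|\text{arg}\,g(p)-\phi_j|\geq\beta_j$ of (v) furnish the quantitative comparison for the remaining $\tau$, via the elementary inequality bounding $r - \text{Re}(re^{i\psi})$ below in terms of $r - \text{Re}(re^{i(\psi - \tau\log p)})$ when $\psi$ avoids a $\beta_j$-neighbourhood of $\phi_j$; integrating this over $p\in E_j$ against $dp/p$ is what forces the cubic loss $\gamma_{0,j}\asymp\tfrac{\delta_j}{B_j}\beta_j^3$, and in the non-decreasing case one additionally uses monotonicity of $\{|g(p)|\}_{p\in E_j}$ and summation by parts to dispense with the extra $\min_j\delta_j/B_j$. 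For the asymptotic \eqref{ASYMP} with $g\in\mc{C}_a$, every $\text{arg}\,g(p)$ lies within $\eta_j$ of $0$, so $g(p)-|g(p)| = i|g(p)|\,\text{arg}\,g(p) + O(|g(p)|\eta^2)$; hence on the range $|s - 1| \asymp 1/\log x$ that governs $M_{|g|}(x)$ through a Selberg--Delange analysis of $\mc{G}$ (this is where the good-partition structure enters), $G(s)/\mc{G}(s)$ equals $A = \exp(\sum_{p\leq x}(g(p)-|g(p)|)/p)$ up to $O(\eta)$, while for $s$ farther from $1$ one falls back on part (i). Feeding all this into the Perron integrals and estimating against $M_{|g|}(x)$ yields the main term $A\,M_{|g|}(x)$, the several summands of $\mc{R}$ accounting respectively for the deviation of $G/\mc{G}$ from $A$ on the principal range (the $\eta^{1/2}|A|(d_1 + \cdots)$ part), for the moderate range of $t$ estimated through part (i) (the $|A|^{\gamma_0/(4(1+\gamma_0))}$ and $\log^{-\delta\beta^3/2}x$ terms), and for the truncation at height $T$ (the remaining $\log^{-\delta/3}x$ and exponentially small terms).

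The main obstacle is the twist-removal step underlying both \eqref{UPPER} and part (ii): replacing $\min_{|\tau|\leq T}$ by $\tau = 0$ while keeping every constant explicit and uniform in $x$, so that the losses come out as the sharp-looking cubic $\beta_j^3$ and linear $\delta_j/B_j$. A second, pervasive difficulty is threading the exceptional set $S$ --- hence the factor $P/\phi(P)$ --- simultaneously through the Perron truncation, the Euler-product comparison and the lower bound for $M_{|g|}(x)$, which is the coupling that pins down $\alpha, D > 2$. A routine preliminary, to be disposed of at the outset, is reducing the strongly multiplicative case to the completely multiplicative one (or treating both uniformly), using that $g$ differs from the completely multiplicative function agreeing with it on primes by convolution with a function supported on $\{p^k : k\geq 2\}$ of controlled size.
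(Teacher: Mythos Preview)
Your sketch is in the right spirit, but it follows a genuinely different route from the paper, and one step is under-specified enough that I am not sure it closes.

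\textbf{Different architecture.} The paper never touches Perron's formula. Instead it writes $N_g(t)=\sum_{n\le t}g(n)\log n$, uses $\log=1\ast\Lambda$ and strong multiplicativity to turn $N_g$ into $\sum_d\Lambda(d)g(d)M_g(t/d)$, and then applies Parseval to get $\int_{(\sg)}|G'(s)|^2|s|^{-2}|ds|$ rather than a contour integral of $G(s)$ itself. The comparison with $M_{|g|}$ is not obtained by dividing a Perron bound for $M_g$ by a lower bound for $M_{|g|}$; instead the paper sets $R_h(\lambda):=\max_{t\le x}\frac{|M_h(t)|}{M_{|g|}(t)}\log^{\lambda}t$ with $h=g-A|g|$, derives an inequality of the form $R_h(\lambda)\le \tfrac12 R_h(\lambda)+\text{(analytic term)}$, and solves for $R_h(\lambda)$. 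The choice of $\lambda$ (Lemma~6.1) is precisely what produces the constants $c_j$ and the $\tfrac12$ you see in the statement; your ``absorbed into the exponent at the cost of the factor $\tfrac12$'' is the right output but comes from a different mechanism.

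\textbf{Where your approach needs more.} The off-the-shelf Hal\'asz--Montgomery inequality controls $|M_g(x)|/x$ (equivalently $|G|/\zeta$), not $|M_g(x)|/M_{|g|}(x)$. If you divide by a lower bound for $M_{|g|}(x)$ you recover the ratio, but you inherit an extra factor of $\prod_{p\le x}(1+|g(p)|/p)/\zeta(\sg)$, i.e.\ you are comparing $|G(s)|$ with $\zeta(\sg)$ rather than with $\mc G(\sg)$; the exponent you get is then $\min_\tau\sum_p(1-\text{Re}(g(p)p^{-i\tau}))/p$ rather than $\sum_p(|g(p)|-\text{Re}(g(p)))/p$, which differ exactly by $\sum_p(1-|g(p)|)/p$. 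This is fine for the first display in part (i) (where that correction is explicit), but for \eqref{UPPER} you need the Dirichlet-series comparison to be against $\mc G$ throughout, and that forces either a weighted Hal\'asz inequality (not standard) or a device like the paper's $R_h(\lambda)$. Your $b(n)$-normalisation does not by itself solve this, since after normalising you still compare to $x$ and not to $\sum_{n\le x}b(n)|\tilde g(n)|$.

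\textbf{Part (ii).} The paper does not use Selberg--Delange here, and indeed the hypotheses of $\mc C_a$ do not include the good-partition condition (that is $\mc C_b$), so the analytic continuation a Selberg--Delange argument wants is not available. The paper instead bounds $\int_{(\sg)}|H'(s)|^2|s|^{-2}|ds|$ with $H=G-A\mc G$ directly (Lemmas~5.7--5.8), splitting the line into $|\tau|\le K(\sg-1)$, $K(\sg-1)<|\tau|\le 2$, $2<|\tau|\le T$, $|\tau|>T$; the several summands of $\mc R$ are exactly the contributions of these ranges. Your four-range description of $\mc R$ is qualitatively right, but the tool is mean-square on $H'$, not contour-shifting of $G$.

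In short: your outline would plausibly reach a Hal\'asz-type bound for $M_g/M_{|g|}$, but to match the explicit constants and the asymptotic in (ii) you would need to supply a weighted Hal\'asz inequality and replace the Selberg--Delange step by a direct $L^2$ analysis of $H'$; the paper's $N_g$/Parseval/$R_h(\lambda)$ machinery is designed precisely to avoid both of those detours.
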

(We have left an explicit dependence on $B$ in each of the above statements because, in the case that $B$ and $\delta$ are of the same order of magnitude and \emph{small}, the relative sizes of $B$ and $\delta$ can be accounted for, mitigating the large factor $\delta^{-1}$.)\\ \\
Let us explain the motivations for our choices for the collections $\mc{C}$, $\mc{C}_a$ and $\mc{C}_b$.  In the definition of $\mc{C}$, we introduce a partition $\mbf{E}$ in order to deal with functions $g$ that may have vastly different behaviours in different sets. For an example of this, see \cite{Res2}, in which we consider functions $g$ that are constant on each set of a partition, but where the values of these constants can have vastly different moduli. We are also allowing a fairly small set of primes $S$ on which $g$ is arbitrarily small, or possibly zero. Indeed, $S$ is small, because, as follows from the prime number theorem, $\prod_{p \leq x} p \sim e^{(1+o(1))x}$. Hence, polynomial growth of the product of primes from $S$ implies that $S$ is quite sparse. Our choice on the growth of $S$ is two-fold. Firstly, we would like to be able to handle twists of functions in $\mc{C}$ by characters in our main result. Thus, if $g \in \mc{C}$ is completely multiplicative and $\chi$ is a Dirichlet character modulo $q \leq x$, we would like to have access to estimates for the function $n \mapsto \chi(n)g(n)$, which, of course, is zero on primes $p|q$. In such a case, if $S$ is the associated set for $g$ and $S' := S \cup \{p: p|q\}$ then ii) applies to $\chi g$ as well and thus $\chi g \in \mc{C}$. Secondly, while we would like to admit the possibility for $g \in \mc{C}$ to have a zero set, it is more difficult to handle estimates for $M_{f}(x)$, for $f$ non-negative and 0 on $S$, if $f$ is supported on a set containing very few primes, e.g., $y$-smooth numbers, with $y$ quite small relative to $x$.  For a discussion that highlights this difficulty, see \cite{Hil}. As is clear from our arguments below (and as is necessary in the proof of Theorem \ref{LOWERMV}), sharp lower estimates on summatory functions of non-negative functions is crucial in our analysis.  \\
The choice of subcollection $\mc{C}_a$ is made to consider the specific case in which $g$ is ''close'' to being real valued, and in which case asymptotic formulae exist for the ratio $M_g(x)/M_{|g|}(x)$ (as well as in a more general context; see the discussion surrounding Theorem \ref{WIRSINGEXT}). \\
Our choice of definition of $\mc{C}_b$ is motivated by a desire to give explicit estimates for the growth rate of the ratio $\frac{M_g(x)}{M_{|g|}(x)}$ in Theorem \ref{HalGen} (see \eqref{UPPER}).  First, introducing multiple angular distribution conditions provides us with more leniency in our choice of functions than what is provided by Hal\'{a}sz' condition for \eqref{HALLLL}. Moreover, our qualification of our partition as being good, in the sense of Definition \ref{GOOD}, also affords us a general context in which complex analytic arguments are available to us.
\\ \\
Let us make a few remarks about Theorem \ref{HalGen}. First, while we only prove the above theorem for strongly multiplicative functions, the reader will notice that these results also hold for completely multiplicative functions as well, as noted in the statement. We highlight the relevant modifications necessary to make our arguments valid for completely multiplicative functions in remarks following each result in which arithmetic conditions on $g$ are implicitly being used. \\
%The trivial bound on the ratio $|M_g(x)|/M_{|g|}(x)$ is 1, by the triangle inequality.  This beats the bounds given above in case 
When $P$ is large the factor $\frac{P}{\phi(P)}$ can be as large as $\log_2 P \sim \log_2 x$. If the series $\sum_{p \leq x} \frac{|g(p)|-\text{Re}(g(p))}{p}$ is not large, or if some of the ratios $\frac{\delta_j}{B_j}$ are small enough then we cannot beat the trivial bound $|M_g(x)|/M_{|g|}(x) \leq 1$ coming from the triangle inequality. When the first of the two scenarios just listed occurs, it is often true that $g \in \mc{C}_a$ (i.e., $|g(p)-|g(p)||$ is small uniformly in $p$), so in this case we can still get a sharp estimate. \\
%Indeed, the analytic estimates in Section 5 go virtually unchanged under this modified hypothesis (replacing factors of the form $1+\frac{g(p)}{p^s-1}$ in the Euler product by $1+\frac{g(p)}{p^s}$); otherwise, Lemmas \ref{SELBERG} and \ref{STRONG} are the only elements of our argument that assert that $g$ needs to be strongly multiplicative, and the proof of these two lemmata can be trivially modified to hold when $g$ is supported on squarefree integers. \\
The reader will also note that, given a strongly (or completely) multiplicative function, one can always partition the primes into sets $E_j := \{p : \text{arg}(f(p)) \in I_j\}$, for some partition $\{I_1,\ldots,I_m\}$ of $[-\pi,\pi]$, with $\phi_j-\pi$ chosen as the midpoint of $I_j$ and $\beta_j$ the minimum distance from $\phi_j$ and the endpoints of $I_j$. Thus, our Theorem \ref{HalGen} actually applies in general to \emph{any} strongly (or completely) multiplicative function whose values at primes are bounded in absolute value below and above by positive real numbers. The explicit results particular to the subcollection $\mc{C}_b$ can be used furthermore when such a partition is good (e.g., if $\chi$ is a character with conductor $q$ then we can classify the values of $\chi$ at primes according to residue classes modulo $q$, so $E_j$ is a union of arithmetic progressions).\\
We have given choices of constants for non-decreasing pairs $(g,\mbf{E})$ as, in many natural cases (such as $\frac{\phi(n)}{n}$ or $\lambda$, i.e., \emph{Liouville's function}), $|g|$ is either constant or monotone in subsets of the primes. At any rate, the explicit estimates in Theorem \ref{HalGen} turn out to be better in this case.\\
%, as in the situation presented therein, $g$ is constant on $E_j$ for each $j$. \\
The constant $\frac{27}{1024\pi}$ implicit in the definition of $\gamma_{0,j}$ is found as an admissible choice for $D$ in a pointwise estimate of the form $\left|\frac{G(s)}{\mc{G}(\sg)}\right| \ll \left|\frac{G(\sg)}{\mc{G}(\sg)}\right|^D$. We have not made an attempt to find the optimal constant. \\
% settling simply for a constant that, in many natural cases, is only dependent on $\frac{\delta}{B}$ (and for our applications, this suffices). \\
It was pointed out to the author that Theorem \ref{HalGen} is hinted at in the concluding section of Chapter 21 of Elliott's monograph \cite{Ell2} as an unpublished result of Hal\'{a}sz, but the author has not been able to find an explicit proof of it in the literature. Furthermore, it is to be emphasized that the result here is substantially stronger and more general than what is suggested in \cite{Ell2} on several fronts. First, our estimate admits the potential to exploit \emph{localized} behaviour of primes, i.e., to treat several sets of primes, on which $g$ behaves differently, separately. Second, the result mentioned in \cite{Ell2} assumes $\delta$ is fixed with $x$, so the scope of the theorem above is somewhat broader than what is explicit from Hal\'{a}sz' result. Third, the constant in the exponential in \eqref{UPPER} is dependent only on the ratio $\frac{\delta_j}{B_j}$, meaning that the estimate is more flexible and more widely applicable in case $|g(p)|$ is frequently quite small . Finally, as mentioned above, our theorem allows us to consider \emph{any} strongly or completely multiplicative function, provided its argument does not resemble that of an archimedean character, rather than those whose arguments do not belong to some distinguished, omitted sector of the unit disc, as in \eqref{HALLLL}.\\ \\
The main thread of the proof of Theorem \ref{HalGen} is to consider the function $h(n) := g(n)-A|g(n)|$, where here $A$ is either 0 in i) or the choice of $A$ given in ii), and transform the functions $M_g(x)$ (which arises even in the analysis of $M_h(x)$ for $A\neq 0$) and $M_{|g|}(x)$ into sums over primes by interchanging $\frac{M_g(x)}{M_{|g|}(x)}$ with a ratio of integral averages of $N_g(x) := \sum_{n \leq x} g(n)\log n$ and $N_{|g|}(x)$. The convolution $\log n = 1\ast \Lambda$ allows us to then use the bounds on primes directly, and translate this ostensibly arithmetic problem into an analytic problem involving Dirichlet series, in which case complex and harmonic analytic techniques are at our disposal. The themes in this treatment are all due originally to Hal\'{a}sz, but as our goal differs from his, a number of natural distinctions and refinements in relation to his work must arise in our analysis. \\ %In the process of our treatment of the analytic side of this problem we prove a uniform bound for $\left|\frac{G(s)}{\mc{G}(\sg)}\right|$ in terms of $\left|\frac{G(s
In proving the above result, we will deduce the following lower mean value estimate for a class of (not necessarily strongly or completely) multiplicative functions. While lower estimates of this kind are already known in a broader context (see, for instance, \cite{Hil}), our proof is substantially simpler and fits naturally into the framework of the rest of the proof of Theorem \ref{HalGen}.
\begin{thm} \label{LOWERMV}
Let $\lambda:\mb{N} \ra [0,\infty)$ be a multiplicative function such that: a) there exists a set $S$ of primes for which $P_x:= \prod_{p \in S \atop p \leq x} p \ll_{\alpha} x^{\alpha}$; b) there exist $\delta, B > 0$ for which $\delta \leq \lambda(p) \leq B$ for $p \notin S$ and $|g(p)| \leq \delta$ for $p \in S$; and c) $\sum_{p,k \geq 2} \frac{\lambda(p^k)}{p^k} \ll_B 1$. Then for $x$ sufficiently large and $P = P_x$,
\begin{equation*}
\sum_{n \leq x} \lambda(n) \gg_B \delta \frac{\phi(P_x)}{P_x}\frac{x}{\log x} \sum_{n \leq x} \frac{\lambda(n)}{n} \gg_B \delta \frac{\phi(P_x)}{P_x}\frac{x}{\log x} \prod_{p \leq x} \left(1+\frac{\lambda(p)}{p}\right).
\end{equation*}
\end{thm}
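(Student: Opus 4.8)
\textbf{Proof proposal for Theorem \ref{LOWERMV}.}

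The plan is to first reduce to the squarefree-supported part of $\lambda$, then carry out a standard ``Rankin / convolution'' argument against the prime zeta function, and finally obtain the second inequality by comparing the Dirichlet series $\sum_{n\le x}\lambda(n)n^{-1}$ with the Euler product $\prod_{p\le x}(1+\lambda(p)/p)$. First I would write $\lambda = \mu^2\lambda \ast r$, where $r$ is the multiplicative function supported on powersful numbers determined by $r(p)=0$, $r(p^k) = \lambda(p^k)-\lambda(p)\lambda(p^{k-1})$ for $k\ge 2$; condition c) guarantees $\sum_n |r(n)|/n \ll_B 1$, so that $\sum_{n\le x}\lambda(n) = \sum_{d\le x} r(d)\sum_{m\le x/d}\mu^2(m)\lambda(m)$, and it suffices to prove the lower bound for the squarefree-supported function $\mu^2\lambda$ (absorbing the $O_B(1)$ factor from the $r$-sum, after checking the tail $\sum_{d>x^{1/2}}$ is negligible). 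So from now on I assume $\lambda = \mu^2\lambda$ is supported on squarefrees, with $\delta \le \lambda(p)\le B$ off $S$ and $\lambda(p)\le \delta$ on $S$.

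Next, the core step: for a squarefree-supported non-negative $\lambda$, write $\lambda(n)\log n = \sum_{p\le n, p\mid n}\lambda(n)\log p$, and since $n/p$ is again squarefree and coprime to $p$ when $p\| n$, $\lambda(n) = \lambda(p)\lambda(n/p)$. Hence
\begin{equation*}
\sum_{n\le x}\lambda(n)\log n = \sum_{p\le x}\lambda(p)\log p\sum_{m\le x/p,\ (m,p)=1}\lambda(m) \le \log x\sum_{p\le x}\lambda(p)\sum_{m\le x/p}\lambda(m),
\end{equation*}
which is the wrong direction; instead I would run the argument the other way. Using partial summation, $\sum_{n\le x}\lambda(n) = \frac{1}{\log x}\sum_{n\le x}\lambda(n)\log n + \int_2^x \frac{1}{t\log^2 t}\Big(\sum_{n\le t}\lambda(n)\log n\Big)\,dt + O(1)$, and then lower-bound $\sum_{n\le x}\lambda(n)\log n$ from below by restricting the prime $p$ to a bounded range, or more efficiently by using $\log n \ge \frac12\log x$ on the dyadic-type range $n\in(\sqrt{x},x]$. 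The cleanest route, and the one I would actually pursue, is an inductive / iterative argument: set $F(x):=\sum_{n\le x}\lambda(n)$ and $G(x):=\sum_{n\le x}\lambda(n)/n$; from $\sum_{n\le x}\lambda(n)\log n = \sum_{p\le x}\lambda(p)\log p\, F_p(x/p)$ (with $F_p$ the sum restricted to $m$ coprime to $p$, which differs from $F$ by at most a factor $(1-\lambda(p)/p)^{-1}$-type correction absorbed into constants), together with $\sum_{p\le t}\lambda(p)\log p \gg_B \delta\, t$ for $t$ large (this is where $\delta\le\lambda(p)\le B$ off the sparse set $S$ and the Chebyshev/PNT bound $\prod_{p\le x}p = e^{(1+o(1))x}$, i.e. condition a), are used to discard $S$), one derives $\sum_{n\le x}\lambda(n)\log n \gg_B \delta\frac{\phi(P)}{P}\, x\, G(x)$, and feeding this back through partial summation yields $F(x)\gg_B \delta\frac{\phi(P)}{P}\frac{x}{\log x}G(x)$. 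The factor $\phi(P)/P$ enters precisely because $m$ is constrained to be coprime to the primes in $S$ (on which $\lambda$ may vanish), so $F_p$ and $F$ are comparable only up to $\prod_{p\in S}(1-1/p) = \phi(P)/P$.

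Finally, for the second inequality $G(x)\gg_B \prod_{p\le x}(1+\lambda(p)/p)$: expanding the product gives $\prod_{p\le x}(1+\lambda(p)/p) = \sum_{n:\ p\mid n\Rightarrow p\le x,\ \mu^2(n)=1}\lambda(n)/n = \sum_{n\le x}\lambda(n)/n + \sum_{n>x}(\cdots)$, where in the tail every $n$ is squarefree with all prime factors $\le x$. I would bound the tail by Rankin's trick: for any $\sigma\in(0,1)$, $\sum_{n>x}\mu^2(n)\lambda(n)/n \le x^{-\sigma}\prod_{p\le x}(1+\lambda(p)p^{-1-\sigma}) \le x^{-\sigma}\exp\big(\sum_{p\le x}\lambda(p)p^{-1-\sigma}\big)$; choosing $\sigma\asymp 1/\log x$ and using $\sum_{p\le x}\lambda(p)/p \le B\log_2 x + O_B(1)$ makes the tail at most a bounded multiple of the main term, say $\le \tfrac12\prod_{p\le x}(1+\lambda(p)/p)$ once $x$ is large (more care is needed if $B$ is large, but the constant is allowed to depend on $B$). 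Hence $G(x)\ge \tfrac12\prod_{p\le x}(1+\lambda(p)/p)$, completing the chain.

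\textbf{Main obstacle.} I expect the delicate point to be the first inequality — specifically, extracting the lower bound $\sum_{n\le x}\lambda(n)\log n \gg_B \delta\frac{\phi(P)}{P}x\,G(x)$ without circularity. One must handle the sparse set $S$ (where $\lambda$ can be $0$) carefully so that the loss is exactly $\phi(P)/P$ and no worse, control the coprimality restriction relating $F_p(x/p)$ to $F(x/p)$, and ensure the partial-summation / iteration closes with an absolute (or $B$-dependent) implied constant rather than one that degrades with $x$. The reduction to squarefree support (handling the tail $\sum_{d>x^{1/2}}|r(d)|\,(x/d)$) and the Rankin tail bound for the Euler product are routine by comparison.
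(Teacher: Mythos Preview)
Your approach is essentially correct and in fact more direct than the paper's for this particular theorem. Two remarks on the reduction step: first, the convolution $\lambda=\mu^2\lambda\ast r$ is unnecessary and slightly dangerous, since $r$ need not be non-negative, so $\sum_d r(d)M_{\mu^2\lambda}(x/d)$ does not obviously dominate $M_{\mu^2\lambda}(x)$; the paper simply uses $M_\lambda(x)\ge M_{\mu^2\lambda}(x)$ by positivity, which is all you need. Second, once you have the squarefree identity $N_\lambda(x)=\sum_{m}\mu^2(m)\lambda(m)\sum_{p\le x/m,\,p\nmid m}\lambda(p)\log p$, the swap you describe together with $\lambda(p)\ge\delta$ off $S$, $\theta(x/m)\gg x/m$, and the trivial bounds $\sum_{p\mid m}\log p\le\log m$, $\sum_{p\in S,\,p\le x}\log p=\log P_x=o(\log x)$ gives $N_\lambda(x)\gg\delta\,x\sum_{m\le x/\log^2 x}\mu^2(m)\lambda(m)/m$; since the truncated logarithmic sum is $\asymp_B$ the full one (exactly by your Rankin argument, which is the paper's Lemma~\ref{LOGSUM}), and since trivially $M_\lambda(x)\log x\ge N_\lambda(x)$, you are done. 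Note that in this direct argument the factor $\phi(P)/P$ never actually appears---your explanation for its origin is off, and your bound is in fact \emph{stronger} than the stated one.

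The paper proceeds differently: it passes through the short-interval average $y^{-1}\int_{t-y}^t N_{|g|}(u)\,du$ (Lemma~\ref{COMPAVG} with $A=0$) and invokes the short-interval Mertens estimate $\sum_{(t-y)/u<a\le t/u,\,(a,P)=1}\Lambda(a)/a\sim\frac{\phi(P)}{P}\frac{y}{t}$ (Lemma~\ref{PRECMERTENS}~ii)), which is where the $\phi(P)/P$ enters. This detour is not needed for Theorem~\ref{LOWERMV} per se, but the same averaging machinery is essential later for the complex-valued $h=g-A|g|$ in Theorem~\ref{HalGen}, so the paper derives the lower bound as a byproduct of that framework rather than by the cleaner direct argument you outline. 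Your Rankin-trick proof of the second inequality $L_\lambda(x)\gg_B\prod_{p\le x}(1+\lambda(p)/p)$ is exactly Lemma~\ref{LOGSUM}.
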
 
As an application of Theorem \ref{HalGen}, we extend a theorem of Wirsing. Satz 1.2.2 in \cite{Wir} states the following (in slightly different notation).
\begin{thm}[Wirsing] \label{WIRSING}
Let $f(n)$ be a non-negative multiplicative function satisfying the asymptotic estimate $\frac{1}{\log x}\sum_{p \leq x} \frac{f(p)\log p}{p} \sim \tau$ for some $\tau > 0$, and suppose that $f(p) \leq C$ for all primes $p$. If $g:\mb{N} \ra \mb{R}$ satisfies $|g(n)| \leq f(n)$ for all $n$ then
\begin{equation}
\lim_{x \ra \infty}\frac{M_g(x)}{M_{f}(x)} = \prod_p \left(1+\sum_{k \geq 1} p^{-k}g(p^k)\right)\left(1+\sum_{k \geq 1} p^{-k}f(p^k)\right)^{-1}, \label{LIMIT}
\end{equation}
where the right side of the above limit is interpreted to be zero if the partial products over primes up to $x$ vanish as $x \ra \infty$.
\end{thm}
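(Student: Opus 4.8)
The plan is to deduce Theorem~\ref{WIRSING} from Theorem~\ref{HalGen} and Theorem~\ref{LOWERMV}, after peeling off the prime-power behaviour and reducing to the strongly multiplicative case; throughout, the size of the non-negative functions that arise is controlled by Theorem~\ref{LOWERMV} together with its standard Halberstam--Richert upper counterpart $M_\lambda(x)\ll_B \tfrac{x}{\log x}\prod_{p\le x}(1+\lambda(p)/p)$. Since $|g(n)|\le f(n)$, both $M_g$ and $M_f$ are supported on the multiplicative semigroup $\mc{N}_f$ on which $f$ is supported, so I would work inside $\mc{N}_f$ and, where possible, place its small or excluded primes into the exceptional set of the class $\mc{C}$. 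Let $F,G$ be the strongly multiplicative functions agreeing with $f,g$ at primes. Using a mild growth bound on the $f(p^{k})$ (implicit in Wirsing's hypotheses), $f=F\ast w$ and $g=G\ast v$ with $v,w$ multiplicative, supported on squarefull integers, vanishing on primes, and $\sum_n|v(n)|/n,\ \sum_n|w(n)|/n<\infty$; hence $M_f(x)=\sum_d w(d)M_F(x/d)$ and likewise for $g$. From $\sum_{p\le t}\frac{f(p)\log p}{p}\sim\tau\log t$ one gets $\prod_{x/d<p\le x}(1+f(p)/p)\to1$ for fixed $d$, so Theorem~\ref{LOWERMV} and its upper counterpart give $M_F(x/d)\sim M_F(x)/d$ and a negligible tail over large $d$, whence $M_f(x)\sim W(1)M_F(x)$ and $M_g(x)\sim V(1)M_G(x)$ with $W,V$ the Dirichlet series of $w,v$. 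Since $V(1)L/W(1)$ is exactly the right-hand side of \eqref{LIMIT} with $L:=\prod_p\frac{p-1+g(p)}{p-1+f(p)}$, it remains to prove that for strongly multiplicative $F\ge0$ and real $G$ with $|G|\le F$ one has $M_G(x)/M_F(x)\to L$, the product read as $0$ when the partial products vanish.

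Because $G$ is real, every factor of $L$ lies in $[0,1]$ for $p$ large, so $L$ is zero or positive, and $L=0$ exactly when $\sum_p\frac{f(p)-g(p)}{p}$ diverges, i.e.\ when $\sum_p\frac{f(p)-|g(p)|}{p}$ or $\sum_p\frac{|g(p)|-g(p)}{p}$ diverges. If the former diverges, the Halberstam--Richert sandwich gives $M_{|g|}(x)/M_F(x)\ll\exp(-\sum_{p\le x}\frac{f(p)-|g(p)|}{p}+O(1))\to0$, so $|M_G(x)|\le M_{|g|}(x)=o(M_F(x))$. If only $\sum_{p:\,g(p)<0}\frac{|g(p)|}{p}=\tfrac12\sum_p\frac{|g(p)|-g(p)}{p}$ diverges, then $M_{|g|}(x)\asymp M_F(x)$, while Theorem~\ref{HalGen}(i) applied with the two-set partition into arguments $0$ and $\pi$ and $T=\log^{D}x$ gives $M_G(x)=o(M_{|g|}(x))$: the exponent $\sum_j B_j(\rho_{E_j}-\sum_{p\in E_j}\tfrac{1-|\tilde g(p)|}{p})=\sum_j\sum_{p\in E_j}\tfrac{|g(p)|-\text{Re}(g(p)p^{-i\tau^{\ast}})}{p}$ tends to $+\infty$, because for real $G$ the minimizing $\tau^{\ast}$ is forced toward $0$ (a nonzero value makes the $g(p)>0$ part $\gg\log_2 x$, by the equidistribution of $\{\tau^{\ast}\log p/2\pi\}$ recalled after Definition~\ref{GOOD}) while at $\tau^{\ast}=0$ the $g(p)<0$ part equals $\sum_{p:\,g(p)<0}\frac{2|g(p)|}{p}\to\infty$. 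In both subcases $M_G(x)=o(M_F(x))=L\,M_F(x)+o(M_F(x))$.

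When $L\ne0$ we have $\sum_p\frac{f(p)-g(p)}{p}<\infty$, so the primes with $g(p)<0$ have finite logarithmic weight; factoring each $n$ through its part supported on those primes (a sum converging absolutely with weight $1/d$) reduces to $0\le G\le F$ on primes. Then $H(s):=G(s)/F(s)=\prod_p(1+\tfrac{g(p)-f(p)}{p^{s}-1+f(p)})$ converges absolutely in a half-plane containing $s=1$, so it is a Dirichlet series $\sum_n h(n)n^{-s}$ with $\sum_n|h(n)|/n<\infty$ and $H(1)=L$. Writing $G=F\ast h$ and running the dominated-convergence estimate of the first paragraph yields $M_G(x)=\sum_d h(d)M_F(x/d)\sim L\,M_F(x)$, and unwinding the reductions recovers \eqref{LIMIT}.

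The substantive difficulties are two. First, the exact-constant bookkeeping: one must check that all the Euler corrections — from $v,w$, from the sign-change primes, and from $H$ — telescope \emph{precisely} into $\prod_p(1+\sum_{k\ge1}p^{-k}g(p^k))(1+\sum_{k\ge1}p^{-k}f(p^k))^{-1}$, and that the estimates are uniform enough that the vanishing and non-vanishing regimes meet continuously as $L\to0^{+}$. Second, and this I expect to be the real obstacle, Wirsing permits $f$ to vanish, or to be arbitrarily small, on a set of primes of positive logarithmic density, which throws $F$ out of the class $\mc{C}$ as literally defined; one must split the primes at a threshold $\e=\e(x)\to0$, absorb $\{p:f(p)<\e(x)\}$ into a set whose total effect on both $L$ and the pretentious distance $\rho$ is $o(1)$, and invoke Theorem~\ref{HalGen} on $\{p:f(p)\ge\e(x)\}$ with $\delta=\e(x)$ decaying in $x$ — which is only affordable because, as the paper stresses, the exponent in \eqref{UPPER} depends solely on the ratios $\delta_j/B_j$, so the prefactor $B^{2}/\delta$ can be defeated by the savings in $\rho$. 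Calibrating $\e(x)$ against those savings is the crux of the argument.
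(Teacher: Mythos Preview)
The paper does not prove Theorem~\ref{WIRSING}: it is stated verbatim as a classical result of Wirsing (Satz~1.2.2 of \cite{Wir}) and serves only as background for the paper's own Theorem~\ref{WIRSINGEXT}. There is therefore no proof in the paper to compare your attempt against.

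Your proposal is not a proof of Wirsing's theorem but a program for deducing it from Theorems~\ref{HalGen} and~\ref{LOWERMV}, and you correctly identify the real obstruction yourself: Wirsing's hypotheses allow $f(p)$ to be zero, or arbitrarily small, on a set of primes of positive logarithmic density, whereas every result in the paper that you invoke requires $g\in\mc{C}$, i.e.\ $\delta\le |g(p)|$ outside a set $S$ with $\prod_{p\in S,\,p\le x}p\ll_\alpha x^\alpha$. Your suggested fix---a moving threshold $\e(x)\to0$ with the $B^2/\delta$ prefactor beaten by the savings in $\rho$---is not carried out, and there is no reason to expect it to succeed in general: when $f(p)$ is genuinely tiny on a positive-density set, the ratio $\delta_j/B_j$ on that set collapses, the constants $c_j$ and $\gamma_{0,j}$ in \eqref{UPPER} collapse with it, and the exponential gain can be wiped out. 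The paper is explicit about this limitation (see the discussion of $S$ after the definition of $\mc{C}$ and the reference to \cite{Hil}), and its own extension, Theorem~\ref{WIRSINGEXT}, simply \emph{assumes} $\delta\le |g(p)|\le f(p)\le B$ rather than attempting to recover Wirsing's full generality.

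A secondary gap: the step $M_F(x/d)\sim M_F(x)/d$ for fixed $d$ does not follow from Theorem~\ref{LOWERMV} and its upper counterpart alone; those give matching orders of magnitude, not an asymptotic with constant $1$. You need a genuine mean-value theorem for $F$ (which is exactly what Wirsing's regularity hypothesis $\sum_{p\le x}f(p)\log p/p\sim\tau\log x$ is designed to supply, but via Wirsing's own methods, not the paper's). The same issue recurs in the $L\ne0$ case when you write $M_G(x)=\sum_d h(d)M_F(x/d)\sim L\,M_F(x)$.
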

It is natural to consider finding an explicit estimate for the rate of convergence to the limiting value of the ratio $\frac{M_g(x)}{M_{f}(x)}$ as $x \ra \infty$.  We can easily prove such an estimate in the case that the ratio converges to 0 as a direct application of Theorems \ref{LOWERMV} and \ref{HalGen} i) with $g$ even \emph{complex-valued}. We can also adapt our proof of Theorem \ref{HalGen} ii) in order to prove estimates of this type for more general limiting values. In \emph{neither} of these results do we require the regularity assumption $\sum_{p \leq x} \frac{f(p)\log p}{p} \sim \tau \log x$.
\begin{thm}\label{WIRSINGEXT}
i) Let $g \in \mc{C}$, and let $\{c_j\}_j$ be the collection of coefficients defined in Theorem \ref{HalGen}. Suppose $f:\mb{N} \ra [0,\infty)$ is a multiplicative function satisfying $\delta \leq |g(p)| \leq f(p)\leq B$, $|g(n)| \leq f(n)$ for all $n$, and such that the right side of \eqref{LIMIT} is zero. Then, for $P = P_x$,
\begin{equation*}
\frac{|M_g(x)|}{M_{f}(x)} \ll_{B,\phi,\beta} \left(\frac{B}{\delta}\right)^3 \left(\frac{P}{\phi(P)}\right)^2e^{-\mc{F}(x;g)}\exp\left(-\sum_{p \leq x} \frac{f(p)-|g(p)|}{p}\right),
\end{equation*}
where, for $D > 2$ and $T := \log^D x$ and $\tilde{g}(p) := g(p)/B_j$ for $p \in E_j$ as above,
\begin{equation*}
\mc{F}(x;g) := \begin{cases} \sum_{1 \leq j \leq m} c_j \sum_{p \in E_j \atop p \leq x} \frac{|g(p)|-\text{Re}(g(p))}{p} &\text{ if $g \in \mc{C}_b$} \\ \sum_{1 \leq j \leq m} B_j\left(\rho_{E_j}(x;\tilde{g},T) - \sum_{p \in E_j \atop p \leq x} \frac{1-|\tilde{g}(p)|}{p}\right) &\text{ otherwise}. \end{cases}
\end{equation*}
ii) Suppose either $f$ and $g$ are both strongly or both completely multiplicative, and $g$ additionally satisfies the hypothesis $|g(p)-f(p)| \leq \eta$ for each prime $p$. Then if $A := \exp\left(-\sum_{p \leq x} \frac{|g(p)|-g(p)}{p}\right)$ and $X := \exp\left(-\sum_{p \leq x} \frac{f(p)-|g(p)|}{p}\right)$ then
\begin{equation*}
M_{g} (x) = M_{f}(x)\left(\exp\left(-\sum_{p \leq x} \frac{f(p)-g(p)}{p}\right) + O_{B,m}\left(\frac{P}{\phi(P)}\left(\mc{R}_1|A| + \mc{R}_2X\right)\right)\right),
\end{equation*}
where we have put
\begin{align*}
\mc{R}_1 := \left(\frac{B}{\delta}\right)^2 \left(X\left(d_1\eta^{\frac{1}{2}} + \log^{-\frac{\delta\beta^3}{2}} x + \delta^{-1}e^{-\frac{d_1}{\sqrt{\eta}}}\right) + \log^{-\frac{2\delta}{3}}x + \delta^{-1}e^{-\frac{2d_1}{\sqrt{\eta}}}\right). \\
\mc{R}_2 := \left(\frac{B}{\delta}\right)^2 \left(d_1\eta^{\frac{1}{2}}|A| + \log^{-\frac{2\delta}{3}}x + \delta^{-1}e^{-\frac{2d_1}{\sqrt{\eta}}} + |A|^{\frac{\gamma_0}{4(1+\gamma_0)}}\left(\log^{-\frac{\gamma}{2}} x + \gamma^{-1}e^{-\frac{d_1}{\sqrt{\eta}}}\right)\right).
\end{align*}
\end{thm}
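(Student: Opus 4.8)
The plan is to prove both parts by inserting the intermediate quantity $M_{|g|}(x)$: write $M_g(x)/M_f(x)$ as $\big(M_g(x)/M_{|g|}(x)\big)\cdot\big(M_{|g|}(x)/M_f(x)\big)$, estimate the first factor with Theorem \ref{HalGen}, and estimate the second factor --- a comparison of two non-negative multiplicative functions --- using the lower bound of Theorem \ref{LOWERMV} together with a standard upper bound for summatory functions of non-negative multiplicative functions (of Halberstam--Richert / Shiu type, or the easy upper half of the argument behind Theorem \ref{LOWERMV}).

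\emph{Part i).} First I would apply Theorem \ref{HalGen} i), using estimate \eqref{UPPER} when $g\in\mc{C}_b$ and the general estimate otherwise; in both cases
\[
|M_g(x)| \ll_{B,m,\mbf{\phi},\mbf{\beta},r,\alpha} \frac{B^2}{\delta}\,\frac{P}{\phi(P)}\,M_{|g|}(x)\,e^{-\mc{F}(x;g)},
\]
with $\mc{F}(x;g)$ the case-defined quantity in the statement. Next, since $|g(p^k)|\leq B^k$ is admissible for the usual upper bound, $M_{|g|}(x)\ll_B \frac{x}{\log x}\prod_{p\leq x}\big(1+\frac{|g(p)|}{p}\big)$, while Theorem \ref{LOWERMV}, applied to $\lambda = f$ with the exceptional set $S$ of $g$ (its hypotheses hold because $f$ is non-negative, bounded on primes, and has summable prime-power contribution), gives $M_f(x)\gg_B \delta\,\frac{\phi(P)}{P}\,\frac{x}{\log x}\prod_{p\leq x}\big(1+\frac{f(p)}{p}\big)$. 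Dividing, and using that the logarithm of $\prod_{p\leq x}\frac{1+|g(p)|/p}{1+f(p)/p}$ equals $-\sum_{p\leq x}\frac{f(p)-|g(p)|}{p}+O_B(1)$ (because $0\leq f(p)-|g(p)|\leq B$), gives $M_{|g|}(x)/M_f(x)\ll_B \frac{1}{\delta}\,\frac{P}{\phi(P)}\exp\big(-\sum_{p\leq x}\frac{f(p)-|g(p)|}{p}\big)$. Multiplying the two estimates and using $\delta\leq B$ to bound $\frac{B^2}{\delta^2}$ by $(B/\delta)^3$ yields the stated inequality, the two factors $P/\phi(P)$ combining into $(P/\phi(P))^2$.

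\emph{Part ii).} By the reverse triangle inequality $|g(p)-f(p)|\leq\eta$ forces $0\leq f(p)-|g(p)|\leq\eta$, and since $f(p)\geq\delta$ one has $|\arg g(p)|\ll \eta/\delta$, so $g\in\mc{C}_a$ (with angular widths $\eta_j\ll\eta/\delta_j$) and Theorem \ref{HalGen} ii) gives $M_g(x)=M_{|g|}(x)\big(A+O(\mc{R})\big)$, $A=\exp\big(\sum_{p\leq x}\frac{g(p)-|g(p)|}{p}\big)$. It then remains to prove the asymptotic comparison $M_{|g|}(x)=M_f(x)\big(X+O(\mc{E})\big)$ of the two non-negative functions $|g|\leq f$, with $X=\exp\big(-\sum_{p\leq x}\frac{f(p)-|g(p)|}{p}\big)$ and $0\leq f(p)-|g(p)|\leq\eta$; I would obtain this by re-running the proof of Theorem \ref{HalGen} ii) with $f$ in place of the modulus as the reference function --- the pretense/angular obstructions being vacuous since $|g|$ is real and positive, the lower bound for $M_f(x)$ coming once more from Theorem \ref{LOWERMV}, and $\mc{E}$ being the specialisation of $\mc{R}$ to this situation (with $f(p)-|g(p)|\leq\eta$ in place of the angular width). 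Concretely, setting $c:=\exp\big(-\sum_{p\leq x}\frac{f(p)-g(p)}{p}\big)=AX$ and $h:=g-cf$, one has $M_g(x)=c\,M_f(x)+M_h(x)$, and $M_h$ is estimated as in Theorem \ref{HalGen} by passing to $N_h(x):=\sum_{n\leq x}h(n)\log n$ via $\log = 1\ast\Lambda$ and bounding the resulting Dirichlet-series integral. Finally, multiplying $(A+O(\mc{R}))\big(X+O(\mc{E})\big)$, sorting the error terms by whether they carry the weight $|A|$ (from the $g$-versus-$|g|$ step) or $X$ (from the $|g|$-versus-$f$ step), and collecting the factors $P/\phi(P)$, produces the claimed form with main term $c=\exp\big(-\sum_{p\leq x}\frac{f(p)-g(p)}{p}\big)$ and error $O_{B,m}\big(\frac{P}{\phi(P)}(\mc{R}_1|A|+\mc{R}_2X)\big)$.

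The main obstacle is the asymptotic comparison of $|g|$ with the general non-negative majorant $f$ needed in part ii): in Theorem \ref{HalGen} ii) the reference function is literally $|g|$, and the cancellation producing the error $\mc{R}$ is engineered through the combination $g(p)-A|g(p)|$, so one must check that the delicate complex-analytic estimates survive when the reference function is an arbitrary $f\geq|g|$ --- in particular, that Theorem \ref{LOWERMV} supplies a lower bound for $M_f(x)$ of the required strength and that the parameter $\eta$, now governing both $|\arg g(p)|$ (through $\delta^{-1}$, hence the factor $d_1$) and $f(p)-|g(p)|$, enters exactly as the $d_1$-weighted terms in $\mc{R}_1,\mc{R}_2$ demand. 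The rest --- tracking $P/\phi(P)$ and deciding which error term is weighted by $|A|$ versus $X$ --- is routine bookkeeping.
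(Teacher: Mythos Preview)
Your proposal is correct and follows essentially the same route as the paper. For part i) you factor $M_g/M_f = (M_g/M_{|g|})(M_{|g|}/M_f)$, apply Theorem~\ref{HalGen} to the first factor, and bound the second factor by combining the upper bound $M_{|g|}(x)\ll_B \frac{x}{\log x}\prod_{p\le x}(1+|g(p)|/p)$ with the lower bound of Theorem~\ref{LOWERMV} for $M_f$; this is exactly the paper's Lemma~\ref{BASICLOW}. For part ii) the paper does precisely what you outline as your primary approach: it re-runs the machinery of Sections~4--6 with $h(n)=|g(n)|-X_t f(n)$ (this generalisation having been flagged in Remarks~\ref{REMGEN} and~\ref{REMGEN2}) to obtain $M_{|g|}(x)=M_f(x)(X+O(\mc{R}_1))$, then applies Theorem~\ref{HalGen} ii) directly to get $M_g(x)=M_{|g|}(x)(A+O(\mc{R}_2))$, and multiplies. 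Your identified ``main obstacle'' is exactly the content of those two remarks; the paper bounds $||g(p)|-g(p)|\le 2\eta$ via the triangle inequality rather than your $|\arg g(p)|\ll\eta/\delta$, but either suffices.
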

Clearly, part ii) contains the case where the sum $\sum_p \frac{f(p)-g(p)}{p}$ (and therefore the ratio on the right side of \eqref{LIMIT}) converges.\\
The case $m = 1$ \emph{is} Hal\'{a}sz' theorem when $f \equiv 1$. Note that in this case, $\mbf{E}$ is trivially good and, in the worst case, $c := c_1 = \frac{\delta}{B}$ in Theorem \ref{HalGen}, which improves on the coefficient for Hal\'{a}sz' theorem given in Elliott \cite{Ell2} when $B$ is small. \\
It should be noted that Elliott \cite{Ell3} showed very recently a theorem of the above type that holds for \emph{any} complex-valued multiplicative function $g$ and a multiplicative function $f$ such that $|g(n)| \leq f(n)$, with the lone assumptions that $f$ (and thus $g$) is uniformly bounded on primes and that $\sum_{p,k\geq 2} \frac{f(p^k)}{p^k} < \infty$. However, his results are \emph{not} effective in that his asymptotic formulae do not have \emph{explicit} error terms. In particular, in the situation of i) in Theorem \ref{WIRSINGEXT} he does not provide a rate of convergence to the limiting value 0.  His method is also completely different from ours. \\
\\
The structure of this paper is as follows. In Section 3 we prove some auxiliary results to be used in the remainder of the paper. In Section 4 we translate the problem of determining the ratio $|M_h(x)|/M_{|g|}(x)$ to a problem of relating $\int_{(\sg)} |H'(s)/s|^2 |ds|$ (appearing in Section 6) to the Dirichlet series $\mc{G}(\sg)$, a purely analytic quantity. In Section 4.1 specifically we derive a proof of Theorem \ref{LOWERMV} as a step in this translation process, while in Section 4.2, we generalize arguments of Hal\'{a}sz in a manner that is suitable for our purposes.  In Section 5, we collect all of the analytic estimates we need to finish the proof of Theorem \ref{HalGen}. In particular, we strengthen a uniform estimate due to Hal\'{a}sz for $\left|G(s)/\mc{G}(s)\right|$ in terms of $\left|G(\sg)/\mc{G}(\sg)\right|$, for $s = \sg + i\tau$ that was mentioned earlier. In deriving this estimate we pay careful attention to treat each of the prime sets $E_j$ separately. In Section 6, we complete the proof of Theorem \ref{HalGen}, by implementing the estimates from Section 5 in the context of the results of the previous two sections. %In Section 7 we prove Proposition \ref{HalApp} and Theorem \ref{GOALTHM} upon specializing to $g(n) := f_{\mbf{z}}(n)$ and evaluating the integral \eqref{CAUCHYINT}, as outlined above, while i
In Section 7 we prove Theorem \ref{WIRSINGEXT}. 
%Finally, in Section 9 we prove Corollary \ref{COREXP}.
\section{Auxiliary Lemmata}
We shall need the following easy estimate. 
%which is a slight extension of Lemma 3.1 in \cite{prev}.
\begin{lem} \label{OLD}
Let $x \geq 2$, $\sg = 1+\frac{1}{\log x}$ and $s = \sg + i\tau_j$, with $\tau \in \mb{R}$. Then
\begin{equation*}
\sum_p \left|\frac{1}{p^{\sg}}-\frac{1}{p^{s}}\right| \ll 1+ \left|\log\left|\frac{|\tau|}{\sg-1}\right|\right|.
\end{equation*}
%In particular, when $s_2 = \sg$ and $s = s_1$, we have $\sum_p \left|\frac{1}{p^s} - \frac{1}{p^{\sg}}\right| \ll \log\left(1+\frac{|\tau|}{\sg-1}\right)$.
\end{lem}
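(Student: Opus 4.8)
The plan is to split the sum over primes according to the size of $p$ relative to the two natural scales: $p \leq x$ (equivalently $p^{\sigma-1} \ll 1$, since $\sigma = 1 + 1/\log x$) and $p > x$, and within the first range to further compare $p$ with $e^{1/|\tau|}$. Write $\left|\frac{1}{p^\sigma} - \frac{1}{p^s}\right| = \frac{1}{p^\sigma}\left|1 - p^{-i\tau}\right| = \frac{1}{p^\sigma}\left|1 - e^{-i\tau\log p}\right|$, and use the elementary bounds $\left|1 - e^{-i\theta}\right| \leq \min\{2, |\theta|\}$. So the summand is $\ll \frac{1}{p^\sigma}\min\{1, |\tau|\log p\}$.

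First I would handle $p > x$: here $p^{-\sigma} = p^{-1}\cdot p^{-1/\log x} \ll p^{-1} e^{-\log p/\log x}$, and summing $p^{-1}e^{-\log p /\log x}$ over $p > x$ (or more simply $\sum_{p > x} p^{-\sigma}$) is $O(1)$ by Mertens/partial summation — this contributes a bounded term regardless of $\tau$. For $p \leq x$, I would split at $P_0 := \min\{x, e^{1/|\tau|}\}$ (when $|\tau| \leq 1/\log x$ this is just $x$ and the second range below is empty). For $p \leq P_0$ we have $|\tau|\log p \leq 1$, so the summand is $\ll \frac{|\tau|\log p}{p^\sigma} \leq \frac{|\tau|\log p}{p}$; by Mertens' theorem $\sum_{p \leq P_0} \frac{\log p}{p} \ll \log P_0 \ll \frac{1}{|\tau|}$ (using $P_0 \leq e^{1/|\tau|}$), so this range contributes $O(1)$. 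For $P_0 < p \leq x$ (nonempty only when $|\tau| > 1/\log x$) we use the crude bound $\min\{\cdots\} \leq 1$, so the summand is $\ll p^{-\sigma} \leq p^{-1}$, and $\sum_{P_0 < p \leq x} \frac{1}{p} \ll \log\frac{\log x}{\log P_0} + O(1) = \log\bigl(|\tau|\log x\bigr) + O(1)$ by Mertens. Since $\sigma - 1 = 1/\log x$, this is exactly $\ll 1 + \log\left|\frac{|\tau|}{\sigma-1}\right|$, and combining with the $O(1)$ contributions from the other ranges (together with the trivial observation that $\left|\log\left|\frac{|\tau|}{\sigma-1}\right|\right|$ already absorbs the case $|\tau|\log x$ small, where the whole quantity is $O(1)$) gives the claimed bound.

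The only mildly delicate point — the "main obstacle," such as it is — is making the case analysis uniform in $\tau$, i.e., checking that the same final expression $1 + \bigl|\log|\,|\tau|/(\sigma-1)\,|\bigr|$ dominates in all regimes: when $|\tau|$ is very small the logarithm is large and negative but its absolute value still bounds things correctly (all ranges collapse to $O(1)$, and $\bigl|\log(|\tau|\log x)\bigr| \geq 0$), when $|\tau| \asymp 1/\log x$ everything is $O(1)$, and when $|\tau|$ is large the $\log(|\tau|\log x)$ term is the genuine one. Everything else is a routine application of Mertens-type estimates and the bound $|1 - e^{-i\theta}| \leq \min\{2,|\theta|\}$; I would not belabor those computations.
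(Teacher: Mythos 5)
Your proof is correct and follows essentially the same route as the paper's: both split the primes at $e^{1/|\tau|}$ and at $x=e^{1/(\sigma-1)}$, use the linear bound $|1-e^{-i\tau\log p}|\ll|\tau|\log p$ together with Mertens' first theorem on the initial range, Mertens' second theorem on the middle range to produce the $\log(|\tau|/(\sigma-1))$ term, and a tail estimate for $\sum_{p>x}p^{-\sigma}$. The only cosmetic difference is that you factor out $p^{-\sigma}$ and compare $1$ with $p^{-i\tau}$ directly, whereas the paper compares each of $p^{-\sigma}$ and $p^{-s}$ with $p^{-1}$ and phrases the intermediate bound in terms of $|s-1|$.
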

\begin{proof}
By partial summation with the prime number theorem with de la Vall\'{e}e-Poussin error term, we have
\begin{align} \label{TAIL}
\sum_{p > u} \frac{1}{p^{\sg}} &= \int_u^{\infty} v^{-\sg}d\pi(v) = \int_u^{\infty} \frac{dv}{v\log v} e^{-(\sg-1)\log v} + O\left(\int_u^{\infty} \frac{dv}{v^{1+\sg}}e^{-\sqrt{\log v}}\right) \\
&= \int_{(\sg-1)\log u}^{\infty} \frac{dv}{v}e^{-v} + O\left(e^{-\sqrt{\log u}}\right) \ll 1.
\end{align}
%By partial summation, we have
%\begin{align}
%\sum_{p > x} \frac{1}{p^{\sg}} &= \sum_{p > x} e^{-(\sg-1)\log p} \frac{1}{p} \ll \int_x^{\infty} e^{-(\sg-1) \log t} \frac{dt}{t\log t} = \int_{(\sg-1)\log x}^{\infty} \frac{e^{-u}}{u} du \nonumber\\
%&\leq \frac{1}{(\sg-1)\log x} \int_{(\sg-1)\log x}^{\infty} e^{-u} du = \int_1^{\infty} e^{-u} du \ll 1 \label{TAIL}.
%\end{align}
Also, for any $\tau$ and $s  =\sg + i\tau$, we have
\begin{equation} \label{HEAD}
\sum_{p\leq e^{\frac{1}{|s-1|}}} \left|\frac{1}{p^s}-\frac{1}{p}\right| = \sum_{p\leq e^{\frac{1}{|s-1|}}} \frac{1}{p}\left|e^{-(s-1)\log p}-1\right| \leq |s-1|\sum_{p \leq e^{\frac{1}{|s-1|}}} \frac{\log p}{p} \ll 1,
\end{equation}
by Mertens' first theorem.  Note that $e^{\frac{1}{|s-1}} \leq e^{\frac{1}{\sg-1}}$ since $|s-1| \geq \sg-1$. Applying \eqref{HEAD} to both of $\sg$ and $s$, we have 
\begin{align*}
\sum_p \left|\frac{1}{p^{\sg}}-\frac{1}{p^{s}}\right| &\leq \sum_{p \leq e^{\frac{1}{\sg-1}}} \left|\frac{1}{p^{\sg}} - \frac{1}{p^{s}}\right| + 2\sum_{p > e^{\frac{1}{\sg-1}}} \frac{1}{p^{\sg}} \\
&\leq \sum_{p \leq e^{\frac{1}{\sg-1}}} \left|\frac{1}{p^{\sg}}-\frac{1}{p}\right| + \sum_{p \leq e^{\frac{1}{|s-1|}}} \left|\frac{1}{p^{s}}-\frac{1}{p} \right| + 2\sum_{e^{\frac{1}{|s-1|}} < p \leq e^{\frac{1}{\sg-1}}} \frac{1}{p} +O(1) \\
%+ \sum_{e^{\frac{1}{|s_1-1|}} < p \leq A} \frac{1}{p}\right) + O(1) \\
&=\left|\log\left|\frac{|s-1|}{\sg-1}\right|\right| + O(1),
%\ll \log\left(1+\frac{|\tau|}{\sg-1}\right),
\end{align*}
by applying Mertens' second theorem (note that in the second last expression only one of the sums in brackets is non-empty). The second claim is immediate upon bounding $|s-1| \leq (\sg-1) + |\tau|$.
\end{proof}
The following lemma is a simple consequence of the Prime Number Theorem, but we prove it for completeness.
\begin{lem}\label{PRECMERTENS}
i) There is a constant $c \in \mb{R}$ such that for any $A > 0$ and $z$ sufficiently large, we have
\begin{equation*}
\sum_{n \leq z} \frac{\Lambda(n)}{n} = \log z + c + O\left(e^{-\theta\sqrt{\log z}}\right),
\end{equation*}
for some $\theta > 0$. \\
ii) Let $P \in \mb{N}$. If $x$ is sufficiently large and $\theta' > 0$ sufficiently small, and $y > xe^{-\theta'\sqrt{\log x}}$ then 
\begin{equation*}
\sum_{x-y < n \leq x \atop (n,P) = 1} \frac{\Lambda(n)}{n} = \frac{\phi(P)}{P}\frac{y}{x} + O\left(\left(\left(\frac{y}{x}\right)^2 + e^{-\theta'\sqrt{\log x}}\right)\log P\right)
\end{equation*}
\end{lem}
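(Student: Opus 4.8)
Both parts are elementary consequences of the Prime Number Theorem in the de la Vall\'ee-Poussin form $\psi(t) := \sum_{n \le t} \Lambda(n) = t + O\!\left(t\, e^{-c_0\sqrt{\log t}}\right)$ for a fixed absolute constant $c_0 > 0$; throughout, $\theta$ and $\theta'$ are taken to be any fixed numbers with $0 < \theta, \theta' < c_0$.

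For part i) the plan is partial summation. Writing $\sum_{n \le z}\frac{\Lambda(n)}{n} = \int_{1}^{z} t^{-1}\, d\psi(t) = \frac{\psi(z)}{z} + \int_{2}^{z} \frac{\psi(t)}{t^2}\, dt$ and inserting $\psi(t) = t + O(t e^{-c_0\sqrt{\log t}})$, the main terms contribute $1 + \int_2^z \frac{dt}{t} = \log z + (1 - \log 2)$, while the error contributes $O(e^{-c_0\sqrt{\log z}})$ together with $\int_2^z e^{-c_0\sqrt{\log t}}\, \frac{dt}{t}$. The substitutions $u = \log t$ and then $v = \sqrt{u}$ transform the last integral into $\int_{\sqrt{\log 2}}^{\sqrt{\log z}} 2v\, e^{-c_0 v}\, dv$, whose completion to $v = \infty$ is a finite constant and whose tail is $\ll \sqrt{\log z}\, e^{-c_0\sqrt{\log z}} \ll e^{-\theta\sqrt{\log z}}$. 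Absorbing $1 - \log 2$ and the value of the convergent integral into a single constant $c$ gives the claim; the quantifier over $A > 0$ in the statement is vacuous, the error term being uniform.

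For part ii) the plan is to reduce the sum to $\psi(x) - \psi(x-y)$, invoke the Prime Number Theorem on the short interval $(x-y,x]$, and then account for the coprimality condition. Since $0 \le \tfrac{1}{n} - \tfrac{1}{x} \le \tfrac{y}{(x-y)x}$ for $n \in (x-y, x]$, replacing $\tfrac1n$ by $\tfrac1x$ costs $O\!\left(\tfrac{y}{(x-y)x}\bigl(\psi(x) - \psi(x-y)\bigr)\right)$; assuming $y \le x/2$ (the regime relevant to the applications; otherwise split the range) and inserting $\psi(x) - \psi(x-y) = y + O(x e^{-c_0\sqrt{\log x}})$, this becomes $O\!\left((y/x)^2 + e^{-\theta'\sqrt{\log x}}\right)$. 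The hypothesis $y > x e^{-\theta'\sqrt{\log x}}$ with $\theta' < c_0$ is exactly what makes the short-interval asymptotic non-vacuous, since it renders the error $O(y\, e^{-(c_0 - \theta')\sqrt{\log x}})$, genuinely smaller than the main term. Finally, the integers counted above that fail to be coprime to $P$ are precisely the prime powers $p^k$ with $p \mid P$; when $y \le x/2$ each prime $p$ contributes at most one such power to $(x - y, x]$, and that power exceeds $x/2$, so their total contribution is $\ll \tfrac1x\sum_{p \mid P}\log p \ll \tfrac{\log P}{x} \ll e^{-\theta'\sqrt{\log x}}\log P$, which is absorbed into the error. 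Collecting these estimates gives part ii).

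I do not expect a substantive obstacle here: as the paper remarks, this is a routine corollary of the Prime Number Theorem. The two points needing (minor) care are choosing $\theta'$ strictly below the de la Vall\'ee-Poussin constant $c_0$, so that the constraint $y > x e^{-\theta'\sqrt{\log x}}$ forces the short-interval error to beat the main term, and verifying that both error sources in part ii) — the oscillation of $n \mapsto n^{-1}$ across the interval and the excision of the prime-power divisors of $P$ — are dominated by $\bigl((y/x)^2 + e^{-\theta'\sqrt{\log x}}\bigr)\log P$.
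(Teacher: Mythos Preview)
For part i) your argument is correct and essentially identical to the paper's (partial summation against $\psi(t)=t+O(te^{-c_0\sqrt{\log t}})$). For part ii) you take a different route: you replace $1/n$ by $1/x$, apply the PNT directly on the short interval, and excise by hand the at most $\omega(P)$ prime powers $p^k\in(x-y,x]$ with $p\mid P$, whereas the paper differences two applications of i) and then M\"obius-inverts over $d\mid P$. Your computation is sound, but it delivers main term $y/x$, \emph{not} $\dfrac{\phi(P)}{P}\cdot\dfrac{y}{x}$ as the lemma asserts; the discrepancy $\dfrac{y}{x}\bigl(1-\tfrac{\phi(P)}{P}\bigr)$ is not absorbed by the stated error (take $P=2$ and $y/x$ a small fixed constant). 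So ``collecting these estimates'' does not give part ii) as written.

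In fact your main term is the correct one, and the lemma as stated is false: for fixed $P$ and large $x$, essentially every prime in $(x-y,x]$ is coprime to $P$, so no density factor $\phi(P)/P$ should appear. The paper's own proof slips at the step
\[
\sum_{(x-y)/d<m\le x/d}\frac{\Lambda(md)}{md}\;=\;\frac{1}{d}\sum_{(x-y)/d<m\le x/d}\frac{\Lambda(m)}{m}:
\]
for squarefree $d=p>1$ one has $\Lambda(mp)=0$ unless $m$ is itself a power of $p$, so the left side is $O((\log p)/x)$ rather than $\sim\tfrac{1}{p}\cdot\tfrac{y}{x}$. This does not harm the downstream applications (e.g.\ Lemma~\ref{DENOM}), which only need the lower bound $\ge\tfrac{\phi(P)}{P}\cdot\tfrac{y}{x}+\text{error}$; your stronger formula supplies that and more.
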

\begin{proof}
i) By the Prime Number theorem with de la Vall\'{e}e-Poussin error term, $\psi(t) = t + \mc{E}(t)$ with $|\mc{E}(t)| \leq Cte^{-\theta\log^{\frac{1}{2}}t}$, for some $C$ sufficiently large and $\theta$ a positive absolute constant. Hence, applying partial summation, we have
\begin{align*}
\sum_{n \leq z} \frac{\Lambda(n)}{n} &= \int_{2^-}^z \frac{d\psi(t)}{t} = \int_{1}^z \frac{dt}{t} + \int_{2^-}^z \frac{d\mc{E}(t)}{t} \\
&= \log z + \frac{\mc{E}(z)}{z} - \frac{\mc{E}(2)}{2} + \int_{2^-}^z \frac{\mc{E}(t)}{t^2} dt =: \log z + c + O\left(e^{-\frac{\theta}{2}\sqrt{\log z}}\right),
\end{align*}
where we set $c := \int_{2^-}^{\infty} \frac{\mc{E}(t)}{t^2} dt - \frac{\mc{E}(2)}{2}$. This estimate follows because 
\begin{equation*}
\left|\int_z^{\infty} \frac{\mc{E}(t)}{t^2} dt\right| \ll e^{-\frac{\theta}{2} \log^{\frac{1}{2}} z} \int_z^{\infty} \frac{dt}{t}e^{-\frac{\theta}{2} \sqrt{\log t}} \ll e^{-\frac{\theta}{2}\sqrt{\log z}},
\end{equation*}
and this upper bound also clearly exceeds $\frac{\mc{E}(z)}{z}$ (this estimate also implies that the integral in the definition of $c$ exists, so $c$ is well-defined). This implies the estimate in i). \\
ii) Applying i) first with $z = x-y$ and then with $z = x$, and subtracting these two results gives
\begin{equation*}
\sum_{x-y < n \leq x} \frac{\Lambda(n)}{n} = -\log\left(1-\frac{y}{x}\right) + O\left(e^{-\frac{\theta}{2}\sqrt{\log (x-y)}}\right) = \frac{y}{x} + O\left(\left(\frac{y}{x}\right)^2 + e^{-\theta'\sqrt{\log x}}\right).
\end{equation*}
Now, given that $\Lambda(md) = \Lambda(m)$ whenever $\Lambda(md) \neq 0$ and $m > 1$, we have
\begin{align*}
\sum_{x-y < n \leq x \atop (n,P)=1} \frac{\Lambda(n)}{n} &= \sum_{d | P} \mu(d)\sum_{(x-y)/d < m \leq x/d} \frac{\Lambda(md)}{md} = \sum_{d|P} \frac{\mu(d)}{d}\sum_{(x-y)/d < m \leq x/d} \frac{\Lambda(m)}{m} \\
&= \frac{y}{x}\sum_{d|P} \frac{\mu(d)}{d} + O\left(\left(\sum_{d\leq P} \frac{1}{d}\right)\left(\left(\frac{y}{x}\right)^2 + e^{-\theta'\sqrt{\log x}}\right)\right) \\
&= \frac{\phi(P)}{P}\frac{y}{x} + O\left(\left(\left(\frac{y}{x}\right)^2 + e^{-\theta'\sqrt{\log x}}\right)\log P\right).
\end{align*}
This completes the proof.
%It is clear that $e^{-\frac{1}{2}\theta \log^{\frac{1}{2}} z} \ll \log^{-A} z$ for sufficiently large $z$ (in terms of $A$). This completes the proof.
\end{proof}
The following estimates, due to Selberg, will provide us with a maximal growth order for our mean values, and will also play a role in the proof of Lemma \ref{MAIN1}.
\begin{lem}\label{SELBERG}
Let $x$ be sufficiently large, $B > 0$ and $0 < \rho \leq B$. Then 
\begin{equation} \label{SelSum}
\sum_{n \leq x} \rho^{\omega(n)} = x(\log x)^{\rho-1}F(\rho)\left(1+O_B\left(\frac{1}{\log x}\right)\right),
\end{equation}
where $F(\rho) := \Gamma(\rho)^{-1}\prod_p \left(1+\frac{\rho}{p-1}\right)\left(1-\frac{1}{p}\right)^{\rho}$. In particular, if $\delta > 0$ is fixed with $\delta \leq B$ and $g$ is a strongly multiplicative function satisfying $\delta \leq |g(p)| \leq B$ for each $p$ then $x\log^{-\delta + 1} x \ll_{\delta} \left|M_g(x)\right| \ll_B x\log^{B-1} x$. \\
%Suppose $\rho \ll 1$ and let $b(n)$ be an additive function, either $\omega(n)$ ovcr $\Omega(n)$.
%\begin{equation} \label{SelSum}
%\sum_{n \leq x} \rho^{b(n)} = x(\log x)^{\rho-1}F_b(\rho)\left(1+O\left(\frac{1}{\log x}\right)\right),
%\end{equation}
%where $F_b(\rho) := \Gamma(\rho)^{-1}\prod_p \left(1+\sum_{k \geq 1} \frac{\rho^{b(p^k)}}{p^{ks}}\right)\left(1-\frac{1}{p}\right)^{\rho}$. In particular, if $0 < B < \infty$ (resp. $0 < B < 2$) is fixed and $g$ is a strongly (resp. completely) multiplicative function satisfying $|g(p)| \leq B$ for each $p$ then $\left|\sum_{n \leq x} g(n) \right| \ll_B x\log^{B-1} x$. \\
%ii) For $\rho > 0$ and $\text{Re}(s) > 1$, write $\zeta(s)^{\rho} = \sum_{n \geq 1} \frac{d_{\rho}(n)}{n^s}$. Then for $x \geq 3$,
%\begin{equation*}
%\sum_{n \leq x} d_{\rho}(n) = \left(1+O\left(\frac{1}{\log x}\right)\right)\Gamma(\rho)^{-1} x\log^{\rho-1} x.
%\end{equation*}
\end{lem}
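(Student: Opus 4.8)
The plan is to prove \eqref{SelSum} first and then deduce the stated bound on $|M_g(x)|$ as a corollary. For the main asymptotic I would use the standard Selberg--Delange / Landau approach: the Dirichlet series $\sum_{n\geq 1}\rho^{\omega(n)}n^{-s} = \prod_p\left(1+\frac{\rho}{p^s-1}\right)$ equals $\zeta(s)^{\rho}\,G(s,\rho)$, where $G(s,\rho) := \prod_p\left(1+\frac{\rho}{p^s-1}\right)\left(1-p^{-s}\right)^{\rho}$ converges absolutely and is holomorphic in $\mathrm{Re}(s) > 1/2$, uniformly for $\rho$ in a compact set. One then applies a Perron-type contour integral, deforming the contour around the branch point at $s=1$ using the zero-free region for $\zeta$ with de la Vall\'ee-Poussin error term; the leading term comes from the Hankel contour around $s=1$ and produces $x(\log x)^{\rho-1}$ times $G(1,\rho)/\Gamma(\rho)$, which is exactly $F(\rho)$. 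The error term $O_B(1/\log x)$ comes from the next term in the Hankel-contour expansion together with the contribution of the truncated vertical contour. Since the paper attributes this to Selberg and allows invocation of earlier results, I might instead simply cite this as a known estimate (e.g. from Tenenbaum's book, which is in the bibliography) and only sketch why the constant takes the stated form; the key point for the reader is the identification of $F(\rho)$ and the uniformity in $\rho\in(0,B]$.

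For the second assertion, write $|g(n)| = \prod_{p\mid n}|g(p)|$ since $g$ is strongly multiplicative, so $\delta^{\omega(n)}\leq |g(n)|\leq B^{\omega(n)}$ for every $n$. Hence
\begin{equation*}
\sum_{n\leq x}\delta^{\omega(n)} \;\leq\; M_{|g|}(x) \;=\; \sum_{n\leq x}|g(n)| \;\leq\; \sum_{n\leq x}B^{\omega(n)},
\end{equation*}
and applying \eqref{SelSum} with $\rho=\delta$ and $\rho=B$ respectively gives $M_{|g|}(x) \asymp_{\delta,B} x(\log x)^{\delta-1}$ on the lower side and $\ll_B x(\log x)^{B-1}$ on the upper side, using that $F(\rho)$ is a fixed positive constant once $\rho$ is fixed (note $F(\rho)>0$ for $\rho>0$ since each Euler factor is positive and $\Gamma(\rho)>0$). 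Finally the triangle inequality gives $|M_g(x)|\leq M_{|g|}(x)$, which yields the claimed upper bound $|M_g(x)|\ll_B x\log^{B-1}x$ and also the trivial lower bound is not what is asserted --- rather the asserted lower bound $x\log^{-\delta+1}x \ll_\delta |M_g(x)|$ should read as a lower bound for $M_{|g|}(x)$ or must follow from positivity; re-reading the statement, the intended reading is surely that the displayed chain bounds $M_{|g|}(x)$ (the lower bound cannot hold for a genuinely oscillating $M_g$), so I would state and prove it for $M_{|g|}(x)$, i.e. $x\log^{1-\delta}x \ll_\delta M_{|g|}(x) \ll_B x\log^{B-1}x$, and note $|M_g(x)|\leq M_{|g|}(x)$.

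The main obstacle is obtaining the asymptotic \eqref{SelSum} with an error term \emph{uniform in $\rho$} over the whole range $0<\rho\leq B$: near $\rho=0$ the factor $\Gamma(\rho)^{-1}\to 0$ and $(\log x)^{\rho-1}$ degenerates, so one must track the $\rho$-dependence of all constants in the contour integral carefully (in particular the implied constants in the zero-free region estimates do not depend on $\rho$, but the Hankel-integral constant does, through $\Gamma$). This is precisely the content of Selberg's refinement, so in practice the cleanest route is to quote it; if a self-contained proof is wanted, the delicate step is bounding the horizontal and vertical pieces of the deformed contour by $O_B\!\big(x(\log x)^{\rho-2}\big)$ uniformly, which requires the de la Vall\'ee-Poussin zero-free region and standard bounds $|\zeta(s)^{\rho}|\ll_B |\log(|t|+2)|^{B}$ in that region. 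Everything after \eqref{SelSum} is routine sandwiching via $\delta^{\omega(n)}\leq|g(n)|\leq B^{\omega(n)}$.
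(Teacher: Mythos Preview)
Your approach matches the paper's: it simply cites Selberg's estimate (Tenenbaum, Chapter II.6) for \eqref{SelSum} and then deduces the second assertion from the sandwich $\delta^{\omega(n)} \leq |g(n)| = \prod_{p\mid n}|g(p)| \leq B^{\omega(n)}$, exactly as you propose. Your observation about the lower bound is also correct: as stated it can only hold for $M_{|g|}(x)$, not for a general $|M_g(x)|$, and indeed this is how the paper actually uses the lemma later (e.g.\ in bounding $M_{|g|}(u_1)/M_{|g|}(u)$).
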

\begin{proof}
The first estimate, due to Selberg, is proved, for instance, in Chapter II.6 of \cite{Ten2}. The second follows from the first via the triangle inequality, given that $\delta^{\omega(n)} \leq |g(n)| = \prod_{p|n} |g(p)| \leq B^{\omega(n)}$ whenever $g$ is strongly multiplicative.
%and $|g(n)| \leq B^{\Omega(n)}$ when $g$ is completely multiplicative.
\end{proof}
\begin{rem}
An analogous estimate exists when $\omega$ is replaced by $\Omega$ and $B < 2$, but for a different choice of $F(\rho)$. Consequently, an upper bound for $|M_g(x)|$ of the same shape as above holds for completely multiplicative functions as well (indeed, $|f(m)| = \prod_{p^k || m} |f(p)|^k \leq B^{\sum_{p^k||m} k} = B^{\Omega(m)}$ in this case).
\end{rem}
A crucial element of the proof of Theorem \ref{HalGen} is a uniform bound in $\tau$ of $\left|\frac{G(s)}{\mc{G}(\sg)}\right|$, for $s = \sg + i\tau$, by a (fractional) power of $\left|\frac{\mc{G}(\sg)}{\mc{G}(\sg)}\right|$. As a first step in deriving such a bound, we will need to relate $\frac{G(s)}{\mc{G}(\sg)}$ to a power of $\frac{\mc{G}(s)}{\mc{G}(\sg)}$. 
%which will figure prominently in the proof of Lemma \ref{MAIN1} below. 
The following simple trigonometric inequality will bridge the gap between the first step and the desired estimate (see Lemma \ref{INTBOUND} below).
\begin{lem} \label{TrigProp}
Let $m \in \N$. Then for $a_1,\ldots,a_m \in \R$,
\begin{align}
\sin^2\left(\sum_{1 \leq j \leq m} a_j\right) \leq m\sum_{1 \leq j \leq m} \sin^2 a_j \label{PROP1}. 
%\sin^2\left(\frac{1}{m}\sum_{1 \leq j \leq m} a_j\right) \geq \left(\prod_{1 \leq j \leq m} \sin^2 a_j \right)^{\frac{1}{m}} \label{PROP2},
\end{align}
%the second equation holding in the case when $\frac{1}{\pi m}\sum_{1 \leq j \leq m} a_j \notin \mb{Z}$. Hence, for $\tau,t,\tau_1,\tau_2, t_1,t_2 \in \R$.
%\begin{align*}
%M_j(\tau_1, t_1) + M_j(\tau_2,t_2) &\geq \frac{1}{2}M_j(\tau_1+\tau_2,t_1+t_2) \\
%M_j(\tau,t) &\geq M_j(2\tau,0)\sin^2 t.
%\end{align*}
\end{lem}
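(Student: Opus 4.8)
The plan is to prove the trigonometric inequality
\[
\sin^2\!\Big(\sum_{1 \leq j \leq m} a_j\Big) \leq m\sum_{1 \leq j \leq m} \sin^2 a_j
\]
by reducing it to an iterated application of the two-variable case together with the Cauchy--Schwarz inequality. First I would dispose of the base case: for $m=1$ there is nothing to prove, and for $m=2$ one writes $\sin(a_1+a_2) = \sin a_1 \cos a_2 + \cos a_1 \sin a_2$, so that by the triangle inequality and $|\cos| \leq 1$ we get $|\sin(a_1+a_2)| \leq |\sin a_1| + |\sin a_2|$, and squaring followed by $(u+v)^2 \leq 2(u^2+v^2)$ yields $\sin^2(a_1+a_2) \leq 2(\sin^2 a_1 + \sin^2 a_2)$.

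For the general case the cleanest route is to first establish the \emph{subadditivity-type} bound $\big|\sin\big(\sum_{j} a_j\big)\big| \leq \sum_{j} |\sin a_j|$ by induction on $m$, using the $m=2$ step just proved (applied with the two ``variables'' $\sum_{j<m} a_j$ and $a_m$) and the inductive hypothesis. Once that is in hand, squaring gives
\[
\sin^2\!\Big(\sum_{1 \leq j \leq m} a_j\Big) \leq \Big(\sum_{1 \leq j \leq m} |\sin a_j|\Big)^2,
\]
and the Cauchy--Schwarz inequality (equivalently, the power-mean inequality) bounds the right-hand side by $m \sum_{1 \leq j \leq m} \sin^2 a_j$, which is exactly the claim. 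An alternative, essentially equivalent, approach is to expand $\sin(a_1 + \cdots + a_m)$ directly via the angle-addition formula into a sum of $2^{m-1}$ products each containing at least one sine factor, bound each such product in absolute value by the corresponding $|\sin a_i|$; but the telescoping induction is tidier and avoids bookkeeping.

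I do not anticipate any real obstacle here; the only mild subtlety is making sure the induction is set up so that the two-variable inequality is being applied to the correct grouping (namely with $\sum_{j=1}^{m-1} a_j$ as a single angle), and noting that all steps are valid for arbitrary real $a_j$ since $|\cos| \le 1$ and $|\sin| \le 1$ hold unconditionally. The final Cauchy--Schwarz step is completely standard: $\big(\sum_{j=1}^m x_j\big)^2 \le m \sum_{j=1}^m x_j^2$ for any reals $x_j$.
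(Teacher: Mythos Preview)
Your proposal is correct and is essentially the same as the paper's proof: both establish the subadditivity bound $|\sin(\sum_j a_j)| \leq \sum_j |\sin a_j|$ by induction on $m$ via the addition formula and $|\cos|\leq 1$, then square and apply the inequality $(\sum_j r_j)^2 \leq m\sum_j r_j^2$.
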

\begin{proof}
This follows by induction on $m\geq 1$ upon setting $A := a_1 + \ldots + a_{m-1}$ via the inequality
\begin{equation*}
\sin^2(A+a_m) \leq (|\sin(A)||\cos(a_m)| + |\cos(A)||\sin(a_m)|)^2 \leq (|\sin(A)| + |\sin(a_m)|)^2,
\end{equation*}
and the inequality $\left(\sum_{1 \leq j \leq m} r_j\right)^2 \leq m\sum_{1 \leq j \leq m} r_j^2$.
\end{proof}
%with positive coefficients.
In spite of our limited local knowledge of $g$ on short intervals, we can at least approximate the behaviour of $M_{|g|}$ on intervals of the form $(a,ca]$ with $c > 1$. We will use this in the proof of Theorem \ref{LOWERMV} below.
%without resorting to bounding trivially using the extreme values $\delta, B$ that bound $|g(p)|$ from below and above, respectively. 
\begin{lem}\label{DobApp}
Let $a \geq 2$, $B \geq 1$ and $1 < c \ll 1$ as $a \ra \infty$. Suppose $g:\mb{N} \ra (0,\infty)$ is a strongly or squarefree-supported multiplicative function, such that $g(p) \leq B$ for all primes $p$. Furthermore, assume that $\frac{M_g(t) \log t}{t} \ra \infty$ as $t \ra \infty$. Then $M_g(ca)-M_g(a) \ll_c BM_g(a)$.
\end{lem}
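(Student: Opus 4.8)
The plan is to bound $M_g(ca) - M_g(a) = \sum_{a < n \le ca} g(n)$ by relating each term to a divisor of $n$ lying in a dyadic-type range, and then to recognize the resulting sum as being controlled by $M_g(a)$ itself. First I would write each $n \in (a, ca]$ as $n = dm$ with $d \mid n$ the largest divisor of $n$ that is at most $\sqrt{a}$ (or some similarly chosen threshold); since $g$ is strongly or squarefree-supported multiplicative and $g(p) \le B$, we have $g(n) = g(d) g(m) \cdot (\text{correction})$, and in the squarefree case $g(n) \le g(d) g(m)$ when $(d,m)=1$, while in the strongly multiplicative case $g(dm) \le g(d) g(m)$ always because $g(p^k) = g(p) \le g(p) \cdot g(p) \cdots$ only if $g(p) \ge 1$ — so more carefully I would use the submultiplicativity bound $g(n) \le B^{\omega(n)} \le B \cdot g(n/p)$ valid for any prime $p \mid n$, or better, a Dirichlet-hyperbola-type decomposition.

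The cleaner route: for $n \in (a, ca]$, pick a prime $p \mid n$ with $p \le n^{1/2} \le (ca)^{1/2}$ (such a $p$ exists unless $n$ is prime or the square of a prime, and those exceptional $n$ contribute $\ll B \cdot \pi(ca) \cdot$(size bound) which is negligible against $M_g(a) \gg a \log^{\delta-1} a$ by Lemma~\ref{SELBERG}/the hypothesis $M_g(t)\log t / t \to \infty$). Then $n/p \le ca/p$, and $g(n) \le B \, g(n/p)$ since $g(p) \le B$ and $g$ is strongly or squarefree-supported. Summing,
\[
\sum_{a < n \le ca} g(n) \;\le\; B \sum_{p \le (ca)^{1/2}} \;\sum_{\substack{a < n \le ca \\ p \mid n}} g(n/p) \;+\; (\text{exceptional terms}) \;\le\; B \sum_{p \le (ca)^{1/2}} \;\sum_{a/p < m \le ca/p} g(m).
\]
Now the inner sum $\sum_{a/p < m \le ca/p} g(m) = M_g(ca/p) - M_g(a/p)$. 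Here I would use the a priori upper bound $M_g(t) \ll_B t \log^{B-1} t$ from Lemma~\ref{SELBERG} (and its completely-multiplicative analogue in the remark), so the inner sum is $\ll_B (ca/p)\log^{B-1}(ca/p) \ll_c (a/p)\log^{B-1} a$ for $p \le \sqrt{ca}$ (so that $ca/p \gg \sqrt{a}$ and the log is comparable to $\log a$). Then $\sum_{p \le \sqrt{ca}} 1/p \ll \log_2 a$ by Mertens, giving $M_g(ca) - M_g(a) \ll_{c,B} B \, a \log^{B-1} a \log_2 a$, which is \emph{weaker} than claimed — so this crude hyperbola split loses a $\log_2 a$ and is not quite enough.

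The main obstacle, then, is to avoid that logarithmic loss, and the fix I expect to use is to apply Lemma~\ref{PRECMERTENS}(i) together with the identity $\log n = \sum_{d \mid n} \Lambda(d)$: write $M_g(ca)\log(ca) - M_g(a)\log a$ in terms of $N_g := \sum g(n)\log n$, handle $N_g(ca) - N_g(a) = \sum_{d} \Lambda(d) g(d') \cdots$ via the convolution and the precise Mertens estimate $\sum_{x-y<n\le x}\Lambda(n)/n = y/x + O((y/x)^2)$ from Lemma~\ref{PRECMERTENS}(ii), and then compare $N_g - M_g \log(\cdot)$ using partial summation plus the hypothesis $M_g(t)\log t/t \to \infty$ (which ensures the difference $\int (M_g(t)/t)\,dt$-type term is $o$ of the main term). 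Concretely: by partial summation, $M_g(ca) - M_g(a) = \frac{N_g(ca)}{\log ca} - \frac{N_g(a)}{\log a} + \int_a^{ca} \frac{N_g(t)}{t \log^2 t}\,dt$, and one estimates $N_g(ca) - N_g(a)$ via $\Lambda = $ von Mangoldt, using $N_g(t) = \sum_{de \le t} \Lambda(d) g_1(d) g(e)$ (where $g_1$ accounts for prime powers, bounded since $g(p) \le B$ and, in the squarefree case, the $d = p$ terms dominate), reducing to short-interval sums of $\Lambda$ that are handled by Lemma~\ref{PRECMERTENS}. The key point making it work is that $(ca - a)/(ca) = 1 - 1/c \ll_c 1$, so the relevant $\Lambda$-sum over $(a,ca]$ contributes a factor $\ll_c 1$ rather than $\log_2 a$, and everything is bounded by $\ll_c B\, M_g(a)$ after using $M_g(t)/t \asymp M_g(a)/a$ for $t \in [a,ca]$ (again from the monotonicity afforded by $g \ge 0$ and the growth hypothesis). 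The lower-order and prime-power contributions are absorbed using $\sum_{p,k\ge 2} g(p^k)/p^k \ll_B 1$ in the strongly multiplicative case (trivially, since $g(p^k) = g(p) \le B$) and the squarefree-supported structure otherwise.
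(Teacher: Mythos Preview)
Your proposal has a genuine gap. The closing step invokes $M_g(t)/t \asymp M_g(a)/a$ for $t \in [a,ca]$, but the upper half of that is exactly the statement being proved: monotonicity of $M_g$ (from $g > 0$) gives only $M_g(t) \ge M_g(a)$, and the hypothesis $M_g(t)\log t/t \to \infty$ says nothing about upper bounds on short multiplicative intervals. If you try to repair this by induction on the scale of $a$, feeding the identity $N_g(x) = \sum_d \Lambda(d)g(d)M_g(x/d)$ with the inductive bound at levels $a/d < a$, the recursion delivers $M_g(ca) - M_g(a) \le K B M_g(a)\cdot\tfrac{\log a}{\log(ca)} + o(BM_g(a))$ from an inductive constant $K$; since $\tfrac{\log a}{\log(ca)} = 1 - O(1/\log a)$ there is no contraction, and the constant drifts, so the induction does not close.

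The paper's argument is entirely different and is in fact a sharpening of your \emph{first} (abandoned) idea. After discarding the primes in $(a,ca]$ (contributing $O_B(a/\log a) = o(M_g(a))$ by the growth hypothesis) and the $c$-smooth integers (only polylogarithmically many, by Ennola's smooth-number count), every remaining $n \in (a,ca]$ has a prime factor $p > c$; write $n = p^k m$ with $p^k \| n$, so that $m \le a$ and $g(n) \le B\,g(m)$. Your version effectively sums over all admissible $p$, overcounting each $n$ and incurring the $\log_2 a$ loss. The paper instead fixes one decomposition per $n$ and bounds, for each $m \le a$, the multiplicity $n_a(m)$ with which $m$ arises. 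The missing ingredient is the uniform bound $n_a(m) \le D(c)$: were it false, many distinct prime powers would lie in a single multiplicative interval of ratio $c$, and a pigeonhole argument would produce a rational $r \in (1, 1+\varepsilon)$, with $\varepsilon$ arbitrarily small, expressible as a ratio of four such prime powers; then $r^{1/q}$ for a suitable small prime $q$ is algebraic of bounded degree with Weil height $<\varepsilon/q$, contradicting Dobrowolski's lower bound. That diophantine bounded-multiplicity fact is what your sketch lacks.
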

\begin{proof}
By the Prime Number theorem, 
\begin{align*}
\sum_{a < p \leq ca} g(p) &\leq B\left(\pi(ca) - \pi(a)\right) = B\left(\frac{ca}{\log a + \log c} - \frac{a}{\log a} + O\left(\frac{a}{\log^2 a}\right)\right) \\
&= B(c-1)\frac{a}{\log a}\left(1 + O_c\left(\frac{1}{\log a}\right)\right) = o_{B,c}(M_g(a)).
\end{align*}
Thus, $M_g(ca)-M_g(a) = \sum_{a < n \leq ca \atop a \text{ not prime}} g(n) + o_{B,c}(M_{g}(a))$. Call the sum in this last expression $S(a)$.  We split $S(a) = S_1(a) + S_2(a)$, where $S_1(a)$ is supported by integers $n \in (a,ca]$ satisfying $P^+(n) \leq c$, where $P^+(n)$ denotes the largest prime factor of $n$, and $S_2(a)$ is supported on the complement of the support of $S_1(a)$.  Now, since $g$ is strongly multiplicative and each $c$-smooth number has at most $\pi(c)$ distinct prime factors,
\begin{equation*}
S_1(a) = \sum_{a < n \leq ca \atop a \text{not prime}, P^+(n) \leq c} g(n) \leq B^{\pi(c)} \sum_{a < n \leq ca \atop P^+(n) \leq c} 1 =: B^{\pi(c)}\left(\Psi(ca,c)-\Psi(a,c)\right),
\end{equation*}
where $\Psi(u,v) := |\{n \leq u : P^+(n) \leq v\}|$. By a theorem of Ennola (see Theorem III.5.2 in \cite{Ten2}), we have, for $ c \ll 1$,
\begin{equation*}
\psi(u,c) = \frac{1}{\pi(c)!}\left(1+O_c\left(\frac{1}{\log u}\right)\right)\prod_{p \leq c} \frac{\log u}{\log p},
\end{equation*}
whence we have
\begin{equation*}
\psi(ca,c)-\psi(a,c) = \left(1+O_c\left(\frac{1}{\log a}\right)\right)\left(\left(1+\frac{\log c}{\log a} \right)^{\pi(c)}-1\right)\left(\prod_{p \leq c} \frac{\log a}{\log p}\right) \ll_c \log^{\pi(c)} a.
\end{equation*}
Hence, we have $S_1(a) \ll_c \frac{a}{\log a}$ easily. \\
Now, for each $n$ in the support of $S_2(a)$ we can write $n = p^k m$ with $p > c$, $(m,p) = 1$, and $\frac{a}{p^k} < m \leq \frac{ca}{p^k} \leq a$ (if $g$ is only supported on squarefrees then assume $k = 1$). For each such $m$, we let $n_a(m)$ denote the number of choices of $n \in (a,ca]$ composite for which $m = \frac{n}{p^k}$, with $p^k || n$. We claim that $n_a(m) \leq D$ uniformly in $a$, where $D\ll_c 1$. The statement of the lemma will then follow because
\begin{equation*}
\sum_{a < n \leq ca \atop n \text{ not prime}, P^+(n) > c} g(n) \leq B\sum_{m \leq a} n_a(m)g(m) \leq DBM_g(a).
\end{equation*}
Assume for the sake of contradiction that $\limsup_{a \ra \infty} \left(\max_{m \leq a} n_a(m)\right) = \infty$. Thus, for any $N$ and $a$ sufficiently large we can select an integer $m \leq a$ such that $n_a(m) := R \geq 2N+1$.  Accordingly, there exist $R$ prime powers $p_1^{k_1} < \ldots < p_R^{k_R}$ for which $p_j^{k_j}m \in (a,ca]$.  Let $r_{ij} := p_i^{k_i}p_j^{-k_j}$, with $i > j$. There are $\frac{1}{2}R(R-1) \geq 2N^2+1$ such ratios, and $r_{ij} \in (1,c]$.  We split this latter interval into $N^2$ intervals $(1+\frac{(c-1)l}{N^2}, 1+\frac{(c-1)(l+1)}{N^2}] =: I_N(l)$.  By the pigeonhole principle, there exists some $l_0$ for which there are two distinct pairs $(i_1,j_1)$ and $(i_2,j_2)$ such that $r_{i_1j_1} < r_{i_2j_2} \in I_N(l_0)$. Set $r := \frac{r_{i_2j_2}}{r_{i_1j_1}} \in \left(1,1+\frac{c-1}{N^2}\right)$. Assume for the moment that one of $k_{i_1},k_{i_2},k_{j_1},k_{j_2}$ is not divisible by some fixed prime $q$ (if $g$ is supported on squarefree integers then this is trivial since they are all 1 whenever $g(m) \neq 0$). Thus, let $\alpha_q := r^{\frac{1}{q}}$, which is an algebraic integer of degree $q$.  Note that $\mb{Q}(\alpha_q)$ is a Kummer extension of degree at least 2 with abelian Galois group corresponding to multiplication by $q$th roots of unity, with minimal polynomial $x^q - r$. It follows that $|\sg(\alpha_q)| = |\alpha_q|$ for each $\sg \in \text{Gal}(\mb{Q}(\alpha_q)/\mb{Q})$, and the Mahler measure of $x^q-r$ is $M(\alpha_q) = r$; thus, the Weil height of $\alpha$ is $\log\alpha = \frac{1}{q}\log r$. Now, by Dobrowolski's theorem (see Section 4.4 in \cite{BoG}), $\log \alpha \geq \frac{1}{4q} \left(\frac{\log_2 3q}{\log 3q}\right)^3$, whence it follows that $\log r \geq \frac{1}{4}\left(\frac{\log_2 3q}{\log 3q}\right)^3$.  Hence, for $q \ll 1$ as $a \ra \infty$, we see that we can choose $N$ large enough so that $\frac{1}{N^2} < \frac{1}{8}\left(\frac{\log_2 q}{\log q}\right)^3$. On the other hand, $\log r \leq \log\left(1+\frac{1}{N^2}\right) \leq \frac{2}{N^2}$, contradicting the conclusion of Dobrowolski's theorem. \\
Assume now that no such $2 \leq q \ll 1$ exists. Then $x^2 + r$ generates a quadratic extension for which $\log r \geq \frac{1}{2}\left(\frac{\log_2 6}{\log 6}\right)^3$, by Dobrowolski's theorem (which holds for extensions of degree at least 2). The same contradiction as above follows.
\end{proof}
Let $L_g(u) := \sum_{n \leq u} \frac{g(n)}{n}$ and $P_g(u) := \sum_{p \leq u} \left(1+\frac{g(p)}{p}\right)$. The following lemma relating $L_g(u)$ and $P_g(u)$ is standard, but we prove it for completeness. It will be necessary for us in conjunction with Theorem \ref{LOWERMV} in order to relate $M_{|g|}(x)$ with $\mc{G}(\sg)$.
\begin{lem} \label{LOGSUM}
Let $B \geq 1$ and let $g : \mb{N} \ra [0,\infty)$ be a multiplicative function with $|g(p)| \leq B$ for each prime $p$, and such that $\sum_{p^k, k \geq 2} \frac{g(p^k)}{p^k} \ll_B 1$. Then for each $u$ sufficiently large, $L_g(u) \asymp_B P_g(u)$. 
%\begin{equation*}
%C_1 \prod_{p \leq u} \left(1+\frac{g(p)}{p}\right) \leq \sum_{n \leq u} \frac{g(n)}{n} \leq C_2 \prod_{p \leq u} \left(1+\frac{g(p)}{p}\right). 
%\end{equation*}
\end{lem}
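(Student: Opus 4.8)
The plan is to establish the two-sided bound $L_g(u) \asymp_B P_g(u)$ by passing through an Euler-product-type comparison. First I would take logarithms on the prime-product side. Since $g$ is non-negative and $g(p) \leq B$, we have $1 + \frac{g(p)}{p} \leq 1 + \frac{B}{p}$, so $\log P_g(u)$ — interpreting $P_g(u)$ as $\prod_{p \leq u}(1 + g(p)/p)$ rather than the sum written in the statement (the two are comparable up to bounded multiplicative constants depending on $B$, since $\sum_p \frac{g(p)^2}{p^2} \ll_B 1$) — satisfies
\begin{equation*}
\log P_g(u) = \sum_{p \leq u} \log\left(1 + \frac{g(p)}{p}\right) = \sum_{p \leq u} \frac{g(p)}{p} + O_B(1),
\end{equation*}
using $\log(1+t) = t + O(t^2)$ for $0 \leq t \leq B$ and $\sum_p p^{-2} < \infty$. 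So it suffices to compare $L_g(u)$ with $\exp\left(\sum_{p \leq u} \frac{g(p)}{p}\right)$.

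Next I would handle the sum $L_g(u) = \sum_{n \leq u} \frac{g(n)}{n}$. For the upper bound, drop the constraint $n \leq u$ and extend to all $n$ with largest prime factor at most $u$:
\begin{equation*}
L_g(u) \leq \prod_{p \leq u}\left(1 + \frac{g(p)}{p} + \frac{g(p^2)}{p^2} + \cdots\right) = \prod_{p \leq u}\left(1 + \frac{g(p)}{p}\right)\prod_{p \leq u}\left(1 + \frac{\sum_{k \geq 2} g(p^k)/p^k}{1 + g(p)/p}\right).
\end{equation*}
The second product is $\ll_B 1$ by the hypothesis $\sum_{p, k \geq 2} \frac{g(p^k)}{p^k} \ll_B 1$ together with $1 + g(p)/p \geq 1$, so $L_g(u) \ll_B P_g(u)$. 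For the lower bound I would argue in the reverse direction: expanding the truncated Euler product over primes $p \leq \sqrt{u}$ (or $p \leq u^{1/2}$) using only the prime terms $g(p)/p$, every resulting integer $n = \prod p_i$ with distinct $p_i \leq \sqrt u$ and $\omega(n)$ bounded — actually more carefully, restrict to squarefree $n$ built from primes up to $u^{1/2}$ with at most, say, $2\log\log u$ prime factors, so that $n \leq u$ automatically — contributes to $L_g(u)$. A cleaner route: use Rankin's trick in reverse isn't available for lower bounds, so instead I would invoke the standard fact that for non-negative multiplicative $g$ with $g(p) \leq B$,
\begin{equation*}
L_g(u) \gg_B \prod_{p \leq u}\left(1 + \frac{g(p)}{p}\right),
\end{equation*}
which follows by noting $\sum_{n \leq u} \frac{g(n)}{n} \geq \sum_{n \leq u,\, n \text{ squarefree},\, p | n \Rightarrow p \leq u^{1/2}} \frac{g(n)}{n}$ and comparing this to the product $\prod_{p \leq u^{1/2}}(1 + g(p)/p)$ via the observation that the "missing" tail (products of primes $\leq u^{1/2}$ that exceed $u$) is a negligible fraction — quantified using $\sum_{p \leq u^{1/2}} \frac{g(p)\log p}{p} \ll_B \log u$ (Mertens-type, since $g(p) \leq B$) so that the mass of terms with $n > u$ is controlled. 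Finally $\prod_{p \leq u^{1/2}}(1 + g(p)/p) \gg_B \prod_{p \leq u}(1 + g(p)/p)$ because $\sum_{u^{1/2} < p \leq u} \frac{g(p)}{p} \leq B \sum_{u^{1/2} < p \leq u} \frac1p = B\log 2 + o(1) = O_B(1)$ by Mertens' second theorem.

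The main obstacle will be making the lower bound fully rigorous: the passage from $\sum_{n \leq u} g(n)/n$ to the truncated Euler product requires showing that restricting to squarefree $n$ composed of small primes loses only a constant factor, and that the integers one discards (those exceeding $u$ but built from primes $\leq u^{1/2}$, or those with too many prime factors) carry a bounded proportion of the total product mass. This is a routine but slightly delicate Rankin/large-deviation argument — one can bound $\sum_{n > u,\, P^+(n) \leq u^{1/2}} \frac{g(n)}{n}$ by $u^{-\e}\sum_{P^+(n)\leq u^{1/2}} \frac{g(n)}{n^{1-\e}}$ and choose $\e \asymp 1/\log u$ so the extra Euler factors $\prod_{p\leq u^{1/2}}(1 + g(p)p^{\e}/p + \cdots)$ stay within a constant of the main product. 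Everything else is bookkeeping with Mertens' theorems and the convergence hypothesis on prime powers.
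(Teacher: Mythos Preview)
Your approach is essentially the same as the paper's: the upper bound via extending to $P^+(n)\le u$ and factoring into an Euler product is identical, and for the lower bound the paper does exactly what you sketch — restrict to squarefree $n$ with $P^+(n)\le u^{\kappa}$, use Rankin's trick with $\e\asymp 1/\log u$ to bound the tail $\sum_{n>u,\,P^+(n)\le u^{\kappa}}\mu^2(n)g(n)/n$, and then recover $P_g(u)$ from $P_g(u^{\kappa})$ via Mertens' second theorem.

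There is, however, one quantitative point where your proposal falls short: the fixed cutoff $u^{1/2}$ does not work for the stated range $B\ge 1$. With $\e=c/\log u$ and cutoff $u^{\kappa}$, the Rankin tail is bounded by
\[
u^{-\e}\prod_{p\le u^{\kappa}}\Bigl(1+\frac{g(p)}{p^{1-\e}}\Bigr)
\;\le\; e^{-c}\exp\Bigl(2B\e\sum_{p\le u^{\kappa}}\frac{\log p}{p}\Bigr)P_g(u^{\kappa})
\;\le\; e^{c(2B\kappa-1)+o(1)}\,P_g(u^{\kappa}),
\]
using $p^{\e}-1\le 2\e\log p$ and Mertens' first theorem. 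For this to beat $P_g(u^{\kappa})$ one needs $2B\kappa<1$, i.e.\ $\kappa<1/(2B)$; with your choice $\kappa=1/2$ the exponent $c(B-1)$ is nonnegative for every $B\ge 1$, so the tail bound never dips below the full product and the lower bound argument collapses. The paper repairs this by taking $\kappa=\tfrac{1}{2B}$ (and $\e=\tfrac{1}{\kappa B\log u}$), after which everything you wrote goes through verbatim. So the idea is right, but the cutoff must depend on $B$.
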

The hypotheses of this lemma are clearly satisfied by a strongly or completely multiplicative function that is uniformly bounded on primes.
\begin{proof}
Set $S := \sum_{p, k \geq 2} \frac{g(p^k)}{p^k}$. The upper bound follows immediately from
\begin{align*}
L_g(u) &\leq \sum_{P^+(n) \leq u} \frac{g(n)}{n} = \prod_{p \leq u} \left(1+\frac{g(p)}{p} + \sum_{k \geq 2} \frac{g(p^k)}{p^k}\right) \leq P_g(u)\prod_{p \leq u}\left(1+\sum_{k \geq 2} \frac{g(p^k)}{p^k}\right) \leq e^SP_g(u).
%= e^S(1+B)^{\sqrt{B}}e^{B^{3/2}}P_g(u).
\end{align*}
To derive a lower bound we can no longer bound the set of $n \leq u$ by those with largest prime factor less than $u$. Instead, we let $\kappa$ be a parameter to be chosen, and bound $L_g(u)$ from below by $P_g(u^{\kappa})$, since for $\kappa > 0$ this should be of the same order as $P_g(u)$. Precisely,
\begin{equation*}
L_g(u) \geq \sum_{n \leq u \atop P^+(n) \leq u^{\kappa}} \mu^2(n)\frac{g(n)}{n} = \sum_{P^+(n) \leq u^{\kappa}} \mu^2(n)\frac{g(n)}{n} - \sum_{n > u \atop P^+(n) \leq u^{\kappa}} \mu^2(n)\frac{g(n)}{n} = P_g(u^{\kappa}) - \sum_{n > u \atop P^+(n) \leq u^{\kappa}} \mu^2(n)\frac{g(n)}{n}.
\end{equation*}
We will also bound the second sum above by a multiple $P_g(u^{\kappa})$, using the condition $n > u$ to save a constant factor. Indeed, by Rankin's trick, for $\e > 0$ a second parameter to be chosen,
\begin{equation*}
\sum_{n > u \atop P^+(n) \leq u^{\kappa}} \mu^2(n)\frac{g(n)}{n} \leq u^{-\e}\sum_{P^+(n) \leq u^{\kappa}} \mu^2(n) \frac{g(n)}{n^{1-\e}} = u^{-\e} \prod_{p \leq u^{\kappa}} \left(1+\frac{g(p)}{p}e^{\e \log p}\right).
\end{equation*}
When $\e$ is sufficiently small then this last factor is approximately $P_g(u^{\kappa})$. Indeed, observe that for each $p \leq u^{\kappa}$,
\begin{equation*}
\left(1+\frac{g(p)}{p}e^{\e\log p}\right)\left(1+\frac{g(p)}{p}\right)^{-1} = 1+\frac{g(p)\left(e^{\e\log p}-1\right)}{p+g(p)} \leq 1+2B\e\frac{\log p}{p}.
\end{equation*}
so, setting $\e := \frac{1}{\kappa B \log u}$, we have
\begin{equation*}
P_g(u^{\kappa})^{-1}\prod_{p \leq u^{\kappa}} \left(1+\frac{g(p)}{p^{1-\e}}\right) \leq \prod_{p \leq u^{\kappa}} \left(1+2B\e\frac{\log p}{p}\right) \leq \exp\left(2B\e \sum_{p \leq u^{\kappa}} \frac{\log p}{p}\right) = e + o(1).
\end{equation*}
It therefore follows that
\begin{equation*}
\sum_{n > u \atop P^+(n) \leq u^{\kappa}} \mu^2(n)\frac{g(n)}{n} \leq u^{-\e}eP_g(u) = e^{1-\frac{1}{\kappa B}}P_g(u)(1+o(1)).
\end{equation*}
We now select $\kappa = \frac{1}{2B}$ if $B \geq \frac{1}{2}$, and $\kappa = \frac{1}{2}$ otherwise.  Then
\begin{equation*}
L_g(u) \geq P_g(u^{\kappa}) \left(1-2e^{1-\frac{1}{\kappa B}}\right)P_g(u^{\kappa}) \geq (1-2e^{-1})P_g(u^{\kappa}),
\end{equation*}
for $u$ sufficiently large. Finally, to complete the proof it suffices to show that $P_g(u) \asymp P_g(u^{\kappa})$. Indeed, when $u > B^{2B}$,
\begin{equation*}
\frac{P_g(u)}{P_g(u^{\kappa})} = \prod_{u^{\kappa} < p \leq u} \left(1+\frac{g(p)}{p}\right) \leq \exp\left(B\sum_{u^{\kappa} < p \leq u} \frac{1}{p}\right) \leq e^{(B+1)\log\left(\frac{1}{\kappa}\right)} \leq e^{(B+1)\log (4\max\{B,1/2\})}.
\end{equation*}
Hence, we have
\begin{equation*}
L_g(u) \geq (1-2e^{-1})P_g(u)\left(\frac{P_g(u^{\kappa})}{P_g(u)}\right) \geq (1-2e^{-1})e^{-(B+1)\log(4\max\{B,1/2\})}P_g(u),
\end{equation*}
and the proof is complete.
\end{proof}
As a consequence of the lower bound in the previous lemma (which is the non-trivial part of it), we have the following.
\begin{lem} \label{DENOMPARS}
Let $u \geq 3$ be sufficiently large and $\sg := 1+\frac{1}{\log u}$. Let $g$ satisfy the hypotheses of the Lemma \ref{LOGSUM}, let $G(s)$ be its Dirichlet series and assume that this converges absolutely in the half-plane $\text{Re}(s) > 1$. Then $L_g(u) \gg_B G(\sg)$.
\end{lem}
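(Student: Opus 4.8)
The plan is to reduce the statement to the inequality $G(\sigma) \ll_B P_g(u)$, after which Lemma \ref{LOGSUM} — specifically its nontrivial lower half, $P_g(u) \ll_B L_g(u)$ — finishes the proof. Since $g$ is multiplicative and $G(s)$ converges absolutely for $\mathrm{Re}(s) > 1$, I would expand $G(\sigma) = \prod_p\left(1 + \sum_{k \geq 1}\frac{g(p^k)}{p^{k\sigma}}\right)$ and split the product according to whether $p \leq u$ or $p > u$.

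For $p \leq u$, the bound $\sigma > 1$ makes each Euler factor at most $1 + \frac{g(p)}{p} + \sum_{k \geq 2}\frac{g(p^k)}{p^k}$; factoring out $1 + \frac{g(p)}{p}$, using $1 + x \leq e^x$, and invoking the hypothesis $S := \sum_{p, k\geq 2}\frac{g(p^k)}{p^k} \ll_B 1$ from Lemma \ref{LOGSUM} yields $\prod_{p \leq u}(\cdots) \leq e^{S}P_g(u) \ll_B P_g(u)$. For $p > u$, taking logarithms and applying $\log(1+x)\leq x$ bounds $\log\prod_{p > u}(\cdots)$ by $\sum_{p > u}\frac{g(p)}{p^\sigma} + S \leq B\sum_{p > u}\frac{1}{p^\sigma} + S$; since $\sigma = 1 + \frac{1}{\log u}$ we have $(\sigma-1)\log u = 1$, so the estimate \eqref{TAIL} from the proof of Lemma \ref{OLD} gives $\sum_{p > u}\frac{1}{p^\sigma} = \int_1^\infty \frac{e^{-v}}{v}\,dv + O(e^{-\sqrt{\log u}}) \ll 1$, whence $\prod_{p > u}(\cdots) \ll_B 1$. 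Multiplying the two ranges gives $G(\sigma) \ll_B P_g(u)$, and Lemma \ref{LOGSUM} then gives $P_g(u) \ll_B L_g(u)$, as desired.

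There is no real obstacle here; the two points worth flagging are that the tail estimate $\sum_{p > u}p^{-\sigma} \ll 1$ genuinely relies on the coupling $\sigma - 1 = 1/\log u$ (it would fail for $u$ fixed as $\sigma \downarrow 1$), and that the passage $P_g(u) \ll_B L_g(u)$ is precisely the nontrivial direction of Lemma \ref{LOGSUM}, which is exactly why that lemma was isolated and proved beforehand.
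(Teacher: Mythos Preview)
Your proposal is correct and follows essentially the same route as the paper: both reduce to $G(\sigma)\ll_B P_g(u)$ by splitting the Euler product at $p=u$, using the tail estimate \eqref{TAIL} (with the crucial coupling $\sigma-1=1/\log u$) for $p>u$ and the hypothesis $\sum_{p,k\geq 2}g(p^k)p^{-k}\ll_B 1$ for higher prime powers, and then invoke the nontrivial direction $P_g(u)\ll_B L_g(u)$ of Lemma~\ref{LOGSUM}. The only cosmetic difference is that the paper separates off the ratio $G(\sigma)\big/\prod_p(1+g(p)p^{-\sigma})$ and bounds it by $\ll_B 1$, whereas you absorb the higher prime-power terms directly inside each Euler factor.
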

\begin{proof}
By Lemma \ref{LOGSUM} we have
\begin{align*}
L_g(u) &\gg_B \prod_{p \leq u} \left(1+\frac{g(p)}{p}\right) = \prod_{p} \left(1+\frac{g(p)}{p^{\sg}}\right)\left(\prod_{p \leq u} \left(1+\frac{g(p)}{p}\right)\left(1+\frac{g(p)}{p^{\sg}}\right)^{-1}\right)\prod_{p > u} \left(1+\frac{g(p)}{p}\right)^{-1} \\
&\geq \prod_p \left(1+\frac{g(p)}{p^{\sg}}\right)\left(\prod_{p > u} \left(1+\frac{g(p)}{p}\right)^{-1}\right),
\end{align*}
the last estimate following because $g$ is non-negative and $\sg > 1$. Now, 
\begin{equation*}
\prod_{p > u} \left(1+\frac{g(p)}{p^{\sg}}\right) \leq \exp\left(B\sum_{p > u} \frac{1}{p^{\sg}}\right) \ll_B 1,
\end{equation*}
by \eqref{TAIL}.
%since, by partial summation with the prime number theorem with de la Vall\'{e}e-Poussin error term, we have
%\begin{align*}
%\sum_{p > u} \frac{1}{p^{\sg}} &= \int_u^{\infty} v^{-\sg}d\pi(v) = \int_u^{\infty} \frac{dv}{v\log v} e^{-(\sg-1)\log v} + O\left(\int_u^{\infty} \frac{dv}{v^{1+\sg}}e^{-\sqrt{\log v}}\right) \\
%&= \int_{(\sg-1)\log u}^{\infty} \frac{dv}{v}e^{-v} + O\left(e^{-\sqrt{\log u}}\right) \ll 1.
%\end{align*}
Also,
\begin{equation*}
G(\sg)\prod_p \left(1+\frac{g(p)}{p^{\sg}}\right)^{-1} \leq \prod_p \left(1+\sum_{\nu \geq 2} \frac{g(p^{\nu})}{p^{\nu\sg}}\right) \ll_B 1.
\end{equation*}
It follows that
\begin{equation*}
L_g(u) \gg_B G(\sg)\left(G(\sg)\prod_p \left(1+\frac{g(p)}{p^{\sg}}\right)^{-1}\right)^{-1} \gg_B G(\sg),
\end{equation*}
as claimed.
\end{proof}
\section{Arithmetic Estimates}
\subsection{Lower Bounds for $M_{|g|}$}
%In analogy to the upper bound obtained for $|M_h(t)|$ in terms of its integral over $[1,t]$, 
In this section we bound $M_{|g|}(t)$ from below.  Laterally, our estimate will essentially suffice to prove Theorem \ref{LOWERMV}. \\
We will need to relate $N_{|g|}(t)$ to an integral average of itself on a short interval. This same method, used in the next section as well to deal with $N_h(t)$ with $h(n) := g(n)-A|g(n)|$ will permit us a passage towards harmonic analytic methods. The next lemma will be essential in this regard.
\begin{lem} \label{STRONG}
Suppose $g$ is strongly multiplicative.
% or completely multiplicative.  
Then $N_g(x) = \sum_{d \leq x} \Lambda(d)g(d)M(x/d)$ for any $x \geq 1$.
\end{lem}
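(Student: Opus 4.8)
The route I would take runs through the elementary identity $\log = 1 \ast \Lambda$, i.e.\ $\log n = \sum_{d \mid n} \Lambda(d)$ for every $n \geq 1$. Feeding this into the definition of $N_g$ and interchanging the two finite sums (over $n \leq x$ and over $d \mid n$), one gets
$$
N_g(x) = \sum_{n \leq x} g(n) \sum_{d \mid n} \Lambda(d) = \sum_{d \leq x} \Lambda(d) \sum_{\substack{n \leq x \\ d \mid n}} g(n) = \sum_{d \leq x} \Lambda(d) \sum_{m \leq x/d} g(dm),
$$
where in the last step I set $n = dm$. Everything then reduces to the local claim that $\sum_{m \leq x/d} g(dm) = g(d) M(x/d)$ whenever $\Lambda(d) \neq 0$, and assembling these pieces over $d \leq x$ gives the lemma.

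Since $\Lambda$ is supported on prime powers, it suffices to verify this local claim for $d = p^r$, and this is exactly the step that uses strong multiplicativity rather than mere multiplicativity. From $g(p^k) = g(p)$ for all $k \geq 1$ one has $g(d) = g(p^r) = g(p)$; for $m$ with $p \nmid m$ the factorization $g(dm) = g(d)g(m)$ is immediate, while for $m$ divisible by $p$ one must unwind $g(p^r m)$ using $g(p^k) = g(p)$ and reconcile the resulting terms according to the $p$-adic valuation of $m$. Carrying this out and summing the contributions over $d \leq x$ yields the asserted formula.

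The hard part will be precisely this last reconciliation of the terms $m$ in the range $m \leq x/d$ that are not coprime to the underlying prime $p$ of $d = p^r$; for all other $m$ the identity $g(dm) = g(d)g(m)$ is automatic. For a completely multiplicative $g$ this difficulty disappears entirely, since then $g(dm) = g(d)g(m)$ holds with no coprimality restriction and the identity is immediate, which is why the same statement is recorded for that case as well.
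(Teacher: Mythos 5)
Your reduction via $\log = 1\ast\Lambda$ and the interchange of summation is exactly the paper's opening move, and you have correctly located the crux: the local identity $\sum_{m \leq x/d} g(dm) = g(d) M_g(x/d)$ for $d = p^r$, whose only delicate part is the contribution of those $m$ divisible by $p$. But you then write ``carrying this out \ldots yields the asserted formula'' without carrying it out, and this is a genuine gap rather than a routine omission, because the deferred reconciliation cannot be made to work as stated. If $m = p^j m'$ with $j \geq 1$ and $p \nmid m'$, strong multiplicativity gives $g(p^r m) = g(p^{r+j} m') = g(p)\,g(m')$, whereas $g(p^r)g(m) = g(p)\cdot g(p^j)g(m') = g(p)^2 g(m')$; these disagree unless $g(p) \in \{0,1\}$ or $g(m')=0$. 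Summation does not repair the discrepancy: taking $g(n) = c^{\omega(n)}$, $x=4$, $d=2$, one has $\sum_{m \leq 2} g(2m) = g(2)+g(4) = 2c$ while $g(2)M_g(2) = c(1+c)$, and correspondingly $N_g(4)$ and $\sum_{d \leq 4}\Lambda(d)g(d)M_g(4/d)$ differ by $(c-c^2)\log 2$. So the ``hard part'' you flagged is not merely tedious; it is the point at which the exact identity fails for general strongly multiplicative $g$.

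For what it is worth, the paper's own proof performs essentially the same manipulation and elides the same point: in the displayed chain, the passage from $\sum_{t \geq l} g(p) \sum_{p^t m \leq x,\, p \nmid m} g(m)$ to $g(p)\sum_{t \geq l} \sum_{m \leq x/p^l,\, p^{t-l}\| m} g(m)$ silently identifies $g(m')$ with $g(p^{t-l}m') = g(p^{t-l})g(m')$, which is only legitimate when $t=l$. The mismatch is supported on integers $n$ divisible by the square of the relevant prime, and in the later applications it is of the size of error terms already being tolerated, but as a blind proof of the lemma as stated your proposal is incomplete precisely where you stopped, and completing it in the advertised way is impossible without an additional hypothesis (e.g., $g$ completely multiplicative, where, as you note, the issue disappears).
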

\begin{proof}
By definition, we have 
\begin{equation*}
N_g(x) = \sum_{n \leq x} g(n)\log(n) = \sum_{d \leq x} \Lambda(d) \sum_{m\leq x/d} g(md).
\end{equation*}
%the last expression coming by separating the indices $d$ according to their number of distinct prime factors, and using Lemma \ref{LAMBDAPROP} to restrict the length of this sum to $k$. When $g$ is completely multiplicative this result is trivial, the middle expression above clearly factors as $g(d)\sum_{m \leq x/d} g(m) = g(d)M_g(x/d)$, and the claim follows immediately. Thus, we assume henceforth that $g$ is strongly multiplicative. \\
%We will prove by induction on $r \leq k$ that 
%\begin{equation} \label{INDUCTGOAL}
%\sum_{d \leq x \atop \omega(d) = r} \Lambda_k(d)\sum_{m \leq x/d} g(md) = \sum_{d \leq x \atop \omega(d)=r} \Lambda_k(d)\left(\prod_{p|d} g(p)\right)M_g(x/d),
%\end{equation}
%which implies the claim of the lemma, because $\prod_{p^k||d} g(p) = \prod_{p^k|| d} g(p^k) = g(d)$, by multiplicativity. \\
Write $d = p^l$. Then
\begin{equation*}
\sum_{m \leq x/p^l} g(p^lm) = \sum_{t \geq l} \sum_{p^tm \leq x \atop p\nmid m} g(p^tm) = \sum_{t \geq l} g(p^t)\sum_{p^tm \leq x \atop p\nmid m} g(m) = g(p)\sum_{t\geq l} \sum_{m \leq x/p^l \atop p^{t-l}||m} g(m) = g(p)\sum_{m \leq x/p^l} g(m),
\end{equation*}
so that
\begin{equation*}
\sum_{p^l \leq x} \Lambda(p^l) \sum_{m \leq x/p^l} g(mp^l) = \sum_{p^l \leq x} \Lambda(p^l) g(p)M_g(x/p^l).
\end{equation*}
%Assume that \eqref{INDUCTGOAL} holds for some $r \geq 1$ and any $x$. For $d \leq x$ with $\omega(d) = r+1$, write $d = d'p^l$, with $p\nmid d'$, so that $\omega(d') = r$. Then
%\begin{equation*}
%\sum_{m \leq x/d} g(md) = \sum_{m \leq x/d'p^l} g(md'p^l) = \sum_{t \geq l} \sum_{p^tm \leq x/d' \atop p\nmid md'} g(md'p^t) = g(p)\sum_{t \geq l} \sum_{m \leq x/d'p^l \atop p^{t-l}||m} g(md') = g(p)\sum_{m \leq x/d'p^l} g(md'),
%\end{equation*}
%and by induction (with $x$ replaced by $x/p^l$), we get
%\begin{equation*}
%\sum_{m \leq x/d} g(md) = g(p)\left(\prod_{q|d'} g(q)\right)\sum_{m \leq x/d'p^l} g(m) = \left(\prod_{p|d}g(p)\right)M_g(x/d).
%\end{equation*}
%Summing this over $d \leq x$ with $\omega(d) = r+1$ proves \eqref{INDUCTGOAL} for each $1 \leq r \leq k$ by induction.  This completes the proof.
\end{proof}
\begin{rem}
The above proof is completely trivial when $g$ is completely multiplicative, because $g(p^tm) = g(p^t)g(m)$, regardless of $m$. 
%or either $g(p^tm) = g(p^t)g(m)$ whenever $(m,p) = 1$ and $t = 1$.
% and when $g$ is squarefree-supported we can replace $M_g(x/d)$ on the other side.
\end{rem}
\begin{lem} \label{COMPAVG}
Let $2 \leq y \leq t \leq x$ with $y = e^{-\log^c t}$ for $c < \frac{1}{2}$. For $A \in \mb{C}$ with $|A| \in [0,1]$ let $h(n) := g(n)-A|g(n)|$ and set $\mu:= \max_p |\text{arg}(g(p))|$. Then 
\begin{align*}
|N_h(t)| &= y^{-1}\int_{t-y}^t |N_h(u)| du + O_B\left(\frac{M_{|g|}(t)}{\log^2 t}\right)\\
&\leq y^{-1}\int_{t-y}^t du \left(\sum_{d \leq u} \Lambda(d)|g(d)||M_h(u/d)| + |A|\mu \sum_{d \leq u} \Lambda(d)|g(d)|M_{|g|}(u/d)\right) + O_B\left(\frac{M_{|g|}(t)}{\log^2 t}\right).
\end{align*}
\end{lem}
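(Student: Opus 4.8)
The statement to prove (Lemma~\ref{COMPAVG}) asserts two things: an equality relating $|N_h(t)|$ to a short-interval integral average of $|N_h(u)|$ up to an acceptable error, and then an upper bound for that average obtained by expanding $N_h(u)$ via the Dirichlet convolution identity for strongly multiplicative functions.

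\textbf{Plan.} First, I would establish that $N_h(u)$ varies slowly on the interval $(t-y,t]$. Since $h = g - A|g|$ with $|A| \le 1$, we have $|h(n)| \le |g(n)| + |A||g(n)| \le 2|g(n)|$, so it suffices to control the oscillation of partial sums of a function dominated by $|g|$. For $t-y < u \le t$,
\begin{equation*}
|N_h(t) - N_h(u)| \le \sum_{u < n \le t} |h(n)| \log n \le 2(\log t)\sum_{u < n \le t} |g(n)| \le 2(\log t)\left(M_{|g|}(t) - M_{|g|}(t-y)\right).
\end{equation*}
The key input is that $M_{|g|}(t) - M_{|g|}(t-y)$ is small relative to $M_{|g|}(t)/\log^3 t$ (say), which follows from Lemma~\ref{DobApp}: with $c < \tfrac12$ and $y = e^{-\log^c t}$ one has $t - y = t(1 - o(1))$, in fact $t/(t-y) = 1 + O(e^{-\log^c t})$, so a dyadic-type comparison (or a direct application of the Ennola/Dobrowolski argument underlying Lemma~\ref{DobApp}, which shows $M_{|g|}$ cannot concentrate on such a short interval) gives $M_{|g|}(t) - M_{|g|}(t-y) \ll_B M_{|g|}(t)\, e^{-c'\log^c t}$ for some $c' > 0$; here I use the hypothesis, carried through the paper, that $M_{|g|}(t)\log t / t \to \infty$, which justifies invoking Lemma~\ref{DobApp}. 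Multiplying by $\log t$ still leaves an error of size $O_B(M_{|g|}(t)/\log^2 t)$, which beats the stated error term handily. Averaging $N_h(u)$ over $u \in (t-y,t]$ and applying the triangle inequality then yields
\begin{equation*}
|N_h(t)| = y^{-1}\int_{t-y}^{t} |N_h(u)|\, du + O_B\!\left(\frac{M_{|g|}(t)}{\log^2 t}\right),
\end{equation*}
which is the first displayed equation.

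\textbf{Second step: expand $N_h(u)$.} Apply Lemma~\ref{STRONG} to the strongly multiplicative function $h$ — but $h$ need not be multiplicative, so instead I would expand $g$ and $|g|$ separately, both of which are strongly multiplicative, and use linearity: $N_h(u) = N_g(u) - A\, N_{|g|}(u)$, and by Lemma~\ref{STRONG},
\begin{equation*}
N_h(u) = \sum_{d \le u}\Lambda(d) g(d) M_g(u/d) - A\sum_{d\le u}\Lambda(d)|g(d)| M_{|g|}(u/d).
\end{equation*}
Now write $M_g(u/d) = M_h(u/d) + A\, M_{|g|}(u/d)$ in the first sum; noting $|g(d)| = |g(p)|$ and $g(d) = g(p)$ for $d = p^l$ a prime power and $|g(p)| \le |g(p)|$, and that $g(p) - A|g(p)| = h$-values have $|g(p) - |g(p)|\,| \le |g(p)| |\,e^{i\arg g(p)} - 1| \le |g(p)| \cdot |\arg g(p)| \le \mu |g(p)|$ when $|A|=1$ — more carefully, for general $A$ with $|A|\le 1$ I take the crude bound $|g(d) - A|g(d)|\,| \le |g(d)|$ but track the refinement: after substituting, the coefficient of the $M_{|g|}(u/d)$ term becomes $A(g(p) - |g(p)|\cdot \overline{A}^{-1}\cdots)$ — this is the one place requiring care. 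The cleanest route: in $\sum_d \Lambda(d) g(d) M_g(u/d)$ replace $M_g(u/d)$ by $M_h(u/d) + A M_{|g|}(u/d)$, getting
\begin{equation*}
N_h(u) = \sum_{d\le u}\Lambda(d)g(d)M_h(u/d) + A\sum_{d\le u}\Lambda(d)\bigl(g(d) - |g(d)|\bigr)M_{|g|}(u/d),
\end{equation*}
and then $|g(d) - |g(d)|\,| = |g(p)|\,|e^{i\arg g(p)}-1| \le |g(p)|\cdot|\arg g(p)| \le \mu |g(d)|$ since $d = p^l$ and $g$ is strongly multiplicative. Taking absolute values through both sums, bounding $|M_h| \le |M_h|$, $|A| \le 1$, $|g(d)| = |g(d)|$, and using $|g(d) - |g(d)|\,| \le \mu |g(d)|$, gives exactly the claimed inequality
\begin{equation*}
|N_h(u)| \le \sum_{d\le u}\Lambda(d)|g(d)|\,|M_h(u/d)| + |A|\mu\sum_{d\le u}\Lambda(d)|g(d)|\,M_{|g|}(u/d),
\end{equation*}
and integrating over $u\in(t-y,t]$ and dividing by $y$ finishes the proof once combined with the first step.

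\textbf{Main obstacle.} The genuinely delicate point is the first step: controlling $M_{|g|}(t) - M_{|g|}(t-y)$ for an interval of length only $y = e^{-\log^c t}$, which is far too short for a direct sieve estimate. This is precisely why Lemma~\ref{DobApp} was proved — but Lemma~\ref{DobApp} is stated for intervals $(a, ca]$ with $c$ bounded, i.e.\ multiplicatively of bounded ratio, whereas here the ratio $t/(t-y) \to 1$. One must check that the proof of Lemma~\ref{DobApp} (or an iteration of it over $O(\log t / \log(t/(t-y)))$-many nested intervals of ratio just above $1$, or rather a single application with $c$ replaced by $t/(t-y)$ and tracking the dependence) yields a \emph{quantitative} bound of the form $M_{|g|}(ca) - M_{|g|}(a) \ll_B (c-1+o(1)) B\, M_{|g|}(a)$ with explicit dependence on $c-1$, not merely $\ll_c B M_{|g|}(a)$. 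Re-examining the proof of Lemma~\ref{DobApp}: the prime contribution is $\ll B(c-1)a/\log a = o(M_{|g|}(a))$ directly with the right $c-1$ factor; the $S_1$ (smooth) contribution is $\ll_c a/\log a$ which is genuinely $o(M_{|g|}(a))$ by Selberg's lower bound $M_{|g|}(a) \gg a \log^{\delta-1} a$ from Lemma~\ref{SELBERG}; and the $S_2$ contribution is $\le DB M_{|g|}(a)$ with $D = D(c)$ bounded — here one needs that as $c \to 1$, $D(c)$ stays bounded (it does, via Dobrowolski) and in fact the number of relevant $m$ with $n_a(m) \ge 1$ times the gain forces the contribution to be $o(M_{|g|}(a))$ as $c\to 1$. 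Threading this quantitative version through is the crux; the rest is bookkeeping. Once that is in hand, choosing $c < \tfrac12$ makes $e^{-c'\log^c t} \cdot \log t \ll \log^{-2} t$ comfortably, delivering the stated error term.
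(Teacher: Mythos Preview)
Your second step — expanding $N_h(u) = N_g(u) - A\,N_{|g|}(u)$ via Lemma~\ref{STRONG}, regrouping as
\[
N_h(u) = \sum_{d\le u}\Lambda(d)g(d)M_h(u/d) + A\sum_{d\le u}\Lambda(d)\bigl(g(d)-|g(d)|\bigr)M_{|g|}(u/d),
\]
and bounding $|g(d)-|g(d)|\,| = |g(p)|\,|e^{i\arg g(p)}-1|\le \mu|g(d)|$ for $d=p^l$ — is exactly what the paper does and is correct.

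The first step, however, goes astray. The paper does \emph{not} invoke Lemma~\ref{DobApp} here; it uses Lemma~\ref{SELBERG}. From $|g(n)|\le B^{\omega(n)}$ and the Selberg asymptotic one gets $\sum_{t-y<n\le t}|g(n)|\ll_B y\log^{B-1}t$ (for $y=te^{-\log^c t}$ one has $y=t^{1-o(1)}$, so this short-interval bound is available). Multiplying by $\log t$ gives $y\log^B t = te^{-\log^c t}\log^B t \ll t\log^{-K}t$ for any fixed $K$, because $e^{-\log^c t}$ beats every power of $\log t$. The lower bound $M_{|g|}(t)\gg_\delta t\log^{\delta-1}t$ from the same lemma then turns this into $O_B(M_{|g|}(t)/\log^2 t)$, and averaging over $u\in(t-y,t]$ gives the first displayed equality.

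Your route through Lemma~\ref{DobApp} has a genuine gap. That lemma yields only $M_{|g|}(ca)-M_{|g|}(a)\ll_c BM_{|g|}(a)$, with no visible decay as $c\to 1$. In its proof, the Dobrowolski step bounds $n_a(m)\le D$ \emph{uniformly}; but $D$ is a positive integer, so $D\ge 1$ whenever the interval $(a,ca]$ contains any composite $n$ with $P^+(n)>c$, and the resulting bound $S_2(a)\le DBM_{|g|}(a)$ cannot be made $o(M_{|g|}(a))$ from that argument alone. Your proposed fix — that ``the number of relevant $m$ with $n_a(m)\ge 1$ times the gain forces the contribution to be $o(M_{|g|}(a))$'' — is not substantiated; making it precise would require exactly a Shiu/Selberg-type upper bound on the short sum $\sum_{a<n\le ca}|g(n)|$, at which point Lemma~\ref{DobApp} is redundant. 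Use Lemma~\ref{SELBERG} directly and the obstacle disappears.
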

When $A = 0$ then $h = g$, so this lemma reduces trivially to a statement about sums of $g$ as well.
\begin{proof}
% \leq y^{-1}\int_t^{t+y} du \left(\sum_{l \leq u} |g(l)| + O\left(\frac{t}{\log^{K_4-k} t}\right)
Observe that for $t-y < u \leq t$, Lemma \ref{SELBERG} implies that
\begin{align*}
||N_h(u)|-|N_h(t-y)|| &\leq |N_h(u)-N_h(t-y)| = \left|\sum_{t-y < n \leq u} \left(g(n)-A|g(n)|\right)\log n\right| \\
&\leq 2(1+|A|)\left(\sum_{t-y < n \leq t} |g(n)| \right) \log t \\
&\asymp_B (1+|A|)\left(t\log^{B-1}t-(t-y)\log^{B-1}(t-y)\right)\log t \\
&\leq (1+|A|)\left(t\left(1+\frac{\log \left(1+\frac{y}{t-y}\right)}{\log t}\right)^{B-1} - t\right)\log^{B} t + y(1+|A|)\log^B t\\
&\leq (1+|A|)\left(t\left(1+2B\frac{y}{(t-y)\log t}\right) - t\right)\log^B t +y(1+|A|)\log^B t\ll y\log^B t,
%y\log^{B+k-1} t + \frac{y^{B-1}}{t^{B-2}} \log^k t = \frac{t}{\log^{K_2-B-k+1} t} + \frac{t}{\log^{K_2(B-1)-k} t},
\end{align*}
the second last inequality holding when $t$ is sufficiently large in terms of $B$. Hence, for each $u \in (t-y,t]$ and any $K > 2$, 
\begin{equation*}
|N_h(u)| = |N_h(t)| + O\left(\frac{t}{\log^{K-1} t}\right) = |N_h(t)| + O\left(\frac{M_{|g|}(t)}{\log^2 t}\right),
\end{equation*}
upon taking $K = 4$ and using the lower bound from Lemma \ref{SELBERG}.
%where we have tacitly used the assumption that $|M_h(t)| \gg M_{|g|}(t) \log^2 t \gg t\log t$ also coming from Lemma \ref{SELBERG} (replacing $|g(p)|$ by $\delta > 0$), the above estimate holding for $K = 3$. 
It then follows that
\begin{equation} \label{FIRST}
|N_h(t)| = y^{-1}\int_{t-y}^{t} |N_h(u)| du + O\left(\frac{M_{|g|}(t)}{\log^2 t}\right).
\end{equation}
Now, taking $u \in (t-y,t]$ and using $\log = 1 \ast \Lambda$, we have
\begin{equation*}
|N_h(u)| = \left|\sum_{d\leq u} \Lambda(d)\sum_{m \leq u/d} \left(g(md)-A|g(md)|\right) \right| = \left|\sum_{d \leq u} \Lambda(d) \sum_{m \leq u/d} g(md) - A\sum_{d\leq u} \Lambda(d)\sum_{m \leq u/d} |g(md)|\right|.
\end{equation*}
Note that both $g$ and $|g|$ are strongly multiplicative. Applying Lemma \ref{STRONG}, we get
\begin{align}
|N_h(u)| &= \left|\sum_{d \leq u} \Lambda(d) g(d) \sum_{m \leq u/d} g(m) - A\sum_{d\leq u} \Lambda(d)|g(d)|\sum_{m \leq u/d} |g(m)|\right| \\
&\leq \left|\sum_{d \leq u} \Lambda(d)g(d) \sum_{m \leq u/d} \left(g(m) -A |g(m)|\right)\right| + |A|\left|\sum_{d\leq u} \Lambda(d)(g(d)-|g(d)|)\sum_{m \leq u/d} |g(m)|\right| \nonumber\\
&\leq \sum_{d \leq u} \Lambda(d)|g(d)||M_h(u/d)| + |A|\sum_{d \leq u} \Lambda(d)|g(d)|\left|\frac{g(d)}{|g(d)|} - 1\right|M_{|g|}(u/d) \label{TWOSUMS}.
\end{align}
%
 %and applying Lemma \ref{STRONG}, we have 
%\begin{equation*}
%|N_g^{(k)}(u)| = \left|\sum_{d \leq u} \Lambda_k(d)g(d)M_g(x/d)\right| \leq \sum_{d \leq u} \Lambda_k(d)|g(d)||M_g(x/d)|,
%\end{equation*}
%since $|\Lambda_k(d)| = \Lambda_k(d)$ for each $d$ by Lemma \ref{LAMBDAPROP}. 
%Now, by Lemma \ref{LAMBDAPROP}, $\Lambda_k(d)|g(d)| > 0$ if and only if $\omega(d) \leq k$.  
As $\frac{g(p)}{|g(p)|} = e^{i\arg{g(p)}}$, the mean value theorem (of calculus) implies that if $d = p^k$ for some $k \in \mb{N}$,
\begin{equation*}
\left|\frac{g(d)}{|g(d)|} - 1\right| = \left|e^{i\arg(g(p))} - 1\right| = \left|\arg(g(p))\right| \leq \mu.
\end{equation*}
Inserting this estimate into \eqref{TWOSUMS}, we get
\begin{equation*}
|N_h(u)| \leq \sum_{d \leq u} \Lambda(d)|g(d)||M_h(u/d)| + |A|\mu\sum_{d \leq u} \Lambda(d)|g(d)|M_{|g|}(u/d).
\end{equation*}
Inputting this estimate into \eqref{FIRST} for each $u \in (t-y,t]$ implies the claim.
\end{proof}
\begin{rem}
When $g$ is completely multiplicative, in which case $|g(d)/|g(d)| - 1| = |\text{arg}(g(p^k))|$ for $k$ possibly at least 2, we must split the sum as
\begin{equation} \label{COMPCHANGE}
\sum_{d \leq u} \Lambda(d)|g(d)|\left|\frac{g(d)}{|g(d)|} - 1\right|M_{|g|}(u/d) \leq \mu\sum_{p \leq u} \Lambda(p) |g(p)|M_{|g|}(u/p) + \mu\sum_{k \geq 2} k\sum_{p^k \leq u} \Lambda(p^k)|g(p^k)|M_{|g|}(u/p^k).
%+ \sum_{p^k \leq u \atop k > K} \Lambda(p^k)|g(p^k)|M_{|g|}(u/p^k),
\end{equation}
We claim that we can bound the second sum by $\ll_B \mu M_{|g|}(u)\log u$. When $B \leq 1$, this can easily be verified since, by Lemma \ref{SELBERG},
\begin{equation*}
\sum_{k \leq 2} k\sum_{p^k \leq u} \Lambda(p^k)|g(p^k)|M_{|g|}(u/p^k) \leq B^2u \sum_p \frac{\log p}{p^2} \sum_{k \geq 0}\frac{(k+2)B^k}{p^k} \ll B^2 u \ll_{\delta} B^2 M_{|g|}(u)\log^{1-\delta u}.
\end{equation*}
%which suffices upon division by $\log u$ in transitioning from $|M_h(u)|$ to $|N_h(u)|$, and division by $M_{|g|}(u)$ upon estimating the ratio $|M_h(u)|/M_{|g|}(u)$. \\
For $1 < B < 2$ we must work a little harder. Let $Q \geq 2$ be a parameter to be chosen. We split the second sum into sums over $[1,Q]$ and $[Q,x]$.  In the first interval, using $M_{|g|}(u/d)|g(d)| \leq M_{|g|}(u)$, we have
\begin{equation*}
\sum_{p^k \leq Q \atop 2 \leq k \leq \log Q/\log 2} k\Lambda(p^k) |g(p^k)| M_{|g|}(u/p^k) \ll \left(\sum_{a \leq Q^{\frac{1}{2}}} \Lambda(a)\right)M_{|g|}(u)\log^2 Q \ll M_{|g|}(u) Q^{\frac{1}{2}} \log^2 Q.
\end{equation*}
In the second interval, we further split the sum over $Q < p^k \leq u$, according as $p \leq P_0$ or not, for $P_0 \geq 2$ a parameter to be chosen. In the first case, we have
\begin{equation*}
\sum_{Q < p^k \leq u \atop p \leq P_0, k \geq 2} k\Lambda(p^k) |g(p^k)| M_{|g|}(u/p^k) = \sum_{n \leq u} |g(n)| \sum_{p^k | n}^{\ast} k\log p,
\end{equation*}
where the asterisk indicates that the support of the sum consists of $p^k | n$ such that $Q < p^k \leq u$, $p \leq P_0$ and $k \geq 2$.  Since $k \log p > \log Q$, each $n$ has at most $\frac{\log n}{\log Q}$ factors $p^k$ of the latter shape and it follows that
\begin{equation} \label{SECONDONE}
\sum_{n \leq u} |g(n)| \sum_{p^k | n}^{\ast} k\log p \leq \frac{\log u}{\log Q} M_{|g|}(u).
\end{equation}
In the remaining sum, we apply Lemma \ref{SELBERG} and the elementary identity $\sum_{l \geq 1} lt^l = t/(1-t)^2$ to get
\begin{align}
&\sum_{Q < p^k \leq u \atop p > P_0, k \geq 2} k\Lambda(p^k) B^k M_{|g|}(u/p^k) \ll_B Bu\log^{B-1}(u/Q)\sum_{p} \frac{\log p}{p^2}\sum_{k \geq 1 \atop p^k > Q} k\frac{B^{k-1}}{p^{k-1}} \nonumber\\
&\ll Bu\log^{B-1}(u/Q)\sum_{p > P_0} \frac{\log p}{p}\sum_{k \geq \log Q/\log p} k\frac{B^k}{p^k} \nonumber\\
&\ll B^2 u \log^{B-1}(u/Q) \sum_{p > P_0} \frac{\log p}{(p-B)^2}\left(\frac{B}{p}\right)^{\frac{\log Q}{\log p}} \nonumber.
\end{align}
The last expression is 
\begin{equation*}
e^{-\frac{\log Q}{\log p}\left(\log p - \log B\right)} \leq e^{-\left(1-\frac{\log B}{\log P_0}\right)\log Q}.
\end{equation*}
Thus,
\begin{equation*}
\sum_{Q < p^k \leq u \atop k \geq 2, p > P_0} k\Lambda(p^k) B^k M_{|g|}(u/p^k) \ll_B B^2 u \log^{B-1}(u/Q)e^{-(1-\frac{\log B}{\log P_0})\log Q}.
\end{equation*}
Write $Q = \log^{2-r} u$, for $0 < r < 2$. Choosing $r > 0$ and $P_0$ such that $1+(2-r)\left(1-\frac{\log B}{\log P_0}\right) \geq B-\delta$, which is possible since $B-\delta < 2-\delta$, it follows that this last bound is $\ll B^2u \log^{1-\delta} u \ll B^2 N_{|g|}(u)\log^{-\delta}u$, as above (when we considered $B \leq 1$). Of course, with this choice of $Q$, we have $M_{|g|}(u) Q^{\frac{1}{2}}\log^2 Q \ll M_{|g|}(u)\log^{1-r/2 +o(1)}u$, and we may take $r = 1/2$, $P_0 = B^3$, and then the bound for the sum \eqref{SECONDONE} is $\ll_B M_{|g|}(u) \log u/\log_2 u$.  All told, this indeed shows that
\begin{equation*}
\sum_{p^k \leq Q \atop k \geq 2} k\Lambda(p^k) |g(p^k)| M_{|g|}(u/p^k) \ll B^2 \mu M_{|g|}(u) \log u.
\end{equation*}
Thus, in the case of completely multiplicative functions we only add the term $\mu M_{|g|}(t)\log t$ when $u = t$. Once divided by $\log t$ in transitioning from $|M_{h}(t)|$ to $|N_h(t)|$ (see Lemma \ref{NUM}) and by $M_{|g|}(t)$ when calculating the ratio $|M_h(t)/M_{|g|}(t)|$, this error term has the same order of magnitude $\mu$ as what results in the strongly multiplicative case (see Proposition \ref{STARTER}).
%In the proof of the following lemma, we show that $N_{|g|}(u) = (1+o(1))M_{|g|}(u)\log u$, so
%\begin{equation*}
%\sum_{p^k \leq u \atop k \geq 2} k\Lambda(p^k)|g(p^k)|M_{|g|}(u/p^k) \leq \left(\frac{\pi^2}{6}-1\right)(1+o(1)) N_{|g|}(u)
%\begin{equation*}
%\sum_{p^k \leq u \atop k\geq 2} \Lambda(p^k)|g(p)|^kM_{|g|}(u/p^k) \ll_B u\log^{B-1} u\sum_{p^k \leq u \atop k \geq 2} \frac{\Lambda(p^k)B^k}{p^k}
\end{rem}
We next bound $M_{|g|}$ from below by an integral of itself on a longer interval. It turns out that this latter integral, in turn, is bounded below by $\mc{G}(\sg)$, which will be of use in later sections.
\begin{lem} \label{DENOM}
Let $t$ be sufficiently large, $g: \mb{N} \ra \mb{C}$ and given $S := \{p : g(p) = 0\}$, let $P_t := \prod_{p \leq t \atop p \in S} p$. Suppose that $P_t \ll_{\alpha} t^{\alpha}$ for any $\alpha > 0$. Then
%Let $\nu$ be such that if $\delta \gg 1$ then $\nu := 1$, while if $\delta = o(1)$ then $\nu := \log^{\delta} t$. Then
\begin{equation*}
M_{|g|}(t) \gg_{B,r} \delta \frac{\phi(P_t)}{P_t}\frac{t}{\log t} \int_1^{t} \frac{M_{|g|}(u)}{u^2} du.
%\gg_B \delta\frac{t}{\log t}\int_1^{\infty} \frac{M_{|g|}(u)}{u^{1+\sg}}du.
%J_{\mc{G},0}(\sg)^{\frac{1}{2}}.
%\left(\int_{(\sg)}|\mc{G}(s)|^2 \frac{|ds|}{|s|^2}\right)^{\frac{1}{2}},
\end{equation*}
%If $B \leq 1$ then we in fact have $M_{|g|}(t) \gg_B \delta \nu \frac{t}{\log t} J_{\mc{G},0}(\sg)$.
\end{lem}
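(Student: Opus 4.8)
The plan is to bound $M_{|g|}(t)$ from below by a short-interval version of itself using Lemma~\ref{COMPAVG} (with $A = 0$, so $h = g$ and $N_h = N_g$), and then to promote that short-interval estimate to the long integral $\int_1^t M_{|g|}(u)u^{-2}\,du$ by a dyadic-decomposition/telescoping argument.

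\medskip

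First I would recall the elementary relation between $N_{|g|}(t)$ and $M_{|g|}(t)$: partial summation gives $N_{|g|}(t) = \sum_{n \le t} |g(n)|\log n$, and conversely $M_{|g|}(t) = \frac{N_{|g|}(t)}{\log t} + \int_2^t \frac{N_{|g|}(u)}{u \log^2 u}\,du$, so up to a factor $\log t$ the two summatory functions are interchangeable; in particular $N_{|g|}(t) \asymp M_{|g|}(t)\log t$ using the Selberg bounds of Lemma~\ref{SELBERG}. The main input is Lemma~\ref{COMPAVG} applied with $h = g$ (non-negative), which after dropping the error term $O_B(M_{|g|}(t)/\log^2 t)$ yields
\begin{equation*}
N_{|g|}(t) \le \frac{1}{y}\int_{t-y}^t \Bigl(\sum_{d \le u} \Lambda(d)|g(d)|\, M_{|g|}(u/d)\Bigr)\,du + O_B\!\left(\frac{M_{|g|}(t)}{\log^2 t}\right).
\end{equation*}
Since $|g(d)| = 0$ for $d$ a power of a prime in $S$, the inner sum runs only over prime powers $p^k$ with $p \notin S$, where $|g(p)| \ge \delta$. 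The strategy is to \emph{reverse} this: I want a lower bound on $M_{|g|}(t)$, so I should instead start from the convolution identity for $M_{|g|}$ itself and isolate the contribution of a single prime variable. Concretely, for $n \le t$ not equal to $1$ write $\log n = \sum_{d \mid n}\Lambda(d)$ and extract, say, the largest prime power exactly dividing $n$, or more simply use the Dirichlet hyperbola/Dickman-type splitting $\sum_{n \le t}|g(n)|\log n = \sum_{p^k \le t, p \notin S}\Lambda(p^k)|g(p^k)|\sum_{m \le t/p^k, p \nmid m}|g(m)|$, and restrict to $k=1$, $p \le \sqrt t$, and $(m, P_t)=1$. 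This produces a lower bound of the form $\gg \delta \sum_{p \le \sqrt t, p \notin S}\frac{\log p}{p}\cdot(\text{truncated }M_{|g|}(t/p))$, and summing $\frac{\log p}{p}$ over $p \le t/u$ via Mertens (adjusted by the factor $\phi(P_t)/P_t$ to account for the coprimality to $P_t$, using Lemma~\ref{PRECMERTENS}(ii) and the hypothesis $P_t \ll_\alpha t^\alpha$ to control $\log P_t$) converts the sum over $p$ into a logarithmic weight, giving $N_{|g|}(t) \gg_B \delta \frac{\phi(P_t)}{P_t}\int \frac{M_{|g|}(u)}{u}\cdot\frac{t}{u}\,du$ after changing variables $p \approx t/u$. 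Dividing through by $\log t$ (using $M_{|g|}(t)\asymp N_{|g|}(t)/\log t$) then yields exactly $M_{|g|}(t) \gg_{B,r} \delta\frac{\phi(P_t)}{P_t}\frac{t}{\log t}\int_1^t \frac{M_{|g|}(u)}{u^2}\,du$.

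\medskip

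More carefully, the cleanest route is: (1) by Mertens-type summation (Lemma~\ref{PRECMERTENS}), for each $d$ in a short range $(t/u, t/u \cdot(1+\epsilon)]$ one controls $\sum \Lambda(d)/d$ with the coprimality-to-$P_t$ correction $\phi(P_t)/P_t$; (2) partition $[2,t]$ into dyadic (or finer) blocks for the variable $u = t/d$, and on each block bound $M_{|g|}(u/d) = M_{|g|}(t/d^2\cdot\ldots)$ below by $M_{|g|}$ of a fixed point of the block times the density of primes — here Lemma~\ref{DobApp} guarantees $M_{|g|}$ does not oscillate too wildly between $u$ and $cu$, so replacing $M_{|g|}(u/d)$ by a representative value on each block costs only a bounded factor; (3) reassemble the blocks into the Riemann sum for $\int_1^t M_{|g|}(u)u^{-2}\,du$, picking up the weight $t/\log t$ from $\sum_{p \le t/u}\frac{\log p}{p} \asymp \log(t/u)$ summed appropriately — actually the integral $\int \frac{du}{u}$ over the logarithmic range produces the $\frac{1}{\log t}$, and the extra $\frac{1}{u}$ in $u^{-2}$ versus $u^{-1}$ comes from the change of variables $d \leftrightarrow t/d$ in $\sum_d \Lambda(d)|g(d)|M_{|g|}(t/d)$. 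The hypotheses needed — $\delta \le |g(p)| \le B$ off $S$, $|g(p)|=0$ on $S$ with $P_t \ll_\alpha t^\alpha$, and $\sum_{p,k\ge 2}\frac{|g(p^k)|}{p^k} \ll 1$ implicitly via strong multiplicativity or as in Lemma~\ref{LOGSUM} — are precisely those that let us drop the $k \ge 2$ prime-power terms and the error terms from Lemma~\ref{COMPAVG}.

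\medskip

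I expect the main obstacle to be step (2)–(3): controlling the \emph{lower} bound for $M_{|g|}$ on short intervals uniformly enough to turn a sum over primes into the Riemann sum for $\int_1^t M_{|g|}(u)u^{-2}\,du$ without losing more than a bounded ($B,r$-dependent) constant. Lemma~\ref{DobApp} is exactly the tool for this — it gives $M_{|g|}(cu) - M_{|g|}(u) \ll_c B M_{|g|}(u)$, hence $M_{|g|}(cu) \asymp_{c,B} M_{|g|}(u)$ on each block $(u, cu]$ provided the growth hypothesis $M_{|g|}(u)\log u / u \to \infty$ holds, which follows from the lower Selberg bound in Lemma~\ref{SELBERG}. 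The secondary annoyance is bookkeeping the factor $\phi(P_t)/P_t$ through Lemma~\ref{PRECMERTENS}(ii): one must ensure the $\log P_t$ error terms there are genuinely negligible, which is where the hypothesis $P_t \ll_\alpha t^\alpha$ (so $\log P_t \ll \log t$, in fact $= o(\log t)$ if we use it with small $\alpha$) enters decisively, and where the implicit dependence on $r$ in $\gg_{B,r}$ originates.
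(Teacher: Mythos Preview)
Your proposal heads in the right direction but takes a more laborious route than the paper and misses the paper's central mechanism. The paper does \emph{not} abandon the short-interval average from Lemma~\ref{COMPAVG}: since $|g|\geq 0$, the identity \eqref{FIRST} gives $N_{|g|}(t)=y^{-1}\int_{t-y}^t N_{|g|}(u)\,du+O(M_{|g|}(t)/\log^2 t)$, which is a \emph{two-sided} estimate and hence usable as a lower bound. One then writes $\int_{t-y}^t N_{|g|}(u)u^{-2}\,du$, expands each $N_{|g|}(u)$ via Lemma~\ref{STRONG}, and \emph{swaps} the sum over $a$ and the $u$-integral. The key point is that after the change of variables $v=u/a$, the short interval $(t-y,t]$ in $u$ becomes, for each fixed $v$, the short interval $(t-y)/v < a \le t/v$ in $a$, and Lemma~\ref{PRECMERTENS}(ii) applies directly to give $\sum_{a}\Lambda(a)/a\sim (\phi(P_t)/P_t)\cdot y/t$. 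The factor $y/t$ then cancels the $t/y$ normalisation from the integral average, and the desired $\int_1^t M_{|g|}(v)v^{-2}\,dv$ drops out with no block decomposition and no appeal to Lemma~\ref{DobApp} at all.

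Your block/dyadic approach could be made to work, but you should be aware of a genuine technical snag: applying Lemma~\ref{PRECMERTENS}(ii) on a block of relative length $\epsilon$ gives an error of size $(\epsilon^2 + e^{-\theta'\sqrt{\log x}})\log P_t$, against a main term $(\phi(P_t)/P_t)\,\epsilon$. Since $\log P_t$ may be as large as $\alpha\log t$ while $\phi(P_t)/P_t$ may be as small as $1/\log_2 t$, dyadic blocks ($\epsilon=1$) are far too coarse; you would need $\epsilon$ as small as roughly $1/(\log t\,\log_2 t)$, and then additionally verify (via Lemma~\ref{DobApp} or monotonicity) that $M_{|g|}$ is essentially constant on blocks of that width. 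The paper's swap trick bypasses all of this because the short interval $y=t e^{-\log^c t}$ makes $(y/t)^2\log P_t$ negligible automatically.
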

\begin{proof}
Write $\Delta_{f}(u) := M_{f}(u)\log u - N_{f}(u)$, for $u \in (1,t]$ and an arithmetic function $f$. By definition, we have 
\begin{equation*}
\Delta_{|g|}(u) = \sum_{n \leq u} |g(n)|\log(u/n) = \sum_{n \leq u} |g(n)|\int_n^u \frac{dv}{v} = \int_1^u \frac{M_{|g|}(v)}{v}dv. \label{EXACT}.
\end{equation*}
%\begin{align}
%\Delta_{|g|}(u)&= \sum_{n \leq u} |g(n)|\left(\log u - \log n\right) = k\sum_{n \leq u} |g(n)|\int_n^u \frac{dt}{t}\log^{k-1} t = k\int_1^u \frac{M_{|g|}(t)\log^{k-1} t}{t}dt \label{EXACT}.
%\end{align}
%We saw earlier that $R(u) = -k\int_1^u \frac{M_{|g|}(v) \log^{k-1} v}{v}dv$, the form of which we shall use momentarily. 
%Hence, $N_{|g|}(u) \leq M_{|g|}(u)\log u + \Delta_{|g|}(u)$. 
Now, for $u \leq t$ fixed but arbitrary and $u_1 \leq \frac{u}{\log^{K_1}u}$ for some $K_1 > 0$ to be chosen, we have
\begin{equation}
\Delta_{|g|}(u) \leq M_{|g|}(u_1)\int_1^{u_1} \frac{dv}{v} + M_{|g|}(u) \int_{u_1}^u \frac{dv}{v} \leq M_{|g|}(u)\log u \left(\frac{M_{|g|}(u_1)}{M_{|g|}(u)} + \left(1-\left(\frac{\log u_1}{\log u}\right)\right)\right). \label{DELTBOUND}
\end{equation}
By Lemma \ref{SELBERG}, it follows that 
\begin{equation*}
\frac{M_{|g|}(u_1)}{M_{|g|}(u)} \ll_{B,\delta} \frac{u_1\log^{B-1}u_1}{u}\log^{1-\delta} u \ll \log^{B-\delta-K_1} u.  
\end{equation*}
Also, $1-\frac{\log u_1}{\log u} \leq K_1\frac{\log_2 u}{\log u}$.
%Also, since $(1-s)^{k} \geq 1-2\delta s$ for any $s < \frac{1}{k}$, it follows that, as $\frac{\log u_1}{\log u} = 1-K_1 \frac{\log_2 u}{\log u} =: 1-s$, if $u$ is sufficiently large (with respect to $k$) then we have
%\begin{equation*}
%\left(1-\left(\frac{\log u_1}{\log u}\right)\right) \leq 2k s = 2k K_1 \frac{\log_2 u}{\log u}.
%\end{equation*}
Hence, choosing $K_1 := B+2$, we have 
\begin{equation*}
N_{|g|}(u) \leq M_{|g|}(u)\log u\left(1+ 2\frac{\log_2 u}{\log u}\right) \leq 2M_{|g|}(u) \log u
\end{equation*}
for $u > e^{5}$. \\
Furthermore, given $y = te^{-\log^{c} t}$ for any $0 < c < \frac{1}{2}$ then by Lemma \ref{COMPAVG} we have, uniformly in $t$,
\begin{equation}
M_{|g|}(t) \gg \frac{1}{\log t}N_{|g|}(t) =  \frac{1}{y\log t}\int_{t-y}^{t} N_{|g|}(u) du +O\left(\frac{M_{|g|}(t)}{\log^3 t}\right)\geq \frac{t^2}{y\log t} \int_{t-y}^t \frac{N_{|g|}(u)}{u^2} du +O\left(\frac{M_{|g|}(t)}{\log^3 t}\right), \label{INTAVGLOW}
\end{equation}
the second estimate coming from \eqref{FIRST} with $A = 0$ and $|g|$ in place of $g$. We exploit this integral average as follows.  Applying Lemma \ref{STRONG}, we get
\begin{align*}
\int_{t-y}^t \frac{N_{|g|}(u)}{u^2} du &= \int_{t-y}^t\frac{du}{u^2}\left(\sum_{m \leq u} |g(m)|\log m\right) = \int_{t-y}^t \frac{du}{u^2} \left(\sum_{a \leq u} \Lambda(a)|g(a)| \sum_{m \leq u/a} |g(m)|\right) \\
&= \sum_{a \leq t} |g(a)|\frac{\Lambda(a)}{a}\int_{(t-y)/a}^{t/a} \frac{du}{u^2}M_{|g|}(u) \geq \delta \int_1^{t} \frac{M_{|g|}(u)}{u^2} du \left(\sum_{(t-y)/u < a \leq t/u \atop (a,P_t) = 1} \frac{\Lambda(a)}{a}\right),
\end{align*}
the last inequality following by reordering summation and integration (noting that $(t-y)/a < u \leq t/a$ if, and only if, $(t-y)/v < a \leq t/v$) and using $|g(n)|\Lambda(n) \geq \delta \Lambda(n)$.
%, the latter being non-zero only if $\omega(n) \leq k-1$. 
Now, as $y = te^{-\log^{c}t}$ with $c < \frac{1}{2}$ we may apply ii) of Lemma \ref{PRECMERTENS} (noting that $\log P_te^{-\theta \sqrt{\log t}} = o_r(1)$ , so that 
%for $\sg := 1+\frac{1}{\log t}$,
\begin{equation*}
M_{|g|}(t) \gg_{B,r} \frac{\phi(P_t)}{P_t}\frac{t^2}{y \log t}\left(\int_{t-y}^t \frac{N_{|g|}(u)}{u^2} du\right)\gg \delta \frac{t}{\log t}\int_{1}^t \frac{M_{|g|}(u)}{u^2} du.
%\gg_B \delta \frac{y}{t}\int_1^{\infty} \frac{M_{|g|}(u)}{u^{1+\sg}},
\end{equation*}
%by Lemma \ref{DENOMPARS2}. 
\end{proof}
\begin{rem} \label{REMSQFSUPP}
For the purposes of the proof of Theorem \ref{LOWERMV}, we require a version of Lemma \ref{DENOM} that holds when $g$ is squarefree-supported. In this case, Lemma \ref{STRONG} no longer holds, and our argument must change slightly at the juncture at which this lemma is quoted. Indeed, we have
\begin{align*}
\int_{t-y}^t \frac{N_{|g|}(u)}{u^2} du &= \int_{t-y}^t\frac{du}{u^2}\left(\sum_{m \leq u} |g(m)|\log m\right) = \int_{t-y}^t \frac{du}{u^2} \left(\sum_{a \leq u} \Lambda(a)|g(a)| \sum_{m \leq u/a \atop (m,a) = 1} |g(m)|\right) \\
&= \sum_{a \leq t} |g(a)|\frac{\Lambda(a)}{a}\int_{(t-y)/a}^{t/a} \frac{du}{u^2}\sum_{m \leq u \atop (m,a) = 1} |g(m)| \geq \delta \int_1^{t} \frac{du}{u^2} \sum_{m \leq u} |g(m)|\left(\sum_{(t-y)/u < a \leq t/u \atop (a,P_tm) = 1} \frac{\Lambda(a)}{a}\right).
\end{align*}
Proceeding as above and using the inequality $\frac{\phi(ab)}{ab} \geq \frac{\phi(a)}{a}\frac{\phi(b)}{b}$ for $a = P_t$ and $b = m$ and each $m \leq u$, it follows that
\begin{equation*}
\int_{t-y}^t \frac{N_{|g|}(u)}{u^2} du \gg \delta \frac{\phi(P_t)}{P_t}\frac{y}{t}\int_1^{t} \frac{M_{g'}(u)}{u^2} du,
\end{equation*}
where we have set $g'(m) := g(m)\phi(m)/m$. Note that this argument actually works for any multiplicative $g$.
\end{rem}
\begin{lem}\label{CHEAP}
Let $t \geq 2$ and $\sg := 1+\frac{1}{\log t}$. Then $\int_1^t \frac{M_{|g|}(u)}{u^2} du \gg_B L_{|g|}(t) \gg_B \mc{G}(\sg)$.
\end{lem}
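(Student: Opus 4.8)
The plan is to establish the chain of inequalities in two independent pieces. For the second inequality, $L_{|g|}(t) \gg_B \mc{G}(\sg)$, I would simply invoke Lemma \ref{DENOMPARS} applied to the non-negative multiplicative function $|g|$: since $|g|$ is strongly (or completely) multiplicative and uniformly bounded on primes by $B$, it satisfies the hypotheses of Lemma \ref{LOGSUM}, and its Dirichlet series is exactly $\mc{G}(s)$, which converges absolutely for $\text{Re}(s) > 1$. Hence Lemma \ref{DENOMPARS} gives $L_{|g|}(t) \gg_B \mc{G}(\sg)$ immediately, with $\sg = 1 + 1/\log t$ as in that lemma.

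For the first inequality, $\int_1^t \frac{M_{|g|}(u)}{u^2}\,du \gg_B L_{|g|}(t)$, the idea is to compare the integral to the Dirichlet-type sum $L_{|g|}(t) = \sum_{n \le t} \frac{|g(n)|}{n}$ by partial summation. Writing the integral as $\int_1^t u^{-2} M_{|g|}(u)\, du = \int_1^t u^{-2} \sum_{n \le u} |g(n)|\, du$ and interchanging sum and integral (Tonelli, as everything is non-negative), one gets $\sum_{n \le t} |g(n)| \int_n^t u^{-2}\, du = \sum_{n \le t} |g(n)|\left(\frac{1}{n} - \frac{1}{t}\right) = L_{|g|}(t) - \frac{1}{t} M_{|g|}(t)$. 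So the task reduces to showing $M_{|g|}(t)/t \ll_B L_{|g|}(t)$, which would in fact even let one conclude $\int_1^t u^{-2} M_{|g|}(u)\, du \asymp_B L_{|g|}(t)$, but only the lower bound is needed. By Lemma \ref{SELBERG} (the upper bound there), $M_{|g|}(t) \ll_B t \log^{B-1} t$, so $M_{|g|}(t)/t \ll_B \log^{B-1} t$; on the other hand $L_{|g|}(t) \ge \sum_{p \le t} \frac{|g(p)|}{p} \ge \delta \sum_{p \le t} \frac1p \gg \delta \log_2 t$, which already handles the case $B \le 1$. For $B > 1$ one needs a better lower bound for $L_{|g|}(t)$: by Lemma \ref{LOGSUM}, $L_{|g|}(t) \asymp_B P_{|g|}(t) = \prod_{p \le t}(1 + |g(p)|/p) \gg \exp\big(\delta \sum_{p \le t} 1/p + O_B(1)\big) \gg_B \log^{\delta} t$, which is genuinely too weak to dominate $\log^{B-1} t$ when $B - 1 > \delta$.

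Because of this, I expect the main obstacle to be precisely the bound $M_{|g|}(t)/t \ll_B L_{|g|}(t)$ when $B$ is large: the crude Rankin/Mertens lower bounds for $L_{|g|}$ do not suffice. The fix is to bound $M_{|g|}(t)$ in terms of $L_{|g|}(t)$ \emph{directly} rather than through Selberg. One clean way: since $|g|$ is strongly multiplicative, $M_{|g|}(t) = \sum_{n \le t} |g(n)| \le t \sum_{n \le t} \frac{|g(n)|}{n} = t\, L_{|g|}(t)$ trivially (each term $|g(n)| = \frac{t}{n}\cdot\frac{n}{t}|g(n)| \le \frac{t}{n}|g(n)|$ for $n \le t$)! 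This gives $M_{|g|}(t)/t \le L_{|g|}(t)$ with no effort at all, so the subtraction $L_{|g|}(t) - \frac1t M_{|g|}(t)$ could be \emph{zero or negative} — the naive partial summation gives nothing. So the genuine fix is to restrict the integral to a subinterval bounded away from $t$: estimate $\int_1^t u^{-2} M_{|g|}(u)\, du \ge \int_1^{t/2} u^{-2} M_{|g|}(u)\, du = \sum_{n \le t/2} |g(n)|\left(\frac1n - \frac{2}{t}\right) + \sum_{t/2 < n \le ?}\cdots$; interchanging gives $\sum_{n \le t/2}|g(n)|(\frac1n - \frac2t) \ge \frac12 \sum_{n \le t/2} \frac{|g(n)|}{n} = \frac12 L_{|g|}(t/2)$, and then one needs $L_{|g|}(t/2) \gg_B L_{|g|}(t)$, i.e. the sum does not change by more than a constant factor when $t$ is halved — this follows from $L_{|g|}(t) - L_{|g|}(t/2) = \sum_{t/2 < n \le t} \frac{|g(n)|}{n} \le \frac{2}{t}(M_{|g|}(t) - M_{|g|}(t/2)) \ll_B L_{|g|}(t/2)$ by the same trivial bound applied on the dyadic block (or, cleaner, via Lemma \ref{DobApp} with $c = 2$ applied to $|g|$, whose hypothesis $M_{|g|}(t)\log t / t \to \infty$ holds by the lower bound in Lemma \ref{SELBERG}). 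Thus $\int_1^t u^{-2} M_{|g|}(u)\, du \ge \frac12 L_{|g|}(t/2) \gg_B L_{|g|}(t)$, completing the first inequality. Chaining the two pieces yields $\int_1^t \frac{M_{|g|}(u)}{u^2}\, du \gg_B L_{|g|}(t) \gg_B \mc{G}(\sg)$, as claimed.
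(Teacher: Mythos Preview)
Your treatment of the second inequality, via Lemma \ref{DENOMPARS}, matches the paper exactly. For the first inequality your overall strategy --- partial summation plus Lemma \ref{DobApp} --- is also the paper's strategy, but the specific ``fix'' you propose is circular.

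Restricting the integral to $[1,t/2]$ gives, by the same Fubini computation you used,
\[
\int_1^{t/2} \frac{M_{|g|}(u)}{u^2}\,du \;=\; L_{|g|}(t/2) \;-\; \frac{M_{|g|}(t/2)}{t/2},
\]
so your claimed inequality $\int_1^{t/2} \geq \tfrac12 L_{|g|}(t/2)$ is equivalent to $\frac{M_{|g|}(t/2)}{t/2} \leq \tfrac12 L_{|g|}(t/2)$ --- precisely the obstacle you started with, now at $t/2$ instead of $t$. (Concretely: for $t/4 < n \leq t/2$ you only have $\tfrac1n - \tfrac2t \geq 0$, not $\geq \tfrac{1}{2n}$.)

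Two easy repairs. The paper's route is to lower-bound the \emph{top} dyadic piece rather than the bottom: since $M_{|g|}$ is increasing and Lemma \ref{DobApp} gives $M_{|g|}(t) \leq (1+DB)M_{|g|}(t/2)$,
\[
\int_{t/2}^t \frac{M_{|g|}(u)}{u^2}\,du \;\geq\; \frac{1}{t}\int_{t/2}^t \frac{M_{|g|}(u)}{u}\,du \;\geq\; \frac{\log 2}{t}\,M_{|g|}(t/2) \;\geq\; \frac{\log 2}{1+DB}\cdot\frac{M_{|g|}(t)}{t}.
\]
With $c := \tfrac{\log 2}{1+DB} > 0$ this yields $L_{|g|}(t) = \frac{M_{|g|}(t)}{t} + \int_1^t \geq (1+c)\frac{M_{|g|}(t)}{t}$, hence $\int_1^t = L_{|g|}(t) - \frac{M_{|g|}(t)}{t} \geq \frac{c}{1+c}L_{|g|}(t)$. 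Alternatively, a corrected version of your idea: keep the full integral $\int_1^t$ but restrict the \emph{sum} to $n \leq t/2$. Then
\[
\int_1^t \frac{M_{|g|}(u)}{u^2}\,du \;\geq\; \sum_{n \leq t/2} |g(n)|\Bigl(\tfrac1n - \tfrac1t\Bigr) \;\geq\; \tfrac12\, L_{|g|}(t/2),
\]
since now $\tfrac1t \leq \tfrac{1}{2n}$ for $n \leq t/2$; your argument that $L_{|g|}(t/2) \gg_B L_{|g|}(t)$ via Lemma \ref{DobApp} is then correct and finishes the job.
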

\begin{proof}
By partial summation, we have
\begin{equation*}
\sum_{n \leq t} \frac{|g(n)|}{n} = \frac{M_{|g|}(t)}{t} + \int_1^t \frac{M_{|g|}(v)}{v^2}dv.
\end{equation*}
Now, observe that 
\begin{equation*}
t\int_{t/2}^t \frac{M_{|g|}(v)}{v^2} dv \geq \int_{t/2}^t \frac{M_{|g|}(v)}{v} dv \geq M_{|g|}(t/2)\log 2 \geq \frac{\log 2}{1+DB} M_{|g|}(t),
\end{equation*}
the last estimate following by Lemma \ref{DobApp} with $c = 2$ and some $D > 0$. Thus, 
\begin{equation*}
\sum_{n \leq t} \frac{|g(n)|}{n} \geq \frac{M_{|g|}(t)}{t} + \frac{\log 2}{1+DB} \frac{M_{|g|}(t)}{t} = \left(1+\frac{\log 2}{1+DB}\right)\frac{M_g(t)}{t}, 
\end{equation*}
and thus
\begin{equation}
\int_1^t \frac{M_{|g|}(v)}{v^2}dv \geq \left(1-\frac{1+DB}{1+DB+\log 2}\right)L_{|g|}(t). \label{LOWLg}
\end{equation}
%Suppose that $\int_1^t \frac{M_{|g|}(u)}{u^2} < \frac{1}{2}\sum_{n \leq t} \frac{g(n)}{n}$. Then by partial summation,
%\begin{equation*}
%\frac{M_{|g|}(t)}{t} = \sum_{n \leq t} \frac{|g(n)|}{n} - \int_1^t \frac{M_{|g|}(u)}{u^2}du > \frac{1}{2}\sum_{n \leq t} \frac{g(n)}{n}.
%\end{equation*}
Since $L_{|g|}(t) \gg_B \mc{G}(\sg)$ by Lemma \ref{DENOMPARS}, the claim follows.
\end{proof}
Note that the proofs of the last lemma above work equally well when $g$ is a squarefree-supported multiplicative function since Lemma \ref{DobApp} holds. With this observation and Remark \ref{REMSQFSUPP}, Theorem \ref{LOWERMV} follows immediately.
\begin{proof}[Proof of Theorem \ref{LOWERMV}]
Let $\lambda$ be a non-negative, multiplicative function satisfying $\delta \leq \lambda(p) \leq B$ for all primes $p$. Let $\lambda'(m) := \mu^2(m)\lambda(m)\phi(m)/m$. Combining Lemma \ref{CHEAP} with Remark \ref{REMSQFSUPP} gives
\begin{align*}
M_{\lambda}(t) &\geq \sum_{n \leq x} \mu^2(n)\lambda(n) \gg_B \delta \frac{\phi(P_t)}{P_t}\frac{t}{\log t} \int_1^t \frac{M_{\lambda'}(u)}{u^2} du \gg_B \delta \frac{\phi(P_t)}{P_t}\frac{t}{\log t} \sum_{n \leq t} \frac{\lambda'(n)}{n} \\
&\gg_B \delta \frac{\phi(P_t)}{P_t}\frac{t}{\log t} \prod_{p \leq t} \left(1+\frac{\lambda(p)(p-1)}{p^2}\right) \geq \delta \frac{\phi(P_t)}{P_t}\frac{t}{\log t} \prod_{p \leq t} \left(1+\frac{\lambda(p)}{p}\right)\left(1-\frac{\lambda(p)}{p^2}\right) \\
&\gg_B \delta \frac{\phi(P_t)}{P_t}\frac{t}{\log t} \prod_{p \leq t} \left(1+\frac{\lambda(p)}{p}\right),
\end{align*}
the last estimate coming from Lemma \ref{LOGSUM} and the convergence of the product $\prod_{p} \left(1-\frac{\lambda(p)}{p^2}\right)$.
\end{proof}
%\begin{rem} \label{BIGP}
%Note that if we allowed $S$ to satisfy $P_t \gg e^{ct}$, for some $c \in (0,1)$ (e.g., if $g$ is supported on $y$-smooth numbers with $t-y \geq t^c$, say) then the factor $\phi(P_t)/P_t$ can be as small as $1/\log_2 (P_t) \gg 1/\log t$. Thus, our estimates are much weaker in this case.
\subsection{Upper Bounds for $|M_h|$}
The following constitutes an analogue of Lemma \ref{DENOM} for $M_h(t)$. Our upper bound integral will instead be dealt with via Parseval's theorem in Section 6.
\begin{lem} \label{NUM}
Let $t$ be sufficiently large, let $|A| \in [0,1]$, and put $h(n):= g(n)-A|g(n)|$ and $\mu := \max_{p} \text{arg}(g(p))$. Set $R_h(\beta) := \max_{2 \leq u \leq x} \frac{|M_h(u)|}{M_{|g|}(u)} \log^{\beta} u$, for $\beta > 0$. 
%For $c > 0$ define
%\begin{equation*}
%\kappa = \kappa(c) := \max \left\{\kappa > 0 : \exp\left(-\sum_{u^{\kappa} < p \leq u} \frac{|g(p)|}{p}\right) \leq c \ \text{for all } t \in [1,x]\right\}.
%\end{equation*}
%Then for any $0 < \kappa \leq \kappa(c)$, 
Then for any $\lambda > 0$ and $\kappa \in (0,1)$,
\begin{align*}
|M_h(t)| &\ll_B B\frac{t}{\log t}(\frac{1}{(1-\kappa)\log t}\int_{t^{\kappa}}^{t} \frac{|M_h(u)|\log u}{u^{2}}du +R_h(\lambda)\left(\int_1^{t^{\kappa}} \frac{M_{|g|}(u)}{u^2\log^{\lambda}(3u)} du +M_{|g|}(t)\frac{\log_2 t}{\log^{\lambda} t}\right) \\
&+ |A|\mu\int_1^{t} \frac{M_{|g|}(u)}{u^{2}} du +\frac{M_{|g|}(t)}{\log^2 t}).
%|M_h(t)| \ll B\frac{t}{\log t}\left(\frac{1}{\kappa \log^{\frac{1}{2}} t}\left(J_{H,1}(\sg)^{\frac{1}{2}} + AmJ_{\mc{G},1}(\sg)^{\frac{1}{2}}\right) + Am I_{|g|}(t^{\kappa})  + R_h(\lambda)\int_1^{t^{\kappa}} \frac{M_{|g|}(u)}{u^2 \log^{\lambda}(3u)} du+R_h(\lambda)\frac{M_{|g|}(t)\log_2 t}{\kappa t^{\eta} \log^{1+\lambda} t}\right).
\end{align*}
\end{lem}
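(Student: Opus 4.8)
The plan is to mirror the strategy of Lemma \ref{DENOM}, but now applied to $h$ rather than $|g|$, and to pass from $M_h(t)$ to $N_h(t)$ in the \emph{opposite} direction — upper-bounding $M_h(t)$ by $N_h(t)/\log t$ plus a correction. First I would establish, using $N_h(u) = \sum_{n\le u}h(n)\log n$ together with $\log n \geq \kappa \log u$ for $n > u^{\kappa}$ and the trivial bound $|h(n)| \leq (1+|A|)|g(n)| \ll |g(n)|$ for the small range $n \leq u^{\kappa}$, the elementary inequality
\begin{equation*}
|M_h(t)| \ll \frac{1}{\log t}\left(|N_h(t)| + \sum_{n \leq t^{\kappa}} |g(n)|\log t\right) + (\text{tail terms}),
\end{equation*}
or more precisely a relation of the form $|M_h(t)| \ll_B \frac{1}{\log t}\left(|N_h(t)| + M_{|g|}(t^{\kappa})\log t^{\kappa}\right)$. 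Actually the cleaner route, matching the shape of the claimed bound, is to write $M_h(t)\log t = N_h(t) + \Delta_h(t)$ with $\Delta_h(t) = \sum_{n \leq t} h(n)\log(t/n) = \int_1^t M_h(v)v^{-1}\,dv$, and to control $\Delta_h(t)$ by splitting the integral at $t^{\kappa}$: on $(t^{\kappa},t]$ bound $|M_h(v)| \leq R_h(\lambda) M_{|g|}(v)\log^{-\lambda} v$ and integrate, picking up $R_h(\lambda)\left(\int_1^{t^{\kappa}} M_{|g|}(u)u^{-2}\log^{-\lambda}(3u)\,du + M_{|g|}(t)\log_2 t/\log^{\lambda} t\right)$ after using $\int_{u}^{t} v^{-1}M_{|g|}(v)\log^{-\lambda}v\,dv$-type estimates and Lemma \ref{SELBERG}/Lemma \ref{DobApp} to compare $M_{|g|}$ at nearby scales; on $[1,t^{\kappa}]$ the contribution is absorbed similarly.

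Second, I would expand $N_h(t)$ itself via Lemma \ref{COMPAVG}: replacing $|N_h(t)|$ by its average $y^{-1}\int_{t-y}^t |N_h(u)|\,du + O_B(M_{|g|}(t)/\log^2 t)$ and then applying Lemma \ref{STRONG} to both $g$ and $|g|$, exactly as in Lemma \ref{COMPAVG}, gives
\begin{equation*}
|N_h(t)| \ll_B y^{-1}\int_{t-y}^t\!\!du\left(\sum_{d\le u}\Lambda(d)|g(d)||M_h(u/d)| + |A|\mu\sum_{d\le u}\Lambda(d)|g(d)|M_{|g|}(u/d)\right) + O_B\!\left(\frac{M_{|g|}(t)}{\log^2 t}\right).
\end{equation*}
For the second (weighted-$|g|$) sum, interchanging the order of summation and integration as in the proof of Lemma \ref{DENOM} and using $|g(d)|\le B$ and $\sum_{(t-y)/u<d\le t/u}\Lambda(d)/d \ll y/t$ (Lemma \ref{PRECMERTENS} ii), with the choice $y=te^{-\log^c t}$) collapses it to $\ll_B B\frac{t}{\log t}\cdot|A|\mu\int_1^t M_{|g|}(u)u^{-2}\,du$ after dividing through by $\log t$. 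For the first sum I would again split the range of $d$ (equivalently of $u/d$) at $(u/d) = u^{\kappa}$ versus $(u/d)>u^{\kappa}$ — i.e. at $d < u^{1-\kappa}$ versus $d$ large. In the range where $u/d > u^{\kappa}$ one has $\log(u/d)\asymp \log u$, so $|M_h(u/d)|$ can be folded into the integral $\int_{t^{\kappa}}^t |M_h(v)|\log v\, v^{-2}\,dv$ (again via reversing summation/integration and Mertens); in the complementary range $u/d \leq u^{\kappa}$ we use the crude bound $|M_h(u/d)| \leq R_h(\lambda) M_{|g|}(u/d)\log^{-\lambda}(u/d)$ and bound the resulting sum by $R_h(\lambda)$ times the stated $M_{|g|}$-integrals plus boundary term.

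The main obstacle I anticipate is the bookkeeping in the first sum: getting the split at $u/d = u^{\kappa}$ to produce exactly the integral $\frac{1}{(1-\kappa)\log t}\int_{t^{\kappa}}^t |M_h(u)|\log u\, u^{-2}\,du$ (note the factor $(1-\kappa)^{-1}$, which should come out of $\sum_{d} \Lambda(d)/d \approx \log(u^{1-\kappa}) = (1-\kappa)\log u$ or its reciprocal after normalization) while simultaneously controlling the small-$u/d$ tail uniformly via $R_h(\lambda)$ — one must be careful that the $\log^{-\lambda}(3u)$ weight and the endpoint term $M_{|g|}(t)\log_2 t/\log^{\lambda}t$ genuinely dominate, which requires Lemma \ref{SELBERG}'s lower bound $M_{|g|}(u)\gg u\log^{1-\delta-o(1)}u$ and Lemma \ref{DobApp} to pass $M_{|g|}$ between scales $u$ and $u/d$ without loss. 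A secondary subtlety, as in the remark after Lemma \ref{COMPAVG}, is the completely multiplicative case, where the prime-power terms in $\sum_d \Lambda(d)|g(d)|\left|g(d)/|g(d)|-1\right|M_{|g|}(u/d)$ contribute an extra $\mu M_{|g|}(t)\log t$, which after division by $\log t$ is of the same order $\mu$ already present, hence harmless. Once these pieces are assembled and divided by $\log t$, collecting terms yields precisely the claimed inequality.
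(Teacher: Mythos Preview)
Your proposal is correct and follows essentially the same route as the paper: pass from $M_h$ to $N_h$ via $\Delta_h(t)=\int_1^t M_h(v)v^{-1}\,dv$, bound $|N_h(t)|$ by Lemma \ref{COMPAVG}, then split the von Mangoldt sum in $a$ and collapse each piece via Lemma \ref{PRECMERTENS}. One small bookkeeping difference: the paper does \emph{not} split $\Delta_h$ at $t^{\kappa}$ --- it simply bounds $|\Delta_h(t)|\le R_h(\lambda)\int_1^t M_{|g|}(u)u^{-1}\log^{-\lambda}u\,du\ll_B R_h(\lambda)M_{|g|}(t)\log_2 t/\log^{\lambda}t$ directly (as in \eqref{DELTBOUND}), so the entire $R_h(\lambda)\int_1^{t^{\kappa}}M_{|g|}(u)u^{-2}\log^{-\lambda}(3u)\,du$ term comes from the tail $a>Y$ of the $\Lambda$-sum, not from $\Delta_h$; also, the $(1-\kappa)^{-1}$ arises from the pointwise bound $\log((t-y)/a)^{-1}\le((1-\kappa)\log t)^{-1}$ for $a\le t^{\kappa}$, rather than from a Mertens sum.
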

%\begin{rem} \label{REMNUM}
%We remark that we may assume, in the course of proving Theorem \ref{HalGen} for fixed $t$ that $|M_h(t)| \gg M_{|g|}(t)\log^{-2}t$, otherwise the statement of the theorem is trivial. We will use this simplification in what follows.
%\end{rem}
\begin{proof}
%The proof of Lemma \ref{NUM} goes along similar lines as that of Lemma \ref{DENOM}, but we prove it entirely for completeness.  
First, using \eqref{EXACT}, we can write
\begin{align*}
|\Delta_{h}(t)|&= \left|\int_1^t \frac{M_{h}(u)}{u}du\right| \leq \int_1^t \frac{|M_h(u)|}{u}du 
%\int_1^t du\frac{\log u}{u}\left(|M_g(u)| + |A|M_{|g|}(u)\right) \\
\leq R_h(\lambda)\int_1^t du\frac{M_{|g|}(u)}{u\log^{\lambda} u} du.
\end{align*}
Arguing as in \eqref{DELTBOUND}, 
\begin{equation}
\int_1^t \frac{M_{|g|}(u) }{u\log^{\lambda} u} du \ll (B+1)M_{|g|}(t)\frac{\log_2 t}{\log^{\lambda} t}, \label{REMAINDER}
\end{equation}
whence follows the estimate $|\Delta_{h}(t)| \leq (B+1)R_h(\lambda)M_{|g|}(t)\frac{\log_2 t}{\log^{\lambda} t}$. \\
%Proceeding as in the proof of Lemma \ref{DENOM}, choose $t_1 \leq \frac{t}{\log^{K_1}t}$ for some $K_1 > 0$ to be chosen, we have
%\begin{equation*}
%|R_{h,k}(t)| \leq kM_{|g|}(t_1)(R+A)\int_1^{t_1} \frac{\log^{k} v}{v}dv + kM_{|g|}(t)(R+A) \int_{t_1}^u \frac{\log^{k}v}{v}dv \leq M_{|g|}(t)\log^k t \left((R+A)\frac{M_{|g|}(t_1)}{M_{|g|}(t)} \log^2 t + \left(1-\left(\frac{\log t_1}{\log t}\right)^k\right)\right);
%\end{equation*}
%in the last line we applied the triangle inequality as $|M_h(t_1)| \leq (1+A)M_{|g|}(t_1)$, and used Remark \ref{REMNUM}. According to our assumption on the growth of $M_{|g|}$, it follows that $(1+A)\frac{M_{|g|}(t_1)}{M_{|g|}(t)}\log^2 t \ll_{B,C} \frac{t_1\log^{3B-1}t_1}{u}\log^C t \ll \log^{3B+C+1-K_1} t$,since
%\begin{equation*}
%|A| = \exp\left(\sum_{p\leq t} \frac{1}{p}(\text{Re}(g(p))-|g(p)|)\right) \leq 1.
%\end{equation*}
%Also, since $(1-s)^{k} \geq 1-2\delta s$ for any $s < \frac{1}{k}$, it follows that, as $\frac{\log u_1}{\log u} = 1-K_1 \frac{\log_2 u}{\log u} =: 1-s$, if $u$ is sufficiently large (with respect to $k$) then we have
%\begin{equation*}
%\left(1-\left(\frac{\log u_1}{\log u}\right)^{k}\right) \leq 2k s = 2k K_1 \frac{\log_2 u}{\log u}.
%\end{equation*}
We now consider $M_h(t)$. Using the above observation,
\begin{equation}
|M_h(t)| = \frac{1}{\log t} |N_h(t)-\Delta_{h}(t)| \leq \frac{1}{\log t}|N_h(t)| + (B+1)R_h(\lambda)M_{|g|}(t)\frac{\log_2 t}{\log^{1+\lambda} t}. \label{GAPMN}
\end{equation}
Now, as before, take $y = te^{-\log^{c} t}$ with $0 < c < \frac{1}{2}$. Applying Lemma \ref{COMPAVG}, we have
\begin{align}
&|N_{h}(t)| \leq y^{-1}\int_{t-y}^{t} |N_{h}(u)| du + O\left(\frac{M_{|g|}(t)}{\log^2 t}\right) \leq \frac{t^2}{y} \int_{t-y}^{t} \frac{|N_{h}(u)|}{u^2} du + O\left(\frac{M_{|g|}(t)}{\log^2 t}\right) \nonumber\\
&\leq \frac{t^2}{y}\left(\sum_{a \leq t} \Lambda(a)|g(a)|\int_{t-y}^{t} \frac{du}{u^2} |M_h(u/a)| + |A|\mu \sum_{a \leq t} \Lambda(a)|g(a)|\int_{t-y}^t \frac{du}{u^2} M_{|g|}(u/a)\right) + O\left(\frac{M_{|g|}(t)}{\log^2 t}\right) \nonumber\\
&=: \frac{t^2}{y}(I_1+|A|\mu I_2) + O\left(\frac{M_{|g|}(t)}{\log^2 t}\right) \label{SETUP1}
%\ll \frac{t^2}{y} \log^{\frac{1}{2}} \left(1+\frac{y}{t}\right) \left(\int_t^{t+y} \frac{|N_{g}(u)|^2}{u^3} du\right)^{\frac{1}{2}} \\
%&\ll \frac{t^2}{y} \left(\frac{y}{t}\right)^{\frac{1}{2}}\left(\int_t^{t+y} \frac{|N_{g}(u)|^2}{u^3} du\right)^{\frac{1}{2}}.
\end{align}
%We repeat the argument in Lemma \ref{DENOM}, namely in making the substitution $v := \frac{u}{a}$ in each summand in $I_1$ and $I_2$. We have
Let $Y := t^{\kappa}$. We divide the sum over $a$ in $I_1$ into the segments $[1,Y]$ and $[Y,t]$. Over the first segment, we make the substitution $v := u/a$ and insert a logarithmic factor, giving
\begin{align}
&\sum_{a \leq Y} \Lambda(a)|g(a)| \int_{t-y}^t \frac{du}{u^2}|M_h(u/a)| \leq B\sum_{a \leq Y} \frac{\Lambda(a)}{a}\int_{(t-y)/a}^{t/a} \frac{du}{u^2}|M_h(u)| \label{COMPGEN}\\
&\leq B\sum_{a \leq Y} \frac{\Lambda(a)}{a\log((t-y)/a)}\int_{(t-y)/a}^{t/a} \frac{du}{u^2}|M_h(u)|\log u \nonumber\\
&\ll B\frac{1}{\log t}\int_{(t-y)/Y}^{t} \frac{du}{u^2}|M_h(u)|\log u \sum_{(t-y)/v < a \leq t/v} \frac{\Lambda(a)}{a}\left(1-\frac{\log a}{\log t}\right)^{-1} \nonumber\\
&\leq B\frac{1}{(1-\kappa)\log t}\int_{(t-y)/Y}^{t} \frac{du}{u^2}|M_h(u)|\log u \sum_{(t-y)/v < a \leq t/v} \frac{\Lambda(a)}{a} \nonumber\\
&\ll B\frac{y}{(1-\kappa)t\log t} \int_{t^{\kappa}}^t \frac{du}{u^2}|M_h(u)| \log u, \nonumber
\end{align}
the last estimate following from Lemma \ref{PRECMERTENS}. Over the second segment, we simply have
\begin{align*}
&\sum_{Y< a \leq t} \Lambda(a)|g(a)| \int_{t-y}^t \frac{du}{u^2}|M_h(u/a)| \leq B\sum_{Y < a \leq t} \frac{\Lambda(a)}{a}\int_{(t-y)/a}^{t/a} \frac{du}{u^2}|M_h(u)| \\
&= B\int_1^Y \frac{du}{u^2}|M_h(u)| \sum_{(t-y)/v < n \leq t/v} \frac{\Lambda(a)}{a} \ll B\frac{y}{t}\int_1^Y \frac{du}{u^2}|M_h(u)|.
\end{align*}
We estimate $I_2$ in the same as with the second segment above, thus
\begin{align*}
%I_1 &\leq B\sum_{a \leq t}\frac{\Lambda(a)}{a} \int_{(t-y)/a}^{t/a} \frac{dv}{v^2}|M_h(v)| = B\int_{1}^{t} \frac{dv}{v^2}|M_h(v)|\left(\sum_{(t-y)/v<a \leq t/v} \frac{\Lambda(a)}{a}\right) \asymp B \frac{y}{t}\int_1^{t} \frac{dv}{v^2}|M_h(v)|; \\
I_2 &\leq B\sum_{a \leq t}\frac{\Lambda(a)}{a} \int_{(t-y)/a}^{t/a} \frac{dv}{v^2}M_{|g|}(v) = B\int_{1}^{t} \frac{dv}{v^2}M_{|g|}(v)\left(\sum_{(t-y)/v<a \leq t/v} \frac{\Lambda(a)}{a}\right) \asymp B \frac{y}{t}\int_1^{t} \frac{dv}{v^2}M_{|g|}(v).
\end{align*}
%
%\begin{align*}
%I_1 &\leq B\sum_{a \leq t}\frac{\Lambda(a)}{a} \int_{(t-y)/a}^{t/a} \frac{dv}{v^2}|M_h(v)| = B\int_{1}^{t} \frac{dv}{v^2}|M_h(v)|\left(\sum_{(t-y)/v<a \leq t/v} \frac{\Lambda(a)}{a}\right) \asymp B \frac{y}{t}\int_1^{t} \frac{dv}{v^2}|M_h(v)|; \\
%I_2 &\leq B\sum_{a \leq t}\frac{\Lambda(a)}{a} \int_{(t-y)/a}^{t/a} \frac{dv}{v^2}M_{|g|}(v)| = B\int_{1}^{t} \frac{dv}{v^2}M_{|g|}(v)\left(\sum_{(t-y)/v<a \leq t/v} \frac{\Lambda(a)}{a}\right) \asymp B \frac{y}{t}\int_1^{t} \frac{dv}{v^2}M_{|g|}(v).
%\end{align*}
It follows that
%Repeating the exact same arithmetic argument involving the von Mangoldt function as in Lemma \ref{DENOM}, with $|M_g(u)|$ in place of $M_{|
%g|}(u)$ and bounding $|g(a)|\Lambda_k(a)$ above by $B^k \Lambda_k(a)$ with $k = 1$ leads to the estimate
\begin{align}
\int_{t-y}^t \frac{|N_{h}(u)|}{u^2} du &\ll B \frac{y}{t}(\frac{1}{(1-\kappa)\log t}\int_{t^{\kappa}}^{t} \frac{|M_{h}(u)|\log u}{u^2} du+R_h(\lambda)\int_1^{t^{\kappa}} \frac{M_{|g|}(u)}{u^2\log^{\lambda}(3u)}du \nonumber\\
&+ |A|\mu \int_1^{t} \frac{M_{|g|}(u)}{u^2}du) \label{SETUP2},
%= B\frac{y}{t}\left(I_h(t) + |A|\mu I_{|g|}(t)\right)\label{SETUP2}.
\end{align}
which, when combined with \eqref{GAPMN}, gives the claim.
\end{proof}
\begin{rem}\label{NUMCOMP}
The above argument can be modified to deal with completely multiplicative functions $g$ as well, provided we take $B < 2$. The lone emendment must be made in the inequality in \eqref{COMPGEN} and the corresponding estimate on the interval $[Y,t]$ (which proceeds similarly), and these modifications can be made in the following way. Splitting the sum into pieces with $a$ prime and $a$ a prime power with multiplicity at least 2, and applying Cauchy-Schwarz to the second, we have
\begin{align*}
&\sum_{a \leq Y} \frac{\Lambda(a)}{a}|g(a)|\int_{(t-y)/a}^{t/a} \frac{du}{u^2} |M_h(u)| \leq B\sum_{p \leq y} \frac{\Lambda(p)}{p}\int_{(t-y)/p}^{t/p} \frac{du}{u^2}|M_h(u)| + \sum_{p^k \leq y \atop k \geq 2} \Lambda(p) \left(\frac{B}{p}\right)^k \int_{(t-y)/p^k}^{t/p^k} \frac{du}{u^2}|M_h(u)| \\
&\leq B\sum_{a \leq Y} \frac{\Lambda(a)}{a}\int_{(t-y)/a}^{t/a} \frac{du}{u^2}|M_h(u)| + \left(\sum_{p, k\geq 2} \log^2 p \left(\frac{B}{p}\right)^k\right)^{\frac{1}{2}} \left(\sum_{p^k \leq Y \atop  k \geq 2} \frac{\Lambda(p)^2}{p^k}B^k \left(\int_{(t-y)/p^k}^{t/p^k} \frac{du}{u^2}|M_h(u)|\right)^2 \right)^{\frac{1}{2}} \\
&\leq B\sum_{a \leq Y} \frac{\Lambda(a)}{a}\int_{(t-y)/a}^{t/a} \frac{du}{u^2}|M_h(u)| + 2B^2\left(\sum_{p} \frac{\log^2 p}{p(p-B)}\right)^{\frac{1}{2}} \sum_{a \leq Y} \frac{\Lambda(a)}{a}\left(\int_{(t-y)/a}^{t/a} \frac{du}{u^2}|M_h(u)|\right),
\end{align*}
where, in the last inequality, we used the fact that if $k/2$ is a half-integer (necessarily strictly greater than 1) then the corresponding term in $p^{k}$ is smaller than that for $p^{2\llf k/2\rrf}$ (since the integral is longer in the latter case, and $(B/p)^k \leq (B/p)^{2\llf k/2 \rrf}$ for all $p$). 
\end{rem}
\begin{rem}\label{REMGEN}
It should be noted that all of the arguments in this section remain valid if we replace the function $h(n) := g(n)-A|g(n)|$ with the function $h(n) := a(n)-Ab(n)$, where $a$ and $b$ are real-valued, strongly or completely multiplicative functions such that $|a(n)| \leq b(n)$, $\delta \leq |a(p)| \leq b(p) \leq B$ (with $B < 2$ in case $g$ is completely multiplicative) and $\left|\frac{a(p)}{b(p)} -1 \right| \leq \eta$. This will be of value in the proof of ii) in Theorem \ref{WIRSINGEXT}.
\end{rem}
\section{Analytic Estimates}
In this section, we collect estimates for several line integrals related to the integral bounds in the previous section.  Throughout, for $\sg > 1$ the operator $\int_{(\sg)}$ denotes integration on the line $\{s \in \mb{C} : \text{Re}(s) = \sg\}$. \\
On occasion, we will find it convenient to estimate $G(s)$ directly in terms of the value of $g$ at primes.  To this end, we prove the following simple result. It appears in \cite{Ell} in a different form, so we prove it in our context for completeness.
\begin{lem} \label{StrongComp}
Let $g : \mb{N} \ra \mb{C}$ be a strongly multiplicative function for which there is a $B \in (0,\infty)$ such that $|g(p)| \leq B$ for all primes $p$, and let $G$ be its Dirichlet series, valid for $\text{Re}(s) > 1$. Then we can write
\begin{equation}
G(s) = G_0(s)\exp\left(\sum_p g(p)p^{-s}\right), \label{USESTRONG}
\end{equation}
where $G_0(s)$ is a Dirichlet series converging absolutely and uniformly in the half-plane $\sg > \frac{1}{2}$, and such that, uniformly in $\tau \in \mb{R}$ and $\sg > 1$, $|G_0(s)| \leq 2e^{B(B+1)}\log(1+B)^{4B}$. \\
%ii) Fix $s = \sg + i\tau$ and set $r := \frac{1}{2}(\sg-1)$ such that $G(s) \neq 0$. Assume that $z \in \mb{C}$ satisfying $|z-s| = r$ is also not a zero of $G$. Then there is a constant $c \leq e^{\frac{1}{2}}$ such that $\left|\frac{G(z)}{G(s)}\right|, \left|\frac{\mc{G}(z)}{\mc{G}(s)}\right|\in \left[\frac{1}{c},c\right]$.
\end{lem}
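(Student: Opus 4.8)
The plan is to construct $G_0$ explicitly as an Euler product and then estimate its local factors. Since $g$ is strongly multiplicative, its Euler factor at $p$ is $1 + g(p)(p^{-s} + p^{-2s} + \cdots) = 1 + \tfrac{g(p)}{p^s - 1}$, so $G(s) = \prod_p \bigl(1 + \tfrac{g(p)}{p^s-1}\bigr)$ for $\mathrm{Re}(s) > 1$. I would then define
\begin{equation*}
G_0(s) := G(s)\exp\left(-\sum_p g(p)p^{-s}\right) = \prod_p \left(1 + \frac{g(p)}{p^s-1}\right)e^{-g(p)p^{-s}},
\end{equation*}
so that \eqref{USESTRONG} holds by construction. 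The entire content of the lemma is then the claim that this last Euler product converges absolutely and uniformly for $\sg > \tfrac12$ and satisfies the stated explicit bound for $\sg > 1$.

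The key step is to estimate the local factor $L_p(s) := \bigl(1 + \tfrac{g(p)}{p^s-1}\bigr)e^{-g(p)p^{-s}}$. First I would write $1 + \tfrac{g(p)}{p^s-1} = 1 + g(p)p^{-s} + g(p)\bigl(\tfrac{1}{p^s-1} - p^{-s}\bigr) = 1 + g(p)p^{-s} + O\bigl(\tfrac{B}{p^{2\sg}}\bigr)$ for $\sg > \tfrac12$ (using $|\tfrac{1}{p^s-1} - p^{-s}| = |\tfrac{p^{-2s}}{1-p^{-s}}| \leq \tfrac{p^{-2\sg}}{1 - 2^{-1/2}} \ll p^{-2\sg}$), and similarly $e^{-g(p)p^{-s}} = 1 - g(p)p^{-s} + O\bigl(\tfrac{B^2}{p^{2\sg}}\bigr)$. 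Multiplying, the linear terms in $g(p)p^{-s}$ cancel, leaving $L_p(s) = 1 + O_B\bigl(p^{-2\sg}\bigr)$; since $\sum_p p^{-2\sg}$ converges uniformly for $\sg \geq \tfrac12 + \delta$ this gives absolute and uniform convergence of $\prod_p L_p(s)$ on $\sg > \tfrac12$ (with care near $\sg = \tfrac12$, which is fine since we only claim convergence in the open half-plane). For the explicit bound when $\sg > 1$, I would take logarithms: $\log|G_0(s)| \leq \sum_p \bigl| \log L_p(s) \bigr|$, split the primes into $p \leq B^2$ (say) and $p > B^2$, bound the tail sum $\sum_{p > B^2} |\log L_p(s)|$ by a convergent series controlled by a constant multiple of $B^2 \sum_{p} p^{-2}$, and bound each of the finitely many small-prime factors crudely by $\bigl(1 + \tfrac{B}{p-1}\bigr)e^{B/p} \leq (1+B)e^B$, using that there are at most $O(B^2/\log B)$ such primes. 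Assembling these and being slightly generous with constants should yield a bound of the shape $2e^{B(B+1)}\log(1+B)^{4B}$.

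The main obstacle is purely bookkeeping: getting the explicit constant to come out as stated (rather than merely $O_B(1)$) requires choosing the split point between small and large primes carefully and tracking the prime-counting estimate $\pi(B^2) \ll B^2/\log B$ together with $\sum_{p \leq y} \tfrac{1}{p} \asymp \log_2 y$; one must be careful that the small-prime contribution, being a product of at most $\pi(B^2)$ factors each of size roughly $(1+B)e^B$, produces something like $\exp\bigl(\pi(B^2)(B + \log(1+B))\bigr)$, which one then bounds by the claimed $e^{B(B+1)}\log(1+B)^{4B}$ — this is where the precise form of the constant is forced, and where one should instead choose the split point to be something like $p \leq 1+B$ so that the small-prime count is $\ll B/\log B$ and each factor is $O(e^B\log(1+B))$. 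I expect no genuine difficulty beyond this constant-chasing; the analytic heart — cancellation of the linear term and $O_B(p^{-2\sg})$ tail — is routine. I would keep the write-up terse, doing the cancellation computation and the tail bound explicitly and merely indicating the finite-product bound.
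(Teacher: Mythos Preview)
Your proposal is correct and follows essentially the same route as the paper: define $G_0(s) = \prod_p (1+g(p)/(p^s-1))e^{-g(p)p^{-s}}$, show each local factor is $1+O_B(p^{-2\sg})$ (you by direct multiplication, the paper by Taylor-expanding $\log(1+z)$), and for the explicit bound split the primes at roughly $B+1$ (the paper uses $B^{1/\sg}+1$, which for $\sg>1$ is the same cutoff and is chosen precisely so that $|g(p)/(p^s-1)|<1$ on the tail). Your closing remark that the split should be at $p\leq 1+B$ rather than $p\leq B^2$ is exactly what the paper does; the tail then contributes the $e^{B(B+1)}$ factor via $\sum_p 1/(p^\sg(p^\sg-1))\leq 1$, and the finitely many small primes give the $\log(1+B)^{4B}$ factor via Mertens.
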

\begin{rem}
In several lemmata to follow, we employ a factorization of the type $A'(s) = \frac{A'(s)}{A(s)}\cdot A(s)$, where $A = G$ or $\mc{G}$. Since we can always exclude measure-zero sets from our integrals, we may ignore each case in which $G(s)$ and/or $\mc{G}(s)$ is non-zero, since $G$ and $\mc{G}$ are holomorphic on the half-plane $\text{Re}(s) > 1$, and thus their zero sets are discrete.  It will, of course, still be necessary to show that the resulting factored expression is still bounded in neighbourhoods of these zeros, and we shall do this in what follows.
%The singularities associated with such zeros (which, anyway, can only occur in a finite number of terms) can therefore be ignored. 
\end{rem}
\begin{proof}
%We consider the strongly multiplicative case first. 
Set $G_0(s) := G(s)\exp\left(-\sum_p g(p)p^{-s}\right)$ and estimate $G_0$.  First, for $\sg > 1$ and $s = \sg + i\tau$ fixed, we write 
\begin{equation*}
A(s) := \prod_{p \leq B^{\frac{1}{\sg}} + 1} \left(1+\frac{g(p)}{p^s-1}\right)\exp\left(-g(p)p^{-s}\right), 
\end{equation*}
so that by definition,
\begin{equation*}
G_0(s) = A(s) \left(\prod_{p > B^{\frac{1}{\sg}}+1} \left(1+\frac{g(p)}{p^s-1}\right)e^{-g(p)p^{-s}}\right) = A(s) \exp\left(\sum_{p > B^{\frac{1}{\sg}}+1} \left(\log\left(1+\frac{g(p)}{p^s-1}\right) - g(p)p^{-s}\right)\right).
\end{equation*}
%where the second expression is well-defined because, as implied by Lemma \ref{ZEROS}, for $p > B^{\sg}+1$ the behaviour of $g$ is unimportant. 
In particular, when $p > B^{\frac{1}{\sg}} + 1$ we have $|\frac{g(p)}{p^s-1}| \leq \frac{|g(p)|}{p^{\sg}-1} < 1$, whence, Taylor expanding the factor $\log\left(1+\frac{g(p)}{p^s-1}\right)$ for each $p$, we get
\begin{align}
\log\left(1+\frac{g(p)}{p^s-1}\right) - g(p)p^{-s} &= \left(\log\left(1+\frac{g(p)}{p^s-1}\right) - \frac{g(p)}{p^{s}-1}\right) + \left(\frac{g(p)}{p^s-1} - \frac{g(p)}{p^s}\right) \label{Str}\\
&= -\frac{g(p)^2}{(p^{s}-1)^2}\sum_{l \geq 2} (-1)^{l} \frac{1}{l}\left(\frac{g(p)}{p^s-1}\right)^{l-2} + \frac{g(p)}{p^s(p^s-1)} \nonumber.
\end{align}
Summing all of these terms and estimating the result gives
\begin{align*}
\left|\sum_{p > B^{\frac{1}{\sg}}+1} \left(\log\left(1+\frac{g(p)}{p^s-1}\right) - g(p)p^{-s}\right)\right| &\leq B(B+1)\sum_{p > B^{\frac{1}{\sg}}+1} \frac{1}{p^{\sg}(p^{\sg}-1)}.
%+ B \sum_p \frac{1}{p^{\sg}(p^{\sg}-1)}.
%&\leq B(B+1)\sum_p \frac{1}{p^{\sg}(p^{\sg}-1)}.
\end{align*}
This estimate implies immediately that the second factor in the definition of $G_0(s)$ converges absolutely and uniformly for $\sg > \frac{1}{2}$. Clearly, as $A(s)$ is a finite product of functions that are holomorphic in the half-plane $\text{Re}(s) > 0$, $G_0(s)$ must also converge absolutely and uniformly in the half-plane $\sg > \frac{1}{2}$, and thus be holomorphic there as well. Moreover, for $\sg > 1$, $\sum_p \frac{1}{p^{\sg}(p^{\sg}-1)} \leq \sum_{n \geq 2} \frac{1}{n(n-1)} = 1$, so that
$\left|G_0(s)\right| \leq e^{B(B+1)}|A(s)|$.  In the expression $A(s)$, we can simply bound trivially via 
\begin{equation*}
\left|1+\frac{g(p)}{p^s-1}\right| \leq 1+\frac{B}{p^{\sg}-1}\leq 1+\frac{2B}{p^{\sg}}, 
\end{equation*}
which gives
\begin{equation*}
|A(s)| \leq \exp\left(2B\sum_{p \leq B^{\frac{1}{\sg}}+1} \frac{1}{p^{\sg}}\right) \leq 2\log(1+B)^{4B}.
\end{equation*}
Combining these two estimates implies the claim. 
%ii) We shall make several of the estimates in i) more precise. 
\end{proof}
\begin{rem}
A similar argument holds for $g$ completely multiplicative, with $B < 2$. While the form of $G_0(s)$ differs, the analytic estimates are essentially identical (dealing with factors $(1-g(p)/p^s)^{-1}$ instead of $1+g(p)/(p^s-1)$).
\end{rem}
It will be helpful to treat integrals in $G(s)$ in terms of those same integrals in $\mc{G}(s)$ without losing the effect of $\tau$ in the integral, since the latter has non-negative coefficients.  The following tool is helpful in this regard.
\begin{lem}\label{MODMONT}
Let $A(s) := \sum_{n \geq 1} a_nn^{-s}$ be a Dirichlet series such that there exists a non-negative real sequence $\{b_n\}_n$ such that $|a_n| \leq b_n$ for all $n \in \mb{N}$. Furthermore, let $B(s) := \sum_{n \geq 1} b_nn^{-s}$ and suppose it converges absolutely and uniformly in the half-plane $\sg > \sg_0$. 
%Finally, let $f: \mb{C} \ra [0,\infty)$ which is bounded on $\text{Re}(s) > \sg_0$. 
Then for any $T \geq 0$ and $\sg > \sg_0$,
\begin{equation*}
\int_{-T}^T |A(\sg + i\tau)|^2 |ds| \leq 3\int_{-T}^{T} |B(\sg+i\tau)|^2 |ds|.
\end{equation*}
\end{lem}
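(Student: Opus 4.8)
The plan is to reduce the inequality to a statement about Fourier coefficients of the $L^2$ functions $\tau \mapsto A(\sg+i\tau)$ and $\tau \mapsto B(\sg+i\tau)$, following the classical mean-value technique for Dirichlet series (Montgomery--Vaughan type). First I would fix $\sg > \sg_0$ and write $a_n = a_n(\sg) := a_n n^{-\sg}$, $b_n := b_n n^{-\sg}$, so that $A(\sg+i\tau) = \sum_n a_n n^{-i\tau} = \sum_n a_n e^{-i\tau \log n}$, and similarly for $B$. The frequencies $\{\log n\}_{n \geq 1}$ are, unfortunately, not uniformly separated, which is precisely why a naive almost-orthogonality argument does not immediately give the clean constant; this is the main obstacle and the reason a factor like $3$ (rather than $1$) appears. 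The standard way around it is the Montgomery--Vaughan mean value theorem for generalized Dirichlet polynomials / series: for any $T \geq 0$ and any square-summable complex sequence $\{c_n\}$,
\begin{equation*}
\int_{-T}^T \left|\sum_{n \geq 1} c_n n^{-i\tau}\right|^2 d\tau = \sum_{n \geq 1} |c_n|^2 \left(2T + O(n)\right),
\end{equation*}
but more useful here is the one-sided bound
\begin{equation*}
\int_{-T}^T \left|\sum_{n} c_n n^{-i\tau}\right|^2 d\tau \leq \sum_n |c_n|^2 \left(2T + \theta_n\right),
\end{equation*}
where $\theta_n$ depends only on the gaps between consecutive $\log n$'s and is nonnegative; crucially the very same weights $2T + \theta_n$ govern \emph{both} integrals because the frequency set $\{\log n\}$ is the same for $A$ and $B$.

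Concretely, the key step is to invoke a Hilbert-type inequality to get, for every square-summable $\{c_n\}$,
\begin{equation*}
\int_{-T}^{T} \left|\sum_{n \geq 1} c_n n^{-i\tau}\right|^2 d\tau \leq \sum_{n \geq 1} w_n |c_n|^2, \qquad w_n := 2T + K\,\delta_n^{-1},
\end{equation*}
where $\delta_n$ is the distance from $\log n$ to the nearest other $\log m$ and $K$ is an absolute constant, together with the matching lower bound
\begin{equation*}
\int_{-T}^{T} \left|\sum_{n \geq 1} c_n n^{-i\tau}\right|^2 d\tau \geq \sum_{n \geq 1} w_n' |c_n|^2, \qquad w_n' \geq c\, w_n
\end{equation*}
for a suitable absolute $c > 0$ (this is where the constant $3 = 1/c$, up to adjusting $K$, ultimately comes from; one may also absorb everything by choosing the normalization of the Hilbert inequality carefully). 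Applying the upper bound with $c_n = a_n(\sg) = a_n n^{-\sg}$ and the lower bound with $c_n = b_n(\sg) = b_n n^{-\sg}$, and using $|a_n| \leq b_n$ termwise so that $w_n |a_n n^{-\sg}|^2 \leq w_n |b_n n^{-\sg}|^2$, yields
\begin{equation*}
\int_{-T}^{T} |A(\sg+i\tau)|^2 \, d\tau \leq \sum_n w_n |b_n n^{-\sg}|^2 \leq \frac{1}{c} \int_{-T}^{T} |B(\sg+i\tau)|^2 \, d\tau,
\end{equation*}
and choosing the absolute constants so that $1/c \leq 3$ completes the proof. (Note $|ds| = d\tau$ on the vertical line, so the two formulations agree.)

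The hardest part, as indicated, is establishing the two-sided weighted estimate with comparable weights $w_n \asymp w_n'$; once that is in hand the termwise domination $|a_n| \leq b_n$ does all the work and the argument is short. I would either cite the Montgomery--Vaughan mean value theorem (and its companion lower bound) directly, or, if a self-contained argument is preferred, prove the upper bound via the Hilbert inequality $\left|\sum_{m \neq n} \frac{x_m \overline{x_n}}{\lambda_m - \lambda_n}\right| \leq \frac{\pi}{\delta} \sum_n |x_n|^2$ (with $\delta$ the minimal gap among the $\lambda_n = \log n$ appearing) applied after expanding $|\sum c_n n^{-i\tau}|^2$ and integrating, and the lower bound similarly; since the frequency configuration $\{\log n\}$ is identical for $A$ and $B$, the same $\delta$ and hence the same constant enters both, and the ratio of the two constants is absolute. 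One should also remark that absolute and uniform convergence of $B(s)$ for $\sg > \sg_0$ (hence of $A(s)$, by domination) guarantees all the series manipulations and interchanges of sum and integral above are legitimate.
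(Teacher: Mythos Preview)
The paper does not actually prove this lemma; it simply records that the result is due to Montgomery and cites Lemma~6.1 in III.4 of Tenenbaum's book. So there is no detailed argument in the paper to compare against, and the question is whether your sketch stands on its own.

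There is a genuine gap. Your plan hinges on a two-sided weighted estimate
\[
\sum_n w_n'\,|c_n|^2 \;\leq\; \int_{-T}^{T}\Bigl|\sum_n c_n n^{-i\tau}\Bigr|^2 d\tau \;\leq\; \sum_n w_n\,|c_n|^2,
\qquad w_n = 2T + K\delta_n^{-1},\ \ w_n' \geq c\,w_n,
\]
with an absolute $c>0$. But for $\lambda_n=\log n$ one has $\delta_n^{-1}\asymp n$, and the Montgomery--Vaughan lower weight is $2T - K\delta_n^{-1}$, which is \emph{negative} once $n\gg T$. Hence no inequality $w_n'\geq c\,w_n$ with $c>0$ can hold uniformly. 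A concrete obstruction: take $b_m=0$ for $m\neq N$ and $b_N=1$. Then $\int_{-T}^{T}|B|^2=2T\,N^{-2\sg}$, while $\sum_n w_n\,|b_n n^{-\sg}|^2=(2T+KN)N^{-2\sg}$; for $N$ large relative to $T$ the ratio $\bigl(\int|B|^2\bigr)/\bigl(\sum w_n b_n^2 n^{-2\sg}\bigr)$ is as small as one likes, so the last step $\sum w_n b_n^2 \leq c^{-1}\int|B|^2$ fails. Since the lemma must hold for every $T\geq 0$ (in particular for $T$ small compared with the effective support of the coefficients), this is not a cosmetic issue.

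The argument that does go through (and is the one in Tenenbaum) works at the level of the bilinear kernel, not via separate upper/lower weight systems. By duality one first reduces to the case $a_n=\e_n b_n$ with $|\e_n|=1$: for $\phi\in L^2(-T,T)$,
\[
\Bigl|\int_{-T}^{T}A(\sg+i\tau)\overline{\phi(\tau)}\,d\tau\Bigr|
=\Bigl|\sum_n a_n n^{-\sg}\hat\phi_n\Bigr|
\leq \sum_n b_n n^{-\sg}|\hat\phi_n|
=\Bigl|\int_{-T}^{T}\tilde B(\sg+i\tau)\overline{\phi(\tau)}\,d\tau\Bigr|,
\]
where $\tilde B=\sum_n \e_n b_n n^{-s}$ and $\hat\phi_n=\int_{-T}^{T}n^{-i\tau}\overline{\phi(\tau)}\,d\tau$. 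One then expands both $\int|\tilde B|^2$ and $\int|B|^2$ against the real symmetric kernel $D_{mn}=\int_{-T}^{T}(n/m)^{i\tau}d\tau$ and uses the nonnegativity of $\int|B|^2$ (a consequence of $b_n\geq 0$) to control the contribution of the negative part of $D_{mn}$; this comparison of the two quadratic forms in the \emph{same} kernel is what produces the clean constant $3$. If you want to keep your write-up self-contained, I would replace the Montgomery--Vaughan upper/lower bound step with this duality-plus-kernel argument, or simply cite Tenenbaum as the paper does.
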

\begin{proof}
This result, due essentially to Montgomery, is Lemma 6.1 in III.4 of \cite{Ten2}.  
%	For completeness, we prove it, since its proof is short. \\
%Observe that the map $\chi_T(t) := \max\{1-\frac{|t|}{2T},0\}$ satisfies $\frac{1}{2} \leq \chi_T(t) \leq 1$ for each $|t| \leq T$ (the upper bound being uniform over all $\mb{R}$. It is trivial that $\chi_T \in L^2(\mb{R})$, being compactly supported, and well-known (see, for instance, the proof of Lemma 5.10 in \cite{prev}) that 
%\begin{equation*}
%\int_{-2T}^{2T} \chi_T(t)e^{iut} dt = \hat{\chi_T}(u) = 2T\frac{\sin^2(Tu)}{(Tu)^2} \geq 0.
%\end{equation*}
%Hence, we have
%\begin{equation*}
%\int_{-T}^T |A(s)|^2 d\tau \leq 2\int_{-2T}^{2T} |A(s)|^2 \chi_T(\tau)d\tau = \sum_{n,m \geq 1} a_n\overline{a}_m (mn)^{-\sg} \int_{-2T}^2T \chi_T(\tau)e^{-i\tau \log\left(n/m\right)} d\tau = \sum_{n,m \geq 1} a_n\overline{a}_m (mn)^{-\sg}  \hat{\chi}_T(\log(n/m)).
%\end{equation*}
%Since $\hat{\chi}$ is non-negative, we can bound $a_n\overline{a}_m$ by $b_nb_m$, getting
%\begin{equation*}
%\sum_{n,m \geq 1} a_n\overline{a}_m (mn)^{-\sg}  \hat{\chi}_T(\log(n/m)) \leq \sum_{n,m \geq 1} b_nb_m(mn)^{-\sg} \hat{\chi}(\log(n/m)) = \int_{-2T}^{2T} \chi_T(\tau) |B(s)|^2 d\tau \leq \int_{-2T}^{2T} |B(s)|^2 d\tau.
%\end{equation*}
%This completes the proof.
\end{proof}
In Lemma \ref{SHARPH} below we will need to know that $G_0(s)$ and $\mc{G}_0(s)$ are non-zero for $|\tau|$ small on the line $\text{Re}(s) = \sg$. To this end, we have the following.
\begin{lem} \label{ZEROLEM}
Let $B > 0$. Suppose $g$ is strongly multiplicative satisfying $|g(p)| \leq B$ for each $p$. Suppose, moreover, that $|\text{arg}(g(p))| < 1$ for each $p$. If $B \leq 1$ then $G(\sg + i\tau) \neq 0$ for every $\sg > 1$, $\tau \in \R$. If $B > 1$ and $G(\sg + i\tau) = 0$ then $|\tau| \geq \frac{\sg}{\log B}$, and the same is true of the Dirichlet series $\mc{G}(s) := \sum_{n \geq 1} \frac{|g(n)|}{n^s}$.
%ii) There exists a strongly multiplicative function $\tilde{g}$ such that: a) uniformly in $n \in \N$, $\frac{\tilde{g}(n)}{g(n)} = 1+o(1)$ as $\sg \ra 1^+$; b) $\tilde{G}(s) := \sum_{n \geq 1} \frac{\tilde{g}(n)}{n^s} \neq 0$ for all $s = \sg + i\tau$. In particular, for any $k \geq 1$, $|G^{(k)}(s)| \asymp |\tilde{G}^{(k)}(s)|+1$ for each $s$ with $\text{Re}(s) = \sg$.
\end{lem}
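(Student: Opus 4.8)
The plan is to work with the Euler product of $G$. Since $g$ is strongly multiplicative with $|g(p)|\le B$, one has $|g(n)| \ll_{\e} n^{\e}$, so $G$ converges absolutely in $\sg > 1$, and there
\begin{equation*}
G(s) = \prod_p \left(1 + g(p)\sum_{k\ge 1} p^{-ks}\right) = \prod_p \left(1 + \frac{g(p)}{p^s - 1}\right),
\end{equation*}
with the analogous factorization for $\mc{G}$ (replace $g(p)$ by $|g(p)|$). For an absolutely convergent product of this shape, $G(s) = 0$ if and only if some Euler factor vanishes, i.e.\ $p^s = 1 - g(p)$ for some prime $p$. I would therefore reduce the entire statement to analysing this single equation, writing $s = \sg + i\tau$.

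Write $g(p) = \rho e^{i\phi}$, where $\rho := |g(p)| \le B$ and $\phi := \text{arg}(g(p))$ satisfies $|\phi| < 1 < \tfrac{\pi}{3}$, so that $\cos\phi > \cos(\pi/3) = \tfrac12$. Taking the squared modulus and the real part of $p^s = 1 - g(p)$ gives the two relations
\begin{equation*}
p^{2\sg} = |1 - g(p)|^2 = \rho^2 - 2\rho\cos\phi + 1 =: q(\rho), \qquad \rho\cos\phi = \text{Re}(g(p)) = 1 - p^{\sg}\cos(\tau\log p).
\end{equation*}
Since $p\ge 2$ and $\sg > 1$ we have $p^{2\sg} > 4$. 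As $q$ is convex on $[0,B]$, $q(\rho)\le \max\{q(0),q(B)\}$, and $q(\rho) = p^{2\sg} > 4 > 1 = q(0)$ forces $p^{2\sg} \le q(B) = B^2 + 1 - 2B\cos\phi$. When $B\le 1$ this is $< B^2 + 1 \le 2 < 4$, a contradiction; so no Euler factor can vanish and $G(\sg + i\tau)\neq 0$ for all $\sg > 1$, $\tau\in\R$. When $B > 1$, one has $2B\cos\phi > 2\cos 1 > 1$, hence $q(B) < B^2$, so $p^{\sg} < B$, equivalently $\log p < \sg^{-1}\log B$.

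To finish the case $B > 1$, observe that $q(\rho) = p^{2\sg} > 1 = q(0)$ also rules out $\rho \le \cos\phi$ (where $q \le q(0)$), so $\rho > \cos\phi > 0$ and thus $\text{Re}(g(p)) = \rho\cos\phi > 0$. The second relation then gives $p^{\sg}\cos(\tau\log p) < 1$, so $\cos(\tau\log p) < p^{-\sg} < \tfrac12$; consequently $\tau\log p$ lies at distance $> \tfrac{\pi}{3}$ from $2\pi\Z$, and in particular $|\tau\log p| > \tfrac{\pi}{3}$. Combining this with $\log p < \sg^{-1}\log B$ yields $|\tau| > \tfrac{\pi}{3\log p} > \tfrac{\pi}{3}\cdot\tfrac{\sg}{\log B} > \tfrac{\sg}{\log B}$, which is the asserted bound (in fact with a factor $\pi/3$ to spare). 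The statement for $\mc{G}$ is then immediate: $|g|$ is itself strongly multiplicative, bounded by $B$ on primes, with $\text{arg}(|g(p)|) = 0$ and $|0| < 1$, so it satisfies all the hypotheses imposed on $g$, and the argument above applies to it verbatim.

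The one point that genuinely needs the hypotheses is passing from the crude estimate $p^{\sg} = |1 - g(p)| \le 1 + B$ to the sharper $p^{\sg} < B$: this is exactly where $|\text{arg}(g(p))| < 1$ enters, via $\cos\phi > \tfrac12$ (equivalently $1 < \pi/3$) and the positivity of $\text{Re}(g(p))$. Everything else — the Euler factorization, the convexity of $q$, and the elementary trigonometric bound $\cos\theta < \tfrac12 \Rightarrow \operatorname{dist}(\theta,2\pi\Z) > \pi/3$ — is routine.
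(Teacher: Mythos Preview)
Your proof is correct and follows the same Euler-product reduction as the paper: both start from $G(s)=\prod_p\bigl(1+\tfrac{g(p)}{p^s-1}\bigr)$ and analyse the equation $p^s=1-g(p)$. Your bookkeeping is actually tighter---you use $|\arg(g(p))|<1$ (via $\cos\phi>\tfrac12$) already in the modulus estimate to get $p^\sigma<B$ rather than the paper's cruder $p^\sigma\le B+1$, and you then bound $|\tau|$ directly from $\cos(\tau\log p)<\tfrac12$ rather than arguing by contradiction; this yields the same conclusion with a spare factor of $\pi/3$.
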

\begin{proof}
As $G(s) = \prod_p \left(1+\frac{g(p)}{p^s-1}\right)$, this can only be zero if there exists some $p$ such that $1-g(p) = p^s$, i.e., $p^{\sg} \leq |g(p)|+1 \leq B+1$; it thus suffices to consider $p < B^{\frac{1}{\sg}} + 1$. Clearly, when $B\leq 1$ this is impossible. Thus, consider $B > 1$, and suppose that $|\tau| < \frac{\sg}{\log B}$. Then, for $p$ such that $g(p) = 1-p^s$, we must have 
\begin{equation*}
1-\text{Re}(g(p)) = p^{\sg}\cos(|\tau| \log p) \geq p^{\sg}\cos\left(\frac{\sg \log p}{\log B}\right) \geq \frac{1}{\sqrt{2}}p^{\sg}. 
\end{equation*}
Clearly, since $|\arg(g(p))| < 1$, $\text{Re}(g(p)) > 0$, which implies that $\frac{1}{\sqrt{2}} \leq \frac{1}{p}$ for some prime $p$, a clear contradiction.  The first claim follows. 
%Moreover, since is guaranteed, so the first claim follows. 
The second one is immediate by applying the first claim to $|g|$.
%(which has zero argument at each $p$). 
\end{proof}
\begin{rem}
When $g$ is completely multiplicative and $|g(p)| < 2$ uniformly, $1-g(p)/p^s$ can never be zero and always has bounded norm, so no such lemma is needed in this case.
\end{rem}
In the proof of part i) of Theorem \ref{HalGen}, we will need to compute the ratio of $\int_{(\sg)} |G'(s)|^2 \frac{|ds|}{|s|^2}$ with $\mc{G}(\sg)^2$, coming from Lemma \ref{CHEAP}. An essential ingredient in this computation is an estimate in $\tau$, uniform over a large range, for the ratio of $\left|\frac{G(s)}{\mc{G}(\sg)}\right|$. This provides the main term in the estimate in \eqref{UPPERGen} and \eqref{UPPER}. We establish an estimate for this ratio in the following two lemmata. The first is geared towards \eqref{UPPERGen}, the second towards \eqref{UPPER}.  \\
We recall that $\tilde{g}$ is defined on primes by $\tilde{g}(p) := g(p)/B_j$ in case $|g(p)| \leq B_j$ whenever $p \in E_j$, and that
\begin{equation*}
\rho_{E_j}(x;\tilde{g},T) := \min_{|\tau| \leq T} D_{E_j}(\tilde{g},p^{i\tau};x) := \min_{|\tau| \leq T} \left(\sum_{p \in E_j \atop p \leq u} \frac{1-\text{Re}\left(\tilde{g}(p)p^{-i\tau}\right)}{p}\right),
\end{equation*}
for $T > 0$ and $x \geq 2$.
\begin{lem}\label{EASYHalDecay}
Let $\sg := 1+ \frac{1}{\log t}$, and suppose $g \in \mc{C}$. For $D > 1$, $u \geq 2$ and $T \ll (\sg-1)^{-D}$, we have, uniformly in $|\tau| \leq T$,
%let
%\begin{equation*}
%\rho_j(g;x) := \min_{|\tau| \leq T} D_{E_j}(\tilde{g},p^{i\tau};x) := \min_{|\tau| \leq T} \left(\sum_{p \in E_j \atop p \leq u} \frac{1-\text{Re}\left(\tilde{g}(p)p^{-i\tau}\right)}{p}\right).
%\end{equation*}
%Then 
\begin{equation*}
\left|\frac{G(\sg + i\tau)}{\mc{G}(\sg)}\right| \ll 
%\exp\left(-\min_{|\tau| \leq T} \sum_{1 \leq j \leq m} \left(B_jD_{E_j}(\tilde{g}(p)p^{-i\tau}) + \sum_{p \in E_j \atop p \leq x} \frac{B_j -|g(p)|}{p}\right)\right) \leq 
\exp\left(-\sum_{1 \leq j \leq m} B_j\left(\rho_{E_j}(x;\tilde{g},t) + \sum_{p \in E_j \atop p \leq x} \frac{1 -|\tilde{g}(p)|}{p}\right)\right).
\end{equation*}
\end{lem}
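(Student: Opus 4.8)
The plan is to factor out the dominant exponential pieces of $G$ and $\mc{G}$ via Lemma~\ref{StrongComp}, thereby reducing $\bigl|G(\sg+i\tau)/\mc{G}(\sg)\bigr|$ to a single sum over primes with weights $p^{-\sg}$; then to transfer those weights to $p^{-1}$ truncated at $p\le x$ at a cost of only $\exp(O(B))$ in the exponent; and finally to reorganise the resulting sum according to the partition $\mbf{E}$ and replace the $j$-th piece by its minimum over $|\tau|\le T$. Throughout I write $x$ for the scale, so $\sg=1+1/\log x$ and $B:=\max_{1\le j\le m}B_j$; since $g\in\mc{C}$ we have $|g(p)|\le B$ for every prime $p$ and $|\tilde g(p)|\le 1$ for every $p$.

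First I would apply Lemma~\ref{StrongComp} to $g$ and, separately, to the (again strongly multiplicative, again $B$-bounded) function $|g|$, obtaining
\[
G(\sg+i\tau)=G_0(\sg+i\tau)\exp\Bigl(\sum_p g(p)p^{-\sg-i\tau}\Bigr),\qquad
\mc{G}(\sg)=\mc{G}_0(\sg)\exp\Bigl(\sum_p|g(p)|p^{-\sg}\Bigr),
\]
with $|G_0(\sg+i\tau)|\ll_B 1$ uniformly in $\tau$. I also need the matching lower bound $\mc{G}_0(\sg)\gg_B 1$; this is not literally stated in Lemma~\ref{StrongComp}, but it falls out of the Euler-product estimate in its proof, which is in fact simpler here because $\tau=0$ makes every local factor $\bigl(1+|g(p)|/(p^{\sg}-1)\bigr)e^{-|g(p)|p^{-\sg}}$ a positive real, bounded above and below in terms of $B$ (the finitely many primes $p\le B+1$ contribute a factor $\gg_B 1$, while a Taylor expansion shows the tail contributes $\exp(O(B^2))$). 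Taking absolute values in the two factorisations, and noting $p^{-\sg}$ is real,
\[
\Bigl|\frac{G(\sg+i\tau)}{\mc{G}(\sg)}\Bigr|\ll_B\exp\Bigl(\text{Re}\sum_p g(p)p^{-\sg-i\tau}-\sum_p|g(p)|p^{-\sg}\Bigr)
=\exp\Bigl(-\sum_p\frac{|g(p)|-\text{Re}(g(p)p^{-i\tau})}{p^{\sg}}\Bigr).
\]

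Next I would move from the weight $p^{-\sg}$ to $p^{-1}$ restricted to $p\le x$. Every summand $|g(p)|-\text{Re}(g(p)p^{-i\tau})$ is nonnegative (since $|\text{Re}(g(p)p^{-i\tau})|\le|g(p)|$), so discarding the primes $p>x$ only increases the exponent and hence only enlarges the bound. For $p\le x$ one has $0\le p^{-1}-p^{-\sg}\le(\log p)/(p\log x)$, so by Mertens' first theorem ($\sum_{p\le x}(\log p)/p\ll\log x$; cf.\ Lemma~\ref{PRECMERTENS}(i)),
\[
\sum_{p\le x}\bigl(|g(p)|+|\text{Re}(g(p)p^{-i\tau})|\bigr)\bigl(p^{-1}-p^{-\sg}\bigr)\le\frac{2B}{\log x}\sum_{p\le x}\frac{\log p}{p}\ll B,
\]
and absorbing the resulting $\exp(O(B))$ into the implied constant gives $\bigl|G(\sg+i\tau)/\mc{G}(\sg)\bigr|\ll_B\exp\bigl(-\sum_{p\le x}p^{-1}(|g(p)|-\text{Re}(g(p)p^{-i\tau}))\bigr)$. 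The hypothesis $T\ll(\sg-1)^{-D}$ is not actually used in this (crude) estimate; it is needed only for the sharper companion bound aimed at \eqref{UPPER}.

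Finally I would reorganise by the partition: for $p\in E_j$ we have $g(p)=B_j\tilde g(p)$, hence
\[
|g(p)|-\text{Re}(g(p)p^{-i\tau})=B_j\bigl[\bigl(1-\text{Re}(\tilde g(p)p^{-i\tau})\bigr)-\bigl(1-|\tilde g(p)|\bigr)\bigr],
\]
so summing over $p\le x$ in each $E_j$ and then over $j$ turns the exponent into $-\sum_{1\le j\le m}B_j\bigl(D_{E_j}(\tilde g,p^{i\tau};x)-\sum_{p\in E_j,\,p\le x}(1-|\tilde g(p)|)/p\bigr)$. Since $|\tau|\le T$, we have $D_{E_j}(\tilde g,p^{i\tau};x)\ge\rho_{E_j}(x;\tilde g,T)$, and substituting this minimum — which only weakens the upper bound, as $B_j>0$ — produces the asserted estimate, with the same exponent $-\sum_j B_j\bigl(\rho_{E_j}(x;\tilde g,T)-\sum_{p\in E_j,\,p\le x}(1-|\tilde g(p)|)/p\bigr)$ as in part i) of Theorem~\ref{HalGen} (each bracket is in fact nonnegative, consistently with the claimed decay, because every local term of $D_{E_j}$ dominates $(1-|\tilde g(p)|)/p$). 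I do not expect a real obstacle here, as all the analytic substance is already contained in Lemma~\ref{StrongComp}; the only step requiring any care is extracting the two-sided bound $\mc{G}_0(\sg)\asymp_B 1$ (only the lower bound is new) from the proof of that lemma, together with the sign bookkeeping in this last rearrangement.
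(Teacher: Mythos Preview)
Your proposal is correct and follows essentially the same route as the paper: apply Lemma~\ref{StrongComp} to reduce to an exponential of a prime sum, pass from weight $p^{-\sg}$ to $p^{-1}$ truncated at $p\le x$ at cost $O(B)$, and then split over the partition and replace each $D_{E_j}$ by its minimum $\rho_{E_j}$. Your sign bookkeeping in the final rearrangement (yielding a \emph{minus} in front of $\sum_{p\in E_j,p\le x}(1-|\tilde g(p)|)/p$, consistent with Theorem~\ref{HalGen}(i)) is in fact cleaner than the paper's, whose displayed algebra and lemma statement carry a stray sign.
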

\begin{proof}
By Lemma \ref{StrongComp}, whenever $s$ is not a zero of $G$ we have 
\begin{equation}
\left|\frac{G(s)}{\mc{G}(\sg)}\right| \ll_B \exp\left(\sum_p\left(\frac{g(p)}{p^s}-\frac{|g(p)|}{p^{\sg}}\right)\right) = \exp\left(-\sum_p \frac{|g(p)|-\text{Re}(g(p)p^{-i\tau}}{p^{\sg}}\right). \label{ONE}
\end{equation}
By \eqref{TAIL} and \eqref{HEAD}, we have
\begin{align*}
\sum_{p} \frac{|g(p)|-\text{Re}(g(p)p^{-i\tau})}{p^{\sg}} = \sum_{p \leq x} \frac{|g(p)|-\text{Re}(g(p)p^{-i\tau})}{p} + O(B).
\end{align*}
Decomposing the sum over primes into the sums over each set $E_j$ in the partition, we have
\begin{align*}
\sum_{p \leq x} \frac{|g(p)|-\text{Re}(g(p)p^{-i\tau}}{p} &= \sum_{1 \leq j \leq m} \left(\sum_{p \in E_j \atop p \leq x} \frac{|g(p)|-B_j}{p} + B_j\sum_{p \in E_j \atop p \leq x} \frac{1-\text{Re}(g(p)p^{-i\tau}/B)}{p}\right) \\
&= \sum_{1 \leq j \leq m} \left(B_j\rho_{E_j}(g;x) - \sum_{p \in E_j \atop p \leq x} \frac{|g(p)|-B_j}{p}\right).
\end{align*}
The proof of the claim follows upon reinserting this expression into \eqref{ONE} and using the definition of $\tilde{g}$.
\end{proof}
Define $\mc{G}_j(s) := \prod_{p \in E_j} \left(1+\frac{|g(p)|}{p^{s}-1}\right)$ for each $1 \leq j \leq m$, which is well-defined and non-zero for $\sg > 1$ provided that $\mc{G}(s) \neq 0$ (indeed, since each $\mc{G}_j(s)$ converges absolutely and uniformly on compact subsets of $\text{Re}(s) > 1$, they are all pole-free, so any zero of $\mc{G}(s)$ corresponds to a zero of (at least) one of the factors $\mc{G}_j$), and define the functions $G_j(s)$ analogously. Note in particular that $\prod_{1 \leq j \leq m} \mc{G}_j(s) = \mc{G}(s)$ by the partition property. Also, set $\beta := \min_{1 \leq j \leq m} \beta_j$. 
\begin{lem}\label{HalDecay}
Let $t \geq 3$, $\sg = 1 + \frac{1}{\log t}$ and suppose $\mc{G}$ is non-zero on the line $\text{Re}(s) = \sg$. Then for any $s  = \sg + i\tau$,
\begin{equation*}
\left|\frac{G(s)}{\mc{G}(\sg)}\right| \ll_{m,B,\mbf{\phi},\mbf{\beta}} \left(\prod_{1 \leq j \leq m} \left|\frac{\mc{G}_j(s)}{\mc{G}_j(\sg)}\right|^{\frac{\delta_j\beta_j^3}{B_j}}\right)^{\frac{27}{1024\pi}}.
%\leq \left|\frac{\mc{G}(s)}{\mc{G}(\sg)}\right|^{\frac{27\beta^3}{1024\pi}}.
\end{equation*}
%Suppose $\mbf{v} := (B,\mbf{\delta},C,\mbf{\phi},\mbf{\beta})$, $g \in \mc{C}_{\mbf{v}}$ is strongly multiplicative. Then, whenever $|\tau| \ll (\sg-1)^{-O(1)}$, 
%\begin{equation*}
%\left|G(s)/\mc{G}(\sg)\right| \ll \exp\left(-\max\{U(\tau),V(\tau)\}\right),
%\end{equation*}
%where we define
%\begin{align*}
%U(s) &:= \frac{1}{16}\sum_{1 \leq j \leq m} \beta_j^3\delta_j \sum_{p \in E_j} \frac{1}{p^{\sg}}(1-\cos(\tau \log p));\\
%V(s) &:= \frac{8}{\pi}\sum_{1 \leq j \leq m}\beta_j^2\sin^2\phi_j\sum_{p \in E_j}\frac{|g(p)|}{p^{\sg}}(1-\cos(2\tau \log p)).
%\end{align*}
\end{lem}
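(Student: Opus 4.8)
\textbf{Proof plan for Lemma \ref{HalDecay}.}

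The plan is to adapt Hal\'{a}sz' classical pointwise argument (of the type that yields $\left|G(s)/\mc{G}(\sg)\right| \ll \left|G(\sg)/\mc{G}(\sg)\right|^{D}$ for some $D > 1$), but to carry it out \emph{separately on each prime set} $E_j$ so that the exponents reflect the individual parameters $\delta_j, B_j, \beta_j$. First I would use the factorizations $G(s) = \prod_j G_j(s)$ and $\mc{G}(s) = \prod_j \mc{G}_j(s)$ afforded by the partition property, together with the Dirichlet-series representation of Lemma \ref{StrongComp} applied to each $G_j$ and $\mc{G}_j$, to reduce to estimating, for each $j$,
\begin{equation*}
\text{Re}\sum_{p \in E_j} \frac{|g(p)| - \text{Re}(g(p)p^{-i\tau})}{p^{\sg}}
\end{equation*}
from below by a constant multiple of $\frac{\delta_j\beta_j^3}{B_j}\,\text{Re}\sum_{p \in E_j}\frac{|g(p)|(1-\cos(\tau\log p))}{p^{\sg}}$, since the latter is (up to $O(B)$ and up to the $G_0$-factors, which are bounded by Lemma \ref{StrongComp}) exactly $\log\left|\mc{G}_j(\sg)/\mc{G}_j(s)\right|$.

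The key pointwise inequality is the following: writing $g(p) = |g(p)|e^{i\psi_p}$ with $|\psi_p - \phi_j| \ge \beta_j$, one has for each prime $p \in E_j$
\begin{equation*}
|g(p)| - \text{Re}(g(p)p^{-i\tau}) = |g(p)|\bigl(1 - \cos(\psi_p - \tau\log p)\bigr) \ge \kappa\,\beta_j^3\,|g(p)|\bigl(1 - \cos(\tau\log p)\bigr)
\end{equation*}
for a suitable absolute $\kappa$ (one can check $\kappa = \tfrac{27}{1024\pi}$ works, which is where that constant enters, matching the remark after Theorem \ref{HalGen}). This is a purely trigonometric fact about $1-\cos(\alpha - \gamma)$ versus $1-\cos\gamma$ when $|\alpha|$ is bounded away from $0$ and from $2\pi$ (here by $\beta_j$, since $\text{arg}(g(p))$ avoids a $\beta_j$-neighbourhood of $\phi_j$ — one first rotates out $\phi_j$ using the fact that for a good partition the contribution of $e^{i\phi_j}\sum_{p\in E_j}|g(p)|p^{-\sg}$ can be absorbed, or one works directly with the distance of $\psi_p$ to $0$ after the rotation). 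I would prove this inequality by elementary calculus / the concavity of $1-\cos$ near its minima, splitting into the ranges where $\tau\log p$ is near a multiple of $2\pi$ and where it is not.

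Summing the pointwise inequality over $p \in E_j$, dividing by $B_j$ (so that $|g(p)|/B_j = |\tilde g(p)| \le 1$ and the constant becomes $\delta_j\beta_j^3/B_j$ after also using $|g(p)| \ge \delta_j$), and exponentiating gives
\begin{equation*}
\left|\frac{G_j(s)}{\mc{G}_j(\sg)}\right| \ll_{B} \left|\frac{\mc{G}_j(s)}{\mc{G}_j(\sg)}\right|^{\kappa \delta_j\beta_j^3/B_j} \cdot \left|\frac{\mc{G}_j(\sg)}{\mc{G}_j(\sg)}\right|^{\cdots},
\end{equation*}
and taking the product over $1 \le j \le m$ and bounding the $G_0$/$A(s)$ factors (finitely many, uniformly bounded by Lemma \ref{StrongComp}, contributing only the implied constant depending on $m, B, \mbf{\phi}, \mbf{\beta}$) yields the stated bound. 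I expect the main obstacle to be the bookkeeping needed to handle the rotation by $\phi_j$ cleanly: one must either exploit the "good partition" hypothesis to show $\sum_{p\in E_j}|g(p)|p^{-\sg}$ has the right asymptotic behaviour so that the $e^{i\phi_j}$ phase does not interfere with the lower bound, or else reduce to the case $\phi_j = 0$ by a preliminary normalization — and then verify that the trigonometric inequality survives with the claimed absolute constant. The non-vanishing hypothesis on $\mc{G}$ along $\text{Re}(s) = \sg$ is what guarantees all the $\mc{G}_j(s)$ in denominators are legitimate (cf. the parenthetical remark preceding the lemma and Lemma \ref{ZEROLEM}).
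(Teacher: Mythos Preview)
Your central claim — the pointwise trigonometric inequality
\[
1 - \cos(\psi_p - \tau\log p) \ \ge\ \kappa\,\beta_j^3\,\bigl(1 - \cos(\tau\log p)\bigr)
\]
for every prime $p \in E_j$ with $|\psi_p - \phi_j| \ge \beta_j$ — is simply false, and the argument collapses at that step. Take any prime $p$ and any $\tau$ for which $\tau\log p \equiv \psi_p \pmod{2\pi}$; then the left side vanishes while the right side equals $\kappa\beta_j^3(1-\cos\psi_p)$, which is generically positive. There is no pointwise lower bound of this type: the angular gap condition $|\psi_p - \phi_j| \ge \beta_j$ constrains $\psi_p$ relative to $\phi_j$, but says nothing about the relation between $\psi_p$ and $\tau\log p$ for a single prime. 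Your parenthetical about ``rotating out $\phi_j$'' does not rescue this; a rotation shifts both sides by the same phase and the counterexample persists.

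What the paper actually does is avoid any pointwise comparison and instead exploit an \emph{average} over $p \in E_j$. One introduces a nonnegative $C^2$ bump $h_j$ on the circle, supported in a $c\beta_j$-neighbourhood of $\phi_j$ and bounded above by $\tfrac{1}{2}(1-c)^2\beta_j^2$, so that $1-\cos(\psi_p - \tau\log p) \ge h_j(p^{i\tau})$ holds for \emph{all} $p$ (trivially zero outside the support, and by the triangle inequality $|\psi_p - \tau\log p| \ge (1-c)\beta_j$ on the support). Fourier-expanding $h_j(p^{i\tau}) = \sum_n a_{j,n}p^{in\tau}$ and summing over $p$ turns the right side into a weighted combination of $\log\zeta_{E_j}(\sg - in\tau)$; the zeroth Fourier coefficient $a_{j,0} \ge \frac{27}{512\pi}\beta_j^3$ supplies the main term, and the \emph{good-partition hypothesis} (that $\zeta_{E_j}(s) \sim (s-1)^{-\lambda_j}$ near $s=1$) is what lets one bound all the $n \ne 0$ terms by $O_k(\log_2(1+|\tau|))$. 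This last point is the one you underestimate: the good partition is not mere bookkeeping for a rotation but the mechanism that controls the entire Fourier tail, and without it (or something equivalent) no inequality relating $\sum_p |g(p)|p^{-\sg}(1-\cos\xi_p)$ to $\log|\mc{G}_j(\sg)/\mc{G}_j(s)|$ is available.
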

\begin{rem} \label{REMDECAY}
%The constant $\frac{27}{1024\pi}$ cannot be improved by this method.  
Note that the trivial bound on $|g(p)|$ shows, using this estimate and Lemma \ref{StrongComp}, that
\begin{equation*}
\left|\frac{G(s)}{\mc{G}(\sg)}\right| \ll_{m,B,\mbf{\phi},\mbf{\beta}} \exp\left(-\frac{27}{1024\pi}\beta^3\delta\sum_p \frac{1}{p^{\sg}}\left(1-\cos\tau \log p\right)\right) \ll_B \left|\frac{\zeta(s)}{\zeta(\sg)}\right|^{\frac{27}{1024\pi}\delta\beta^3}.
\end{equation*}
%By Lemma \ref{OLD},
%\begin{equation*}
%\left|\frac{\zeta(s)}{\zeta(\sg)\right| \ll \exp\left(-\sum_p \frac{1}{p^{\sg}\left|\frac{1}{p^{\sg}}-\frac{1}{p^s}\right|\right) \ll e^{\log\left(1+\frac{|\tau|}{\sg-1}\right)}
%\end{equation*}
%so that 
%\begin{equation*}
%\left|\frac{G(s)}{\mc{G}(\sg)}\right| \ll_{m,B,\mbf{\phi},\mbf{\beta}} \exp\left(-\frac{27}{1024\pi}\delta \beta^3 \log\left(1+\frac{|\tau|}{\sg-1}\right)\right)
This recovers (up to a constant factor) the coefficient for the exponential in \cite{Ell2}, as mentioned in Section 2. When $|\tau| \leq 2$, the Laurent series expansion of $\zeta$ around $s = 1$ gives
\begin{equation*}
\left|\frac{\zeta(s)}{\zeta(\sg)}\right| \ll \left|\frac{\sg-1}{\sg-1+i\tau}\right| \leq  \left|1+i\frac{\tau}{\sg-1}\right|^{-1} \leq \left(1+\left(\frac{|\tau|}{\sg-1}\right)^2\right)^{-\frac{1}{2}} =  e^{-\frac{1}{2}\log\left(1+\left(\frac{|\tau|}{\sg-1}\right)^2\right)} \ll e^{-\log\left(1+\frac{|\tau|}{\sg-1}\right)},
\end{equation*}
while if $2 < |\tau| < (\sg-1)^{-D}$ for $D > 2$, using $|\zeta(s)| \ll \log |\tau|$ (see, for instance, Theorem 7 in Chapter II.3 of \cite{Ten2})
\begin{equation*}
\left|\frac{\zeta(s)}{\zeta(\sg)}\right| \ll (\sg-1)\log|\tau| \leq \exp\left(-\frac{1}{2}\log\left(1+\frac{1}{\sg-1}\right)\right),
\end{equation*}
for $\sg$ sufficiently close to 1. Thus,
\begin{equation*}
\left|\frac{G(s)}{\mc{G}(\sg)}\right| \ll_{m,B,\mbf{\phi},\mbf{\beta}}  \begin{cases} \exp\left(-\frac{27}{1024 \pi} \log\left(1+\frac{|\tau|}{\sg-1}\right)\right) &\text{ if $|\tau| \leq 2$} \\ \exp\left(-\frac{27}{1024\pi}\log\left(1+\frac{1}{\sg-1}\right)\right) &\text{ if $2 < |\tau| \leq (\sg-1)^{-D}$}. \end{cases}
\end{equation*}
We will use this estimate repeatedly in this section and the next.
%Observe that we can rewrite $e^{-V(s)} = \left|\frac{\mc{G}(\sg+2i\tau)}{\mc{G}(\sg)}\right|^{\gamma}$, where $\gamma := 2\min_{0 \leq j \leq n} \sin^2 \beta_j\sin^2 \phi_j$, which we assume is positive. This will play a role in Lemma \ref{MAIN1}.
\end{rem}
\begin{proof}
%We first prove that $\left|G(s)/\mc{G}(\sg)\right| \ll e^{-U(\tau)}$. 
We begin with an idea of Hal\'{a}sz. By Lemma \ref{StrongComp}, whenever $s$ is not a zero of $G$ we have 
\begin{equation*}
\left|\frac{G(s)}{\mc{G}(\sg)}\right| \ll_B \exp\left(\sum_p\left(\frac{g(p)}{p^s}-\frac{|g(p)|}{p^{\sg}}\right)\right) = \exp\left(-\sum_p \frac{|g(p)|}{p^{\sg}}(1-\cos(\xi_p))\right), 
\end{equation*}
where, for $\theta_p := \text{arg}(g(p))$, put $\xi_p := \theta_p - \tau \log p$.
Let $c \in (0,1)$ be a constant to be chosen. For each $j$, if we restrict to those primes for which $|p^{i\tau} - e^{i\phi_j}| \leq c\beta_j$ then, in light of the condition $|e^{i\theta_p}-e^{i\phi_j}| = |\theta_p-\phi_j| \geq \beta_j$, it follows that
%by the mean value theorem and the triangle inequality that 
\begin{equation*}
|\xi_p| = |1-e^{i\xi_p}| = |p^{i\tau}-e^{i\theta_p}| \geq |e^{i\theta_p}-e^{i\phi_j}| - |e^{i\phi_j}-p^{i\tau}| \geq (1-c)\beta_j.
\end{equation*}
We note that the collection of $p$ for which this condition holds is non-empty by our assumption on $\rho_j(x;gT)$. In particular, $1-\cos \xi_p \geq \frac{1}{2}(1-c)^2\beta_j^2$ whenever $p \in E_j$.  \\
Thus, we will choose functions $h_j:\mb{T} \ra [0,1]$ such that $h_j(e^{i\theta}) \leq \frac{1}{2}(1-c)^2\beta_j^2$ uniformly when $|\theta-\phi_j| \leq c\beta_j$. Specifically, we let $h_j(e^{i\theta}) = \frac{1}{2}(1-c)^2\beta_j$ for each $|\theta-\phi_j| \leq c(1-c)\beta_j$, $h(e^{i\theta}) = 0$ for $|\theta - \phi_j| \geq c\beta_j$, and interpolate in the intervals $[\phi_j-c\beta_j,\phi_j-c(1-c)\beta_j]$ and $[\phi_j+c(1-c)\beta_j,\phi_j+c\beta_j]$ with twice-continuously differentiable functions. Since $h_j \in L^2(\mb{T})$ for each $j$, we write $h_j(e^{i\theta}) = \sum_{n \in \mb{Z}} a_{j,n}e^{in\theta}$, with $a_{j,n} := \frac{1}{2\pi}\int_{-\pi}^{\pi} h_j(e^{i\theta})e^{-in\theta} d\theta$. Since $h_j \in \mc{C}^2(\mb{T})$, integration by parts, periodicity of $h_j$ and the extreme value theorem imply that for $l \neq 0$,
\begin{equation*}
2\pi a_{j,l} = \frac{1}{l}\int_{-\pi}^{\pi} h_j'(e^{i\theta})e^{-il\theta} d\theta = \frac{1}{l^2}\int_{-\pi}^{\pi} h_j''(e^{i\theta})e^{-il\theta} d\theta \ll \frac{1}{l^2+1}.
\end{equation*}
We will use this condition later. \\
It follows that, for $s_n := \sg -in\tau$,
\begin{align} 
\sum_{p\in E_j} \frac{|g(p)|}{p^{\sg}}(1-\cos \xi_p) &\geq \sum_{p\in E_j} \frac{|g(p)|}{p^{\sg}}h_j(p^{i\tau}) = \sum_{p \in E_j} \frac{|g(p)|}{p^{\sg}} \sum_{n \in \mb{Z}} a_{j,n}p^{in\tau} \nonumber\\
&= \sum_{n \in \mb{Z}} a_{j,n}\sum_{p \in E_j} |g(p)|p^{-s_n}, \label{RATBOUND}
\end{align}
this last interchange justified by the convergence of $\sum_{p \in E_j} \frac{1}{p^{\sg}}$ for $\sg > 1$ fixed. Since $\sum_{n \in \mb{Z}} a_{j,n} \ll 1$, it follows by Lemma \ref{StrongComp} that
\begin{equation} \label{FOURIERDECOMP}
\sum_{n \in \mb{Z}} a_{j,n}\sum_{p\in E_j} |g(p)|p^{-s_n} = \sum_{n \in \mb{Z}} a_{j,n} \log \mc{G}_j(s_n) + O_B(1).
\end{equation}
Observe that when $n = 0$, 
\begin{equation*}
a_{j,0} = \frac{1}{2\pi} \int_{-\pi}^{\pi} h_j(e^{i\theta}) d\theta \geq \frac{1}{4\pi}(1-c)^2\beta_j^2\int_{\theta_0-c(1-c)\beta_j}^{\theta_0+c(1-c)\beta_j} d\theta = \frac{1}{2\pi}c(1-c)^3\beta_j^3. 
\end{equation*}
Choosing the optimal $c = \frac{1}{4}$ gives $a_{j,0} = \frac{27}{512\pi} \geq \frac{1}{10\pi}$ as the constant in front of $\beta_j^3$. (Note that replacing $(1-c)^3$ with any higher power of $(1-c)$ gives an optimal value when $c$ is small, so the resulting constant here is most advantageous.)
%(this, of course, can be improved to numbers closer to $\left(1-\frac{1}{n}\right)\frac{1}{2\pi}$ by changing the intervals of decay for $h_j$ to $[\phi_j \pm c \theta_j, \phi \pm c(1-c)^n \phi_j, which is optimal in light of the above, but with possible problems related to the $L^2$ norm of $h_j$). 
Of course, $h_j$ being non-negative, we can bound each term $|g(p)| \geq \delta_j$. Now, observe that since $\log u = \log|u| + O(1)$ for any non-zero complex number (and an appropriate branch of logarithm) and any $k \in \mb{N}$,
\begin{align*}
&\sum_{n \in \mb{Z}} a_{j,n} \log \mc{G}_{j}(s_n) \geq \delta_j\left(a_{j,0}\log\zeta_{E_j}(s_0) + \sum_{n \neq 0} a_{j,n}\log \zeta_{E_j}(s_n)\right) \\
&=\delta_j\left(a_{j,0}\log\zeta_{E_j}(s_0) + \sum_{n \neq 0} a_{j,n}\log \left(\zeta_{E_j}(s_n)/\zeta_{E_j}(s_k)\right) + \sum_{n \neq 0} a_{j,n} \log\zeta_{E_j}(s_k)\right)\\
&= \delta_j\left(a_{j,0}\log |\zeta_{E_j}(s_0)| + \sum_{n \neq 0} a_{j,n}\log \left|\zeta_{E_j}(s_n)/\zeta_{E_j}(s_k)\right| + (h_j(1)-a_{j,0})\log|\zeta_{E_j}(s_k)| + O_{\phi_j,\beta_j}(1)\right) \\
&=\delta_j\left(a_{j,0}\log\left|\zeta_{E_j}(s_0)/\zeta_{E_j}(s_k)\right| + \sum_{n \neq 0}  a_{j,n}\log \left|\zeta_{E_j}(s_n)/\zeta_{E_j}(s_k)\right| + h_j(1)\log|\zeta_{E_j}(s_k)| +O_{\phi_j,\beta_j}(1)\right)
\end{align*}
Since $E_j$ is good, there is some $\lambda_j \in (0,1)$ such that if $f_j(s) := \log\zeta_{E_j}(s)-\lambda_j \log\left(\frac{1}{s-1}\right)$ then $f_j$ is holomorphic in a neighbourhood of $\sg = 1$. Moreover, $f_j$ extends by convergence to $|\tau| \leq 2$ with $\sg > 1$.  Now, if $|n\tau|,|k\tau| \leq 2$, we have
\begin{equation*}
\log\left|\zeta_{E_j}(s_n)/\zeta_{E_j}(s_k)\right| = \lambda_j\log\left|\frac{s_k-1}{s_n-1}\right| + O(1) \leq \log\left(1+|n|/|k|\right) \leq \log(1+n).
\end{equation*}
If at least one of $|n\tau|$ or $|k\tau|$ exceeds 2 but $|\tau| \leq 2$, upon noting that $\zeta_{E_j}$ is non-zero in zero-free regions of $\zeta$ and applying Theorem 16 in II.3 of \cite{Ten2}, we have
\begin{equation*}
\log\left|\zeta_{E_j}(s_n)/\zeta_{E_j}(s_k)\right| \leq 2\log_2\max\{|k\tau|,|n\tau|\} \leq 2\log_2(1+\max\{2k,2n\}).
\end{equation*}
Hence, when $|\tau| \leq 2$, we have
\begin{align*}
&a_{j,0}\log\left|\zeta_{E_j}(s_0)/\zeta_{E_j}(s_k)\right| + \sum_{n \neq 0}  a_{j,n}\log \left|\zeta_{E_j}(s_n)/\zeta_{E_j}(s_k)\right| + h_j(1)\log|\zeta_{E_j}(s_k)| +O_{\phi_j,\beta_j}(1) \\
&= a_{j,0}\log\left|\zeta_{E_j}(s_0)/\zeta_{E_j}(s_k)\right| + O\left(k \log_2(2+k) + \sum_{n \geq 1} \frac{\log (1+n)}{1+n^2}\right)  = a_{j,0} \log\left|\zeta_{E_j}(s_0)/\zeta_{E_j}(s_k)\right| + O(k\log_2(2+k)),
\end{align*}
where we used $h_j(1)\log\left|\zeta_{E_j}(s_j)\right| = O\left(\log(1+2k)\right)$. On the other hand, if $|\tau| \geq 2$ then, again by this last argument,
\begin{equation*}
\sum_{n \neq 0} a_{j_n}\log\left|\zeta_{E_j}(s_n)\right| \leq \sum_{n \neq 0} a_{j,n}\log_2 |n\tau| = O\left(\log_2(1+|\tau|)\right).
\end{equation*}
Hence, using this same bound for $\log\left|\zeta_{E_j}(s_k)\right|$, it follows that in all cases,
\begin{equation*}
\sum_{n \in \mb{Z}} a_{j,n} \log \mc{G}_{j}(s_n) \geq \delta_ja_{j,0}\log\left|\zeta_{E_j}(\sg)/\zeta_{E_j}(s_k)\right| + O_k\left(\log_2 |\tau| + 1\right) \geq \frac{\delta_j}{B_j} \log\left|\mc{G}_j(\sg)/\mc{G}_j(s_k)\right| + O_k\left(\log_2(1+|\tau|)\right).
\end{equation*}
Note that this bound is non-trivial because $|\tau| \leq \log^D x$ and $D_{E_j}(g/|g|,p^{i\tau},\sg) \gg \frac{B_j}{\delta_j}\log_3 x$, with some sufficiently large implicit constant by assumption. At any rate, replacing $a_{j,0}$ by $a_{j,0}/2$ to compensate, it follows that
\begin{align*}
\left|\frac{G_j(s)}{\mc{G}_j(\sg)}\right| &\ll_{B, \phi_j,\beta_j,k} \exp\left(-\frac{\delta_j}{2B_j}a_{j,0}\sum_{p \in E_j} |g(p)|\left(\frac{1}{p^{\sg}}-\frac{1}{p^{s_k}}\right)\right) \ll_B \left|\frac{\mc{G}_j(s_k)}{\mc{G}(\sg)}\right|^{\frac{\delta_ja_{j,0}}{2B_j}} = \left|\frac{\mc{G}_j(s_k)}{\mc{G}(\sg)}\right|^{\frac{27\delta_j}{1024 \pi B_j}\beta_j^3},
\end{align*}
the second last estimate applying Lemma \ref{StrongComp} again and the last resulting from our earlier choice of $c$.
\end{proof}
Set $\gamma_{0,j} := \frac{27\delta_j}{1024\pi B_j}\beta_j^3$, and $\gamma_0 := \min_{1 \leq j \leq m} \gamma_{0,j}$. The major contribution to the upper bound in Theorem \ref{HalGen} comes from establishing a bound for the maximum of $\left|\frac{G(s)}{\mc{G}(\sg)}\right|$ for $|\tau| \leq T$. We deal with this as follows.
%, we have two approaches, one of which will be used in the proof of \eqref{HalAsymp} and the other in the proof of \eqref{HalUpper}. 
%We consider the latter first, which follows quickly from Lemma \ref{IkEst}.
\begin{lem} \label{INTBOUND}
Let $\sg = 1+\frac{1}{\log t}$ for $t \geq 2$ sufficiently large. Then for each $\tau \in \mb{R}$ and each $j$,
\begin{equation*}
\left|\frac{G_j(s)}{\mc{G}_j(\sg)}\right| \ll_{B,\phi_j,\beta_j} \left|\frac{G_j(\sg)}{\mc{G}_j(\sg)}\right|^{\frac{\gamma_{0,j}}{2(1+\gamma_{0,j})}}. 
\end{equation*}
%Set $G_j(s) := \prod_{p \in E_j} \left(1+\frac{g(p)}{p^s-1}\right)$ and $\mc{G}_j(s) := \prod_{p \in E_j} \left(1+\frac{|g(p)|}{p^s-1}\right)$. Then there are constants $r_j := \min\{\frac{1}{2},\frac{\beta_j^3\delta_j}{\phi_j^2 + (\pi-\beta_j)^2}\}$ such that
%\begin{equation*}
%I_k(\sg) \ll (\sg-1)^{-2k+1}\mc{G}(\sg)|G(\sg)|.
%%I_k(\sg) \ll \mc{G}(\sg)^{2(1-c')}|G(\sg)|^{c'}(\sg-1)^{1-2k}.
%\end{equation*}
\end{lem}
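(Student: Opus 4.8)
The plan is to combine the two pointwise bounds already available: Lemma \ref{HalDecay}, which controls $|G_j(s)/\mc{G}_j(\sg)|$ by a power of $|\mc{G}_j(s)/\mc{G}_j(\sg)|$, and the elementary comparison between $\mc{G}_j$ and $\zeta_{E_j}$ coming from $\delta_j \le |g(p)| \le B_j$ on $E_j$. Concretely, since $E_j$ is good, write $\log \mc{G}_j(s) = \lambda_j' \log\frac{1}{s-1} + (\text{holomorphic near }s=1)$ with $\lambda_j' \asymp \lambda_j \in(0,1)$ up to the factor $\delta_j$ (more precisely $\delta_j\lambda_j \le \lambda_j' \le B_j\lambda_j$, using $\sum_{p\in E_j}|g(p)|p^{-s}$ sandwiched between $\delta_j$ and $B_j$ times $\sum_{p\in E_j}p^{-s}$). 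This reduces the problem, via the Laurent/Landau estimates \eqref{HEAD}, \eqref{TAIL} and the $\zeta$-type bounds recalled in Remark \ref{REMDECAY}, to the single model inequality
\[
\Bigl|\tfrac{\zeta_{E_j}(s)}{\zeta_{E_j}(\sg)}\Bigr| \ll \Bigl|\tfrac{\zeta_{E_j}(\sg)}{\zeta_{E_j}(\sigma)}\Bigr|^{\,0} \;\asymp\; \min\Bigl(1,\;\tfrac{\sg-1}{|\tau|}\Bigr)^{\lambda_j},
\]
i.e. to showing that $|G_j(s)/\mc{G}_j(\sg)|$ decays like a power of $1+|\tau|/(\sg-1)$ with exponent $\asymp \gamma_{0,j}$, while simultaneously $|G_j(\sg)/\mc{G}_j(\sg)|$ is itself bounded in terms of such a power.

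First I would treat the régime $|\tau| \le 2$. Here Lemma \ref{HalDecay} (together with Remark \ref{REMDECAY}) gives $|G_j(s)/\mc{G}_j(\sg)| \ll \exp\bigl(-\gamma_{0,j}\log(1+\tfrac{|\tau|}{\sg-1})\bigr)$. On the other hand, using Lemma \ref{StrongComp} and the lower bound $|g(p)|\ge \delta_j$ on $E_j$, one gets $|G_j(\sg)/\mc{G}_j(\sg)| \gg_B \exp\bigl(-\sum_{p\in E_j}\tfrac{|g(p)|-\mathrm{Re}\,g(p)}{p^{\sg}}\bigr)$, and the point is that this quantity is not too small: by hypothesis vi) (for $g\in\mc{C}_b$) or simply because $D_{E_j}\ge 0$, and in any case because $\gamma_{0,j}<1$, we can afford to write $1 = \tfrac{\gamma_{0,j}}{1+\gamma_{0,j}} + \tfrac{1}{1+\gamma_{0,j}}$ and distribute the decay. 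The clean way: set $P_j := \sum_{p\in E_j}\tfrac{|g(p)|-\mathrm{Re}(g(p)p^{-i\tau})}{p^{\sg}} \ge 0$; then $|G_j(s)/\mc{G}_j(\sg)| \asymp_B e^{-P_j}$ while $|G_j(\sg)/\mc{G}_j(\sg)| \asymp_B e^{-Q_j}$ with $Q_j := \sum_{p\in E_j}\tfrac{|g(p)|-\mathrm{Re}\,g(p)}{p^{\sg}}$, and Lemma \ref{HalDecay} is exactly the statement that $P_j \ge \gamma_{0,j}\,\bigl(\text{something}\bigr)$; so it suffices to bound $Q_j$ above by $(1+\gamma_{0,j})/\gamma_{0,j}$ times that something — which is where the trigonometric inequality of Lemma \ref{TrigProp} enters, comparing $1-\cos\xi_p$ (appearing in $P_j$ after the Fourier manipulation) with $1-\cos\theta_p$ (appearing in $Q_j$), using $\xi_p = \theta_p - \tau\log p$ and $\sin^2(\theta_p) = \sin^2((\theta_p-\tau\log p)+\tau\log p) \le 2(\sin^2\xi_p + \sin^2(\tau\log p))$.

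Then I would handle $2 < |\tau|$. For $2<|\tau|\le (\sg-1)^{-D}$ (which is the range relevant once one fixes $T=\log^D x$), Remark \ref{REMDECAY} gives $|G_j(s)/\mc{G}_j(\sg)| \ll \exp\bigl(-\gamma_{0,j}\log(1+\tfrac1{\sg-1})\bigr)$, i.e. the decay saturates at its value near $|\tau|\asymp 1$, and the same bookkeeping as above applies with $Q_j$ and the saturated bound. For $|\tau| > (\sg-1)^{-D}$ one falls back on the crude bound $|G_j(s)| \ll |G_j(\sg)|$ is false in general, so instead I would use $|\mc{G}_j(s)/\mc{G}_j(\sg)| \ll 1$ trivially from absolute convergence is also false; rather, one notes that in this range $\log(1+|\tau|/(\sg-1))$ and $\log(1+1/(\sg-1))$ are comparable up to the constant $D+1$, so the $2<|\tau|\le(\sg-1)^{-D}$ estimate, being uniform, already covers all $|\tau|\ge 2$ after adjusting the implied constant; the hypothesis $|\tau|\le T$ in the applications is what keeps $D$ finite. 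Finally, multiply the estimates over $1\le j\le m$ — here no interaction between the $E_j$ is needed since we are proving a per-$j$ statement — and absorb the $O_B(1)$ additive errors from Lemma \ref{StrongComp} into the implied constant, noting $\mc{G}_j(\sg) \gg_B 1$ by Lemma \ref{DENOMPARS} applied to $|g|$ restricted to $E_j$.

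The main obstacle I anticipate is the bookkeeping in the exponent: tracking exactly how the constant $\gamma_{0,j} = \tfrac{27\delta_j}{1024\pi B_j}\beta_j^3$ propagates through the split $1 = \tfrac{\gamma_{0,j}}{1+\gamma_{0,j}}+\tfrac1{1+\gamma_{0,j}}$, and verifying that the residual piece $\tfrac1{1+\gamma_{0,j}}$ of the decay is genuinely non-negative and can be thrown away rather than needed — i.e. checking that the trigonometric comparison $1-\cos\xi_p \ge \tfrac{\gamma_{0,j}}{1+\gamma_{0,j}}(1-\cos\theta_p) - (1-\cos\xi_p)\cdot(\text{error})$ closes without losing the good constant. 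The Fourier-analytic part (choosing the bump functions $h_j$, the decay $a_{j,l}\ll (l^2+1)^{-1}$, the value $a_{j,0} = \tfrac{27}{512\pi}\beta_j^3$) is already done inside Lemma \ref{HalDecay}, so here it should suffice to invoke that lemma as a black box and do only the elementary algebra with the two exponentials $e^{-P_j}$ and $e^{-Q_j}$ plus Lemma \ref{TrigProp}.
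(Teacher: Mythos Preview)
Your core idea is correct and matches the paper's: write $\theta_p = (\theta_p-\tau\log p) + \tau\log p$, apply Lemma \ref{TrigProp} with $m=2$ to get $Q_j \le 2(P_j + R_j)$ where $R_j := \sum_{p\in E_j}|g(p)|(1-\cos\tau\log p)/p^{\sg}$, then invoke Lemma \ref{HalDecay} to replace $R_j$ by $P_j/\gamma_{0,j}$, and rearrange.

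However, the proof is far shorter and more direct than your proposal suggests. There is no need for any case analysis on $|\tau|$, no comparison with $\zeta_{E_j}$, no appeal to hypothesis vi) or to Lemma \ref{DENOMPARS}, and no separate treatment of the range $|\tau|>(\sg-1)^{-D}$. The whole argument is the three-line algebra you sketch in your final sentence. Concretely: by Lemma \ref{StrongComp},
\[
\left|\frac{G_j(\sg)}{\mc{G}_j(\sg)}\right| \gg_B e^{-Q_j},\qquad \left|\frac{G_j(s)}{\mc{G}_j(\sg)}\right| \asymp_B e^{-P_j},\qquad \left|\frac{\mc{G}_j(s)}{\mc{G}_j(\sg)}\right| \asymp_B e^{-R_j}.
\]
Lemma \ref{TrigProp} gives $Q_j \le 2(P_j+R_j)$, hence
\[
\left|\frac{G_j(\sg)}{\mc{G}_j(\sg)}\right| \gg_B \left(\left|\frac{G_j(s)}{\mc{G}_j(\sg)}\right|\cdot\left|\frac{\mc{G}_j(s)}{\mc{G}_j(\sg)}\right|\right)^2.
\]
Now Lemma \ref{HalDecay} says $|G_j(s)/\mc{G}_j(\sg)| \ll |\mc{G}_j(s)/\mc{G}_j(\sg)|^{\gamma_{0,j}}$, equivalently $|\mc{G}_j(s)/\mc{G}_j(\sg)| \gg |G_j(s)/\mc{G}_j(\sg)|^{1/\gamma_{0,j}}$, so the right side is $\gg |G_j(s)/\mc{G}_j(\sg)|^{2(1+1/\gamma_{0,j})}$, and taking the reciprocal power gives the lemma. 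This is uniform in $\tau\in\mb{R}$ because Lemma \ref{HalDecay} already is; all the $\tau$-regime splitting in your proposal (and the detours through goodness of $E_j$) belongs to the proof of Lemma \ref{HalDecay}, not here.
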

As a corollary, we have the weaker bound 
\begin{equation*}
\left|\frac{G(s)}{\mc{G}(\sg)}\right| \ll_{B,m,\mbf{\phi},\mbf{\beta}} \left|\frac{G(\sg)}{\mc{G}(\sg)}\right|^{\frac{\gamma_0}{2(1+\gamma_0)}}.
\end{equation*}
\begin{proof}
Applying Lemma \ref{TrigProp} with $k = 1$, observe that for each $j$,
\begin{equation*}
\sum_{p \in E_j} \frac{|g(p)|}{p^{\sg}}(1-\cos \theta_p) \leq 2\left(\sum_{p \in E_j} \frac{|g(p)|}{p^{\sg}}(1-\cos (\theta_p-\tau\log p)) + \sum_{p \in E_j} \frac{|g(p)|}{p^{\sg}}(1-\cos \tau \log p)\right),
\end{equation*}
with $\theta_p := \text{arg}(g(p))$ as above. Combining this with Lemma \ref{StrongComp} gives
\begin{align*}
\left|\frac{G_j(\sg)}{\mc{G}_j(\sg)}\right| &\gg_B \exp\left(-\sum_{p\in E_j} \frac{|g(p)|}{p^{\sg}}(1-\cos \theta_p)\right) \\
&\geq \exp\left(-2\sum_{p \in E_j} \frac{|g(p)|}{p^{\sg}}(1-\cos (\theta_p-\tau\log p))\right) \cdot  \exp\left(-2\sum_{p \in E_j} \frac{|g(p)|}{p^{\sg}}(1-\cos \tau \log p)\right)\\
&\gg_B \left(\left|\frac{G_j(s)}{\mc{G}_j(\sg)}\right|\left|\frac{\mc{G}_j(s)}{\mc{G}_j(\sg)}\right|\right)^2 \gg_{B,\phi_j,\beta_j} \left|\frac{G_j(s)}{\mc{G}_j(\sg)}\right|^{2\left(1+\frac{1}{\gamma_{0,j}}\right)},
\end{align*}
the last estimate coming by Lemma \ref{HalDecay}. The first claim of the lemma follows immediately. The second comes from observing that the map $u \mapsto \frac{u}{2(1+u)}$ is increasing and that $\left|\frac{G_j(\sg)}{\mc{G}_j(\sg)}\right| \leq 1$ by the triangle inequality.
\end{proof}
%The next lemma relates to line integrals of our Dirichlet series $\mc{G}'(s)$, normalized by $\mc{G}(\sg)$. We use this estimate while bounding a ratio of the form $\left(\int_{(\sg)} |\mc{G}(s)|^2 \frac{|ds|}{|s|^2}\right)^{-1}\left(\int_{(\sg)} |G(s)|^2 \frac{|ds|}{|s|^2}\right)$ (see Lemma \ref{MAIN1}) in two ways: the first is in conjunction with Lemma \ref{HalDecay}, which suggests that we can replace line integrals over $\frac{G(s)}{\mc{G}(\sg)}$ by line integrals over $\frac{\mc{G}(\sg + 2i\tau)}{\mc{G}(\sg)}$; the second is simply done to the denominator.
%This is the purpose of the following Lemma.
%which we prove in greater generality because we feel that the proof may be of some independent interest.
%Combining the last 3 lemmata together, we see that, in order to bound $\frac{|M_h(t)|}{M_{|g|}(t)}$, it suffices to bound the ratio of the integrals $\int_{(\sg)} |G(s)|^2 \frac{|ds|}{|s|^2}$ and $\int_{(\sg')} |\mc{G}(\sg)
In what follows, given a Dirichlet series $F(s)$ that is absolutely and uniformly convergent in the half-plane $\text{Re}(s) > 1$, we will write 
\begin{equation*}
J_F(\sg) := \int_{(\sg)} |F'(s)| \frac{|ds|}{|s|^2},
\end{equation*}
for any $\sg > 1$. In order to derive Theorem \ref{HalGen} it will be essentially sufficient to derive sharp bounds for $\mc{G}(\sg)^{-2}J_{F}(\sg)$ in terms of $\sg$ when $F = G$ and $H$. 
Therefore, in the next several lemmata we estimate $J_F(\sg)$ for $F = G$ and $H$. 
\begin{lem} \label{MAIN1}
Let $k \geq 1$. Let $D > 2$ be fixed and set $T := (\sg-1)^{-D}$. Then
\begin{equation*}
J_{G}(\sg) \ll_{m,B,\mbf{\phi},\mbf{\beta}} \mc{G}(\sg)^2\left(B^2(\sg-1)^{-1}\prod_{1 \leq j \leq m} \left|\frac{G_j(\sg)}{\mc{G}_j(\sg)}\right|^{\frac{\gamma_{0,j}}{1+\gamma_{0,j}}} + 1 \right)
%\left(\max_{|\tau| \leq T} \left|\frac{G(s)}{\mc{G}(\sg)}\right|\right)^2 + (\sg'-1)\right).
%\frac{J_{G,1}(\sg)}{J_{\mc{G},1}(\sg')} \ll_{B} \left(\frac{B}{\delta}\right)^2\left(\frac{\sg'-1}{\sg-1}\left(\max_{|\tau| \leq T} \left|\frac{G(s)}{\mc{G}(\sg)}\right|\right)^2 + (\sg'-1)\right).
\end{equation*}
\end{lem}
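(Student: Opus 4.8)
The plan is to work with $J_G(\sg)=\int_{(\sg)}|G'(s)|^2\frac{|ds|}{|s|^2}$ and to split the contour into three pieces: the tail $|\tau|>T$ (with $T=(\sg-1)^{-D}$), small neighbourhoods of the zeros of $G$ lying in $\{|\tau|\le T\}$, and the remaining ``good'' portion of $\{|\tau|\le T\}$, on which we factor $G'(s)=G(s)\cdot\frac{G'(s)}{G(s)}$.

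On the tail I would use nothing but the trivial bound $|G'(\sg+i\tau)|\le\sum_n|g(n)|(\log n)n^{-\sg}=|\mc{G}'(\sg)|$, together with $|\mc{G}'(\sg)|\ll_B\mc{G}(\sg)(\sg-1)^{-1}$ (this amounts to $|(\log\mc{G})'(\sg)|\ll(\sg-1)^{-1}$, which follows from $\sum_p(\log p)p^{-\sg}\ll(\sg-1)^{-1}$ as in Lemma \ref{PRECMERTENS}), the weight $|s|^{-2}\le\tau^{-2}$, and $\int_{|\tau|>T}\tau^{-2}\,d\tau=2(\sg-1)^D$; this yields $\int_{|\tau|>T}|G'(s)|^2|s|^{-2}|ds|\ll_B\mc{G}(\sg)^2(\sg-1)^{D-2}\ll\mc{G}(\sg)^2$, since $D>2$ and $\sg$ is close to $1$. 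This is the origin of the $+1$ term. On small neighbourhoods of the zeros of $G$, which by Lemma \ref{ZEROLEM} occur only at heights $|\tau|\gg_B1$ and consist of $\ll_B1$ arithmetic-progression ``families'', one again uses $|G'(\sg+i\tau)|\le|\mc{G}'(\sg)|\ll_B\mc{G}(\sg)(\sg-1)^{-1}$ pointwise; shrinking these neighbourhoods so that their $|s|^{-2}$-weighted total measure is $\ll(\sg-1)^2$ makes this piece $\ll_B\mc{G}(\sg)^2$ as well.

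On the good portion I would pull the supremum of $|G|$ out of the integral. Applying Lemma \ref{INTBOUND} on each $E_j$ and multiplying over $j$ (using $G=\prod_jG_j$ and $\mc{G}=\prod_j\mc{G}_j$),
\[
\Big(\sup_{|\tau|\le T}|G(\sg+i\tau)|\Big)^2\le\prod_{1\le j\le m}\Big(\sup_{\tau}|G_j(\sg+i\tau)|\Big)^2\ll_{m,B,\mbf{\phi},\mbf{\beta}}\mc{G}(\sg)^2\prod_{1\le j\le m}\Big|\frac{G_j(\sg)}{\mc{G}_j(\sg)}\Big|^{\gamma_{0,j}/(1+\gamma_{0,j})},
\]
so it remains to prove that, on the good portion of $\{|\tau|\le T\}$, $\int|G'/G(s)|^2\,|s|^{-2}|ds|\ll_B B^2(\sg-1)^{-1}$. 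Here I would use Lemma \ref{StrongComp} to write $\frac{G'}{G}=\frac{G_0'}{G_0}-\sum_p g(p)(\log p)p^{-s}$. The term $G_0'/G_0$ is $O_B(1)$ on the good portion (it is bounded away from the zeros of $G$, as $G_0$ converges absolutely for $\text{Re}(s)>1/2$), hence contributes $O_B(1)$. For the oscillating Dirichlet polynomial $\sum_p g(p)(\log p)p^{-s}$, whose coefficients are majorised in absolute value by $B\log p$, Montgomery's comparison lemma (Lemma \ref{MODMONT}) bounds its mean square over any $[-U,U]$ by $3B^2$ times that of $\sum_p(\log p)p^{-s}=-\zeta'/\zeta(s)+O(1)$; a dyadic decomposition of $[-T,T]$ in $|\tau|$, with the weight $|s|^{-2}\le\max(1,\tau^{-2})$ and the classical bounds $|\zeta'/\zeta(\sg+i\tau)|\ll|s-1|^{-1}+1$ for $|\tau|\le1$ and $|\zeta'/\zeta(\sg+i\tau)|\ll\log^2(|\tau|+2)$ for $1\le|\tau|\le T$ (valid since $\sg>1$ lies in a zero-free region, as in the estimates used in Remark \ref{REMDECAY}), then sums to $\ll B^2(\sg-1)^{-1}$. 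Reassembling the three pieces gives $J_G(\sg)\ll_{m,B,\mbf{\phi},\mbf{\beta}}\mc{G}(\sg)^2\big(B^2(\sg-1)^{-1}\prod_j|G_j(\sg)/\mc{G}_j(\sg)|^{\gamma_{0,j}/(1+\gamma_{0,j})}+1\big)$; the auxiliary parameter $k$ plays no role.

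The delicate point is the treatment of the zeros of $G$ inside $\{|\tau|\le T\}$, equivalently the poles of $G_0'/G_0$. A zero lying exactly on the line $\text{Re}(s)=\sg$ would make $\int|G'/G|^2|s|^{-2}$ diverge, so one invokes the convention of the remark following Lemma \ref{MODMONT} to exclude such coincidences; for the remaining near-line zeros the crucial observation is that on a neighbourhood of a zero $\rho$ the integrand $|G'(s)|^2=|G(s)|^2|G'/G(s)|^2$ is in fact bounded by $|\mc{G}'(\sg)|^2\ll_B\mc{G}(\sg)^2(\sg-1)^{-2}$, so shrinking the neighbourhoods renders their contribution harmless while leaving the good-portion estimate unaffected. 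Making this bookkeeping uniform — in particular verifying that the dependence on $g$ entering through the finitely many possible real parts $\log|1-g(p)|/\log p$ of these zeros is absorbed once $t$ is sufficiently large — is the step that needs the most care.
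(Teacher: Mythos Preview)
Your overall architecture matches the paper's: split at $T=(\sg-1)^{-D}$, on $|\tau|\le T$ factor $G'$ and pull out $\sup_{|\tau|\le T}|G(s)/\mc{G}(\sg)|$, then appeal to Lemma~\ref{INTBOUND}. Your tail estimate and your Montgomery/dyadic treatment of the prime sum are both fine; the paper does the latter step a bit more directly via Parseval (equation \eqref{PARS}), converting $\int_{(\sg)}\bigl|\sum_n g(n)\Lambda(n)n^{-s}\bigr|^2|s|^{-2}|ds|$ into $\int_0^\infty e^{-2v\sg}\bigl|\sum_{n\le e^v}g(n)\Lambda(n)\bigr|^2dv\ll B^2(\sg-1)^{-1}$, but your route gives the same bound.

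The genuine gap is in your handling of the zeros of $G$. You propose to excise neighbourhoods of the zeros so that their $|s|^{-2}$-weighted measure is $\ll(\sg-1)^2$, and then to assert $G_0'/G_0=O_B(1)$ on the complement. These two requirements are incompatible. The zeros lie in $O_B(1)$ arithmetic progressions of spacing $\asymp 1$, so their weighted measure is $\asymp r$ for neighbourhoods of radius $r$; forcing this $\ll(\sg-1)^2$ means $r\asymp(\sg-1)^2$, and then on the complement $|G_0'/G_0|$ can be as large as $r^{-1}\asymp(\sg-1)^{-2}$ near each excised disc, making $\int_{\text{good}}|G_0'/G_0|^2|s|^{-2}$ of size $(\sg-1)^{-2}$ rather than $O_B(1)$. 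Your closing remark about ``absorbing'' the dependence on $\log|1-g(p)|/\log p$ once $t$ is large does not help either: $\sg_p-1=\log(|1-g(p)|/p)/\log p$ can be made arbitrarily small within the class $|g(p)|\le B$ (take $p=2$, $|1-g(2)|$ just above $2$), so any constant you produce depends on $g$, not just on $B$. Note also that your appeal to Lemma~\ref{ZEROLEM} is illegitimate here, since that lemma assumes $|\arg g(p)|<1$, which is not part of the hypotheses of Lemma~\ref{MAIN1}.

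The paper avoids all of this by never dividing by $G(s)$. Differentiating $G=G_0\exp\bigl(\sum_p g(p)p^{-s}\bigr)$ directly gives
\[
\frac{G'(s)}{\mc{G}(\sg)}=-\frac{G(s)}{\mc{G}(\sg)}\sum_p\frac{g(p)\log p}{p^s}+\frac{G_0'(s)}{\mc{G}_0(\sg)}\exp\Bigl(-\sum_p\frac{|g(p)|}{p^{\sg}}(1-\cos\xi_p)\Bigr),
\]
and the second term is $\ll_B 1$ \emph{uniformly} on the line, because $G_0'$ is holomorphic and bounded on $\text{Re}(s)>\tfrac12$ with a bound depending only on $B$ (Lemma~\ref{StrongComp} plus Cauchy), and $\mc{G}_0(\sg)\gg_B 1$. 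One then pulls the supremum out of the first term only. This replaces your excision bookkeeping by a single clean observation; I recommend you rewrite the ``good portion'' step using this decomposition.
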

\begin{proof}
By definition,
\begin{align}
\mc{G}(\sg)^{-2}J_{G}(\sg)&= \int_{(\sg)} \left|\frac{G'(s)}{G(s)}\right|^2\left|\frac{G(s)}{\mc{G}(\sg)}\right|^2 \frac{|ds|}{|s|^2} \nonumber\\
&\leq \left(\int_{|\tau| \leq T} \left|\frac{G'(s)}{G(s)}\right|^2 \left|\frac{G(s)}{\mc{G}(\sg)}\right|^2 \frac{|ds|}{|s|^2} + \int_{|\tau| > T} \left|\frac{G'(s)}{G(s)}\right|^2 \frac{|ds|}{|s|^2}\right), \label{GOODONE}
\end{align}
upon using $|G(s)| \leq \mc{G}(\sg)$ in the second integral.  
%(Note that we can ignore zeroes of $G(s)$ by our earlier remark, by dropping the null set of those zeros). 
Factoring $G'(s) = \frac{G'(s)}{G(s)} \cdot \frac{G(s)}{\mc{G}(\sg)} \cdot \mc{G}(\sg)$ for $s$ not a zero of $G$, Lemma \ref{StrongComp} implies that
\begin{align} 
\frac{G'(s)}{\mc{G}(\sg)}&= \frac{G(s)}{\mc{G}(\sg)}\frac{d}{ds}\log G(s) = \frac{G(s)}{\mc{G}(\sg)}\left(\frac{d}{ds}\left(\sum_p g(p)p^{-s}\right) + \frac{d}{ds}\log G_0(s)\right) \nonumber\\
&= -\frac{G(s)}{\mc{G}(\sg)}\sum_p \frac{g(p)\log p}{p^s} + \frac{G_0'(s)}{\mc{G}_0(\sg)}\exp\left(-\sum_p \frac{|g(p)|}{p^{\sg}}(1-\cos(\xi_p(\tau)))\right). \label{BALANCE2}
\end{align}
Now, since $G_0(s)$ is uniformly convergent on $\sg > \frac{1}{2}$, $\left|G_0'(s)\right| \ll_B 1$ uniformly on the line $\text{Re}(s) = \sg$. Similarly, $\mc{G}_0(\sg) \gg_B 1$. Thus, the second term above in \eqref{BALANCE2} is $\ll_B 1$, even in neighbourhoods of zeros of $G$. Taking absolute values and squaring, and bounding $\left|\frac{G(s)}{\mc{G}(\sg)}\right| \leq 1$ trivially in the first term, the second integral in \eqref{GOODONE} becomes
\begin{align*}
\int_{|\tau| > T} \left|\frac{G'(s)}{\mc{G}(\sg)}\right|^2 \frac{|ds|}{|s|^2} &\ll_B \int_{|\tau| > T} \left(\left|\sum_p \frac{g(p)\log p}{p^s}\right|^2 + 1\right)\frac{|ds|}{|s|^2} \ll \left(B^2\left(\sum_p \frac{\log p}{p^{\sg}}\right)^2 + 1\right)\int_{|\tau| > T} \frac{|ds|}{|s|^2}  \\
&\ll B^2 (\sg-1)^{-2}T^{-1} = B^2(\sg-1)^{D-2},
\end{align*}
the second last estimate following from the estimates
\begin{equation*}
\sum_p \frac{\log p}{p^{\sg}} = \sum_{n \geq 1} \frac{\Lambda(n)}{n^{\sg}} + O(1) = -\frac{\zeta'(\sg)}{\zeta(\sg)} + O(1) = \frac{1}{\sg-1} + O(1),
\end{equation*}
from the Laurent series of $\zeta$.  \\
For the first integral above, we ignore the (measure zero) set of $s$ for which $\mc{G}(s)=0$. Using the factorization mentioned earlier and \eqref{BALANCE2}, and interchanging $\sum_p \frac{g(p)\log p}{p^s}$ with $\sum_{n \geq 1} g(n)\Lambda(n)n^{-s}$ as above, we have
\begin{align*}
\int_{|\tau| \leq T} \left|\frac{G'(s)}{G(s)}\right|^2 \left|\frac{G(s)}{\mc{G}(\sg)}\right|^2 \frac{|ds|}{|s|^2} &\ll \left(\max_{|\tau| \leq T} \left|\frac{G(s)}{\mc{G}(\sg)}\right|\right)^2\int_{|\tau| \leq T} \left|\sum_p \frac{g(p)\log p}{p^s}\right|^2 \frac{|ds|}{|s|^2} + O_B(1) \\
&\leq \left(\max_{|\tau| \leq T} \left|\frac{G(s)}{\mc{G}(\sg)}\right|\right)^2\int_{(\sg)} \left|\sum_{n \geq 1} \frac{g(n)\Lambda(n)}{n^s}\right|^2 \frac{|ds|}{|s|^2} + O_B(1).
\end{align*}
Now, applying Parseval's theorem, the integral in this last expression is
\begin{equation} \label{PARS}
\int_{(\sg)} \left|\sum_{n \geq 1} \frac{g(n)\Lambda(n)}{n^s}\right|^2 \frac{|ds|}{|s|^2} = 2\pi\int_0^{\infty} e^{-2v\sg} \left|\sum_{n \leq e^v} g(n)\Lambda(n) \right|^2 dv \leq B^2 \int_0^{\infty} e^{-2v(\sg-1)}\left(e^{-v} \sum_{n \leq e^v} \Lambda(n)\right)^2 dv.
\end{equation}
%upon applying the triangle inequality. 
As in the proof of the last lemma, this integral is $\ll (\sg-1)^{-1}$. 
%By the prime number theorem, $\sum_{p \leq e^v} \log p \ll e^v$ as before, and hence 
%\begin{equation*}
%\int_0^{\infty} e^{-2v(\sg-1)}\left(e^{-v} \sum_{p \leq e^v} \log p\right)^2 dv \ll \int_0^{\infty} e^{-2v(\sg-1)} dv \ll (\sg-1)^{-1}.
%\end{equation*}
It thus follows that
\begin{equation*}
\int_{|\tau| \leq T} \left|\frac{G'(s)}{G(s)}\right|^2 \left|\frac{G(s)}{\mc{G}(\sg)}\right|^2 \frac{|ds|}{|s|^2} \ll B^2(\sg-1)^{-1}\left(\max_{|\tau| \leq T} \left|\frac{G(s)}{\mc{G}(\sg)}\right|\right)^2 + O_B(1).
\end{equation*}
Inserting the estimates for our two integrals into \eqref{GOODONE}, we get
\begin{align*}
\mc{G}(\sg)^{-2}J_{G}(\sg)&\ll_B B^2\left(\max_{|\tau| \leq T} \left|\frac{G(s)}{\mc{G}(\sg)}\right|\right)^2 + B^2(\sg-1)^{D-2} + 1\ll B^2(\sg-1)^{-1}\left(\max_{|\tau| \leq T} \left|\frac{G(s)}{\mc{G}(\sg)}\right|\right)^2 + 1,
\end{align*}
since $D > 2$. Appealing to Lemma \ref{INTBOUND} completes the proof.
\end{proof}
In the case that $A = A_t := \exp\left(\sum_{p \leq t} \frac{g(p)-|g(p)|}{p}\right)$ and $|\theta_p| \leq \eta_j$ for all $p \in E_j$ and all $j$, we will need to take advantage of the small argument condition to derive an asymptotic formula. In order to account for this condition, therefore, it will be helpful to know that when $\text{Im}(s)$ is sufficiently small, $G(s)$ is approximated well by $A\mc{G}(s)$. In this spirit, we prove the following lemma.
\begin{lem}\label{SHARPH}
Let $g$ be as above, such that for each $1\leq j \leq m$ and $p \in E_j$, we have $|\text{arg}(g(p))| \leq \eta_j < 1$, and let $\eta := \max_{1\leq j \leq n} \eta_j$. Assume also that $B\eta < 1$. For $t \leq x$ sufficiently large and fixed, let $h(n) := g(n)-A|g(n)|$ and $H(s) := \sum_{n \geq 1} \frac{h(n)}{n^s}$, for $\text{Re}(s) > 1$, where $A = A_t$ is as above. Let $\sg = 1+\frac{1}{\log t}$ and $s = \sg + i\tau$. Then for any $K> 0$ such that $B\eta \log(1+K) < 1$ and $K(\sg-1)\log B < 1$,
\begin{equation*}
\int_{|\tau| \leq K(\sg-1)} |H'(\sg + i\tau)|^2 \frac{d\tau}{\sg^2+\tau^2} \ll_B B^2(1+B^2)\eta^2|A|^2\min\left\{\delta^{-2},\log^2(1+K)(1+K)^{-2\delta}\right\}(\sg-1)^{-1} .
%+ B^2\eta^2K(\sg-1)\log^2(1+K).
\end{equation*}
\end{lem}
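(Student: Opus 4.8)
The plan is to exploit that the specific choice $A = A_t = \exp\l\sum_{p \leq t}\frac{g(p)-|g(p)|}{p}\right)$ is designed precisely so that $G(s)$ and $A\mc{G}(s)$ agree to within a factor $1 + O_B(\eta)$ throughout the short segment $|\tau| \leq K(\sg-1)$, so that $H = G - A\mc{G}$ -- and hence $H'$ -- carries an explicit factor of $\eta$. To extract this, write, via Lemma \ref{StrongComp}, $G(s) = G_0(s)e^{P(s)}$ and $\mc{G}(s) = \mc{G}_0(s)e^{\mc{P}(s)}$ with $P(s) := \sum_p g(p)p^{-s}$ and $\mc{P}(s) := \sum_p |g(p)|p^{-s}$. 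Then
\begin{equation*}
H'(s) = G(s)\l P'(s) + \frac{G_0'(s)}{G_0(s)}\right) - A\mc{G}(s)\l \mc{P}'(s) + \frac{\mc{G}_0'(s)}{\mc{G}_0(s)}\right),
\end{equation*}
and substituting $G(s) = A\mc{G}(s) + H(s)$ rewrites the right-hand side as a combination of three kinds of terms, each of which visibly carries a factor $\eta$: the coefficients of $P'(s) - \mc{P}'(s) = -\sum_p (g(p)-|g(p)|)(\log p)p^{-s}$ are $O(B\eta)$; the difference $\frac{G_0'}{G_0}(s) - \frac{\mc{G}_0'}{\mc{G}_0}(s) = \frac{d}{ds}\log\frac{G_0(s)}{\mc{G}_0(s)}$ is $O_B(\eta)$, since the $p$-th term of $\log\frac{G_0(s)}{\mc{G}_0(s)}$ equals $\frac{g(p)-|g(p)|}{p^s(p^s-1)} + O\l\frac{(B\eta)^2}{p^{2\sg}}\right)$ and so is $\ll B\eta\, p^{-2\sg}$; and $H(s) = A\mc{G}(s)(\Phi(s)-1)$, where $\Phi := \frac{G_0}{\mc{G}_0}\exp(P - \mc{P} - \log A)$ satisfies $|\Phi(s)-1| \ll_B \eta\log(1+K)$, the logarithm appearing because $\sum_{p\leq t}|p^{-s}-p^{-1}| \ll \log(1+K)$ on $|\tau| \leq K(\sg-1)$ by the argument proving Lemma \ref{OLD} (split at $e^{1/|\tau|}$). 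All of these manipulations presuppose that $G_0(s)$ and $\mc{G}_0(s)$ -- equivalently $G(s)$ and $\mc{G}(s)$ -- are non-vanishing on the segment, with $\frac{G_0'}{G_0}(s), \frac{\mc{G}_0'}{\mc{G}_0}(s) \ll_B 1$ there; this is exactly what the hypotheses $B\eta\log(1+K) < 1$ and $K(\sg-1)\log B < 1$ guarantee, via Lemma \ref{ZEROLEM} (applied both to $g$ and to $|g|$) together with the uniform convergence of $G_0, \mc{G}_0$ for $\sg > \tfrac12$.

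With $H'(s)$ reorganized in this way it is a linear combination, with coefficients $O_B(\eta|A|)$ or $O_B(\eta\log(1+K)|A|)$, of the three absolutely convergent Dirichlet series $\mc{G}(s)$, $\mc{G}(s)P'(s)$, $\mc{G}(s)\mc{P}'(s)$, plus an $O_B(|H(s)|)$ remainder that is itself of this shape. To estimate $\int_{|\tau|\leq K(\sg-1)} |H'(s)|^2\frac{d\tau}{\sg^2+\tau^2}$ I would bound each such term in two complementary ways. First, the trivial bound $|\mc{G}(s)| \leq \mc{G}(\sg)$ together with Parseval exactly as in \eqref{PARS} (using $\psi(e^v) \ll e^v$) handles integrals of the type $\int_{(\sg)}|\mc{G}(s)P'(s)|^2\frac{|ds|}{|s|^2}$, producing a bound of size $B^2\mc{G}(\sg)^2(\sg-1)^{-1}$; this route loses only the constant $\delta^{-2}$ once one recalls $\mc{G}(\sg) \asymp_B \prod_{p\leq t}(1+|g(p)|/p)$. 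Second, over the restricted segment one can instead invoke the decay estimate $|\mc{G}(s)/\mc{G}(\sg)| \ll_B (1+|\tau|/(\sg-1))^{-\delta}$ of Remark \ref{REMDECAY} (valid since $|g(p)| \geq \delta$, so $\mc{G}$ is at least a $\delta$-th power of $\zeta$ near $s = 1$), which on $|\tau| \leq K(\sg-1)$ is $\ll_B (1+K)^{-\delta}$ and, after also absorbing the at-most-$\log$-type growth of the factors $P'(s), \mc{P}'(s)$ there, supplies the factor $\log^2(1+K)(1+K)^{-2\delta}$. Taking whichever of these two estimates is smaller yields the $\min\left\{\delta^{-2},\log^2(1+K)(1+K)^{-2\delta}\right\}$, and collecting the $B$-dependence -- a $B^2$ from the bound $|g(p)| \leq B$ in each Parseval step and a further $B^2$ from the terms involving $P'$ (or the prime-power corrections inside $G_0'$) -- gives the factor $B^2(1+B^2)$; the factor $\eta^2|A|^2$ is the square of the $\eta|A|$ already extracted in the first paragraph.

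The main obstacle is the bookkeeping in the first two paragraphs: one must carefully distinguish the three error scales $\eta$, $B\eta$, and $\eta\log(1+K)$, verify that the $\log(1+K)$ appears only where it must (namely through $\Phi(s)-1$, ultimately because of the comparison of $p^{-s}$ with $p^{-1}$ over $p \leq t$ on the truncated range, cf. Lemma \ref{OLD}), and then show that the two competing estimates for the $L^2$ integral genuinely combine into the stated $\min$ -- in particular that, on the range where the decay estimate is in force, the logarithmic-derivative factors contribute no worse than $\log^2(1+K)$. A subsidiary technical point needing care is the lower bound $|G_0(s)|, |\mc{G}_0(s)| \gg_B 1$ on the zero-free segment: this requires the finitely many ``small prime'' factors of the product defining $G_0$ (those with $p \leq B^{1/\sg}+1$) to be bounded away from $0$, which again follows from the zero-freeness statement of Lemma \ref{ZEROLEM} valid on $|\tau| \leq K(\sg-1)$.
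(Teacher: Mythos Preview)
Your approach is essentially the paper's: decompose $H'(s)$ via logarithmic derivatives, substitute $G(s)=A\mc{G}(s)+H(s)$, show $|H(s)|\ll_B \eta|A||\mc{G}(s)|\log\bigl(1+\tfrac{|\tau|}{\sg-1}\bigr)$ on the segment (exactly your $\Phi(s)-1$ computation, which the paper carries out as a calculation of $\bigl|\tfrac{G(\sg)}{\mc{G}(\sg)}A^{-1}\bigr|$ and of $R(\tau):=\tfrac{G(s)}{\mc{G}(s)}\cdot\tfrac{\mc{G}(\sg)}{G(\sg)}$), and then invoke the decay $\bigl|\tfrac{\mc{G}(s)}{\mc{G}(\sg)}\bigr|\ll(1+|\tau|/(\sg-1))^{-\delta}$.

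Two small points of divergence are worth flagging. First, your handling of the residual terms is actually slightly sharper than the paper's: you group $\tfrac{G_0'}{G_0}-\tfrac{\mc{G}_0'}{\mc{G}_0}$ and observe it is $O_B(\eta)$, whereas the paper simply bounds $G(s)\tfrac{G_0'}{G_0}$ and $A\mc{G}(s)\tfrac{\mc{G}_0'}{\mc{G}_0}$ each by $O_B(\mc{G}(\sg))$, leaving an $O_B(1)$ contribution to $\mc{G}(\sg)^{-2}\int|H'|^2|s|^{-2}|ds|$ that is then silently absorbed. Second, your account of how the $\min$ arises is slightly off: it is not that the ``trivial bound'' route yields $\delta^{-2}$ while the ``decay'' route yields $\log^2(1+K)(1+K)^{-2\delta}$. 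Rather, after inserting both the $\log$ from $|H(s)|$ and the decay of $|\mc{G}(s)/\mc{G}(\sg)|$, the integrand carries the single factor $u^2e^{-2\delta u}$ with $u=\log(1+|\tau|/(\sg-1))$; its supremum on $[0,\log(1+K)]$ is $(e\delta)^{-2}$ if $\log(1+K)\geq 1/\delta$ and $\log^2(1+K)(1+K)^{-2\delta}$ otherwise, and the remaining $\int|\zeta'/\zeta|^2\,d\tau\ll(\sg-1)^{-1}$ supplies the last factor. With that correction your plan goes through as written.
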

\begin{proof}
Observe that if $s$ is not a zero of $G(s)$ or $\mc{G}(s)$ then
\begin{align*}
H'(s) &= G'(s)-A\mc{G}'(s) = G(s)\frac{d}{ds}\log G(s) - A\mc{G}(s) \frac{d}{ds}\log \mc{G}(s) \\
&= H(s)\frac{d}{ds}\left(\sum_p \frac{g(p)}{p^s}\right) + G(s)\cdot \frac{G_0'(s)}{G_0(s)} + A\mc{G}(s) \frac{d}{ds}\left(\sum_p \frac{g(p)-|g(p)|}{p^s}\right) -A\mc{G}(s) \cdot \frac{\mc{G}_0'(s)}{\mc{G}_0(s)},
\end{align*}
by Lemma \ref{StrongComp}. Using the fact that both $\frac{G_0'(s)}{G_0(s)}\cdot \frac{G(s)}{\mc{G}(\sg)}$ and $\frac{\mc{G}_0'(s)}{\mc{G}_0(s)}\cdot \frac{\mc{G}(s)}{\mc{G}(\sg)}$ are both $O_B(1)$ as in Lemma \ref{MAIN1}, we have (at non-zeros of $G$ and $\mc{G}$)
\begin{align*}
H'(s) &= 
%H(s)\frac{d}{ds}\left(\sum_p \frac{g(p)}{p^s} + \log G_0(s)\right) + A\mc{G}(s)\frac{d}{ds}\left(\sum_p \frac{|g(p)|-g(p)}{p^s} + \log(G_0(s)/\mc{G}_0(s))\right) \\
-H(s)\sum_{p} \frac{g(p)\log p}{p^s} + A\mc{G}(s)\sum_p \frac{(|g(p)|-g(p))\log p}{p^s}  + O_B(\mc{G}(\sg)).
\end{align*}
Write $\mc{G}(\sg)^{-2}\int_{|\tau| \leq K(\sg-1)} |H'(s)|^2 \frac{|ds|}{|s|^2} \leq 3(J_1 + J_2 + O_B(1))$, where 
\begin{align*}
J_1 &:= \mc{G}(\sg)^{-2}\int_{|\tau| \leq K(\sg-1)} |H(s)|^2 \left|\sum_p \frac{g(p)\log p}{p^s}\right|^2 \frac{|ds|}{|s|^2}, \\
J_2 &:= |A|^2\int_{|\tau| \leq K(\sg-1)} \left|\sum_p \frac{(|g(p)|-g(p))\log p}{p^s}\right|^2\left|\frac{\mc{G}(s)}{\mc{G}(\sg)}\right|^2 \frac{|ds|}{|s|^2}.
%J_3 &\ll_B \int_{|\tau| \leq K(\sg-1)} |H(s)|^2 \frac{|ds|}{|s|^2}, \\
%J_4 &:= |A|^2\int_{|\tau| \leq K(\sg-1)} \log^2|G_0(s)/\mc{G}_0(s)|\left|\frac{\mc{G}(s)}{\mc{G}(\sg)}\right|^2 \frac{|ds|}{|s|^2}.
\end{align*}
It follows from Lemma \ref{OLD} that $\left|\frac{\mc{G}(s)}{\mc{G}(\sg)}\right| \ll e^{-\delta \log\left(1+\frac{|\tau|}{\sg-1}\right)}$. We will either use this upper bound, or the trivial bound $1$. \\
% which follows by the triangle inequality applied to the Dirichlet series $\mc{G}(s)$. \\
We begin by computing a uniform estimate for $|H(s)|$. Since $K(\sg-1) \log B < 1$, by Lemma \ref{ZEROLEM} $G$ and $\mc{G}$ are non-zero and holomorphic for $|\tau| \leq K(\sg-1)$, i.e., each of the factors $1+\frac{g(p)}{p^s-1}$ and $1+\frac{|g(p)|}{p^s-1}$ is non-zero, provided $\sg-1$ is sufficiently small (or equivalently, if $t$ is sufficiently large). Observe, therefore, that 
\begin{align*}
\left|\frac{G(\sg)}{\mc{G}(\sg)}A^{-1}\right| &= \left|A^{-1} \prod_p \left(1+\frac{g(p)}{p^{\sg}-1}\right)\left(1+\frac{|g(p)|}{p^{\sg}-1}\right)^{-1}\right| \\
&= \left|\prod_{p \leq t} \left(1+\frac{g(p)-|g(p)|}{p^{\sg}-1 + |g(p)|}\right)e^{-\frac{g(p)-|g(p)|}{p}}\right| \prod_{p > t}\left|1+\frac{g(p)-|g(p)|}{p^{\sg}-1+|g(p)|}\right| \\
&= \exp\left(\text{Re}\left(\sum_{p \leq t} \left(\log\left(1+\frac{g(p)-|g(p)|}{p^{\sg}-1+|g(p)|}\right)-\frac{g(p)-|g(p)|}{p}\right)\right)\right) \\
&\cdot \exp\left(\sum_{p > t} \log\left|1+\frac{g(p)-|g(p)|}{p^{\sg}-1+|g(p)|}\right|\right) \\
&=: e^{P_1+P_2}.
\end{align*}
Estimating $P_2$ first, we note that $|g(p)-|g(p)|| \leq B\eta$ for each $p$, whence by \eqref{TAIL},
\begin{equation*}
P_2 \leq \sum_{p > t} \log\left(1+\frac{B\eta}{p^{\sg}-1}\right) \leq 2B\eta \sum_{p > t} \frac{1}{p^{\sg}} \ll_B B\eta.
\end{equation*}
Next, as $B\eta < 1$ we have
\begin{align*}
P_1 &= \sum_{p \leq t} \text{Re}\left(\left(\frac{g(p)-|g(p)|}{p^{\sg}-1+|g(p)|} - \frac{g(p)-|g(p)|}{p}\right) + \sum_{l \geq 2} \frac{(-1)^{l-1}}{l} \frac{(g(p)-|g(p)|)^l}{(p^{\sg}-1+|g(p)|)^{l}}\right) \\
&\leq B\eta\sum_{B^{\frac{1}{\sg}}+1< p \leq t} \frac{|g(p)| + 1}{p(p-1+|g(p)|)} + O(B^2\eta) 
%\leq \frac{B\eta}{\log t}\sum_{p \leq t} \frac{\log p}{p^{\sg}-1} + O_B(B\eta^2) 
\ll B(1+B)\eta.
\end{align*}
%It follows that for some $C > 0$ depending at most on $B$, $\frac{G(\sg)}{\mc{G}(\sg)} = Ae^{CB\eta} = A\left(1+O_B(B\eta)\right)$. 
Furthermore, 
%when $s = \sg + i\tau$ is not a zero of $\mc{G}$ or $G$ (which implies that $|g(p)|, g(p) \neq 1-p^s$, for instance), 
setting $R(\tau) := \frac{G(s)}{\mc{G}(s)}\left(\frac{G(\sg)}{\mc{G}(\sg)}\right)^{-1}$,
\begin{align*}
|R(\tau)| &= \left|\frac{G_0(s)}{G_0(\sg)}\right|\left|\frac{\mc{G}_0(\sg)}{\mc{G}_0(s)}\right|\exp\left(\text{Re}\left(\sum_p (g(p)-|g(p)|)\left(\frac{1}{p^s}-\frac{1}{p^{\sg}}\right)\right)\right) \\
&= \left|\frac{G_0(s)}{G_0(\sg)}\right|\left|\frac{\mc{G}_0(\sg)}{\mc{G}_0(s)}\right| \exp\left(O\left(B\eta \sum_p \left|\frac{1}{p^s}-\frac{1}{p^{\sg}}\right|\right)\right) = \left|\frac{G_0(s)}{G_0(\sg)}\right|\left|\frac{\mc{G}_0(s)}{\mc{G}_0(\sg)}\right|e^{O\left(B\eta \log\left(1+\frac{|\tau|}{\sg-1}\right)\right)}.
%&= \prod_p \left|1+\frac{p^{\sg}(g(p)-|g(p)|)\left(1-e^{i\tau \log p}\right)}{(p^s-1+|g(p)|)(p^{\sg}-1+g(p))}\right| \\
%&= \exp\left(\sum_p \log\left|1+\frac{p^{\sg}(g(p)-|g(p)|)\left(1-e^{i\tau \log p}\right)}{(p^s-1+|g(p)|)(p^{\sg}-1+g(p))}\right|\right) = \exp\left(O\left(\sum_p \log\left(1+\frac{B\eta p^{\sg}\left|1-e^{i\tau \log p}\right|}{|p^s-1+|g(p)|||p^{\sg}-1+g(p)|}\right)\right)\right) \\
%&= \exp\left(O\left(\sum_p \log\left(1+\frac{B\eta |\tau| p^{\sg}\log p}{|p^s-1+|g(p)|||p^{\sg}-1+g(p)|}\right)\right)\right) = \exp\left(O\left(\sum_p \log\left(1+\frac{B\eta |\tau|\log p}{|p^{\sg}-1+g(p)|}\right) + \frac{B\eta |\tau|\log p}{|p^{\sg}-1+g(p)|} \right)\right).
%\exp\left(O_B\left(B\eta |\tau|\sum_{p \leq 2B^{\frac{1}{\sg}}+1} \frac{p^{\sg} \log p}{|p^s-1+|g(p)|||p^{\sg}-1+g(p)|} + B\eta |\tau| \sum_{p > 2B+1} \frac{p^{\sg}\log p}{(p^{\sg}-(1+|g(p)|))^2}\right)\right) \\
%&= \exp\left(O_B\left(B\eta |\tau|\sum_{p > 2B^{\frac{1}{\sg}}+1} \frac{\log p}{p^{\sg}} + B\eta|\tau|\right)\right) = \exp\left(O_B\left(B\eta\frac{|\tau|}{\sg-1} + \eta|\tau|\right)\right) \\
%&=\exp\left(O_B\left(B\eta \log\left(1+\frac{|\tau|}{\sg-1}\right)\right)\right),
\end{align*}
the second last estimate following from Lemmas \ref{OLD} and \ref{StrongComp}. It follows that
\begin{align*}
\left|\frac{G_0(s)}{G_0(\sg)}\right| &= \left|\prod_p e^{-\frac{g(p)-|g(p)|}{p^s}}\left(1+\frac{g(p)-|g(p)|}{p^s-1+|g(p)|}\right)\right| \\
&= \exp\left(\text{Re}\left(\sum_p \left(\log(1+\frac{g(p)-|g(p)|}{p^s-1+|g(p)|})-\frac{g(p)-|g(p)|}{p^s}\right)\right)\right) \\
&= \exp\left(\text{Re}\left(\sum_p \log\left(1+\frac{g(p)-|g(p)|}{p^s}\right)- \frac{g(p)-|g(p)|}{p^s}\right) + O(B(1+B)\eta)\right),
\end{align*}
the error term above owing to
\begin{align*}
&\left|\log\left(1+\frac{g(p)-|g(p)}{p^s-1+|g(p)|}\right)-\log\left(1+\frac{g(p)-|g(p)|}{p^s}\right)\right| \\
&= \left|\log\left(1+(g(p)-|g(p)|)\frac{|g(p)|-1}{(p^s-1+|g(p)|)(p^s+g(p)-|g(p)|)}\right)\right| \\
&\leq \frac{|g(p)-|g(p)|||B-1|}{|p^s-1+|g(p)||(p^{\sg}-|g(p)-|g(p)||)} \leq \frac{B(1+B)\eta}{(p^{\sg}-B\eta)|p^s-1+|g(p)||},
\end{align*} 
the sum of which over all primes is $\ll B(1+B)\eta$, as $B\eta < 1$. For the remaining expression, 
\begin{align*}
&\text{Re}\left(\sum_p \log\left(1+\frac{g(p)-|g(p)|}{p^s}\right) - \frac{g(p)-|g(p)|}{p^s}\right) \leq \sum_{l \geq 2} \frac{1}{l}\left|\frac{g(p)-|g(p)|}{p^s}\right|^l \\
&\leq \frac{B^2\eta^2}{p^{2\sg}}\left(1-\frac{1}{p^{\sg}}\right)^{-1} = \frac{B^2\eta^2}{p^{\sg}(p^{\sg}-1)},
\end{align*}
and summing over all $p$ gives $\ll B^2 \eta^2$. It follows that $\left|\frac{G_0(s)}{\mc{G}_0(s)}\right| = e^{O(B(1+B)\eta)}$. The same argument holds in estimating $\left|\frac{\mc{G}_0(\sg)}{G_0(\sg)}\right|$ upon replacing $g(p)-|g(p)|$ by $|g(p)|-g(p)$ and $s$ by $\sg$.  Hence,
\begin{equation*}
|R(\tau)| = e^{O\left(B\eta\left(\log\left(1+\frac{|\tau|}{\sg-1}\right) + (1+B)\right)\right)} = 1+O\left(B\eta \log\left(1+\frac{|\tau|}{\sg-1}\right)\right),
\end{equation*}
by assumption on $K$, since $\log\left(1+\frac{|\tau|}{\sg-1}\right) \leq \log(1+K)$. 
%
%= \exp\left(O_B\left(B\eta |\tau|\sum_{p \leq 2B^{\frac{1}{\sg}}+1} \frac{p^{\sg} \log p}{|p^s-1+|g(p)|||p^{\sg}-1+g(p)|} + B\eta |\tau| \sum_{p > 2B+1} \frac{p^{\sg}\log p}{(p^{\sg}-(1+|g(p)|))^2}\right)\right) \\
%&= \exp\left(O_B\left(B\eta |\tau|\sum_{p > 2B^{\frac{1}{\sg}}+1} \frac{\log p}{p^{\sg}} + B\eta|\tau|\right)\right) = \exp\left(O_B\left(B\eta\frac{|\tau|}{\sg-1} + \eta|\tau|\right)\right) \\
%&=\exp\left(O_B\left(B\eta \log\left(1+\frac{|\tau|}{\sg-1}\right)\right)\right)
%upon using $\sum_p \frac{\log p}{p^{\sg}} \asymp \frac{1}{\sg-1}$.  
Hence, whenever $B\eta \log(1+K) < 1$,
\begin{equation*}
G(s) = R(\tau)\mc{G}(s)\cdot\frac{G(\sg)}{\mc{G}(\sg)} = A\mc{G}(s)\left(1+O_B\left(B\eta \log\left(1+\frac{|\tau|}{\sg-1}\right)\right)\right),
\end{equation*}
%this last expression holding whenever $\eta$ and $\eta\left(1+\frac{|\tau|}{\sg-1}\right)$ are sufficiently small (in terms of $B$ alone). 
%Hence, for each $s =\sg +i\tau$ with $\sg \geq 1+\frac{1}{2}(\sg_t-1)$,
and thus
\begin{equation*}
|H(s)| = |G(s)-A\mc{G}(s)| \ll_B B\eta|A||\mc{G}(s)|\log\left(1+\frac{|\tau|}{\sg-1}\right),
\end{equation*}
%Inserting this estimate into $J_3$ immediately gives $J_3 \ll_B B^2\eta^2 K(\sg-1)\log^2(1+K)$. 
Inserting this into $J_1$, bounding $\left|\frac{\mc{G}(s)}{\mc{G}(\sg)}\right|$ by $e^{-\delta \log\left(1+\frac{|\tau|}{\sg-1}\right)}$ and applying Lemma \ref{MODMONT},
\begin{align*}
J_1 &\ll_B B^2\eta^2|A|^2\left(\int_{-K(\sg-1)}^{K(\sg-1)} \log^2\left(1+\frac{|\tau|}{\sg-1}\right)\left|\frac{\mc{G}(s)}{\mc{G}(\sg)}\right|^2\left|\sum_{n \geq 1} g(n)\Lambda(n)n^{-s}\right|^2 d\tau + K(\sg-1)\log^2(1+K)\right)\\
&\leq B^2\eta^2|A|^2\left(B^2\int_{-K(\sg-1)}^{K(\sg-1)} \log^2\left(1+\frac{|\tau|}{\sg-1}\right)e^{-2\delta \log\left(1+\frac{|\tau|}{\sg-1}\right)}\left|\sum_{n \geq 1} \Lambda(n)n^{-s}\right|^2 d\tau + K(\sg-1)\log^2(1+K)\right).
\end{align*}
Observe that the map $ue^{-\delta u}$ is maximized in $u$ when $u = \frac{1}{\delta}$. Hence, if $u = \log\left(1+\frac{|\tau|}{\sg-1}\right)$, this quantity is largest once $|\tau| = \left(e^{\frac{1}{\delta}}-1\right)(\sg-1) =: \tau_0$. Now, if $K \geq \tau_0$ then we can bound the product $\log\left(1+\frac{|\tau|}{\sg-1}\right)e^{-\delta \log\left(1+\frac{|\tau|}{\sg-1}\right)}$ by $\frac{1}{\delta}$, while if $K < \tau_0$ then our bound is $\log(1+K)(1+K)^{-\delta}$.  
%In the first case, the main error term contribution is not dependent on $K$, which makes it more suitable for us. Hence, we will take $K = e^{\frac{1}{\delta}}-1$ later. \\
Allowing either choice, we have
\begin{align*}
J_1&\ll B^2\eta^2|A|^2\left(B^2\min\left\{\delta^{-2},\frac{\log^2(1+K)}{(1+K)^{2\delta}}\right\} \int_{|\tau| \leq K(\sg-1)} \left|\sum_{n \geq 1} \Lambda(n)n^{-s}\right|^2d\tau + K(\sg-1)\log^2(1+K)\right).
\end{align*}
As before, since $\sum_{n \geq 1} \Lambda(n)n^{-s} = -\frac{\zeta'(s)}{\zeta(s)} = \frac{1}{s-1} + O(1)$,
\begin{align*}
J_1 &\ll B^2\eta^2|A|^2\left(B^2\min\left\{\delta^{-2},\frac{\log^2(1+K)}{(1+K)^{2\delta}}\right\} \int_{|\tau| \leq K(\sg-1)} \frac{d\tau}{|s-1|^2} + K(\sg-1)\log^2(1+K)\right)\\
&\ll B^2\eta^2|A|^2\left(B^2\min\left\{\delta^{-2},\frac{\log^2(1+K)}{(1+K)^{2\delta}}\right\}(\sg-1)^{-1} \int_{|u| \leq K} (1+u)^{-2} du + K(\sg-1)\log^2(1+K)\right) \\
&\ll B^2\eta^2|A|^2\left(B^2\min\left\{\delta^{-2},\frac{\log^2(1+K)}{(1+K)^{2\delta}}\right\}(\sg-1)^{-1} + K(\sg-1)\log^2(1+K)\right).
\end{align*}
Similarly, we can apply Lemma \ref{MODMONT} to $J_2$, noting that $(|g(n)|-g(n))\Lambda(n) \leq B\eta\Lambda(n)$ in this case, to get
\begin{equation*}
J_2 \ll B^2\eta^2|A|^2\int_{|\tau| \leq K(\sg-1)} \left|\sum_{n \geq 1} \Lambda(n)n^{-s}\right|^2 d\tau \ll B^2\eta^2 |A|^2\int_{|\tau| \leq K(\sg-1)} \frac{d\tau}{|s-1|^2} \ll B^2\eta^2|A|^2(\sg-1)^{-1}.
\end{equation*}
%It remains to determine $J_4$. To this end, we must estimate the ratio $\frac{G_0(s)}{\mc{G}_0(s)}$, assuming $\mc{G}_0(s) \neq 0$, more carefully. As in Lemma \ref{StrongComp}, 
%\begin{equation*}
%\log(G_0(s)/\mc{G}_0(s)) = \sum_p \left(\log\left(1+\frac{g(p)}{p^s}\right) - \log\left(1+\frac{|g(p)|}{p^s-1}\right) + \frac{|g(p)|-g(p)}{p^s}\right),
%\end{equation*}
%which converges uniformly on compact subsets of $\text{Re}(s) > 1$. Differentiating the series termwise gives
%\begin{align*}
%&\frac{d}{ds}\log(G_0(s)/\mc{G}_0(s)) = \sum_p \left(\frac{p^s\log p}{p^s-1}\left(\frac{g(p)}{p^s-1+g(p)} - \frac{|g(p)|}{p^s-1+|g(p)|}\right) + \frac{(g(p)-|g(p)|)\log p}{p^s}\right) \\
%&= \sum_p \frac{p^s|g(p)|(|g(p)|-g(p))\log p}{(p^s-1)(p^s-1 + |g(p)|)} + \sum_p g(p)p^s\log p\left(\frac{1}{(p^s-1)(p^s-1+|g(p)|)} - \frac{1}{(p^s-1)(p^s -1+ g(p))}\right) \\
%&+ \sum_p \frac{(g(p)-|g(p)|)\log p}{p^s}.
%%&= \sum_p \frac{|g(p)|(|g(p)|-g(p))\log p}{p^s -1+ |g(p)|} + \sum_p \frac{g(p)(g(p)-|g(p)|)\log p}{p^s+g(p))(p^s+|g(p)|)}
%\end{align*}
%It therefore follows that
%\begin{align*}
%\left|\frac{d}{ds}\log(G_0(s)/\mc{G}_0(s))\right| &\leq \sum_p \frac{||g(p)|-g(p)|\log p}{p^{\sg}|p^s+|g(p)||} + \sum_p \frac{|g(p)|||g(p)|-g(p)|\log p}{|p^s+g(p)||p^s+|g(p)||} \\
%&\leq B^2\eta \left(\sum_p \frac{\log p}{p^{\sg}|p^s+|g(p)||} + \sum_p \frac{\log p}{|p^s+g(p)||p^s+|g(p)||}\right) \ll B^2 \eta
%\end{align*} 
%and as such, $J_4 \leq K(\sg-1) \max_{|\tau| \leq K(\sg-1)} \left|\log(G_0(s)/\mc{G}_0(s))\right|^2 \ll K(\sg-1)B^2\eta^2$.  In sum, we have
Collecting the estimates for $J_1$ and $J_2$,
\begin{equation*}
\int_{|\tau| \leq K(\sg-1)} |H'(s)|^2 \frac{|ds|}{|s|^2} \ll_B B^2(1+B^2)\eta^2 |A|^2\min\left\{\delta^{-2},\log^2(1+K)(1+K)^{-2\delta}\right\}(\sg-1)^{-1} 
%+ B^2\eta^2K(\sg-1)\log^2(1+K).
\end{equation*}
This completes the proof.
\end{proof}
\begin{rem}
Similar estimates can be had if $g$ is completely multiplicative. In such a case any factor of the form $1+g(p)/(p^s-1)$ is replaced by $(1-g(p)/p^2)^{-1}$, and a Taylor expansion of the corresponding logarithm is available whenever $|g(p)|  < 2$.
\end{rem}
\begin{lem}\label{SHARPAPP}
Let $K > 0$ be such that $B\eta \log(1+K) < 1$ and $T \geq 1$ be fixed such that $K(\sg-1) < 2$ and $(\sg-1)^{D-1} < (\sg-1)^{-1}T^{-1} < 1$ for some $D> 1$.
Furthermore, set $\gamma := \min_{1 \leq j \leq n} \beta_j^3 \delta_j$. Then
\begin{align*}
J_{H,1}(\sg) &\ll_{B,D} B^2\mc{G}(\sg)^2(\sg-1)^{-1}(\eta^2|A|^2\min\left\{\delta^{-2},\log^2(1+K)(1+K)^{-2\delta}\right\} + (\sg-1)^{-1}T^{-1} \\
&+ (\sg-1)^{\gamma}|A|^{\frac{\gamma_0}{2(1+\gamma_0)}} + |A|^2(\sg-1)^{2\delta/3} + \gamma^{-1}|A|^{\frac{\gamma_0}{2(1+\gamma_0)}} (1+K)^{-(1+\gamma)} + \frac{|A|^2}{\delta} (1+K)^{-(1+2\delta)})
\end{align*}
\end{lem}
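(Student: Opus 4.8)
The plan is to run the argument of Lemma~\ref{MAIN1} again, but now carrying the dependence on $A$ along and exploiting the smallness of $|\text{arg}(g(p))|$ near $\tau=0$. Recall from the proof of Lemma~\ref{SHARPH} that, away from the (measure-zero) zero sets of $G$ and $\mc{G}$,
\[ H'(s) = -H(s)\sum_p \frac{g(p)\log p}{p^s} + A\mc{G}(s)\sum_p \frac{(|g(p)|-g(p))\log p}{p^s} + O_B(\mc{G}(\sg)), \]
so that $|H'(s)|^2 \ll |H(s)|^2\bigl|\sum_p g(p)\log p\,p^{-s}\bigr|^2 + |A|^2|\mc{G}(s)|^2\bigl|\sum_p(|g(p)|-g(p))\log p\,p^{-s}\bigr|^2 + O_B(\mc{G}(\sg)^2)$. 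I would split $J_{H,1}(\sg)=\int_{(\sg)}|H'(s)|^2\,|ds|/|s|^2$ into the ranges $|\tau|\le K(\sg-1)$, $K(\sg-1)<|\tau|\le 2$, $2<|\tau|\le T$ and $|\tau|>T$, and estimate each piece. On $|\tau|\le K(\sg-1)$, Lemma~\ref{SHARPH} applies directly and yields the term $B^2(1+B^2)\eta^2|A|^2\min\{\delta^{-2},\log^2(1+K)(1+K)^{-2\delta}\}(\sg-1)^{-1}$. On $|\tau|>T$, I would argue as in the tail estimate of Lemma~\ref{MAIN1}: since $|G(s)|\le\mc{G}(\sg)$, $|\mc{G}(s)|\le\mc{G}(\sg)$ and $|A|\le 1$, the above decomposition gives $|H'(s)|\ll B\mc{G}(\sg)(\sg-1)^{-1}$ pointwise, whence this range contributes $\ll B^2\mc{G}(\sg)^2(\sg-1)^{-2}T^{-1}=B^2\mc{G}(\sg)^2(\sg-1)^{-1}\cdot(\sg-1)^{-1}T^{-1}$; the $O_B(\mc{G}(\sg))$ remainder, integrated over all $\tau$, contributes only $O_B(\mc{G}(\sg)^2)$, which is dominated by this since $(\sg-1)^{D-1}<(\sg-1)^{-1}T^{-1}$.

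For the two middle ranges I would use four ingredients: (i) the $\tau$-uniform bound $|G(s)|\ll_B \mc{G}(\sg)|A|^{\gamma_0/(2(1+\gamma_0))}$, which follows from Lemma~\ref{INTBOUND} together with $|G(\sg)/\mc{G}(\sg)|\asymp_B|A|$ (this last is exactly the estimate $e^{P_1+P_2}$ computed inside the proof of Lemma~\ref{SHARPH}); (ii) the decay bound $|G(s)/\mc{G}(\sg)|\ll(1+\tfrac{|\tau|}{\sg-1})^{-c\gamma}$ for $|\tau|\le2$ and $|G(s)/\mc{G}(\sg)|\ll(\sg-1)^{c\gamma}$ for $2<|\tau|\le T$, from Lemma~\ref{HalDecay} and Remark~\ref{REMDECAY} with an absolute $c>0$ and $\gamma=\min_j\beta_j^3\delta_j$; (iii) $|\mc{G}(s)/\mc{G}(\sg)|\ll(1+\tfrac{|\tau|}{\sg-1})^{-\delta}$ from Lemma~\ref{OLD}; and (iv) the Parseval–Montgomery bound $\int_{(\sg)}\bigl|\sum_n g(n)\Lambda(n)n^{-s}\bigr|^2|ds|/|s|^2\ll_B(\sg-1)^{-1}$ from the proof of Lemma~\ref{MAIN1}. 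Writing $|H(s)|\le|G(s)|+|A||\mc{G}(s)|$ and bounding $|G(s)|^2=|G(s)|\cdot|G(s)|$ by feeding one factor to (i) and one to (ii), then extracting the resulting $\tau$-decay and invoking (iv) for the sum over primes, the range $K(\sg-1)<|\tau|\le2$ contributes $\ll B^2\mc{G}(\sg)^2(\sg-1)^{-1}\bigl(\gamma^{-1}|A|^{\gamma_0/(2(1+\gamma_0))}(1+K)^{-(1+\gamma)}+\delta^{-1}|A|^2(1+K)^{-(1+2\delta)}\bigr)$, while on $2<|\tau|\le T$, where the decay factors become the near-constants $(\sg-1)^{c\gamma}$ and $(\sg-1)^{\delta}$, one gets the terms $(\sg-1)^{\gamma}|A|^{\gamma_0/(2(1+\gamma_0))}$ and $|A|^2(\sg-1)^{2\delta/3}$. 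The second integral, carrying $|\mc{G}(s)|^2\bigl|\sum_p(|g(p)|-g(p))\log p\,p^{-s}\bigr|^2$, is handled identically after noting $(|g(p)|-g(p))\Lambda(p)$ is dominated by $B\eta\Lambda(p)$, so Montgomery's Lemma~\ref{MODMONT} reduces it to $\eta^2|A|^2$-type contributions already accounted for (indeed on $|\tau|\le K(\sg-1)$ it is subsumed by Lemma~\ref{SHARPH}). Summing the four ranges gives the stated estimate.

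The main obstacle is the bookkeeping in the two middle $\tau$-ranges. Neither Lemma~\ref{INTBOUND} (uniform in $\tau$ but sensitive to the size of $A$) nor Lemma~\ref{HalDecay} (sensitive to decay in $\tau$ but not to the size of $A$) alone furnishes the product of an $|A|$-power and a $(1+|\tau|/(\sg-1))$-decay that is needed; one must split $|G(s)|^2$ into two factors and apply one lemma to each. This is precisely why one splits $|G(s)|^2$ rather than taking a geometric mean of the two bounds on $|G(s)|$: the former keeps the full power $\gamma_0/(2(1+\gamma_0))$ of $|A|$ whereas the latter would halve it. Getting the exponents to line up this way, and then integrating the decaying factor against $d\tau/(\sg^2+\tau^2)$ over $K(\sg-1)<|\tau|\le2$ carefully enough to recover the sharp $(1+K)^{-(1+\gamma)}$ and $(1+K)^{-(1+2\delta)}$ savings (rather than the cruder $(1+K)^{-\gamma}$, $(1+K)^{-2\delta}$ one gets by pulling out the maximum of the decay factor) is where essentially all the work lies; everything else is assembling estimates already established.
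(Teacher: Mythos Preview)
Your four-range split, the use of Lemma~\ref{SHARPH} on $|\tau|\le K(\sg-1)$, the tail argument from Lemma~\ref{MAIN1} on $|\tau|>T$, and the device of splitting $|G(s)/\mc{G}(\sg)|^2$ into one factor for Lemma~\ref{INTBOUND} (giving the $|A|$-power) and one for Lemma~\ref{HalDecay}/Remark~\ref{REMDECAY} (giving the $\tau$-decay) are exactly what the paper does. Your handling of $2<|\tau|\le T$ is also essentially the paper's, including the Korobov--Vinogradov input for the $\mc{G}$-piece.

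The gap is in the range $K(\sg-1)<|\tau|\le 2$. You correctly flag this as the crux, but your description ``integrating the decaying factor against $d\tau/(\sg^2+\tau^2)$'' misidentifies the mechanism: on this range $|s|^2\asymp 1$, so the measure $|ds|/|s|^2$ contributes nothing. The extra factor $(1+K)^{-1}$ beyond the crude $(1+K)^{-\gamma}$ comes instead from the pointwise size $|\zeta'/\zeta(s)|^2\asymp |s-1|^{-2}$, which after the substitution $u=\tau/(\sg-1)$ yields $\int_K^\infty (1+u)^{-2-\gamma}\,du\asymp \gamma^{-1}(1+K)^{-(1+\gamma)}$. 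The difficulty is that you do not have $\zeta'/\zeta$ but $G'/G\sim\sum_p g(p)\log p\,p^{-s}$, for which no pointwise bound of this quality is available; your ingredient (iv) only controls its $L^2$-norm on the whole line, which forces the crude bound if applied directly. The paper bridges this by a dyadic decomposition $[\psi_l,\psi_{l+1}]$ with $\psi_l=2^lK(\sg-1)$, pulls out the weight $\alpha_l=(1+2^lK)^{-\gamma}$ on each block, applies H\"older to separate off $|s|^{-2}$, and then uses Montgomery's Lemma~\ref{MODMONT} on the nested intervals $|\tau|\le\psi_{l+1}$ to majorize $\int|\sum g\Lambda n^{-s}|^2$ by $\int|\zeta'/\zeta|^2$; a short recursion in $l$ (exploiting $\alpha_l\psi_l^{1/q}\le\omega\,\alpha_{l-1}\psi_{l-1}^{1/q}$ for a suitable $q$ and $\omega<1$) then reassembles the sum into the weighted $\zeta'/\zeta$-integral that can be computed pointwise. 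Without this step, ``invoking (iv)'' after extracting the decay gives only $(\sg-1)^{-1}(1+K)^{-\gamma}$, not the claimed $(\sg-1)^{-1}\gamma^{-1}(1+K)^{-(1+\gamma)}$.
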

\begin{proof}
By definition
\begin{align*}
\mc{G}(\sg)^{-2}J_{H,1}(\sg)
%\ll_B \mc{G}(\sg)^{-2}\int_{(\sg)} \left|H'(s)\right|^2 \frac{|ds|}{|s|^2} \\
&= \mc{G}(\sg)^{-2}\left(\int_{|\tau| \leq K(\sg-1)} + \int_{K(\sg-1) < |\tau| \leq 2} + \int_{2 < |\tau| \leq T} +\int_{|\tau| > T} \right) \left|H'(s)\right|^2 \frac{|ds|}{|s|^2} \\
&=: \mc{G}(\sg)^{-2}\left(I_1 + I_2 + I_3 +I_4\right).
\end{align*}
We deal with each of the $I_j$ in turn. The content of Lemma \ref{SHARPH} shows that 
\begin{equation*}
I_1 \ll_B B^2(1+B^2)|A|^2\eta^2(\sg-1)^{-1}\max\left\{\delta^{-2},\log^2(1+K)(1+K)^{-2\delta}\right\}.
\end{equation*}
Next, we consider $I_4$. Note that $|H'(s)|^2 \ll |G'(s)|^2 + |A|^2|\mc{G}(s)|^2$, thus
\begin{equation*}
I_4 \ll \mc{G}(\sg)^2\left(\int_{|\tau| > T} \left|\frac{G'(s)}{\mc{G}(s)}\right|^2\frac{|ds|}{|s|^2} + |A|^2\int_{|\tau|>T} \left|\frac{\mc{G}'(s)}{\mc{G}'(\sg)}\right|^2 \frac{|ds|}{|s|^2}\right).
\end{equation*}
The first integral is implicitly contained in the proof of Lemma \ref{MAIN1}, where we showed that it was bounded above by $B^2 (\sg-1)^{-2}T^{-1}$.  The proof of this estimate can be carried out in precisely the same way to get the same bound for the integral in $\mc{G}(s)$ as well. Hence, as $|A| \leq 1$,
\begin{equation*}
I_4 \ll B^2(1+|A|^2)\mc{G}(\sg)^2(\sg-1)^{-2}T^{-1} \ll B^2\mc{G}(\sg)^2(\sg-1)^{-2}T^{-1},
\end{equation*}
Consider now $I_3$.  Upon using Lemma \ref{HalDecay} and applying the arguments of Lemma \ref{MAIN1} to the first term, we have
\begin{align*}
\int_{2 < |\tau| \leq T} \left|\frac{G'(s)}{G(s)}\right|^2 \left|\frac{G(s)}{\mc{G}(\sg)}\right|^2 \frac{|ds|}{|s|^2}  &\leq 
\left(\max_{|\tau| \leq T} \left|\frac{G(s)}{\mc{G}(\sg)}\right|\right)\int_{2 < |\tau| \leq T}e^{-\gamma \log\left(1+\frac{|\tau|}{\sg-1}\right)} \left(\left|\sum_p \frac{g(p)\log p}{p^s}\right|^2  + O_B(1)\right)\frac{|ds|}{|s|^2} \\
&\leq \left|\frac{G(\sg)}{\mc{G}(\sg)}\right|^{\frac{\gamma_0}{2(1+\gamma_0)}}(\sg-1)^{\gamma} \int_{(\sg)}\left(\left|\sum_p \frac{g(p)\log p}{p^s}\right|^2  + O_B(1)\right)\frac{|ds|}{|s|^2}\\
&\ll_B B^2|A|^{\frac{\gamma_0}{2(1+\gamma_0)}}(\sg-1)^{-1+\gamma}
%\left(\max_{2 < |\tau| \leq T} \left|\frac{G(s)}{\mc{G}(\sg)}\right|\right)^2.
\end{align*}
where, as in Lemma \ref{SHARPH}, $\left|\frac{G(\sg)}{\mc{G}(\sg)}\right| \asymp_B |A|$, in the last inequality. In the $\mc{G}$ integral we use the pointwise estimate $\left|\frac{\mc{G}(s)}{\mc{G}(\sg)}\right| \ll_B \left|\frac{\zeta(s)}{\zeta(\sg)}\right|^{\delta}$ that we invoked earlier. As a corollary of the Korobov-Vinogradov zero-free region of $\zeta$ it can be deduced that when $|\tau| \geq 2$, $|\zeta(\sg + i\tau)| \ll \log^{\frac{2}{3}} |\tau|$ for $\sg > 1$ (see Note 3.9 in Chapter II.3 of \cite{Ten2}). As a result,
\begin{align*}
\int_{2 < |\tau| \leq T} \left|\frac{\mc{G}'(s)}{\mc{G}(s)}\right|^2 \left|\frac{\mc{G}(s)}{\mc{G}(\sg)}\right|^2 \frac{|ds|}{|s|^2}  &\leq (\sg-1)^{2\delta}\int_{2 < |\tau| \leq T}\log^{\frac{2}{3}\delta}|\tau|\left(\left|\sum_p \frac{g(p)\log p}{p^s}\right|^2 + O_B(1)\right)\frac{|ds|}{|s|^2} \\
&\leq \left((\sg-1) \log^{\frac{2}{3}}T\right)^{2\delta} \int_{(\sg)}\left(\left|\sum_p \frac{g(p)\log p}{p^s}\right|^2  + O_B(1)\right)\frac{|ds|}{|s|^2} \\
&\ll_D (\sg-1)^{\frac{2\delta}{3}} \int_{(\sg)}\left(\left|\sum_p \frac{g(p)\log p}{p^s}\right|^2  + O_B(1)\right)\frac{|ds|}{|s|^2} \\
&\ll_B B^2(\sg-1)^{-1+\frac{2\delta}{3}},
\end{align*}
where, in the last estimate, we used Parseval's theorem as in \eqref{PARS}. In sum, we have
\begin{equation*}
I_3 \ll_{B,D} B^2 \mc{G}(\sg)^2(\sg-1)^{-1}\left((\sg-1)^{\gamma}|A|^{\frac{\gamma_0}{2(1+\gamma_0)}} + |A|^2(\sg-1)^{\frac{2\delta}{3}}\right).
\end{equation*}
It remains to estimate $I_2$. Consider first the $G$ integral. We decompose the interval $(K(\sg-1),2]$ dyadically and let $L$ be the number of subintervals thus produced. Set $\psi_l := 2^lK(\sg-1)$ for each $0 \leq l \leq L-1$. Applying Lemma \ref{HalDecay} and arguing that we can ignore the effect of any of the zeros of $G$ as before (from $\frac{G_0'(s)}{G_0(s)}G(s) \ll_B \mc{G}(\sg)$),
\begin{align*}
&\int_{K(\sg-1) < |\tau| \leq 2} \left|\frac{G'(s)}{G(s)}\right|^2 \left|\frac{G(s)}{\mc{G}(\sg)}\right|^2 \frac{|ds|}{|s|^2} \\
&\leq |A|^{\frac{\gamma_0}{2(1+\gamma_0)}}\int_{K(\sg-1)<|\tau| \leq 2} e^{-\gamma \log\left(1+\frac{|\tau|}{\sg-1}\right)} \left(\left|\sum_{n \geq 1} \frac{g(n)\Lambda(n)}{n^s}\right|^2 + O_B(1)\right)\frac{|ds|}{|s|^2}\\
&\leq 2|A|^{\frac{\gamma_0}{2(1+\gamma_0)}}\sum_{0 \leq l \leq L-1} e^{-\gamma \log\left(1+2^lK\right)}\left( \int_{\psi_l}^{\psi_{l+1}} \left|\sum_{n \geq 1} \frac{g(n)\Lambda(n)}{n^s}\right|^2 \frac{|ds|}{|s|^2} + O_B(1)\right)\\
&= 2|A|^{\frac{\gamma_0}{2(1+\gamma_0)}}\sum_{0 \leq l \leq L-1} (1+2^lK)^{-\gamma} \left(\int_{\psi_l}^{\psi_{l+1}} \left|\sum_{n \geq 1} \frac{g(n)\Lambda(n)}{n^s}\right|^2 \frac{|ds|}{|s|^2} + O_B(1)\right).
\end{align*}
Let $q > 1$ be a parameter to be chosen, and let $p$ be its H\"{o}lder conjugate (i.e., $\frac{1}{q}+\frac{1}{p} =1$). By H\"{o}lder's inequality,
\begin{align*}
\int_{\psi_l}^{\psi_{l+1}} \left|\sum_{n \geq 1} \frac{g(n)\Lambda(n)}{n^s}\right|^2 \frac{|ds|}{|s|^2} &\leq \left(\int_{\psi_{l}}^{\psi_{l+1}} \frac{d\tau}{(\sg^2 + \tau^2)^q} \right)^{\frac{1}{q}} \left(\int_{\psi_{l}}^{\psi_{l+1}} \left|\sum_{n \geq 1} \frac{g(n)\Lambda(n)}{n^s}\right|^{2p} d\tau\right)^{\frac{1}{p}} \\
&\leq \psi_l^{\frac{1}{q}} \int_{\psi_{l}}^{\psi_{l+1}} \left|\sum_{n \geq 1} \frac{g(n)\Lambda(n)}{n^s}\right|^2 d\tau.
\end{align*}
For convenience, for each $0 \leq l \leq L-1$ let $\alpha_l := (1+2^lK)^{-\gamma}$ and, given an arithmetic function $a$, set $J_l(a) := \int_{\psi_{l}}^{\psi_{l+1}} \left|\sum_{n \geq 1} \frac{a(n)\Lambda(n)}{n^s}\right|^2 d\tau$. With these notations, we have
\begin{equation*}
\int_{K(\sg-1) < |\tau| \leq 2} \left|\frac{G'(s)}{G(s)}\right|^2 \left|\frac{G(s)}{\mc{G}(\sg)}\right|^2 \frac{|ds|}{|s|^2} \leq 2|A|^{\frac{\gamma_0}{2(1+\gamma_0)}}\sum_{0 \leq l \leq L-1} \alpha_l\psi_l^{\frac{1}{q}} J_l(g). 
\end{equation*}
By Lemma \ref{MODMONT}, as $|g(n)|\Lambda(n) \leq B\Lambda(n)$ for each $n$,
\begin{align*}
2J_l(g) &\leq \int_{|\tau| \leq \psi_{l+1}} \left|\sum_{n \geq 1} \frac{g(n)\Lambda(n)}{n^s}\right|^2 d\tau \leq 3B^2\int_{|\tau| \leq \psi_{l+1}} \left|\sum_{n \geq 1} \frac{\Lambda(n)}{n^s}\right|^2 d\tau = 3B^2 \int_{|\tau| \leq \psi_{l+1}} \left|\frac{\zeta'(s)}{\zeta(s)}\right|^2 d\tau =: 3B^2\iota_l
\end{align*}
Now, we clearly have $\iota_l = 2J_l(1) + \iota_{l-1}$, so that
\begin{equation*}
2\sum_{0 \leq l \leq L-1} \alpha_l\psi_l^{\frac{1}{q}}J_l(g) \leq 3B^2 \sum_{0 \leq l \leq L-1} \alpha_l\psi_l^{\frac{1}{q}} \iota_l = 3B^2\left(2\sum_{0 \leq l \leq L-1} \alpha_l \psi_l^{\frac{1}{q}}J_l(1) + \sum_{1 \leq l \leq L-1} \alpha_l\psi_l^{\frac{1}{q}} \iota_{l-1}\right).
\end{equation*}
Observe now that for any $1 \leq l \leq L-1$ and $K > 1$, we have $\frac{2^{l-1}K}{1+2^lK} \geq \frac{1}{3}$, whence
\begin{align*}
\frac{\alpha_l\psi_l^{\frac{1}{q}}}{\alpha_{l-1}\psi_{l-1}^{\frac{1}{q}}} &= 2^{\frac{1}{q}} \left(1-\frac{2^{l-1}K}{1+2^lK}\right)^{2\gamma} \leq 2^{\frac{1}{q}} \exp\left(-2\gamma \frac{2^{l-1}K}{1+2^lK}\right) \leq 2^{\frac{1}{q}}e^{-\frac{2\gamma}{3}} = 2^{\frac{1}{q}-\frac{2\gamma}{3\log 2}}.
\end{align*}
Choosing $q := \frac{3\log 2}{\gamma}$ yields, with $\omega := 2^{-\frac{\gamma}{3\log 2}} < 1$, the inequality $\alpha_l \psi_l^{\frac{1}{q}} \leq \omega \alpha_{l-1}\psi_{l-1}^{\frac{1}{q}}$ uniformly in $1 \leq l \leq L-1$. It follows that
\begin{align*}
\sum_{0 \leq l \leq L-1} \alpha_l\psi_l^{\frac{1}{q}} \iota_l &\leq 2\sum_{0 \leq l \leq L-1} \alpha_l \psi_l^{\frac{1}{q}}J_l(1) + \omega\sum_{1 \leq l \leq L-1} \alpha_{l-1}\psi_{l-1}^{\frac{1}{q}} \iota_{l-1} \\
&\leq 2\sum_{0 \leq l \leq L-1} \alpha_l \psi_l^{\frac{1}{q}}J_l(1) + \omega\sum_{0 \leq l \leq L-1} \alpha_{l}\psi_{l}^{\frac{1}{q}} \iota_{l}.
\end{align*}
Rearranging this inequality, we get
\begin{equation*}
\sum_{0 \leq l \leq L-1} \alpha_l\psi_l^{\frac{1}{q}} \iota_l \leq \frac{2}{1-\omega} \sum_{0 \leq l \leq L-1} \alpha_l\psi_l^{\frac{1}{q}}J_l(1).
\end{equation*}
We now have
\begin{align*}
\int_{K(\sg-1) < |\tau| \leq 2} \left|\frac{G'(s)}{G(s)}\right|^2 \left|\frac{G(s)}{\mc{G}(\sg)}\right|^2 \frac{|ds|}{|s|^2} &\leq \frac{6B^2}{1-\omega}|A|^{\frac{\gamma_0}{2(1+\gamma_0)}}\sum_{0 \leq l \leq L-1} \alpha_l\psi_l^{\frac{1}{q}} J_l(1) \ll \frac{B^2}{\gamma} \sum_{0 \leq l \leq L-1} \alpha_l\psi_l^{\frac{1}{q}} J_l(1) \\
&\ll \frac{B^2}{\gamma} |A|^{\frac{\gamma_0}{2(1+\gamma_0)}}\sum_{0 \leq l \leq L-1} \alpha_l \int_{\psi_l}^{\psi_{l+1}} \left|\frac{\zeta'(s)}{\zeta(s)}\right|^2 d\tau \\
&\ll \frac{B^2}{\gamma} |A|^{\frac{\gamma_0}{2(1+\gamma_0)}}\int_{K(\sg-1) < |\tau| \leq 2} \left(1+\frac{|\tau|}{\sg-1}\right)^{-2\gamma} \left|\frac{\zeta'(s)}{\zeta(s)}\right|^2 d\tau,
\end{align*}
the second last estimate following from $\psi_l \leq 2$ for all $l$, and the last being obvious from $\alpha_l \leq 2^{2\gamma} \left(1+2^{l+1}K\right)^{-2\gamma}$. Observe that by the Laurent series representation of $\zeta$, 
\begin{align*}
&\int_{K(\sg-1) < |\tau| \leq 2} \left(1+\frac{|\tau|}{\sg-1}\right)^{-2\gamma} \left|\frac{\zeta'(s)}{\zeta(s)}\right|^2 d\tau \ll \int_{K(\sg-1) < |\tau| \leq 2} \left(1+\frac{|\tau|}{\sg-1}\right)^{-2\gamma} \frac{d\tau}{|s-1|^2} \\
&\leq (\sg-1)^{-1} \int_K^{\infty} (1+u)^{-2\gamma} (1+u^2)^{-1} du \ll (\sg-1)^{-1}\int_K^{\infty} (1+u)^{-2(1+\gamma)} du \\
&= (\sg-1)^{-1}(1+K)^{-(1+\gamma)}.
\end{align*}
As such,
\begin{equation*}
\int_{K(\sg-1) < |\tau| \leq 2} \left|\frac{G'(s)}{G(s)}\right|^2 \left|\frac{G(s)}{\mc{G}(\sg)}\right|^2 \frac{|ds|}{|s|^2} \ll \frac{B^2}{\gamma}|A|^{\frac{\gamma_0}{2(1+\gamma_0)}}(\sg-1)^{-1}(1+K)^{-(1+\gamma)}.
\end{equation*}
For the $\mc{G}$ integral, we use $\left|\frac{\mc{G}(s)}{\mc{G}(\sg)}\right| \ll \left(1+\frac{|\tau|}{\sg-1}\right)^{-2\delta}$, so that our preceding arguments for the $G$ integral apply with $2\delta$ here in place of $\gamma$ (of course, without appealing to Lemma \ref{INTBOUND} in this case). It then follows that
\begin{equation*}
\int_{K(\sg-1) < |\tau| \leq 2} \left|\frac{\mc{G}'(s)}{\mc{G}(s)}\right|^2 \left|\frac{\mc{G}(s)}{\mc{G}(\sg)}\right|^2 \frac{|ds|}{|s|^2} \ll \frac{B^2}{\delta}(\sg-1)^{-1}(1+K)^{-(1+2\delta)}.
\end{equation*}
Thus, 
\begin{align*}
I_2 &\ll B^2\mc{G}(\sg)^2(\sg-1)^{-1}\left(\gamma^{-1}|A|^{\frac{\gamma_0}{2(1+\gamma_0)}}(1+K)^{-(1+\gamma)} + \delta^{-1}|A|^2(1+K)^{-(1+2\delta)}\right).
%& \ll \frac{B^2}{\delta}(\sg-1)^{-1} \left(|A|^{\frac{\gamma_0}{2(1+\gamma_0)}}(1+K)^{-(1+\gamma)}+ |A|^2(1+K)^{-(1+\delta)}\right).
\end{align*}
Combining all of our integral estimates gives
\begin{align*}
J_{H,1}(\sg) &\ll B^2(1+B^2)\mc{G}(\sg)^2(\sg-1)^{-1} (|A|^2\eta^2\max\left\{\delta^{-2},\log^2(1+K)(1+K)^{-2\delta}\right\} + (\sg-1)^{-1}T^{-1} \\
&+ (\sg-1)^{\gamma}|A|^{\frac{\gamma_0}{2(1+\gamma_0)}} + |A|^2(\sg-1)^{2\delta/3} + \frac{1}{\gamma} |A|^{\frac{\gamma_0}{2(1+\gamma_0)}}(1+K)^{-(1+\gamma)} + \frac{|A|^2}{\delta} (1+K)^{-(1+2\delta)}),
\end{align*}
as claimed.
\end{proof}
\begin{rem}\label{REMGEN2}
In the same vein as Remark \ref{REMGEN}, we may observe that Lemmas \ref{SHARPH} and \ref{SHARPAPP} are equally valid upon replacing $h(n)$ by $|g(n)|-Xf(n)$, in which case $X = X_t := \exp\left(-\sum_{p \leq t} \frac{f(p)-|g(p)|}{p}\right)$, $|g(n)| \leq f(n)$, $|f(p)| \leq B$ and $\left|\frac{|g(p)|}{f(p)}-1\right| \leq \eta$.  To replace Lemma \ref{HalDecay} and its corollary, Lemma \ref{INTBOUND}, putting $F(s) := \sum_{n \geq 1} \frac{f(n)}{n^s}$ we simply use 
\begin{equation*}
\left|\frac{\mc{G}(s)}{F(\sg)}\right| \leq \frac{\mc{G}(\sg)}{F(\sg)} \ll X, 
\end{equation*}
the last estimate following by arguments similar to those in the proof of Lemma \ref{SHARPH}.
\end{rem}
\section{From Arithmetic to Analysis}
In this section we bridge the gap between the results of Section 4 and those of Section 5.  Our first lemma in this direction allows us to estimate with precision one of the terms arising in Lemma \ref{NUM}. For the definitions of \emph{reasonable} and \emph{non-decreasing} pairs $(g,\mbf{E})$, see Definition \ref{REAS}.
\begin{lem} \label{WITHR}
%Let $c_0$ be the unique solution to the equation $ue^u = 1$. Then 
There exists $\lambda = \lambda(t) > 0$ such that for all $\kappa \in (0,1)$, 
\begin{equation}
\int_1^{t^{\kappa}} \frac{M_{|g|}(u)}{u^2 \log^{\lambda} u} \ll_B \left(\int_1^t \frac{M_{|g|}(u)}{u^2} du\right) \kappa^{-\lambda}e^{-\frac{1}{2}S_{\kappa}(t)}\log^{-\lambda} t ,\label{INTRAT}
\end{equation}
where $S_{\kappa}(t) = \sum_{t^{\kappa} < p \leq t} \frac{g(p)}{p}$. \\
In particular, for any $c_j > 0$, and $\gamma_{0,j}$ as defined in Lemma \ref{HalDecay} we can take 
\begin{equation*}
\lambda(t) := \frac{1}{\log_2 t}\sum_{1 \leq j \leq m} c_jd_j\frac{\gamma_{0,j}}{1+\gamma_{0,j}}\sum_{p \leq t \atop p \in E_j} \frac{|g(p)|}{p}, 
\end{equation*}
where $d_j = 1$ if $(g,\mbf{E})$ is non-decreasing, and $d_j := \frac{\delta_j}{B_j}$ otherwise.
\end{lem}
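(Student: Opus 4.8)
\emph{Plan.} First I would set $V(u):=\int_1^u \frac{M_{|g|}(v)}{v^2}\,dv$, so that by Lemma~\ref{CHEAP} the right-hand side of \eqref{INTRAT} is $\asymp_B V(t)\,\kappa^{-\lambda}e^{-\frac12 S_\kappa(t)}\log^{-\lambda}t$ (here I read $S_\kappa(t)=\sum_{t^\kappa<p\le t}|g(p)|/p$). The backbone of the argument will be the two-sided comparison $V(u)\asymp_B E(u):=\exp\!\left(\sum_{p\le u}\frac{|g(p)|}{p}\right)$ for $u$ large: $V(u)\le L_{|g|}(u)$ by the partial-summation identity in the proof of Lemma~\ref{CHEAP}, $V(u)\gg_B L_{|g|}(u)$ is \eqref{LOWLg}, and $L_{|g|}(u)\asymp_B P_{|g|}(u)\asymp_B E(u)$ by Lemma~\ref{LOGSUM} together with $\log(1+x)=x+O(x^2)$ and $\sum_p|g(p)|^2/p^2\ll_B1$. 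In particular $V(t^\kappa)\asymp_B V(t)E(t^\kappa)/E(t)=V(t)e^{-S_\kappa(t)}$, and by Mertens $E(u)\gg_B(\log u)^{\delta}$, hence $V(u)\gg_B(\log u)^{\delta}$, for $u$ large.

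\emph{Main step.} I would peel off a bounded initial segment, $\int_1^{3}\frac{M_{|g|}(u)}{u^2\log^\lambda u}\,du=O_B(1)$ (convergent since $\lambda<1$), and on $[3,t^\kappa]$ integrate by parts against $dV$, obtaining
\[
\int_3^{t^\kappa}\frac{M_{|g|}(u)}{u^2\log^\lambda u}\,du=\frac{V(t^\kappa)}{(\kappa\log t)^\lambda}+\lambda\int_3^{t^\kappa}\frac{V(u)}{u\log^{\lambda+1}u}\,du+O_B(1).
\]
The first term is $\asymp_B V(t)e^{-S_\kappa(t)}\kappa^{-\lambda}\log^{-\lambda}t$, already of the right shape. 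For the remaining integral I would substitute $w=\log u$, replace $V(e^w)$ by $E(e^w)$, and use the slow-variation bound $E(e^w)\ll_B E(e^{W})(w/W)^{\delta}$ valid for $3\le e^w\le e^{W}:=t^\kappa$ (from Mertens applied to $\sum_{e^w<p\le e^W}\tfrac1p=\log(W/w)+O(1/w)$ and $|g(p)|\ge\delta$), which gives
\[
\lambda\int_{\log 3}^{\,\kappa\log t}\frac{E(e^w)}{w^{\lambda+1}}\,dw\ \ll_B\ \frac{\lambda}{\delta-\lambda}\cdot\frac{E(t^\kappa)}{(\kappa\log t)^\lambda}\ \ll_B\ V(t)e^{-S_\kappa(t)}\kappa^{-\lambda}\log^{-\lambda}t
\]
as soon as $\lambda\le\delta/2$, since then $\lambda/(\delta-\lambda)\le1$. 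Finally the $O_B(1)$ terms are absorbed because $V(t)e^{-S_\kappa(t)}\kappa^{-\lambda}\log^{-\lambda}t\asymp_B E(t^\kappa)(\kappa\log t)^{-\lambda}\gg_B(\kappa\log t)^{\delta-\lambda}\to\infty$ for $t$ large. Since $e^{-S_\kappa(t)}\le e^{-\frac12 S_\kappa(t)}$, this yields \eqref{INTRAT} (in fact with the stronger factor $e^{-S_\kappa(t)}$).

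\emph{Checking the explicit $\lambda(t)$.} It then remains to verify $\lambda(t)\le\delta/2$ and $\lambda(t)<1$. By the partition property, $\sum_{1\le j\le m}\frac{1}{\log_2 t}\sum_{p\le t,\,p\in E_j}\frac{|g(p)|}{p}=\frac{1}{\log_2 t}\sum_{p\le t}\frac{|g(p)|}{p}\le B+o(1)$, so $\lambda(t)\le\left(\max_j c_jd_j\frac{\gamma_{0,j}}{1+\gamma_{0,j}}\right)(B+o(1))$. Substituting $\gamma_{0,j}=\frac{27}{1024\pi}\frac{\delta_j}{B_j}\beta_j^3$ with $\beta_j<\pi$, together with $d_j=\frac{\delta_j}{B_j}$ and $c_j\le\frac12\min\{\frac1{4m^2},\frac{\gamma_{0,j}}{1+\gamma_{0,j}}\}$ (from Theorem~\ref{HalGen}), bounds $\lambda(t)$ by a small absolute multiple of $\delta/m^2$; in the non-decreasing case ($d_j=1$) one uses instead $c_j\le\frac12\frac{\gamma_{0,j}}{1+\gamma_{0,j}}\le\frac12\gamma_{0,j}$. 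Either way $\lambda(t)$ lies comfortably in $(0,\delta/2]$, completing the proof.

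\emph{Main obstacle.} The delicate point throughout is the unbounded weight $\log^{-\lambda}u$ near $u=1$: it forces both the peeling of a fixed initial segment (where the comparison $V\asymp_B E$ is not yet available) and, more essentially, the smallness requirement $\lambda\le\delta/2$ that makes the post-integration-by-parts integral converge without loss of a power of $\log$. This is exactly why one cannot merely invoke ``$\lambda$ small'' and must instead track the explicit constants $c_j,d_j,\gamma_{0,j}$; conversely, it is this constraint that dictates the particular normalisation $\frac1{\log_2 t}\sum_{p\le t,\,p\in E_j}|g(p)|/p$ appearing in $\lambda(t)$.
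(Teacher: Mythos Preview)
Your approach---integrate by parts against $dV$, then use the slow-variation bound $E(e^w)\ll_B E(e^W)(w/W)^{\delta}$ coming from $|g(p)|\ge\delta$---is genuinely different from, and considerably cleaner than, the paper's dyadic decomposition over the intervals $D_k=(Z^{2^k},Z^{2^{k+1}}]$ with the carefully chosen cut $Z=e^{\log^{\alpha}t}$. When it applies, it even produces the stronger factor $e^{-S_\kappa(t)}$ rather than $e^{-\frac12 S_\kappa(t)}$. One small omission: the lower bound $|g(p)|\ge\delta$ fails on the exceptional set $S$, but by Remark~\ref{REMWITHR} the contribution of $S$ to $\sum_{e^w<p\le e^W}|g(p)|/p$ is $o(1)$, so this is harmless.

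The real issue is the range of $\lambda$ you cover. Your integral $\lambda\int_{\log 3}^{W}w^{\delta-\lambda-1}\,dw$ only yields $\asymp W^{-\lambda}$ when $\lambda<\delta$; for $\lambda\ge\delta$ it is $\asymp W^{-\delta}$, which is too weak by a factor $W^{\lambda-\delta}$. So your argument genuinely needs $\lambda\le c\delta$ for some $c<1$. In the ``otherwise'' clause of the lemma, where $d_j=\delta_j/B_j$, this is fine: the extra factor $\delta_j/B_j$ together with $c_j\le\frac12\frac{\gamma_{0,j}}{1+\gamma_{0,j}}$ forces $\lambda$ to be a small multiple of $\delta$. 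But in the non-decreasing case with $d_j=1$, the formula for $\lambda(t)$ involves only the absolute constants $c_j\frac{\gamma_{0,j}}{1+\gamma_{0,j}}$; these are small (so that $\lambda<\tfrac14$, which is all the paper uses), but there is no reason they should lie below $\delta/2$ when $\delta$ is very small. Your verification at the end does not close this gap: you bound $\lambda(t)$ by an absolute constant times $\max_j(\delta_j/B_j)^2$ or $1/m^2$, neither of which is controlled by $\delta=\min_j\delta_j$ in general.

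This is precisely what the paper's more elaborate argument buys. By decomposing dyadically and using monotonicity of $\{|g(p)|\}_{p\in E_j}$ to prove $\Sigma_k/\Sigma_{L-1}\le(k+1)/L$, the paper replaces the crude slow-variation exponent $\delta$ by the \emph{averaged} exponent $\Sigma_{L-1}/L$, and obtains convergence under the $\delta$-free condition $\lambda<\tfrac14$. In short: your route proves the lemma in the non-monotone case and in the monotone case when $\delta$ is not too small, but to get the full monotone statement you need to feed the monotonicity into the slow-variation bound, at which point you are essentially reproducing the paper's dyadic computation.
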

\begin{rem}\label{REMWITHR}
Note that, provided that $\log(1/\kappa) < \log_2 t$, which we will show to be true (see Lemma \ref{EXPONENTS} below and the remarks following its statement), the right side of \eqref{INTRAT} is decreasing in $\lambda$. Thus, we may always decrease $\lambda$ if necessary and the bound still holds with this smaller value of $\lambda$. \\
We should also remark that it follows from partial summation and the condition $\log P_t = \sum_{p \in S \atop p \leq t} \log p \ll_r r \log t$, that
\begin{equation*}
\sum_{p \in S \atop s < p \leq t} \frac{1}{p} = \int_s^t d\left\{\sum_{p \in S \atop p \leq u} \log p\right\} \frac{1}{u \log u} \ll_r \frac{1}{s} + \int_s^t \left(\sum_{p \in S \atop p \leq u} \log p\right) \frac{du}{u^2 \log u} \ll_r \frac{1}{s}.
\end{equation*}
Thus, for all intents and purposes, sums over primes from $S$ give a negligible contribution.
\end{rem}
\begin{proof}
By partial summation, for any $\lambda > 0$ we have
\begin{equation*}
\sum_{n \leq u} \frac{|g(n)|}{n\log^{\lambda}(3n)} = \frac{M_{|g|}(u)}{u\log^{\lambda}(3u)} + \int_1^u \left(1+\frac{\lambda}{\log u}\right)\frac{M_{|g|}(u)}{u^2 \log^{\lambda}(3u)}du \geq \int_1^u \frac{M_{|g|}(u)}{u^2 \log^{\lambda}(3u)}du.
\end{equation*}
Also, by Lemmas \ref{LOGSUM} and \ref{CHEAP}, the integral over $[1,t]$ in the statement of the lemma is bounded from below by $P_{|g|}(t)$. Thus, it suffices to estimate $P_{|g|}(t)^{-1}\sum_{n \leq t^{\kappa}} \frac{|g(n)|}{n\log^{\lambda}(3n)}$. \\
%the sum on the left side in the above expression, with $u = t^{\kappa}$. \\
Set $\lambda$ as in the statement of the lemma (depending on whether or not $\{|g(p)|\}_{p \in E_j}$ is non-decreasing for each $j$).  
%:= \frac{\nu}{2\log_2 t} \sum_{1 \leq j \leq m} \frac{\gamma_{0,j}}{1+\gamma_{0,j}}\sum_{p \leq t \atop p \in E_j} \frac{|g(p)|}{p}$.
Define $\alpha > 0$ to be such that if $Z := e^{\log^{\alpha} t}$ then 
\begin{equation*}
\frac{1}{2\log_2 t}\sum_{Z < p \leq t} \frac{|g(p)|}{p} = \lambda.
\end{equation*}
%Set $S_{\kappa}(t) := \sum_{t^{\kappa} < p \leq t} \frac{g(p)}{p}$. 
Take $L := \llf \frac{\log \left(\kappa \log^{1-\alpha} t\right)}{\log 2} \rrf$, and if $L \geq 1$ and $0 \leq k \leq L-1$ set $D_k := (Z^{2^k},Z^{2^{k+1}}]$ (if $L = 0$ then the sum over $k$ below is empty).  We can suppose that the expression in the floor function brackets defining $L$ is an integer by perturbing $t$ or $\kappa$ slightly (which will not change the order of magnitude of the bounds). Let $\psi_k := \sum_{n \leq Z^{2^{k}}} \frac{|g(n)|}{n}$ (not to be confused with the $\psi_l$ in Lemma \ref{SHARPAPP}). Thus, we have
\begin{align*}
&P_{|g|}(t)^{-1}\sum_{n \leq t} \frac{|g(n)|}{n\log^{\lambda}(3n)} \leq P_{|g|}(t)^{-1}\sum_{n \leq Z} \frac{|g(n)|}{n} + P_{|g|}(t)^{-1}\sum_{0 \leq k \leq L-1} \sum_{n \in D_k} \frac{|g(n)|}{n\log^{\lambda}(3n)} \\
&\leq P_{|g|}(t)^{-1} \prod_{p \leq Z} \left(1+\frac{|g(p)|}{p}\right) + P_{|g|}(t)^{-1}\log^{-\lambda} Z \sum_{0 \leq k \leq L-1} 2^{-k\lambda} (\psi_{k+1}-\psi_k) \\
&\leq P_{|g|}(t)^{-1} \prod_{p \leq Z} \left(1+\frac{|g(p)|}{p}\right) + P_{|g|}(t)^{-1}(1-2^{-\lambda})\log^{-\lambda} Z \sum_{0 \leq k \leq L-1} 2^{-k\lambda} \psi_{k+1}+ 2^{-\lambda L}\log^{-\lambda}Ze^{-S_k(t)},
\end{align*}
where the last estimate follows by partial summation, noting that in the last term on the right, $L_{|g|}(t^{\kappa})$ is at most $P_{|g|}(t^{\kappa})$, and for $t^{\kappa}$ sufficiently large,
\begin{equation*}
\prod_{t^{\kappa} < p \leq t} \left(1+\frac{|g(p)|}{p}\right)^{-1} = \exp\left(-\sum_{t^{\kappa} < p \leq t} \log\left(1+\frac{|g(p)|}{p}\right)\right) \ll_B e^{-S_{\kappa}(t)}.
\end{equation*}
Since $1-2^{-\lambda} \asymp \lambda$, it follows that
\begin{align} 
P_{|g|}(t)^{-1}\sum_{n \leq t} \frac{|g(n)|}{n\log^{\lambda}(3n)} 
%\ll \prod_{Z < p \leq t} \left(1+\frac{|g(p)|}{p}\right)^{-1} + \lambda \log^{-\lambda} Z P_{|g|}(t)^{-1}\sum_{0 \leq k \leq L-1} 2^{-k\lambda} P_{|g|}\left(Z^{2^{k+1}}\right) + 2^{-\lambda L}e^{-S_{\kappa}(t)}\log^{-\lambda } Z\nonumber\\
&\ll e^{-\frac{1}{2}S_{\kappa}(t)} e^{-\frac{1}{2}\sum_{Z < p \leq t} \frac{|g(p)|}{p}}\left(1 + \lambda \kappa^{-\lambda} \sum_{0\leq k \leq L-1} \frac{\kappa^{\lambda}}{2^{k\lambda}\log^{\lambda} Z} \exp\left(\frac{1}{2} \sum_{Z < p \leq Z^{2^{k+1}}} \frac{|g(p)|}{p}\right)\right) \\
&+ 2^{-\lambda L}e^{-S_{\kappa}(t)}\log^{-\lambda }Z. \label{DYAD}
\end{align}
By choice, the first factor is precisely $e^{-\frac{1}{2}S_{\kappa}(t)}\log^{-\lambda} t$.
Now, observe that for each $0 \leq j \leq L-1$, there is some $\xi_j \in [\delta,B]$ such that $\sum_{Z^{2^{j}} < p \leq Z^{2^{j+1}}} \frac{|g(p)|}{p} = \xi_j\log 2 + O_B\left(\frac{1}{2^j \log Z}\right)$. In particular, 
we can write
\begin{align*}
\lambda &= \frac{1}{2\log_2 t} \sum_{Z < p \leq t} \frac{|g(p)|}{p}
=\frac{\log 2}{2\log_2 t}\left(\sum_{0 \leq l \leq L-1} \xi_l +\frac{1}{\log 2} S_{\kappa}(t)+ O_B\left(\frac{1}{\log Z}\right)\right).
\end{align*}
Additionally, by definition we have
\begin{equation*}
\log^{-\lambda} Z = \exp\left(-\frac{1}{2}\alpha \sum_{Z<p \leq t} \frac{|g(p)|}{p}\right) \ll \exp\left(-\frac{1}{2}\alpha \log 2  \sum_{0 \leq l \leq L-1} \xi_l\right).
\end{equation*}
Hence, 
%provided that $\log\left(1/\kappa\right) = o(\log_2 t)$, 
it follows that each term in the sum in \eqref{DYAD} takes the form
\begin{equation*}
2^{-k\lambda} \kappa^{\lambda}\exp\left(\frac{1}{2}\sum_{Z < p \leq Z^{2^{k+1}}} \frac{|g(p)|}{p}\right)\log^{-\lambda} Z \ll 2^{-\frac{1}{2}(k+1)\e_k},
\end{equation*}
where, for $\Sigma_j := \sum_{0 \leq l \leq j} \xi_l$, we have set
\begin{align}
\e_k &:= \lambda\left( 1+ \frac{2}{(k+1)\log 2}\log(1/\kappa)\right) + \frac{1}{(k+1)\log 2}\left(\log_2 Z - \log 2\sum_{0 \leq l \leq k} \xi_l\right) \nonumber\\
&= \frac{1}{k+1}\left(\frac{\log 2}{\log_2 t}\left((k+1)\Sigma_{L-1} + \frac{S_{\kappa}(t)}{\log 2} + \frac{2\log(1/\kappa)}{\log 2}\left(\Sigma_{L-1} + S_{\kappa}(t)\right)\right) + \alpha\Sigma_{L-1} - \Sigma_k\right). \label{CANCEL}
\end{align}
%\frac{\log 2}{\log_2 t}\left(\sum_{0 \leq j \leq L-1} \xi_j + \frac{\log(1/\kappa)}{(k+1)\log^2 2}+ \frac{S_{\kappa}(t)}{\log_2 t} \frac{\alpha}{k+1}\sum_{0 \leq j \leq L-1} \xi_j + \frac{1}{k+1}- \frac{1}{k+1}\sum_{0 \leq j \leq k} \xi_j. \\
%&= \frac{1}{k+1}\left(\left(\frac{(k+1)\log 2}{\log_2 t}+ \alpha\right)\sum_{0 \leq j \leq L-1} \xi_j + \frac{S_{\kappa}(t)}{\log_2 t}\right)+\alpha \right) +  - \sum_{0 \leq j \leq k}\xi_j\right).
By choice, we have
\begin{equation*}
\frac{\log 2}{\log_2 t} = \frac{1-\alpha}{L}\left(1-\frac{\log(1/\kappa)}{\log_2 t} \right),
\end{equation*}
so that, as $\frac{1-\alpha}{L}\frac{(k+1)\log(1/\kappa)}{\log_2 t}$ is canceled by the $\log(1/\kappa)$ contribution in \eqref{CANCEL} uniformly in $k \leq L-1$, we have
\begin{align*}
\e_k &\geq \frac{1}{k+1}\left(\frac{1-\alpha}{L}\left((k+1)\Sigma_{L-1} + \frac{S_{\kappa}(t)}{\log 2}\right) + \alpha\Sigma_{L-1} - \Sigma_k\right)\\
&\geq \frac{1}{k+1}\left(\Sigma_{L-1}\left(\frac{k+1}{L}(1-\alpha) + \alpha\right) - \Sigma_k\right) = \frac{1}{k+1}\Sigma_{L-1}\left(\frac{k+1}{L} + \frac{L-k-1}{L}\alpha-\frac{\Sigma_k}{\Sigma_{L-1}}\right).
\end{align*}
%
%
%
%\frac{1}{k+1}\left(\left(\frac{k+1}{L}(1-\alpha)\left(\Sigma_{L-1} + \frac{S_{\kappa}(t)}{\log 2} - \frac{\log(1/\kappa)}{\log_2 t}\Sigma_{L-1}\right) +\alpha\Sigma_{L-1}\right) - \Sigma_k\right) \\
%&= \frac{1}{k+1}\left(\left(\left(\frac{k+1}{L} + \frac{L-k-1}{L}\alpha\right)\Sigma_{L-1} +  \frac{k+1}{L}(1-\alpha)\left(\frac{S_{\kappa}(t)}{\log 2} - \frac{\log(1/\kappa)}{\log_2 t}\Sigma_L\right)\right) - \Sigma_k\right).
%\end{equation*}
Consider when $(g,\mbf{E})$ is non-decreasing for each $j$. We claim that $\frac{\Sigma_k}{\Sigma_{L}} \leq \frac{k+1}{L}$.  To see this, write $\xi_{l,j}$ to denote the contribution to $\xi_l$ coming from primes in $E_j$. Then
%:= \frac{1}{\log 2} \sum_{p \in D_l \cap E_j} \frac{|g(p)|}{p}$. Then for each $k \leq L-1$, and each $j$,
\begin{equation*}
\frac{1}{k+1}\sum_{0 \leq l \leq k} \xi_{l,j} \leq \frac{1}{L}\sum_{0 \leq l \leq L-1} \xi_{l,j}.
\end{equation*}
It follows that
\begin{equation*}
\frac{1}{k+1}\Sigma_k = \sum_{1 \leq j \leq m}  \left(\frac{1}{k+1}\sum_{0\leq l \leq k} \xi_{l,j}\right) \leq \frac{1}{L}\sum_{1 \leq j \leq m} \sum_{0 \leq l \leq L-1} \xi_{l,j}  = \frac{1}{L}\Sigma_L.
\end{equation*}
Hence, we have $(k+1)\e_k \geq (L-k-1)\alpha\Sigma_{L-1}/L =: (L-k-1)\tilde{\lambda}$. Inserting this into the geometric series in \eqref{DYAD}, reindexing and summing, we get
\begin{align*}
P_{|g|}(t)^{-1}\sum_{n \leq t} \frac{|g(n)|}{n\log^{\lambda}(3n)}&\ll_B e^{-\frac{1}{2}S_{\kappa}(t)}\log^{-\lambda} t\left(1 + \lambda\kappa^{-\lambda}\sum_{0\leq k \leq L-1} 2^{-k\tilde{\lambda}}\right) + 2^{-\lambda L}e^{-S_{\kappa}(t)}\log^{-\lambda }Z \\
&\ll e^{-\frac{1}{2}S_{\kappa}(t)}\log^{-\lambda} t\left(1 + \frac{\lambda}{\tilde{\lambda}}\kappa^{-\lambda}\right) + \kappa^{-\lambda}e^{-S_{\kappa}(t)}\log^{-\lambda} t,
\end{align*}
where, in the last line we used the definition of $L$. Now, we saw earlier that 
\begin{equation*}
\lambda  = \frac{1-\alpha}{L}\Sigma_{L-1}\left(1 + \frac{S_{\kappa}(t)}{\Sigma_{L-1}\log 2}\right) = \frac{1-\alpha}{\alpha}\tilde{\lambda}\left(1+ \frac{S_{\kappa}(t)}{\Sigma_{L-1}\log 2}\right).
\end{equation*}
Hence, $\lambda/\tilde{\lambda} \ll \frac{1-\alpha}{\alpha}$, and we claim that $\alpha > \frac{1}{2}$. Indeed, by monotonicity, 
\begin{equation*}
\sum_{p \leq e^{\sqrt{\log t}} \atop p \in E_j} \frac{|g(p)|}{p} \leq \sum_{e^{\sqrt{\log t}} < p \leq t \atop p \in E_j} \frac{|g(p)|}{p}, 
\end{equation*}
and since $\lambda < \frac{1}{4}$ as can be checked by the definition given, if $\alpha = \frac{1}{2}$ then
\begin{equation*}
\lambda \log_2 t = \sum_{1 \leq j \leq m} \frac{c_j\gamma_{0,j}}{1+\gamma_{0,j}} \sum_{p \leq t \atop p \in E_j} \frac{|g(p)|}{p} < \frac{1}{4}\sum_{1 \leq j \leq m} \sum_{p \leq t \atop p \in E_j} \frac{|g(p)|}{p} \leq \frac{1}{2} \sum_{Z < p \leq t} \frac{|g(p)|}{p} = \lambda \log_2 t,
\end{equation*}
a contradiction. Since the sum on the interval $[Z,t]$ decreases monotonically as $\alpha$ increases, $\alpha > \frac{1}{2}$, as required. This completes the proof of the claim in this case.\\ 
If $(g,\mbf{E})$ is not non-decreasing for some $j$, bound $|g(p)|$ from below trivially by the function $g_0$ defined on primes such that $g_0(p) := \delta_j$ for each $p \in E_j$ and each $j$. Then applying the foregoing analysis with $|g(p)|$ replaced by $g_0(p)$, we can replace $\lambda$ by
\begin{equation*}
\sum_{1 \leq j \leq m} \frac{\gamma_{0,j}}{1+\gamma_{0,j}}\delta_jE_j(t) +O_B(1) \geq \sum_{1 \leq j \leq m} \frac{\delta_j}{B_j}\frac{\gamma_{0,j}}{1+\gamma_{0,j}} \sum_{p \leq t \atop p \in E_j} \frac{|g(p)|}{p} + O_B(1)
\end{equation*}
(see Remark \ref{REMWITHR}). This completes the proof.
\end{proof}
\begin{lem}\label{EXPONENTS}
Let $\lambda = \lambda(t) > 0$ be defined as in Lemma \ref{WITHR}. \\
%Let $B_j := \sup\{|g(p)| : p \in E_j\}$ for each $j$.\\
i) If $(g,\mbf{E})$ is reasonable then
\begin{equation*}
\lambda = \begin{cases} \frac{1}{\log_2 t} \sum_{1 \leq j \leq m} \min\left\{\frac{1}{4m^2}, \frac{\gamma_{0,j}}{1+\gamma_{0,j}}\right\}\sum_{p \leq t \atop p \in E_j} \frac{|g(p)|}{p} &\text{ if $(g,\mbf{E})$ is non-decreasing} \\ 
\frac{1}{\log_2 t} \left(\min_{1 \leq j \leq m}\frac{\delta_j}{B_j}\right)\sum_{1 \leq j \leq m} \min\left\{\frac{1}{4m^2},\frac{\gamma_{0,j}}{1+\gamma_{0,j}}\right\}\sum_{p \leq t \atop p \in E_j} \frac{|g(p)|}{p} & \text{ otherwise.}\end{cases}
\end{equation*}
ii) If $(g,\mbf{E})$ is not reasonable then
\begin{equation*}
\lambda = \begin{cases} \frac{1}{\log_2 t} \frac{\delta}{B}\sum_{1 \leq j \leq m} \frac{\gamma_{0,j}}{1+\gamma_{0,j}} \sum_{p \leq t \atop p \in E_j} \frac{|g(p)|}{p} &\text{ if $(g,\mbf{E})$ is non-decreasing} \\ 
\frac{\delta}{\log_2 t}\sum_{1 \leq j \leq m} \frac{\gamma_{0,j}}{1+\gamma_{0,j}}E_j(t)& \text{ otherwise.}\end{cases}
\end{equation*}
In each of these cases, for any constant $C > 0$ we may choose $C' > 0$ a constant depending at most on $B$ and $C$ such that if $\kappa =\kappa(C) := \left(\frac{\delta}{2C'B}\right)^{\frac{4}{\delta}}$ then 
\begin{equation*}
C \frac{B}{\delta}\kappa^{-\lambda}e^{-\frac{1}{2}S_{\kappa}(t)} \leq \frac{1}{2}.
\end{equation*}
\end{lem}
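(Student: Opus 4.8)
The plan is to prove Lemma \ref{EXPONENTS} in two stages: first establish the explicit forms of $\lambda$ in parts i) and ii), then verify the final inequality $C\frac{B}{\delta}\kappa^{-\lambda}e^{-\frac{1}{2}S_\kappa(t)} \leq \frac{1}{2}$ with the stated choice of $\kappa$. For the first stage, recall that in Lemma \ref{WITHR} we established $\lambda(t) = \frac{1}{\log_2 t}\sum_j c_j d_j \frac{\gamma_{0,j}}{1+\gamma_{0,j}}\sum_{p\leq t, p\in E_j}\frac{|g(p)|}{p}$, where $c_j$ is the coefficient from Theorem \ref{HalGen} and $d_j = 1$ or $\delta_j/B_j$ according to whether $(g,\mbf{E})$ is non-decreasing. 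Substituting the definition $c_j = \frac{1}{2}\min\{\frac{1}{4m^2}, \frac{\gamma_{0,j}}{1+\gamma_{0,j}}\}$ (non-decreasing case) or $c_j = \frac{1}{2}\min_j\{\frac{\delta_j}{B_j}\}\min\{\frac{1}{4m^2},\frac{\gamma_{0,j}}{1+\gamma_{0,j}}\}$, the reasonable-versus-not distinction amounts to invoking Definition \ref{REAS}: when $(g,\mbf{E})$ is reasonable, condition \eqref{CONDIT2} lets us pass from the per-set sums to the full sum $\sum_{p\leq t}|g(p)|/p$ up to the factor $1/m$, which can then be absorbed into the $\min\{1/(4m^2),\cdots\}$ to yield the clean forms written; when it is not reasonable, we must fall back on the cruder trivial bound $|g(p)|\geq\delta_j\geq\delta$ on each $E_j$, which accounts for the appearance of $\frac{\delta}{B}$ or $\delta E_j(t)$ in part ii). This stage is mostly bookkeeping, tracking how the factors of $m$, $\delta_j/B_j$, and $\gamma_{0,j}/(1+\gamma_{0,j})$ combine, using the elementary inequality $\gamma_{0,j}/(1+\gamma_{0,j}) \leq 1$.

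For the second stage, the key observation is that $\lambda \log_2 t$ is, in every case, bounded above by a fixed constant multiple of $\sum_{Z<p\leq t}\frac{|g(p)|}{p}$ where $Z = e^{\log^\alpha t}$ — indeed the very definition of $\alpha$ in Lemma \ref{WITHR} was $\lambda = \frac{1}{2\log_2 t}\sum_{Z<p\leq t}|g(p)|/p$. More to the point, I will bound $\lambda$ from above by $\frac{1}{4}$ (as is already checked in the proof of Lemma \ref{WITHR}, using $\gamma_{0,j}/(1+\gamma_{0,j}) < 1$ and that the tail sum over $(Z,t]$ dominates), so that $\kappa^{-\lambda} = \exp(\lambda\log(1/\kappa)) \leq \exp(\frac{1}{4}\log(1/\kappa))$. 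With $\kappa = (\frac{\delta}{2C'B})^{4/\delta}$ we get $\log(1/\kappa) = \frac{4}{\delta}\log(\frac{2C'B}{\delta})$, hence $\kappa^{-\lambda} \leq \exp(\frac{1}{\delta}\log(\frac{2C'B}{\delta})) = (\frac{2C'B}{\delta})^{1/\delta}$. Meanwhile $e^{-\frac{1}{2}S_\kappa(t)} \leq 1$ trivially since $S_\kappa(t) = \sum_{t^\kappa < p \leq t}|g(p)|/p \geq 0$ (note $|g(p)|\geq 0$ always; if one wants $S_\kappa$ genuinely large one can use $|g(p)|\geq\delta$ off $S$ plus Mertens, but the trivial bound already suffices). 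So it remains to arrange $C\frac{B}{\delta}(\frac{2C'B}{\delta})^{1/\delta} \leq \frac{1}{2}$ — which is \emph{false} as stated unless we use the decay from $S_\kappa$; the honest argument must instead retain $e^{-\frac12 S_\kappa(t)}$, using that $\sum_{t^\kappa < p \leq t}\frac{|g(p)|}{p} \geq \delta\log(1/\kappa) + O_B(1)$ by Mertens' theorem (off the sparse set $S$, cf. Remark \ref{REMWITHR}).

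Carrying this out: by Mertens and Remark \ref{REMWITHR}, $S_\kappa(t) \geq \delta\sum_{t^\kappa<p\leq t}\frac1p + O_B(1) = \delta\log(1/\kappa) + O_B(1)$, so $e^{-\frac12 S_\kappa(t)} \ll_B \exp(-\frac{\delta}{2}\log(1/\kappa))$. Combining with $\kappa^{-\lambda} \leq \exp(\frac14\log(1/\kappa))$ gives $\frac{B}{\delta}\kappa^{-\lambda}e^{-\frac12 S_\kappa(t)} \ll_B \frac{B}{\delta}\exp(-\frac{\delta}{4}\log(1/\kappa)) = \frac{B}{\delta}\kappa^{\delta/4}$. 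With $\kappa = (\frac{\delta}{2C'B})^{4/\delta}$ this is $\ll_B \frac{B}{\delta}\cdot\frac{\delta}{2C'B} = \frac{1}{2C'}\cdot(\text{an }O_B(1)\text{ constant})$, and choosing $C'$ large enough relative to $C$ and the implied $O_B(1)$ constant forces the whole expression $C\frac{B}{\delta}\kappa^{-\lambda}e^{-\frac12 S_\kappa(t)} \leq \frac12$. The main obstacle is making the $\lambda \leq \frac14$ bound airtight in the not-reasonable case (where $\lambda$ involves $\delta E_j(t)/\log_2 t$ and one needs $\sum_j \delta\gamma_{0,j}/(1+\gamma_{0,j}) E_j(t) \leq \frac14\log_2 t$, i.e. that the densities $\lambda_j$ of the $E_j$ sum to something controlled) — this needs the good-partition hypothesis so that $\sum_j E_j(t) \sim (\sum_j\lambda_j)\log_2 t$ with $\sum_j\lambda_j = 1$, combined with $\delta\gamma_{0,j}/(1+\gamma_{0,j}) < \delta \leq 1$; everything else is a routine application of Mertens' theorem and the monotonicity $u\mapsto\kappa^{-u}$.
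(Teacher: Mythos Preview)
Your second stage contains a genuine arithmetic error that breaks the argument. You write that combining $\kappa^{-\lambda} \leq \exp\bigl(\tfrac14\log(1/\kappa)\bigr)$ with $e^{-\frac12 S_\kappa(t)} \ll_B \exp\bigl(-\tfrac{\delta}{2}\log(1/\kappa)\bigr)$ gives
\[
\frac{B}{\delta}\kappa^{-\lambda}e^{-\frac12 S_\kappa(t)} \ll_B \frac{B}{\delta}\exp\Bigl(-\frac{\delta}{4}\log(1/\kappa)\Bigr).
\]
But the product of those two exponentials is $\exp\bigl((\tfrac14 - \tfrac{\delta}{2})\log(1/\kappa)\bigr)$, not $\exp\bigl(-\tfrac{\delta}{4}\log(1/\kappa)\bigr)$. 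When $\delta < \tfrac12$ (which is the generic situation --- nothing in the hypotheses forces $\delta$ to be large) the exponent $\tfrac14 - \tfrac{\delta}{2}$ is \emph{positive}, so your bound blows up rather than decays as $\kappa \to 0$. The global bound $\lambda \leq \tfrac14$ is therefore too crude to close the argument.

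The paper does not separate the two factors this way. Instead it compares $\lambda\log(1/\kappa)$ directly to $S_\kappa(t)$, showing in each case that
\[
\lambda\log(1/\kappa) \leq \frac14 S_\kappa(t) + O_B(1),
\]
so that $\kappa^{-\lambda}e^{-\frac12 S_\kappa(t)} \ll_B e^{-\frac14 S_\kappa(t)}$; only \emph{then} does one invoke $S_\kappa(t) \geq \delta\log(1/\kappa) + O_B(1)$ to get the $e^{-\frac{\delta}{4}\log(1/\kappa)}$ decay. The entire point of the coefficients $\min\{\tfrac{1}{4m^2},\tfrac{\gamma_{0,j}}{1+\gamma_{0,j}}\}$ (reasonable case) and $\tfrac{\delta}{B}$ (not reasonable) in the definition of $\lambda$ is to make this direct comparison work: in the reasonable, non-decreasing case, for instance, one first establishes via the pigeonholed index $j_0$ and the monotonicity inequality \eqref{MONO1} that $S_\kappa(t) \geq \tfrac{1}{m^2}\tfrac{\log(1/\kappa)}{\log_2 t}\sum_{p\leq t}\tfrac{|g(p)|}{p} + O_B(1)$, and the factor $\tfrac{1}{4m^2}$ in $\lambda$ is exactly what is needed to make $\lambda\log(1/\kappa)$ at most one quarter of this. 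Your first-stage discussion treats these coefficients as inherited from Theorem~\ref{HalGen}, but in fact Lemma~\ref{EXPONENTS} is where they are \emph{chosen}, and they are chosen precisely to make the comparison $\lambda\log(1/\kappa) \lesssim \tfrac14 S_\kappa(t)$ go through. Without this direct comparison you cannot get the stated $\kappa$.
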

\begin{rem}\label{REMEXPONENTS}
Note that in the case where $\mbf{E}$ is good, $(g,\mbf{E})$ is necessarily reasonable. We prove the result in a greater general in part to motivate the utility of the ''good'' condition. \\
Choosing $\delta_j \gg \log_2^{-1+\e} t$ for $\e > 0$ fixed, $1/\kappa$ is at most $e^{\log_2^{1-\e} t}$ (in the worst case where $B$ is much larger than $\delta_j$), so our assumption that the inequality $\log(1/\kappa) < \log_2 t$ holds easily here.
\end{rem}
\begin{proof}
Suppose first that $(g,\mbf{E})$ is reasonable. Thus, choose $1 \leq j_0 \leq m$ such that 
\begin{align*}
E_{j_0}(t)-E_{j_0}\left(t^{\kappa}\right) &\geq \frac{1}{m}\log(1/\kappa), \\
\sum_{p \leq t \atop p \in E_{j_0}} \frac{|g(p)|}{p} &\geq \frac{1}{m}\sum_{p \leq t} \frac{|g(p)|}{p}.
\end{align*}
If $(g,\mbf{E})$ is non-decreasing then clearly
\begin{align*}
&E_j(t)\sum_{t^{\kappa} < p \leq t \atop p \in E_j} \frac{|g(p)|}{p} - (E_j(t)-E_j(t^{\kappa})) \sum_{p \leq t \atop p \in E_j} \frac{|g(p)|}{p} = \sum_{t^{\kappa} < p \leq t \atop p \in E_j} \sum_{q \leq t \atop q \in E_j} \frac{1}{pq}\left(|g(p)|-|g(q)|\right) \\
&= \sum_{t^{\kappa} < p,q \leq t \atop p,q \in E_j} \frac{1}{pq}\left(|g(p)|-|g(q)|\right) + \sum_{t^{\kappa} < p \leq t \atop p \in E_j} \sum_{q \leq t^{\kappa}\atop q \in E_j } \frac{1}{pq}\left(|g(p)|-|g(q)|\right).
\end{align*}
By symmetry, the first sum is 0, while by monotonicity, the second sum is non-negative.  Hence, it follows that
\begin{equation}
\sum_{t^{\kappa} < p \leq t \atop p \in E_j} \frac{|g(p)|}{p} \geq \frac{E_j(t)-E_j(t^{\kappa})}{E_j(t)} \sum_{p \leq t \atop p \in E_j} \frac{|g(p)|}{p}. \label{MONO1}
\end{equation}
Applying this for each $j$ and using the ''reasonable'' property, we have
\begin{align}
\sum_{t^{\kappa} < p \leq t} \frac{|g(p)|}{p} &\geq \sum_{t^{\kappa} < p \leq t \atop p \in E_{j_0}} \frac{|g(p)|}{p} \geq \frac{E_{j_0}(t)-E_{j_0}(t^{\kappa})}{E_{j_0}(t)} \sum_{p \leq t \atop p \in E_{j_0}} \frac{|g(p)|}{p} \geq \frac{1}{m} \frac{\log(1/\kappa)}{E_{j_0}(t)}\sum_{p \leq t \atop p \in E_{j_0}} \frac{|g(p)|}{p} +o(1) \nonumber\\
&\geq \frac{1}{m^2} \frac{\log(1/\kappa)}{\log_2 t} \sum_{p \leq t} \frac{|g(p)|}{p} + O_B(1), \label{MONO2}
\end{align}
upon using the trivial bound $E_j(t) \leq \log_2 t + O(1)$. Thus, setting $c_j :=\min\left\{\frac{1}{4m^2},\frac{\gamma_{0,j}}{1+\gamma_{0,j}}\right\}$ in the definition of $\lambda$, we have
\begin{equation}
\kappa^{-\lambda} e^{-\frac{1}{2}S_{\kappa}(t)} = \exp\left(-\frac{1}{2}\sum_{t^{\kappa} < p \leq t} \frac{|g(p)|}{p} + \frac{\log(1/\kappa)}{\log_2 t} \sum_{1 \leq j \leq m} c_j\sum_{p \leq t \atop p \in E_j}\frac{|g(p)|}{p}\right) \ll_B e^{-\frac{1}{4}S_{\kappa}(t)}. \label{KAPDEF}
\end{equation}
If $(g,\mbf{E})$ is not non-decreasing then we instead have (using our observation regarding $S$ from Remark \ref{REMWITHR})
\begin{align*}
\sum_{t^{\kappa} < p \leq t} \frac{|g(p)|}{p} &\geq \sum_{1 \leq j \leq m} \delta_j((E_j\bk S)(t)-(E_j\bk S)(t^{\kappa})) \geq \frac{1}{m}\delta_{j_0}\log(1/\kappa) +o(\delta_{j_0})\\
&\geq \frac{1}{m}\frac{\delta_{j_0}}{B_{j_0}} \frac{\log(1/\kappa)}{E_{j_0}(t)}\sum_{p \leq t \atop p \in E_{j_0}} \frac{|g(p)|}{p} + o(\delta_{j_0}) \geq \left(\min_{1 \leq j \leq m} \frac{\delta_j}{B_j}\right)\frac{1}{m^2\log_2 t}\sum_{p \leq t \atop p \in E_j} \frac{|g(p)|}{p} + O_B(1),
%If $(g,\mbf{E})$ is not non-decreasing for $j_0$ then let $g_0$ be the function defined by $g_0(p) = \delta_j$ if $p \in E_j$ for each $j$. Using the above argument with $g(p)$ replaced by $g_0(p)$,
%\begin{align*}
%\kappa^{-\lambda} e^{-\frac{1}{2}\sum_{t^{\kappa} < p \leq t} \frac{g(p)}{p}} \leq \kappa^{-\lambda} e^{-\frac{1}{2}\sum_{t^{\kappa} < p \leq t} \frac{g_0(p)}{p}} \ll_B e^{-\frac{1}{4}\sum_{t^{\kappa} < p \leq t} \frac{|g_0(p)|}{p}}.
\end{align*}
and \eqref{KAPDEF} holds again with $c_j := \frac{\delta}{B}\min\left\{\frac{1}{4m^2},\frac{\gamma_{0,j}}{1+\gamma_{0,j}}\right\}$ in this case.  \\
Suppose now that $(g,\mbf{E})$ is not reasonable. Again, if $(g,\mbf{E})$ is non-decreasing then \eqref{MONO1} still applies for each $j$, and instead of \eqref{MONO2} we simply note that if $c = \frac{\delta}{B}$ in the definition of $\lambda$ then we have
\begin{equation*}
\sum_{t^{\kappa} < p \leq t} \frac{|g(p)|}{p} \geq \delta \log\left(1/\kappa\right) +o(\delta)= Bc\log\left(1/\kappa\right) +o(\delta)\geq \frac{c\log(1/\kappa)}{\log_2 t}\sum_{p \leq t} \frac{|g(p)|}{p} +O_B(1) \geq 2\lambda \log\left(1/\kappa\right),
\end{equation*}
the last inequality following because $\frac{\gamma_{0,j}}{1+\gamma_{0,j}} \leq \frac{1}{2}$ for each $j$.  With this value of $c$, we may take $\kappa$ as before. \\
Finally, suppose $(g,\mbf{E})$ is not reasonable or non-decreasing.  Then we simply bound $|g(p)|$ in the definition of $\lambda$ from below by $\delta$. \\
In each of these scenarios we may select $\kappa$ such that, with $C' > 0$ a constant larger than $C$ depending only on $B$, we have 
\begin{equation*}
C'\frac{B}{\delta}e^{-\frac{\delta}{4}\log(1/\kappa)}= \frac{1}{2},
\end{equation*}
i.e., $\kappa = \left(\frac{\delta}{2C' B}\right)^{\frac{4}{\delta}}$. This completes the proof. 
\end{proof}
The above estimates culminate in the following.
\begin{prop} \label{STARTER}
For $t$ sufficiently large there exists a $C = C(B) > 0$, a function $\lambda: [1,x] \ra (0,\infty)$ and $\kappa = \kappa(C)$ as in Lemma \ref{EXPONENTS} such that
\begin{equation*}
\frac{|M_h(t)|}{M_{|g|}(t)} - \frac{R_h(\lambda)}{2\log^{\lambda} t} \leq C_1\left(\frac{B}{\delta}\frac{P_t}{\phi(P_t)}\left(\frac{1}{(1-\kappa) \log^{\frac{1}{2} }t}\left(\mc{G}(\sg)^{-2}J_{H,1}(\sg)\right)^{\frac{1}{2}} + |A|\mu\right)+ R_h(\lambda)\frac{B\log_2 t}{\kappa \log^{1+\lambda} t} +\frac{1}{\log^2 t}\right)
%0 \leq \frac{|M_h(t)|}{M_{|g|}(t)} - R_h(\lambda)\frac{Bc}{\delta\nu\log^{\beta} t} \leq \frac{B}{\delta \nu}\left(\frac{1}{\kappa^m(c') \log^{m} t}\left(\frac{J_{H,m}(\sg)}{J_{\mc{G},0}(\sg)}+ Am\frac{J_{\mc{G},m}(\sg)}{J_{\mc{G},0}(\sg)}\right) + Am + \frac{1}{\log^{m-1+\delta} t}+ R_h(\lambda)\frac{\log_2 t}{\kappa t^{\eta} \log t}\right).
\end{equation*}
\end{prop}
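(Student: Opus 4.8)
The plan is to feed the conclusion of Lemma \ref{NUM} into the analytic machinery of Section 5, after replacing each of the integrals over $u$ appearing there by the appropriate line integral via Parseval. First I would apply Lemma \ref{NUM} with the parameter $\kappa$ chosen as in Lemma \ref{EXPONENTS} (depending on $C = C(B)$, itself to be fixed at the end), and with $\lambda = \lambda(t)$ the function defined in Lemma \ref{WITHR} (the appropriate branch according to whether $(g,\mbf{E})$ is non-decreasing). This gives
\begin{align*}
|M_h(t)| &\ll_B B\frac{t}{\log t}\Bigl(\frac{1}{(1-\kappa)\log t}\int_{t^{\kappa}}^t \frac{|M_h(u)|\log u}{u^2}\,du + R_h(\lambda)\Bigl(\int_1^{t^{\kappa}} \frac{M_{|g|}(u)}{u^2\log^{\lambda}(3u)}\,du + M_{|g|}(t)\frac{\log_2 t}{\log^{\lambda} t}\Bigr) \\
&\quad + |A|\mu\int_1^t \frac{M_{|g|}(u)}{u^2}\,du + \frac{M_{|g|}(t)}{\log^2 t}\Bigr).
\end{align*}
The second displayed term is handled directly by Lemma \ref{WITHR}: the integral $\int_1^{t^{\kappa}} \frac{M_{|g|}(u)}{u^2\log^{\lambda}(3u)}\,du$ is $\ll_B \bigl(\int_1^t \frac{M_{|g|}(u)}{u^2}\,du\bigr)\kappa^{-\lambda}e^{-\frac{1}{2}S_{\kappa}(t)}\log^{-\lambda} t$, and then Lemma \ref{EXPONENTS} lets us choose $\kappa = \kappa(C)$ so that the prefactor $C\frac{B}{\delta}\kappa^{-\lambda}e^{-\frac12 S_\kappa(t)}$ is $\le \frac12$; this is exactly the mechanism that produces the subtracted term $\frac{R_h(\lambda)}{2\log^\lambda t}$ on the left-hand side of the proposition (once we divide through by $M_{|g|}(t)$ and use that $\int_1^t \frac{M_{|g|}(u)}{u^2}\,du \ll M_{|g|}(t)$, which follows from partial summation together with Lemma \ref{SELBERG} or Lemma \ref{DobApp}). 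The term $R_h(\lambda)M_{|g|}(t)\frac{\log_2 t}{\log^\lambda t}$ in Lemma \ref{NUM}, once divided by $M_{|g|}(t)$, contributes the $R_h(\lambda)\frac{B\log_2 t}{\kappa\log^{1+\lambda} t}$ term in the statement (the $\kappa^{-1}$ coming from the $(1-\kappa)^{-1}$ and the bookkeeping in Lemma \ref{NUM}), and the term $\mu|A|\int_1^t \frac{M_{|g|}(u)}{u^2}\,du \ll \mu|A|M_{|g|}(t)$ contributes $|A|\mu$; finally $\frac{M_{|g|}(t)}{\log^2 t}$ gives the $\frac{1}{\log^2 t}$.

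The heart of the argument is the first term, $\frac{1}{(1-\kappa)\log t}\int_{t^{\kappa}}^t \frac{|M_h(u)|\log u}{u^2}\,du$. Here I would first drop the condition $u > t^{\kappa}$ (nonnegativity of the integrand) and bound $\int_1^t \frac{|M_h(u)|\log u}{u^2}\,du$. By partial summation $M_h(u)\log u = N_h(u) + \Delta_h(u)$ in the sense of the identity $\Delta_h(u) = M_h(u)\log u - N_h(u) = \int_1^u \frac{M_h(v)}{v}\,dv$ used in the proof of Lemma \ref{NUM}; the $\Delta_h$ part is already absorbed into the $R_h(\lambda)$ terms by \eqref{REMAINDER}, so it suffices to bound $\int_1^t \frac{|N_h(u)|}{u^2}\,du$... actually the cleaner route, mirroring the proof of Lemma \ref{MAIN1} and \ref{SHARPAPP}, is: the integral $\int_1^t \frac{|M_h(u)|\log u}{u^2}\,du$ is, up to the constant factors just discussed and up to $O_B$ of the $\Delta_h$-contribution, comparable to $\bigl(\int_1^\infty e^{-2v(\sg-1)}|M_h(e^v)|^2 v^2\,dv\bigr)^{1/2}$ times $\bigl(\int_1^\infty e^{-2v(\sg-1)}\,dv\bigr)^{1/2} \ll (\sg-1)^{-1/2}$ by Cauchy--Schwarz with $\sg = 1 + 1/\log t$; and by Parseval's theorem the first factor is $\ll \bigl(\int_{(\sg)} |H'(s)/s|^2\,|ds|\bigr)^{1/2} = J_{H,1}(\sg)^{1/2}$ (this is the same Parseval step as in \eqref{PARS}, now applied to $N_h$ rather than to $\sum_{n\le u}g(n)\Lambda(n)$, using that the Dirichlet series of $n\mapsto h(n)\log n$ is $-H'(s)$). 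Dividing by $\log t \asymp (\sg-1)^{-1}$ and by $M_{|g|}(t)$, and using Lemma \ref{CHEAP} together with Lemma \ref{DENOM} to see that $M_{|g|}(t) \gg_{B} \delta\frac{\phi(P_t)}{P_t}\frac{t}{\log t}\mc{G}(\sg)$ — equivalently $M_{|g|}(t)^{-1} \ll_B \frac{B}{\delta}\frac{P_t}{\phi(P_t)}\frac{\log t}{t}\mc{G}(\sg)^{-1}$ — produces precisely the term $\frac{B}{\delta}\frac{P_t}{\phi(P_t)}\frac{1}{(1-\kappa)\log^{1/2}t}\bigl(\mc{G}(\sg)^{-2}J_{H,1}(\sg)\bigr)^{1/2}$.

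The main obstacle I anticipate is the bookkeeping that converts the various integral averages in Lemma \ref{NUM} (which are over $u$, against measures $du/u^2$ or $\log u\,du/u^2$) uniformly into the weighted $L^2$ integral $J_{H,1}(\sg)$ over the vertical line $\mathrm{Re}(s) = \sg$, and in particular keeping careful track that the $\log u$ weight becomes the $v^2$ weight after the substitution $u = e^v$ — this is what forces the use of $N_h$ (whose transform carries the extra derivative $H'$) rather than $M_h$ directly, and it is the reason $J_{H,1}$ rather than $J_H$ appears. A secondary point requiring care is ensuring the truncation of the $u$-integral at $t^\kappa$ (needed so the Mertens-type estimate in Lemma \ref{PRECMERTENS} applies in Lemma \ref{NUM}) does not cost anything: since we only ever use the upper bound obtained by extending the integral to all of $[1,t]$ and then to the half-line, nonnegativity of $|M_h(u)|^2$ makes this harmless. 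Once these translations are in place, collecting the five resulting contributions — the $J_{H,1}$ main term, the $|A|\mu$ term, the $R_h(\lambda)\frac{B\log_2 t}{\kappa\log^{1+\lambda}t}$ term, the $\frac{1}{\log^2 t}$ term, and the $\frac{R_h(\lambda)}{2\log^\lambda t}$ term moved to the left — and absorbing all implied constants into a single $C_1 = C_1(B)$ completes the proof.
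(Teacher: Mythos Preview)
Your plan follows exactly the paper's route: feed Lemma \ref{NUM} into the lower bound from Lemmas \ref{DENOM} and \ref{CHEAP}, split $M_h(u)\log u = N_h(u) + \Delta_h(u)$, handle the $\Delta_h$ and $\int_1^{t^\kappa}$ pieces through $R_h(\lambda)$ together with Lemmas \ref{WITHR} and \ref{EXPONENTS}, and pass from $\int_1^t |N_h(u)|u^{-2}\,du$ to $J_{H,1}(\sg)^{1/2}$ by Cauchy--Schwarz and Parseval.

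Two slips to fix. First, the Parseval weight must be $e^{-2v\sg}$, not $e^{-2v(\sg-1)}$: with your weight the integral diverges, since $|N_h(e^v)|^2$ grows like $e^{2v}$. The paper's version is $\int_1^t |N_h(u)|^2 u^{-3}\,du \le e^2\int_1^\infty |N_h(u)|^2 u^{-1-2\sg}\,du$, which after $u = e^v$ and Parseval gives exactly $J_{H,1}(\sg)$. Second, for the $R_h(\lambda)\int_1^{t^\kappa}$ and $|A|\mu\int_1^t$ terms you cannot use the bound $\int_1^t M_{|g|}(u)u^{-2}\,du \ll M_{|g|}(t)$: the prefactor $Bt/\log t$ from Lemma \ref{NUM} would survive after dividing by $M_{|g|}(t)$. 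What is needed is the \emph{lower} bound from Lemma \ref{DENOM}, rearranged as $\dfrac{t}{\log t}\Big/ M_{|g|}(t) \ll \dfrac{1}{\delta}\dfrac{P_t}{\phi(P_t)}\Big(\int_1^t \dfrac{M_{|g|}(u)}{u^2}\,du\Big)^{-1}$, which cancels the $t/\log t$ and supplies the factor $\frac{B}{\delta}\frac{P_t}{\phi(P_t)}$ --- precisely what you already invoke for the main term.
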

\begin{proof}
Recall that $\mu := \max_p |\theta_p|$. Take $\lambda$ as in Lemma \ref{WITHR}. Applying Lemmas \ref{NUM} and \ref{DENOM} combined with Lemma \ref{CHEAP}, 
%(using $M_{|g|}(t)$ in each of the three forms in the statement of Lemma \ref{DENOM}), we get for $l \geq 2$,
\begin{align*}
\frac{|M_h(t)|}{M_{|g|}(t)} &\ll_B \frac{B}{\delta}\frac{P_t}{\phi(P_t)}\left(\frac{\mc{G}(\sg)^{-1}}{\log t}\int_{t^{\kappa}}^{t} \frac{|M_h(u)|\log u}{u^{2}} du+R_h(\lambda)\frac{\int_1^{t^{\kappa}} \frac{M_{|g|}(u)}{u^2\log^{\lambda}(3u)} du}{\int_{1}^t \frac{M_{|g|}(u)}{u^2}du}+ A\mu\right)\\
&+ R_h(\lambda)\frac{B\log_2 t}{\kappa \log^{1+\lambda} t} + \frac{1}{\log^2 t}.
%C_1\frac{B}{\delta}\left(\frac{1}{\kappa^l \log^l t}\left(\frac{J_{G,m}(\sg)}{J_{\mc{G},0}(\sg)}+ Am\frac{J_{\mc{G},m}(\sg)}{J_{\mc{G},0}(\sg)}\right) + Am + R_h(\lambda)\frac{\log_2 t}{\log^{\beta} t} + RI_{|g|}(t)^{-1}\int_1^{t+y} \frac{M_{|g|}(u)}{u^2 \log^{\beta} u}du \right), \label{PROPFIRST}
\end{align*}
%for some constant $C_1 > 0$ depending at most on $B$. 
Now, using the definition of $\Delta_h(u)$, which was introduced in Section 4, we have
\begin{align*}
&\int_{t^{\kappa}}^{t} \frac{|M_h(u)|\log u}{u^{2}} du 
%\leq \frac{1}{\kappa \log t}\int_{t^{\kappa}}^{t} \frac{|M_h(u)|\log u}{u^{2}} du + \int_1^{t^{\kappa}} \frac{|M_h(u)|}{u^{2}}du\\
\leq \int_1^{t} \frac{|N_h(u)|}{u^{2}} du+ \int_{t^{\kappa}}^{t}\frac{|\Delta_{h}(u)|}{u^{2}} du,
%%&\int_1^{\infty} \frac{|M_h(u)|}{u^{1+\sg}} du}{\int_1^{\infty} \frac{M_{|g|}(u)}{u^{1+\sg}} du} \leq \frac{1}{\kappa}\frac{\int_{t^{\kappa}}^{\infty} \frac{|M_h(u)|\log u}{u^{1+\sg}} du}{O_B(1)+\int_3^{\infty} \frac{M_{|g|}(u)\log u}{u^{1+\sg}} du} + \frac{\int_1^{t^{\kappa}} \frac{|M_h(u)|}{u^{1+\sg}}}{\int_1^{\infty} \frac{M_{|g|}(u)}{u^{1+\sg}} du} \\
%%&\ll \frac{\int_1^{\infty} \frac{|N_h(u)|}{u^{1+\sg}} du}{\int_1^{\infty} \frac{N_{|g|}(u)}{u^{1+\sg}} du} + R_h(\lambda)I_g(t)^{-1}\int_1^{t^{\kappa}} \frac{|M_g(u)|}{u^{1+\sg}\log^{\beta}(3u)} du + \frac{\int_{t^{\kappa}}^{\infty}\frac{|\Delta_{h,1}(u)|}{u^{1+\sg}} du}{\int_1^{\infty} \frac{M_{|g|}(u)}{u^{1+\sg}} du}.
\end{align*}
whence we have the estimate
\begin{equation*}
\frac{|M_h(t)|}{M_{|g|}(t)} \ll_B \frac{B}{\delta}\frac{P_t}{\phi(P_t)}\left(\frac{1}{(1-\kappa)\log t}\mc{G}(\sg)^{-1}(M_1 + M_2) +M_3 + A\mu\right) + R_h(\lambda)\frac{B\log_2 t}{\kappa \log^{1+\lambda} t}
\end{equation*}
where we have defined $M_1 := \int_1^t \frac{|N_h(u)|}{u^2}du$ and
\begin{align*}
M_2 &:= \left(\int_{1}^{t} \frac{M_{|g|}(u)}{u^{2}}du\right)^{-1}\int_{t^{\kappa}}^t \frac{|\Delta_h(u)|}{u^2} du, \\
M_3 &:= R_h(\lambda)\left(\int_1^t \frac{M_{|g|}(u)}{u^2}du\right)^{-1}\left(\int_1^{t^{\kappa}} \frac{M_{|g|}(u)}{u^2\log^{\lambda}(3u)} du\right).
\end{align*}
%&=: \frac{B}{\delta}\left(\frac{1}{(1-\kappa)\log t}\mc{G}(\sg)^{-1}\left(\frac{\int_1^{t} \frac{|N_h(u)|}{u^{2}} du+\int_{t^{\kappa}}^{t}\frac{|\Delta_{h}(u)|}{u^{2}} du\right) + R_h(\lambda)\left(\int_1^t \frac{M_{|g|}(u)}{u^2}du\right)^{-1}\left(\int_1^{t^{\kappa}} \frac{M_{|g|}(u)}{u^2\log^{\lambda}(3u)} du\right)+ A\mu\right)+ \left(\frac{R_h(\lambda)}{\log^{\lambda} t} + A\mu\right)\frac{\log_2 t}{\log t}
%Label the first three terms $M_1$, $M_2$ and $M_3$, respectively.  
%Since $u^{-(1+\sg)} \asymp 1$ in the integral, we can simply 
By Lemma \ref{WITHR}, $M_3 \ll R_h(\lambda)\kappa^{-\lambda}e^{-\frac{1}{2}S_{\kappa}(t)}\log^{-\lambda} t$.
%\begin{equation*}
%M_2 \ll_B R_h(\lambda)\left(\int_1^t \frac{M_{|g|}(u)}{u^2}du\right)\log^{-\lambda} te^{-c_0S_{\kappa}(t)}.
%\end{equation*}
Next, using the pointwise bound for $\Delta_h(u)$, \eqref{GAPMN}, we have
\begin{align*}
M_2\left(\int_{1}^{t} \frac{M_{|g|}(u)}{u^{2}}du\right) &\ll BR_h(\lambda)\int_{t^{\kappa}}^{t} \frac{M_{|g|}(u)\log_2 u}{u^{2}\log^{1+\lambda}u}du %+ |A| \int_{t^{\kappa}}^{t} \frac{M_{|g|}(u) \log_2 u}{u^{2} \log u} du\right) \\
\leq BR_h(\lambda)\frac{\log_2 t}{\kappa \log^{1+\lambda} t}\left(\int_{1}^{t} \frac{M_{|g|}(u)}{u^{2}}du\right). 
\end{align*}
Therefore, $M_2 \ll BR_h(\lambda)\frac{\log_2 t}{\kappa \log^{1+\lambda} t}$.  
%Lastly, by Lemma \ref{CHEAP}, either $\int_1^{t} \frac{M_{|g|}(u)}{u^2} du \gg_B \mc{G}(\sg)$. 
%or else $M_{|g|}(t) \gg t \mc{G}(\sg)$. 
%Dividing $|M_h(t)|$ by $\delta \frac{t}{\log t}\int_1^t \frac{M_{|g|}(u)}{u^2} du$, 
%in the first case, or $M_{|g|}(t)$ in the second, 
%it follows that 
%\begin{align*}
%M_1 &\ll_B \left(\frac{1}{\delta}\frac{\log t}{t} \mc{G}(\sg)^{-1}\right)\left(B\frac{t}{\log^2 t} \int_1^{\infty} \frac{|N_h(u)|}{u^{2}} du\right) \\
%&= \frac{B}{\delta(1-\kappa)\log t} \mc{G}(\sg)^{-1}\int_1^{t} \frac{|N_h(u)|}{u^{2}} du.
%\end{align*}
Consider now $M_1$. Note that $t^{\sg-1} = e$. Applying Cauchy-Schwarz and then invoking Parseval's theorem, we get
\begin{align*}
M_1 &\leq \left(\int_1^{t} \frac{du}{u}\right)^{\frac{1}{2}}\left(\int_1^{t} \frac{|N_h(u)|^2}{u^{3}}\right)^{\frac{1}{2}} \leq \log^{\frac{1}{2}} t\left(e^2\int_1^{\infty} \frac{|N_{h}(u)|^2}{u^{1+2\sg}} du\right)^{\frac{1}{2}} = e\log^{\frac{1}{2}} t \left(\int_0^{\infty} |N_{h}(e^v)|^2e^{-2v\sg} dv\right)^{\frac{1}{2}} \\
&= 2\pi e \log^{\frac{1}{2}} t \left(\int_{(\sg)} |H'(s)|^2\frac{|ds|}{|s|^2}\right)^{\frac{1}{2}}  \ll J_{H,1}(\sg)^{\frac{1}{2}}\log^{\frac{1}{2}} t.
\end{align*}
%\ref{BEATCSApp}, $\kappa M_1 \ll_B \left(\frac{J_{H,1}(\sg)}{J_{\mc{G},1}(\sg)}\right)^{\frac{1}{2}}$. 
Combining these estimates, we can find some constant $C_1$ depending at most on $B$ such that
\begin{equation*}
\frac{|M_h(t)|}{M_{|g|}(t)} \leq C_1\left(\frac{B}{\delta}\frac{P_t}{\phi(P_t)}\left(\frac{\left(\mc{G}(\sg)^{-2}J_{H,1}(\sg)\right)^{\frac{1}{2}}}{(1-\kappa)\log^{\frac{1}{2}} t} + |A|\mu\right)+ R_h(\lambda)\left(\kappa^{-\lambda}e^{-\frac{1}{2}S_{\kappa}(t)}\log^{-\lambda} t + B\frac{\log_2 t}{\kappa \log^{1+\lambda} t}\right) + \frac{1}{\log^2 t}\right).
\end{equation*}
%\begin{align*}
%\frac{|M_h(t)|}{M_{|g|}(t)} \leq C_1
We now choose $\kappa > 0$ as in Lemma \ref{EXPONENTS} for $C = C_1$.
%In particular, since 
%\begin{equation*}
%\sum_{t^{\kappa} < p \leq t} \frac{|g(p)|}{p} \geq \delta \log\left(\frac{1}{\kappa}\right),
%\end{equation*}
%it follows that $\kappa^{c_0\delta} \geq \frac{\delta}{2C_1B}$. 
In sum, we have
%Now take $\beta = \frac{1}{2}(1-\alpha)c_0\delta$ and $\alpha = \frac{1}{2}$, and take $\kappa = e^{-\frac{1}{c_0\delta}\log\left(\frac{4C_1}{\delta}\right)} = \kappa(C_1) < 1$.  Then, as $c_0\delta- \beta < 0$ and $\log^{-(1-\alpha)c_0\delta} < \frac{1}{4}\log^{-\beta}t$ when $t$ is sufficiently large, it follows from the definition of $\kappa$ that
\begin{equation*}
\frac{|M_h(t)|}{M_{|g|}(t)} - \frac{1}{2}\frac{R_h(\lambda)}{\log^{\lambda} t} \leq C_1\left(\frac{B}{\delta}\frac{P_t}{\phi(P_t)}\left(\frac{1}{\log^{\frac{1}{2}} t}\left(\mc{G}(\sg)^{-2}J_{H,1}(\sg)\right)^{\frac{1}{2}} + |A|\mu\right) + R_h(\lambda)\frac{\log_2 t}{\kappa \log^{1+\lambda} t}  + \frac{1}{\log^2 t}\right).
\end{equation*}
This gives the claim.
\end{proof}   
%\begin{rem}
%From the proof, it is obvious that for some constant depending at most on $B$, $\kappa$ need only satisfy
%\begin{equation*}
%\sum_{t^{\kappa} < p \leq t} \frac{|g(p)|}{p} \geq \frac{1}{c_0}\log\left(\frac{2c'B^2}{\delta^2}\right)
%\end{equation*}
%for each $t \leq x$ sufficiently large. Thus, in situations in which we know the distribution of $\{|g(p)|\}_p$, we can improve the lower bound for allowable values of $\kappa$ in the statement of the proposition.  
%\end{rem}
An immediate corollary of the above proposition is an upper bound for $R_h(\lambda)$ upon taking maxima.
\begin{cor} \label{CORSTARTER}
For each $2 \leq t \leq x$ let $\sg_t := 1+\frac{1}{\log t}$. Suppose $C > 0$ is sufficiently small, $\kappa = \kappa(C)$ and $\lambda$ are as in Lemma \ref{WITHR} and Lemma \ref{EXPONENTS}. 
%$\beta := \frac{1}{2}c_0\delta$ and $l \geq 1$. 
Then there exists a constant $C = C(\lambda,\delta) > 0$ and a $t_0 \geq 2$ sufficiently large such that
\begin{equation*}
R_h(\lambda) \leq \max\left\{1+|A|,C\max_{t_0 \leq t \leq x} \left\{\frac{B}{\delta}\frac{P_t}{\phi(P_t)}\log^{\lambda} t\left(\frac{1}{\log^{\frac{1}{2}} t}\left(\mc{G}(\sg)^{-2}J_{H,1}(\sg_t)\right)^{\frac{1}{2}}+ |A|\mu\right) + \frac{1}{\log^2 t}\right\}\right\}.
%\frac{|M_h(t)|}{M_{|g|}(t)} - \frac{R_h(\lambda)}{2\log^{\lambda} t} \leq C_1\left(\frac{B}{\kappa\delta}\left(\frac{J_{H,1}(\sg)}{J_{\mc{G},1}(\sg)}\right)^{\frac{1}{4}} + Am + \frac{B\log_2 t}{\kappa^2 \log t} \left(\frac{R_h(\lambda)}{\log^{\lambda} t} + |A|\right)\right)
\end{equation*}
\end{cor}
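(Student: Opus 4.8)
The plan is to derive Corollary~\ref{CORSTARTER} directly from Proposition~\ref{STARTER} by taking the supremum over $t$ in $[2,x]$ of the quantity $\frac{|M_h(t)|}{M_{|g|}(t)}\log^{\lambda} t$ and exploiting the self-referential appearance of $R_h(\lambda)$ on the right-hand side. First I would fix $\lambda$ and $\kappa = \kappa(C)$ exactly as in Lemmas~\ref{WITHR} and~\ref{EXPONENTS}, so that the conclusion of Lemma~\ref{EXPONENTS} gives the key bound
\begin{equation*}
C\frac{B}{\delta}\kappa^{-\lambda}e^{-\frac{1}{2}S_{\kappa}(t)} \leq \frac{1}{2}
\end{equation*}
for the relevant constant $C = C_1$ coming out of Proposition~\ref{STARTER}. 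I would multiply the inequality in Proposition~\ref{STARTER} through by $\log^{\lambda} t$; the left side becomes $\frac{|M_h(t)|}{M_{|g|}(t)}\log^{\lambda} t - \frac{1}{2}R_h(\lambda)$, and every occurrence of $R_h(\lambda)$ on the right is multiplied by a decaying factor. The term $R_h(\lambda)\frac{\log_2 t}{\kappa \log^{1+\lambda} t}\cdot\log^{\lambda} t = R_h(\lambda)\frac{\log_2 t}{\kappa \log t}$ tends to $0$, so for $t$ larger than some $t_0 = t_0(\kappa,C)$ it is at most, say, $\frac{1}{4}R_h(\lambda)$; similarly the $M_3$-type contribution hidden in the $\frac{1}{2}R_h(\lambda)/\log^{\lambda}t$ subtraction is already accounted for.

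Next I would take the maximum over $t \in [t_0,x]$ of both sides. Writing $R_h(\lambda) = \max_{2\le u\le x}\frac{|M_h(u)|}{M_{|g|}(u)}\log^{\lambda}u$, the maximum over the full range $[2,x]$ splits into the maximum over $[2,t_0]$ and the maximum over $[t_0,x]$. On the short range $[2,t_0]$ the trivial bound $\frac{|M_h(u)|}{M_{|g|}(u)} \le 1 + |A|$ (from the triangle inequality applied to $h = g - A|g|$, using $|g| \le |g|$ pointwise) gives $\frac{|M_h(u)|}{M_{|g|}(u)}\log^{\lambda} u \le (1+|A|)\log^{\lambda} t_0 \ll_{t_0,\lambda} 1+|A|$, which is absorbed into the first alternative of the stated maximum (after adjusting constants, noting $t_0$ depends only on $B$ through $\kappa$ and $C$). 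On the range $[t_0,x]$, the multiplied-through inequality reads
\begin{equation*}
\frac{|M_h(t)|}{M_{|g|}(t)}\log^{\lambda}t - \frac{1}{2}R_h(\lambda) \le C_1\left(\frac{B}{\delta}\frac{P_t}{\phi(P_t)}\log^{\lambda}t\left(\frac{\left(\mc{G}(\sg_t)^{-2}J_{H,1}(\sg_t)\right)^{\frac{1}{2}}}{\log^{\frac{1}{2}}t} + |A|\mu\right) + \frac{1}{\log^2 t}\right) + \frac{1}{4}R_h(\lambda).
\end{equation*}
Taking the supremum of the left side over $t \in [t_0,x]$ produces a term $\ge \left(\max_{t_0\le t\le x}\frac{|M_h(t)|}{M_{|g|}(t)}\log^{\lambda}t\right) - \frac{1}{2}R_h(\lambda)$.

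The final step is the absorption argument. Combining the two ranges, $R_h(\lambda) \le \max\{(1+|A|)\log^{\lambda}t_0,\ \Phi(x)\} + \frac{1}{2}R_h(\lambda) + \frac{1}{4}R_h(\lambda)$ would be too lossy; instead I would argue by cases on which range attains the maximum defining $R_h(\lambda)$. If the max is attained in $[2,t_0]$ we land in the first alternative directly. If it is attained in $[t_0,x]$, then $R_h(\lambda) = \max_{t_0\le t\le x}\frac{|M_h(t)|}{M_{|g|}(t)}\log^{\lambda}t$, and the displayed inequality above (taking the sup over $t_0 \le t \le x$ of the left side, where the first summand equals $R_h(\lambda)$) gives $R_h(\lambda) - \frac{1}{2}R_h(\lambda) \le C_1\max_{t_0\le t\le x}(\cdots) + \frac{1}{4}R_h(\lambda)$, i.e.\ $\frac{1}{4}R_h(\lambda) \le C_1\max_{t_0\le t\le x}(\cdots)$, whence $R_h(\lambda) \le 4C_1\max_{t_0\le t\le x}(\cdots)$, which is the second alternative with $C = 4C_1$. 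The main obstacle — really the only subtle point — is making sure that the decaying $R_h(\lambda)$-coefficients on the right genuinely become $< \frac{1}{2}$ (in total) uniformly for $t \ge t_0$, which is exactly where the choice of $\kappa$ from Lemma~\ref{EXPONENTS} and the hypothesis that $C$ is sufficiently small are used; one must also check that $t_0$ can be chosen depending only on $B$ (through $\kappa$, $\lambda$, $C_1$) and not on $x$, so that the short-range contribution is a genuine absolute constant times $1+|A|$. Everything else is bookkeeping with the triangle inequality and monotonicity of $\log^{\lambda}t$.
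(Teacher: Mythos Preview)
Your proposal is correct and follows essentially the same approach as the paper: multiply Proposition~\ref{STARTER} through by $\log^{\lambda}t$, use the triangle inequality $|M_h(t)|/M_{|g|}(t)\le 1+|A|$ for $t<t_0$, and for $t\ge t_0$ take the maximum and absorb the $R_h(\lambda)$-terms (whose coefficients are made small via the choice of $\kappa$ from Lemma~\ref{EXPONENTS} and the decay of $\log_2 t/(\kappa\log t)$) to the left. Your case split on where the maximum defining $R_h(\lambda)$ is attained is exactly what the paper means by ``coalescing all of the terms in $R_h(\lambda)$ together on the left side,'' and your remark that $\log^{\lambda}t_0$ must be absorbed into the constant is a valid observation that the paper leaves implicit.
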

\begin{proof}
In Proposition \ref{STARTER} we must choose $t_0$ sufficiently large such that each $t \geq t_0$ satisfies its conclusion.  For the remaining elements we can only get $O(1)$ order terms, and since, by the triangle inequality we have $\frac{|M_h(t)|}{M_{|g|}(t)} \leq 1+|A|$, the first bound also follows. \\
For $t \geq t_0$, otherwise, the claim follows immediately upon multiplying each side of the estimate in Proposition \ref{STARTER} by $\log^{\lambda} t$, taking the maximum of both sides and then coalescing all of the terms in $R_h(\lambda)$ together on the left side. This gives a fixed positive multiple of $R$ when $C$ is sufficiently small.
\end{proof}
\begin{proof}[Proof of Theorem \ref{HalGen} i)]
Assume first that $A = 0$.  Then $H = G$, and by Lemmas \ref{INTBOUND} and \ref{MAIN1}, we have 
\begin{align*}
\mc{G}(\sg)^{-2}J_{G,1}(\sg_t) &\ll_{m,B,\mbf{\phi},\mbf{\beta}} B^2(\sg_t-1)^{-1}\prod_{1 \leq j \leq m} \left|\frac{G_j(\sg_t)}{\mc{G}_j(\sg_t)}\right|^{\frac{\gamma_{0,j}}{1+\gamma_{0,j}}} +1 \\
&\ll_B B^2(\sg_t-1)^{-1}\exp\left(-\sum_{1 \leq j \leq m} \frac{\gamma_{0,j}}{1+\gamma_{0,j}} \sum_{p\in E_j} \frac{|g(p)|-\text{Re}(g(p))}{p^{\sg_t}}\right) + 1.
\end{align*}
By Corollary \ref{CORSTARTER}, we need only show that if $\sg_t := 1+\frac{1}{\log t}$ then 
\begin{equation} \label{INCR}
h(t) := \frac{B^2}{\delta}\frac{P_t}{\phi(P_t)}\prod_{1 \leq j \leq m} \left|\frac{G_j(\sg_t)}{\mc{G}_j(\sg_t)}\right|^{c_j'\frac{\gamma_{0,j}}{2(1+\gamma_{0,j})}}\log^{\lambda} t
%= \frac{B}{\delta}(\sg_t-1)^{-\beta} \left(\frac{1}{\kappa} \left|\frac{G(\sg_t)}{\mc{G}(\sg_t)}\right|^{\frac{\gamma_0}{1+\gamma_0}}\right)^{\frac{1}{2}}
\end{equation}
where $c_j' >0$, is maximized when $t = x$, after which the theorem will immediately follow. Indeed, if the function is maximized at $t = x$ then we have
\begin{align*}
\frac{|M_g(x)|}{M_{|g|}(x)}\log^{\lambda(x)} x = R_g(\lambda) \ll_B \frac{B^2}{\delta}\frac{P_x}{\phi(P_x)}\exp\left(-\sum_{1 \leq j \leq m} \frac{c_j'\gamma_{0,j}}{1+\gamma_{0,j}} \sum_{p\in E_j} \frac{|g(p)|-\text{Re}(g(p))}{p^{\sg_x}}\right)\log^{\lambda(x)} x,
\end{align*}
in case $g \in \mc{C}_b$, and a similar argument holds when $g \in \mc{C}\bk \mc{C}_b$.
Now, \eqref{TAIL} implies that $\sum_{p > x} |g(p)|\left(\frac{1}{p^{\sg_x}} + \frac{1}{p^{\sg'}}\right) \ll_B 1$, and 
\begin{align*}
\exp\left(\sum_{p \leq x} \left(\frac{1}{p^{\sg}}-\frac{1}{p}\right)\right) &= \exp\left(\sum_{p \leq x} \frac{1}{p}\left(e^{-(\sg-1)\log p} - 1\right)\right) \leq \exp\left(-(\sg-1)\sum_{p \leq x} \frac{\log p}{p}\right) \ll 1.
\end{align*}
Thus,
\begin{equation} \label{MAX}
\frac{|M_g(x)|}{M_{|g|}(x)}\ll_B \frac{B^2}{\delta}\frac{P_x}{\phi(P_x)}\exp\left(-\sum_{1 \leq j \leq m} c_j'\frac{\gamma_{0,j}}{2(1+\gamma_{0,j})} \sum_{p\leq x \atop p \in E_j} \frac{|g(p)|-\text{Re}(g(p))}{p}\right).
\end{equation}
Now, $\frac{B^2}{\delta}$ is non-decreasing in $t$, and thus to show that $t = x$ maximizes $h(t)$, it suffices to show that the remaining factors defining it are maximal at $t = x$. Of course, the maximal value of $\frac{P_t}{\phi(P_t)}$ is $O\left(\log_2 t\right)$ (see, for instance, Theorem 4 in I.5 of \cite{Ten2}). We consider two cases, according to whether $g \in \mc{C}_b$ or not. Suppose first that $g \in \mc{C}_b$. Since $\log^{\lambda(t)} t = \exp\left(\sum_{1 \leq j \leq m} \frac{c_j\gamma_{0,j}}{1+\gamma_{0,j}}\sum_{p \leq t \atop p \in E_j} \frac{|g(p)|}{p}\right)$, where $c_j > 0$, we see that it suffices that $c_j' \leq c_j$, for in this case,
\begin{align*}
&\log^{\lambda(t)}t\exp\left(-\sum_{1 \leq j \leq m} \frac{c_j'\gamma_{0,j}}{1+\gamma_{0,j}} \sum_{p \leq t \atop p \in E_j} \frac{g(p)-\text{Re}(g(p))}{p}\right) \\
&\geq \exp\left(\sum_{1 \leq j \leq m}\frac{\gamma_{0,j}}{2(1+\gamma_{0,j})}\left((2c_j-c_j') \sum_{p \leq t \atop p \in E_j} \frac{g(p)}{p} +\sum_{p \leq t \atop p \in E_j} \frac{|\text{Re}(g(p))|}{p}\right)\right)\\
&\geq \exp\left(\sum_{1 \leq j \leq m}c_j'\frac{\gamma_{0,j}}{2(1+\gamma_{0,j})}\sum_{p \leq t \atop p \in E_j} \frac{|g(p)|+\text{Re}(g(p))}{p}\right)
\end{align*}
uniformly in $t_0 \leq t \leq x$, and the expression in the exponential is non-decreasing in $t$ since $|g(p)|+\text{Re}(g(p)) \geq 0$ for all $p$, provided we replace $c_j$ by $\frac{1}{2}c_j$, if necessary, to account for the $\log_2 t$ factor from $\frac{P_t}{\phi(P_t)}$. Thus, depending on the choice of $c_j$ emerging from Lemma \ref{EXPONENTS}, the right side of \eqref{MAX} with $t$ in place of $x$ is largest when $t = x$ when $c_j' := \frac{1}{2}c_j$, and the proof of the theorem is complete in this case. \\
On the other hand, if $g \in \mc{C} \bk \mc{C}_b$ by Lemma \ref{EASYHalDecay} we have
\begin{equation*}
\max_{|\tau| \leq (\sg-1)^{-D}}\left|\frac{G(s)}{\mc{G}(\sg)}\right| \ll_B \exp\left(-\sum_{1 \leq j \leq m} B_j\left(\rho_{E_j}(t;\tilde{g},T) + \sum_{p \in E_j \atop p \leq t} \frac{1-|\tilde{g}(p)|}{p}\right)\right).
\end{equation*}
Inserting this instead into the estimate from Lemma \ref{MAIN1} and then applying Corollary \ref{CORSTARTER} as above, we need to show that
\begin{align*}
&\lambda(t)\log_2 t +B_j\left(\sum_{p \in E_j \atop p \leq t} \frac{1}{p} - \rho_{E_j}(t;\tilde{g},T)\right) - \sum_{p \in E_j \atop p \leq t} \frac{|g(p)|}{p} \\
&= \sum_{p \in E_j \atop p \leq t} (c_j-1)\sum_{p \in E_j \atop p \leq t} \frac{|g(p)|}{p} + B_j\left(\sum_{p \in E_j \atop p \leq t} \frac{1}{p} - \rho_{E_j}(t;\tilde{g},T)\right)
\end{align*}
is increasing in $t$. Of course, $2\sum_{p \in E_j \atop p \leq t}\frac{1}{p}-\rho_{E_j}(t;\tilde{g},T)$ is non-decreasing in $t$ by definition, and thus, picking $c_j = B_j+1$ proves the claim in this case as well.
\end{proof}
\begin{proof}[Proof of Theorem \ref{HalGen} ii)]
In this case, $\mu = \max_p |\theta_p|  < \eta$.  Applying Corollary \ref{CORSTARTER} with 
\begin{equation*}
A := \exp\left(-\sum_{p \leq t} \frac{|g(p)|-g(p)}{p}\right) 
\end{equation*}
instead, in conjunction with Lemma \ref{SHARPAPP} gives, in the non-trivial case,
\begin{equation*}
R_h(\lambda) \ll_B \max_{t_0 \leq t \leq x} \left\{\frac{B}{\delta}\log^{\lambda} t\left(\frac{1}{\log^{\frac{1}{2}} t}\left(\mc{G}(\sg)^{-2}J_{H,1}(\sg_t)\right)^{\frac{1}{2}}+ A\eta\right) + \frac{1}{\log^2 t}\right\},
\end{equation*}
where, for $T = (\sg-1)^D$, $D > 2$ and $K > 0$ satisfying $K(\sg-1) < 1$ and $B\eta \log(1+K) < 1$,
\begin{align*}
\mc{G}(\sg_t)^{-2}J_{H,1}(\sg_t) &\leq B^2(\sg_t-1)^{-1}(\eta^2|A|^2\min\{\delta^{-2},\log^2(1+K)(1+K)^{-2\delta}\} + (\sg_t-1)^{-2}T^{-1} \\
&+ (\sg_t-1)^{\gamma}|A|^{\frac{\gamma_0}{2(1+\gamma_0)}} + |A|^2(\sg_t-1)^{2\delta/3} + \frac{1}{\gamma} (1+K)^{-(1+\gamma)} + \frac{|A|^2}{\delta} (1+K)^{-(1+\delta)}).
\end{align*}
If $B\eta \leq \delta < \eta^{\frac{1}{2}}$, take $K := e^{\frac{1}{\delta}}-1$.  Otherwise, take $K := e^{\frac{1}{\sqrt{\eta}}}-1$.  Set $d_1 := \delta^{-1}\sqrt{\eta}$ in the first case and $d_1 := 1$ in the second case. 
%Similarly, set $c_2 := \delta^{-1}\sqrt{\eta}$ in the first case and $c_2 := 1$ in the second. 
Then
%$K := e^{\frac{2}{\sqrt{\eta}}}-1$ and $T = (\sg-1)^{-4}$ gives
\begin{align*}
\mc{G}(\sg_t)^{-2}J_{H,1}(\sg_t) &\ll_{B,\mbf{\phi},\mbf{\beta}} \frac{B^2}{\sg_t-1}\left(d_1^2\eta|A|^2+ (\sg_t-1)^{2\delta/3} + \delta^{-1}e^{-\frac{2d_1}{\sqrt{\eta}}} + |A|^{\frac{\gamma_0}{2(1+\gamma_0)}}\left((\sg_t-1)^{\gamma} + \frac{e^{-\frac{2d_1}{\sqrt{\eta}}}}{\gamma}\right)\right).
%  + |A|^2(\sg-1)^{2\delta/3} + \left(\frac{|A|^{\frac{\gamma_0}{2(1+\gamma_0)}}{\gamma} +\frac{|A|^2}{\delta} e^{-\frac{1}{\sqrt{\eta}}})
\end{align*}
It therefore follows, upon taking termwise square-roots of the above expression in the non-trivial bound in Corollary \ref{CORSTARTER} and using the definition of $\sg = \sg_t$,
\begin{align*}
R_h(\lambda) &\ll_{B,m,\mbf{\phi},\mbf{\beta}} \frac{B^2}{\delta}\log^{\lambda} t \left(d_1\eta^{\frac{1}{2}}|A| + \log^{-\frac{\delta}{3}} t + \delta^{-\frac{1}{2}}e^{-\frac{d_1}{\sqrt{\eta}}} + |A|^{\frac{\gamma_0}{4(1+\gamma_0)}}\left(\log^{-\frac{\gamma}{2}}t + e^{-\frac{d_1}{\sqrt{\eta}}}\right) +\log^{-2} t + |A|\eta\right).
\end{align*}
Now, we saw in Lemmas \ref{HalDecay} and \ref{WITHR} that $\gamma_{0,j} = C_j\beta_j^3$, where $C_j:= \frac{27\delta_j}{1024\pi B_j}$. Since $|\theta_p| \leq \eta_j$ for each $p \in E_j$ by assumption, we can take $\phi_j = \pi$ so that $\beta_j = \pi-\eta_j > 1$ is admissible. If it happens that $\gamma_{0,j} \geq 1$ then $\frac{\gamma_{0,j}}{2(1+\gamma_{0,j})} \geq \frac{1}{4}$. On the other hand, if $\gamma_{0,j} < 1$, we have $\frac{\gamma_{0,j}}{2(1+\gamma_{0,j})} \geq \frac{C_j}{8}\beta_j^3 > \frac{C_j}{8}$. Thus, provided that $\eta_j < \frac{C_j}{16}c_j$ for each $j$, we have $\frac{\gamma_{0,j}c_j}{2(1+\gamma_{0,j})} > \alpha\eta_j$, for any $\alpha \in (0,1]$. It therefore follows that for such $\alpha$,
\begin{align*}
\log^{\lambda} t|A|^{\alpha} &\geq \exp\left(\sum_{1 \leq j\leq m} \sum_{p \leq t \atop p \in E_j} \frac{|g(p)|}{p}\left(\frac{c_j\gamma_{0,j}}{2(1+\gamma_{0,j})} - \alpha\left|1-e^{i\theta_p}\right|\right)\right) \\
&\geq \exp\left(\sum_{1 \leq j \leq m} \sum_{p \leq t \atop p \in E_j} \frac{|g(p)|}{p}\left(\frac{c_j\gamma_{0,j}}{2(1+\gamma_{0,j})} - \alpha\eta_j\right)\right) \\
&\geq \exp\left(\sum_{1 \leq j \leq m} \frac{c_j\gamma_{0,j}}{4(1+\gamma_{0,j})}\sum_{p \leq t \atop p \in E_j} \frac{|g(p)|}{p}\right),
\end{align*}
which is increasing in $t$. The other terms depending on $t$ contribute strictly smaller contributions. Hence, for this choice of $\lambda$, the upper bound for $R_h(\lambda)$ is maximized at $t = x$. It follows that, in the non-trivial case,
\begin{align*}
\frac{|M_h(x)|}{M_{|g|}(x)}\log^{\lambda(x)}x = R_h(\lambda) &\ll_{B,m,\mbf{\phi},\mbf{\beta}} \frac{B^2}{\delta}\log^{\lambda(x)} x ((\eta^{\frac{1}{2}}|A|\left(d_1 + \log^{-\frac{\delta}{3}} x + \delta^{-\frac{1}{2}}e^{-\frac{d_1}{\sqrt{\eta}}}\right) \\
&+ |A|^{\frac{\gamma_0}{4(1+\gamma_0)}}\left(\log^{-\gamma/2}x + e^{-\frac{d_1}{\sqrt{\eta}}}\right) +\log^{-2} t) + |A|\eta).
\end{align*}
The last term being irrelevant, since $\eta < 1$ and $\frac{\gamma_0}{4(1+\gamma_0)} < 1$ we have
\begin{align*}
|M_h(x)| &\ll_{B,m,\mbf{\phi},\mbf{\beta}} \frac{B^2}{\delta} M_{|g|}(x)\left(\eta^{\frac{1}{2}}|A|\left(d_1 +\log^{-\frac{\delta}{3}} x + \delta^{-\frac{1}{2}}e^{-\frac{d_1}{\sqrt{\eta}}}\right)+ |A|^{\frac{\gamma_0}{4(1+\gamma_0)}}\left(\log^{-\frac{\gamma}{2}} x + \gamma^{-1}e^{-\frac{d_1}{\sqrt{\eta}}}\right)\right).
\end{align*}
The upper bound just given is precisely the error term in ii) of Theorem \ref{HalGen}.
\end{proof}
\section{Proof of Theorem \ref{WIRSINGEXT}}
In this section, we apply Theorem \ref{HalGen} to prove Theorem \ref{WIRSINGEXT}. We first need an estimate for the ratio $\frac{M_{|g|}(x)}{M_{f}(x)}$, which we can directly determine via Theorem \ref{LOWERMV}.
\begin{lem}\label{BASICLOW}
Let $\delta, B > 0$, and let $S$ be a set of primes for which $P_x := \prod_{p \in S \atop p \leq x} p \ll_r x^r$ for $r > 0$. Suppose $g$ is a strongly multiplicative function satisfying $|g(p)| \geq \delta$ for each prime $p \notin S$, and $f$ is a non-negative multiplicative function satisfying $|g(n)| \leq f(n)$ for all $n$ and $f(p) \leq B$. Then for any sufficiently large $x$,
\begin{equation*}
M_{|g|}(x) \ll_{B} \frac{B}{\delta} \frac{P_x}{\phi(P_x)}\exp\left(-\sum_{p \leq x} \frac{f(p)-|g(p)|}{p}\right)M_{f}(x).
\end{equation*}
\end{lem}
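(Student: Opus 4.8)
The statement compares $M_{|g|}(x)$ with $M_f(x)$, and the natural route is to bound each of these quantities below and above by a common "Euler product" model and then take the ratio. The key input is Theorem~\ref{LOWERMV}, which gives a \emph{lower} bound for $M_f(x)$ in terms of $\frac{\phi(P_x)}{P_x}\frac{x}{\log x}\prod_{p\le x}(1+\frac{f(p)}{p})$, and an \emph{upper} bound for $M_{|g|}(x)$ which should come from Selberg's estimate (Lemma~\ref{SELBERG}) together with a standard comparison of $M_{|g|}(x)$ to its own Euler product. So the first step is: apply Theorem~\ref{LOWERMV} to $\lambda = f$ (its hypotheses (a),(b),(c) are exactly what is assumed here, using $f(p)\le B$ and $|g(p)|\le f(p)$, together with the squarefree-type tail bound which is automatic for strongly/completely multiplicative $f$), obtaining
\[
M_f(x) \gg_B \delta\,\frac{\phi(P_x)}{P_x}\,\frac{x}{\log x}\,\prod_{p\le x}\Bigl(1+\frac{f(p)}{p}\Bigr).
\]
Wait — one must be careful: Theorem~\ref{LOWERMV} as stated requires $\delta\le\lambda(p)\le B$ on $p\notin S$, and here the lower bound on $f(p)$ is only $f(p)\ge|g(p)|\ge\delta$, which does hold; so this is fine.

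**Upper bound for $M_{|g|}$.** Next I would bound $M_{|g|}(x)$ from above. Since $|g|$ is strongly (or completely) multiplicative with $|g(p)|\le f(p)\le B$, Lemma~\ref{SELBERG} (or its remark in the completely multiplicative case) gives $M_{|g|}(x)\ll_B x\log^{B-1}x$, but that is too crude; instead I want $M_{|g|}(x)\ll_B \frac{x}{\log x}\prod_{p\le x}(1+\frac{|g(p)|}{p})$ up to a bounded factor. This is a standard Wirsing-type upper bound: by partial summation $M_{|g|}(x)\ll x\,L_{|g|}(x)/\log x$ after using the elementary inequality relating a summatory function of a non-negative multiplicative function to $\frac{x}{\log x}$ times its logarithmic sum (the key being Lemma~\ref{LOGSUM} which gives $L_{|g|}(x)\asymp_B P_{|g|}(x) = \prod_{p\le x}(1+\frac{|g(p)|}{p})$). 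One can also extract this directly from Selberg's formula applied to $\rho^{\omega(n)}$ after a convexity/Rankin argument, but the cleanest is: $M_{|g|}(x)\le \sum_{n\le x}|g(n)| \ll_B \frac{x}{\log x} L_{|g|}(x)$ by the Wirsing upper bound (which in this paper's framework follows by the same mechanism as Lemma~\ref{DENOM} run in the opposite direction, using $N_{|g|}(x)\le 2M_{|g|}(x)\log x$ is the wrong direction — rather one uses $M_{|g|}(x)\log x \ge N_{|g|}(x) = \sum_{d\le x}\Lambda(d)|g(d)|M_{|g|}(x/d)$ to get a \emph{lower} bound, so for the upper bound I instead invoke the classical Hall--Tenenbaum type estimate). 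So the honest statement is: I would cite the standard upper bound $M_{|g|}(x)\ll_B \frac{x}{\log x}\prod_{p\le x}(1+\frac{|g(p)|}{p})$, which holds for any non-negative multiplicative function bounded on primes.

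**Combining.** Now form the ratio:
\[
\frac{M_{|g|}(x)}{M_f(x)} \ll_B \frac{\frac{x}{\log x}\prod_{p\le x}(1+\frac{|g(p)|}{p})}{\delta\,\frac{\phi(P_x)}{P_x}\,\frac{x}{\log x}\prod_{p\le x}(1+\frac{f(p)}{p})} = \frac{1}{\delta}\,\frac{P_x}{\phi(P_x)}\prod_{p\le x}\frac{1+\frac{|g(p)|}{p}}{1+\frac{f(p)}{p}}.
\]
Finally I estimate the product: for $0\le |g(p)|\le f(p)\le B$,
\[
\log\prod_{p\le x}\frac{1+\frac{|g(p)|}{p}}{1+\frac{f(p)}{p}} = -\sum_{p\le x}\Bigl(\log\bigl(1+\tfrac{f(p)}{p}\bigr)-\log\bigl(1+\tfrac{|g(p)|}{p}\bigr)\Bigr) = -\sum_{p\le x}\frac{f(p)-|g(p)|}{p} + O_B(1),
\]
where the $O_B(1)$ absorbs the convergent sum of $O\bigl(\frac{f(p)^2}{p^2}\bigr)$ error terms from the Taylor expansion of $\log(1+t)$ (here $f(p)\le B$ keeps these bounded, using $\sum_p p^{-2}<\infty$). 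Exponentiating and folding the $e^{O_B(1)}$ into the implied constant gives exactly
\[
M_{|g|}(x)\ll_B \frac{B}{\delta}\,\frac{P_x}{\phi(P_x)}\exp\Bigl(-\sum_{p\le x}\frac{f(p)-|g(p)|}{p}\Bigr)M_f(x),
\]
noting the extra factor $B$ can be inserted freely since $B\ge\delta$ makes $\frac{B}{\delta}\ge 1$ (and indeed $B\ge 1$ may be assumed, or one simply keeps $\frac{1}{\delta}$ and notes $B/\delta$ dominates it). The main obstacle is making the upper bound $M_{|g|}(x)\ll_B \frac{x}{\log x}P_{|g|}(x)$ fully rigorous within the paper's toolkit — this is where one must either invoke an external Wirsing/Hall--Tenenbaum estimate or, preferably, run a short argument paralleling Lemma~\ref{DENOM}: bound $M_{|g|}(x)\log x = N_{|g|}(x)+\Delta_{|g|}(x)$, control $\Delta_{|g|}(x)=\int_1^x M_{|g|}(u)u^{-1}du$ and $N_{|g|}(x)=\sum_{d\le x}\Lambda(d)|g(d)|M_{|g|}(x/d)\le B M_{|g|}(x)\sum_{d\le x}\frac{\Lambda(d)}{d}$, but this last step gives $M_{|g|}(x)\log x \le B M_{|g|}(x)(\log x + O(1))$, which is vacuous, so one genuinely needs the sharper input comparing $M_{|g|}$ directly to the Euler product via a first-moment/Dirichlet-series argument; I would quote this as a known fact.
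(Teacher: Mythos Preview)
Your approach is exactly the paper's: Theorem~\ref{LOWERMV} applied to $f$ for the lower bound, an upper bound $M_{|g|}(x)\ll_B B\frac{x}{\log x}\prod_{p\le x}(1+\frac{|g(p)|}{p})$, then divide and expand the Euler-product ratio. The only gap is the one you flagged yourself, and the paper closes it with its own tools rather than an external citation.

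Concretely, the upper bound comes from the integral-averaging device already set up in Lemmas~\ref{COMPAVG} and~\ref{NUM}. With $A=0$ and $h=|g|$ one has $M_{|g|}(x)\log x = N_{|g|}(x)+\Delta_{|g|}(x)$ where $\Delta_{|g|}(x)=\int_1^x M_{|g|}(u)u^{-1}du \ll_B M_{|g|}(x)\log_2 x$ (this is \eqref{DELTBOUND}/\eqref{GAPMN}), so $M_{|g|}(x)\ll_B N_{|g|}(x)/\log x$. For $N_{|g|}$, replace it by its short average $y^{-1}\int_{x-y}^x N_{|g|}(u)\,du$ as in Lemma~\ref{COMPAVG}, swap the order as in the $I_2$ computation in the proof of Lemma~\ref{NUM} (no splitting at $Y$ is needed here) to get
\[
N_{|g|}(x)\ \ll_B\ Bx\int_1^x \frac{M_{|g|}(u)}{u^2}\,du.
\]
By partial summation $\int_1^x M_{|g|}(u)u^{-2}du = L_{|g|}(x)-M_{|g|}(x)/x$, and Lemma~\ref{LOGSUM} gives $L_{|g|}(x)\ll_B P_{|g|}(x)$. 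This yields $M_{|g|}(x)\ll_B B\frac{x}{\log x}\prod_{p\le x}(1+\frac{|g(p)|}{p})$, which is precisely the inequality you wanted; the rest of your argument then goes through verbatim.
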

\begin{proof}
By assumption, $\delta \leq |g(p)| \leq f(p) \leq B$, so by Theorem \ref{LOWERMV}, we have
\begin{equation}
M_{f}(x) \gg_B \delta \frac{\phi(P_x)}{P_x}\frac{x}{\log x} \prod_{p \leq x} \left(1+\frac{f(p)}{p}\right). \label{LAMBDAEST}
\end{equation}
On the other hand, by the argument of Lemma \ref{NUM} and 
%(some estimate from \cite{FI}), as $\sum_{p \leq x} |g(p)| \log p \leq 2Cx$ (by Chebyshev's bounds) and 
%\begin{equation*}
%\sum_{p, \nu \geq 2}|g(p^{\nu})|\frac{\log p^{\nu}}{p^{\nu}} \leq B\sum_{p,\nu \geq 2} \frac{\log p^{\nu}}{p^{\nu}} \ll B, 
%\end{equation*}
by Lemma \ref{LOGSUM},
\begin{equation} \label{GESTFORRAT}
M_{|g|}(x) \ll_B B\frac{x}{\log x} \int_1^x \frac{M_{|g|}(u)}{u^2}du = B\frac{x}{\log x} \left(\sum_{n \leq x}\frac{|g(n)|}{n} - \frac{M_{|g|}(x)}{x}\right) \ll_B B\frac{x}{\log x} \prod_{p \leq x} \left(1+\frac{|g(p)|}{p}\right).
\end{equation}
%It thus follows that
%
%M_{|g|}(x) \ll (2B+1) \frac{x}{\log x} \sum_{n \leq x} \frac{|g(n)|}{n} \ll_B \frac{x}{\log x} \prod_{p \leq x} \left(1+\frac{|g(p)|}{p}\right),-
%\end{equation}
%the last estimate following from Lemma \ref{LOGSUM}. 
Dividing \eqref{GESTFORRAT} by \eqref{LAMBDAEST}, we get
\begin{align*}
M_{|g|}(x) &\ll_{B} \frac{B}{\delta}\frac{P_x}{\phi(P_x)}M_{f}(x)\prod_{p \leq x} \left(1+\frac{|g(p)|}{p}\right)\left(1+\frac{f(p)}{p}\right)^{-1} = \frac{B}{\delta}M_{f}(x)\prod_{p \leq x} \left(1-\frac{f(p)-|g(p)|}{p+f(p)}\right) \\
&\ll \frac{B}{\delta}\frac{P_x}{\phi(P_x)}M_{f}(x)\exp\left(-\sum_{p \leq x} \frac{f(p)-|g(p)|}{p+f(p)}\right) \ll_B \frac{B}{\delta}\frac{P_x}{\phi(P_x)}\exp\left(-\sum_{p \leq x} \frac{f(p)-|g(p)|}{p}\right),
\end{align*}
as claimed.
\end{proof}
%\begin{thm}\label{WIRSINGEXT}
%Let $g$ satisfy the hypotheses of Theorem \ref{HalGen} i), and suppose $\lambda$ is as in Lemma \ref{BASICLOW}. Then there exists a constant $c_1 > 0$ such that
%\begin{equation*}
%\frac{|M_g(x)|}{M_{\lambda}(x)} \ll_{B,n} \left(\frac{B}{\delta}\right)^2 e^{-\frac{c_1}{\delta}} \exp\left(-\frac{\gamma_0}{16(1+\gamma_0)} \sum_{p \leq x} \frac{|g(p)|-\text{Re}(g(p))}{p} - \sum_{p \leq x} \frac{\lambda(p)-|g(p)|}{p}\right).
%\end{equation*}
%\end{thm}
\begin{proof}[Proof of Theorem \ref{WIRSINGEXT}] 
Part i) of the theorem is immediate upon making the factorization $\frac{|M_g(x)|}{M_{f}(x)} = \frac{|M_g(x)|}{M_{|g|}(x)}\cdot \frac{M_{|g|}(x)}{M_{f}(x)}$, and applying Theorem \ref{HalGen} to the first factor and Lemma \ref{BASICLOW} to the second one. Note that the right side of the estimate in the statement must vanish as $x \ra \infty$. Indeed, if $\sum_{p \leq x} \frac{f(p)-|g(p)|}{p}$ diverges as $x \ra \infty$ then this is obvious; otherwise, $|f(p)-|g(p)|| < \frac{1}{2}|f(p)-\text{Re}(g(p)p^{i\tau})|$ for $p$ sufficiently large, by virtue of 
\begin{equation*}
\prod_{p \leq x} \left(1+\sum_{k \geq 1} \frac{g(p)p^{-ik\tau}}{p^k}\right)^{-1}\left(1+\sum_{k \geq 1} \frac{f(p^k)}{p^k}\right) \ll_B \exp\left(\sum_{p \leq x} \frac{f(p)-\text{Re}(g(p)p^{-i\tau})}{p}\right),
\end{equation*}
for any $|\tau| \leq T$, and the left side of this last estimate tends to infinity as $x$ gets large by assumption (a similar estimate holds for $g$ completely multiplicative, with $g(p)^k$ in place of $g(p)$).  Therefore, 
\begin{equation*}
||g(p)|-\text{Re}(g(p)p^{i\tau})| \geq |f(p)-\text{Re}(g(p))| - |f(p)-|g(p)|| \geq \frac{1}{2}|f(p)-\text{Re}(g(p)p^{i\tau})|,
\end{equation*}
whence it follows that $\sum_{p \leq x} \frac{|g(p)|-\text{Re}(g(p)p^{-i\tau})}{p}$ tends to infinity as $x \ra \infty$ uniformly in compact intervals of $\tau$. Consequently, for some $E_j$ in a given partition $\mbf{E}$, the corresponding sum in the exponential in \eqref{UPPER} tends to $\infty$, and the exponential itself tends to 0.\\
For part ii), we first make some observations related to Remarks \ref{REMGEN} and \ref{REMGEN2}, which suggest that the conclusions of Sections 4 and 5 all apply more generally. Indeed, set $h(n) := |g(n)|-X_tf(n)$, where $X_t := \exp\left(-\sum_{p \leq t} \frac{f(p)-|g(p)|}{p}\right)$, $f(n)$ is a strongly multiplicative function satisfying $|g(n)| \leq f(n)$, $\delta \leq |g(p)| \leq f(p) \leq B$, and $\left|f(p)-|g(p)|\right| \leq B\eta$. We indicated in the aforementioned remarks that the arguments in sections 4,5 and 6 still apply with these choices, as do the arguments in Section 6 (in which we must instead define $R_h(\lambda) := \max_{2 \leq t \leq x} \frac{M_{|g|}(t)}{M_{f}(t)}\log^{\lambda} t$).  With the appropriate translations (which we leave to the reader, as they are straightforward restatements of our above arguments), the statement of Corollary \ref{CORSTARTER} gives (with $X := X_x$)
\begin{equation*}
R_{|g|-Xf}(\lambda) \leq \max\left\{1+X,D\max_{t_0 \leq t \leq x} \frac{B}{\delta}\frac{P_t}{\phi(P_t)}\log^{\lambda} t \left(\log^{-\frac{1}{2}} t (F(\sg_t)^{-2} J_{H,1}(\sg_t))^{\frac{1}{2}} + X\mu\right) + \frac{1}{\log^2 t}\right\},
\end{equation*}
where $F(s)$ is the Dirichlet series of $f$ and $D$ a constant depending at most on $B$. Moreover, for $\sg_t = 1 + \frac{1}{\log t}$ as above,
\begin{equation*}
F(\sg_t)^{-2} J_{H,1} \ll_B B^2(\sg_t-1)^{-1}\left(X^2\left(c_1^2\eta + (\sg_t-1)^{\delta\beta^3} + \frac{e^{-\frac{2d_1}{\sqrt{\eta}}}}{\delta}\right)+(\sg_t-1)^{\frac{2\delta}{3}} + \delta^{-1}e^{-\frac{2d_1}{\sqrt{\eta}}}\right) +(\sg_t-1)^2.
\end{equation*}
It follows by the same argument as at the end of Section 6 that
\begin{equation} \label{MOREGEN}
M_{|g|}(x) = M_{f}(x)\left(X + O_{B,m}\left(\mc{R}_1\right)\right).
\end{equation}
where we have set
\begin{equation*}
\mc{R}_1 := \left(\frac{B}{\delta}\right)^2 \frac{P_t}{\phi(P_t)}\left(X\left(d_1\eta^{\frac{1}{2}} + \log^{-\frac{\delta\beta^3}{2}} x + \delta^{-1}e^{-\frac{d_1}{\sqrt{\eta}}}\right) + \log^{-\frac{2\delta}{3}}x + \delta^{-1}e^{-\frac{2d_1}{\sqrt{\eta}}}\right).
\end{equation*}
With these observations, consider the modified hypotheses that $f$ is a strongly multiplicative function satisfying $|g(n)| \leq f(n)$ for each positive integer $n$, $\delta \leq |g(p)| \leq f(p) \leq B$ and $|g(p)-f(p)| \leq B\eta$ (rather than $||g(p)|-f(p)| \leq B\eta$) uniformly over all primes $p$.  By the triangle inequality, we have $||g(p)|-f(p)| \leq \eta$, as well as 
\begin{equation*}
\left||g(p)|-g(p)\right| \leq |g(p)-f(p)| + ||g(p)|-f(p)| \leq 2\eta.
\end{equation*}
We may therefore apply Theorem \ref{HalGen} ii) with $g$, giving
\begin{equation*}
M_g(x) = M_{|g|}(x)\left(A + O_{B,m}\left(\mc{R}_2\right)\right),
\end{equation*}
where 
\begin{equation*}
\mc{R}_2 := \left(\frac{B}{\delta}\right)^2 \left(d_1\eta^{\frac{1}{2}}|A| + \log^{-\frac{2\delta}{3}}x + \delta^{-1}e^{-\frac{2d_1}{\sqrt{\eta}}} + |A|^{\frac{\gamma_0}{4(1+\gamma_0)}}\left(\log^{-\frac{\gamma}{2}} x + \gamma^{-1}e^{-\frac{d_1}{\sqrt{\eta}}}\right)\right).
\end{equation*}
Inputting the asymptotic in \eqref{MOREGEN}, we get
\begin{equation*}
M_g(x) = M_{f}(x)\left(AX + O_{B,m}(|A|\mc{R}_1+X\mc{R}_2)\right).
\end{equation*}
Now clearly, $AX = \exp\left(-\sum_{p \leq x} \frac{f(p)-g(p)}{p}\right)$, so the proof of the theorem is complete.
\end{proof}
\section*{Acknowledgements}
The author would like to thank his Ph.D supervisor Dr. J. Friedlander for his ample patience and encouragement during the period in which this paper was written, and Dr. D. Koukoulopoulos for helpful comments.
%A simple generalization, which we leave to the reader, of the proof of Theorem \ref{HalGen} ii) gives the following.
%\begin{lem}\ref{WIRSINGASYMPLEM}
%Suppose $\lambda$ and $g$ are as above with the additional constraint that $||g(p)|-\lambda(p)| < \chi$ for all primes $p$. Then
%\begin{equation*}
%M_{|g|}(x) = M_{\lambda}(x)\left(\exp\left(\sum_{p \leq x} \frac{|g(p)|-\lambda(p)}{p}\right) + O_{B,C}\left(\chi + e^{-\frac{1}{\sqrt{\chi}}}\right)\right).
%\end{equation*}
%\end{lem}
%This results in the following extension of Theorem \ref

\bibliographystyle{plain}
\bibliography{bibWirsing}
\end{document}